\def\spann{\mathop{\rm span}\nolimits}
\DeclareMathOperator*{\supp}{supp}
\def\im{\mathop{\rm im}}
\def\conv{\mathop{\rm conv}}
\def\cconv{\mathop{\overline{\rm conv}}}
\def\cone{\mathop{\rm cone}}
\def\ccone{\mathop{\overline{\rm cone}}}
\newcommand{\bbZ}{\mathbb{Z}}
\newcommand{\bbR}{\mathbb{R}}
\newcommand{\bbP}{\mathbb{RP}}
\newcommand{\bbS}{\mathbb{S}}
\newcommand{\cD}{\mathcal{D}}
\newcommand{\cP}{\mathcal{P}}
\newcommand{\cV}{\mathcal{V}}
\declaretheorem[numberwithin=section]{theorem}
\newtheorem{corollary}[theorem]{Corollary}
\newtheorem{lemma}[theorem]{Lemma}
\newtheorem{proposition}[theorem]{Proposition}
\theoremstyle{definition}
\newtheorem{example}[theorem]{Example}
\newtheorem{remark}[theorem]{Remark}
\tikzset{ 
table/.style={
  matrix of math nodes,
  row sep=-\pgflinewidth,
  column sep= 100\pgflinewidth,
  nodes={rectangle,text width=3em,align=center},
  text depth=1.25ex,
  text height=2.5ex,
  nodes in empty cells,
  ampersand replacement=\&
}
}
\title{A Topological Approach to Simple Descriptions of Convex Hulls of Sets Defined by Three Quadrics}
\author{Grigoriy Blekherman and Alex Dunbar}
\begin{document}

\subjclass{14P10, 52A20, 55T99}
\keywords{convex hull, quadratic inequalities, spectral sequences, hyperbolic curves}
\begin{abstract}
    We study the convex hull of a set $S\subset \bbR^n$ defined by three quadratic inequalities. A simple way of generating inequalities valid on $S$ is to take nonnegative linear combinations of the defining inequalities of $S$. We call such inequalities \emph{aggregations}. We introduce a new technique relating aggregations to properties of the spectral curve, i.e. the curve defined by the vanishing of the determinant polynomial, and utilizing known spectral sequences  \cite{agrachev_systems_2012}. We find new families beyond those identified in \cite{blekherman_aggregations_2022,dey_obtaining_2022}, where the convex hull is defined by aggregations. We also prove a characterization of the emptiness of the projective variety defined by $3$ homogeneous quadratics in terms of the spectral curve generalizing results of \cite{AgrachevHomologyIntersections}.
\end{abstract}
\maketitle

\section{Introduction}

Optimization of linear functions on a subset $S$ of $\mathbb{R}^n$ leads naturally to considering the convex hull of $S$. A semialgebraic set defined by quadratic inequalities is often nonconvex. One strategy for computing the convex hull of such a set is to study \emph{aggregations} of the defining inequalities \cite{blekherman_aggregations_2022,dey_obtaining_2022,yildiran_convex_2009}. We focus on the case of three quadratic inequalities. Given three linearly independent symmetric matrices $Q_1,Q_2,Q_3 \in \bbR^{(n+1)\times (n+1)}$ defining the three quadratics $f_i(x) = \begin{bmatrix}x^\top & 1 \end{bmatrix} Q_i \begin{bmatrix} x^\top & 1\end{bmatrix}^\top$, an aggregation is simply a nonnegative combination of the defining inequalities: $f_\lambda = \sum_{i = 1}^3 \lambda_i f_i$ with $\lambda_i \geq 0$. Note that $Q_\lambda = \sum_{i = 1}^{3}\lambda_i Q_i$ is a symmetric matrix which represents the quadratic function $f_\lambda$.  

The use of aggregations for obtaining the convex hull of a set is a natural idea in optimization. In linear programming, the Farkas Lemma ensures that the convex hull of a set defined by linear inequalities can be described by aggregations of these inequalities. Similarly, aggregations have been used in the contexts of integer programming \cite{bodur2018aggregation} and mixed-integer nonlinear programming problems \cite{gleixner2020generalized}. 

Let $S = \{x \in \bbR^n \; \vert \; f_i(x) \leq 0, \; i \in [3]\}$. One application of the study of aggregations is to compute a representation of the closed convex hull of $S$ as $\cconv(S) = \bigcap_{\lambda \in \Lambda_1}\{x \in \bbR^n \; \vert \; f_\lambda(x) \leq 0\}$ for some appropriate subset $\Lambda_1$ of the nonnegative orthant $\bbR^3_+$. In order to relate aggregations to convexity, we restrict the signature of the matrices $Q_\lambda$. An aggregation $\lambda$ is \emph{permissible} if $Q_\lambda$ has at most one negative eigenvalue. If $\lambda$ is a permissible aggregation, then the set $\{x \in \bbR^n \; \vert \; f_\lambda(x) < 0\}$ is either empty, convex, or the union of two connected components, each of which is convex. A permissible aggregation $\lambda$ is a \emph{good} aggregation if $\cconv(S) \subseteq \{x \in \bbR^n \; \vert \; f_\lambda(x) \leq 0\}$. In order to describe $\cconv(S)$ as an intersection of permissible aggregations, it is necessary to further restrict our attention to good aggregations. In \cite{dey_obtaining_2022}, the authors prove such a description is possible for sets defined by strict inequalities (i.e. $f_i(x) < 0$ for $i \in [3]$) under the assumption that there is a positive definite linear combination of the defining quadratics. When such a linear combination exists, we say that \emph{PDLC} holds. In \cite{blekherman_aggregations_2022}, the authors show that for a set defined by (possibly more than three, strict) quadratic inequalities, the convex hull can be computed via aggregations under a hidden convexity assumption. 

We study sets of aggregations of quadratic inequalities via the \emph{spectral curve}, the curve in $\bbP^2$ defined as the vanishing set of the determinant $g(\lambda) = \det(\lambda_1Q_1 + \lambda_2Q_2 + \lambda_3Q_3)$, using tools derived from algebraic topology. A line of work \cite{agrachev_systems_2012,AgrachevHomologyIntersections,degtyarev_number_2012,lerario_convex_2012} relates the topology of sets defined by quadratic inequalities with the topology of the sets of linear combinations of the $Q_i$ with specified number of positive eigenvalues. In this setting, the homology groups of the solution sets to systems of quadratic inequalities are computed from the cohomology groups of combinations of the $Q_i$ with specified numbers of positive eigenvalues using a spectral sequence. Note that the number of positive eigenvalues is constant on connected components of the complement of the spectral curve.

When PDLC holds, the projective variety $\cV_{\bbR}(f_1^h,f_2^h,f_3^h)\subseteq \bbP^n$ defined by the homogenizations of the $f_i$ is empty. We begin by investigating the structure of the spectral curve for the case of three quadratics which define an empty projective variety but do not necessarily satisfy PDLC. Our first result shows that when $n\geq 4$ and the projective variety $\cV_{\bbR}(f_1^h,f_2^h,f_3^h)$ is empty, $g$ is \emph{hyperbolic} when it is smooth. Recall that a homogeneous polynomial $h\in \bbR[x_1,\ldots,x_{n+1}]$ is hyperbolic with respect to a point $e$ if $h(te + a) \in \bbR[t]$ is real rooted for any point $a \in \bbR^{n+1}$. If $h$ is a homogeneous polynomial in $n+1$ variables which is hyperbolic with respect to a point $e$, then the connected component of $e$ in $\bbR^{n+1}\setminus \cV_{\bbR}(h)$ is a convex cone \cite{garding_inequality_1959}. If $h$ is hyperbolic and defines a smooth plane curve $C$, then $C$ consists of $\lfloor\frac{\deg h}{2} \rfloor$ nested ovals (and a pseudo-line if $\deg h$ is odd). Theorem \ref{thm:intro_thm1} is a generalization of the sufficient condition that there is a positive definite linear combination $Q_\mu$ of the $Q_i$ since in this case, $g$ is hyperbolic with respect to $\mu$. 

\begin{restatable}{theorem}{IntroTheoremOne}\label{thm:intro_thm1}
    Let $Q_1,Q_2,Q_3 \in \bbR^{(n+1)\times (n+1)}$ be three linearly independent symmetric matrices and $f_1^h,f_2^h,f_3^h$ the associated quadratic forms $f_i^h(x) = x^\top Q_i x$. Suppose that the spectral curve $\cV_{\bbR}(g)$ is smooth and $n \not = 2$. Then, the real projective variety $\cV_\bbR(f_1^h,f_2^h,f_3^h)$ is empty if and only if one of the following holds: 
    \begin{enumerate}
        \item[(i)] The polynomial $g$ is hyperbolic, and there is $\mu \in \bbR^3$ such that $Q_\mu$ has $n$ positive eigenvalues
        \item[(ii)] $n = 3$ and the spectral curve $\cV_\bbR(g)$ is empty.
    \end{enumerate}
    
    When $n = 2$, the spectral curve is not smooth if the variety $\cV_{\bbR}(f_1^h,f_2^h,f_3^h)$ is nonempty.
\end{restatable}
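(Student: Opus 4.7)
The plan is to prove the two directions separately, with the forward direction leveraging the Agrachev--Lerario spectral sequence from \cite{agrachev_systems_2012} to translate emptiness of $\cV_\bbR(f_1^h,f_2^h,f_3^h)$ into combinatorial constraints on the spectral curve and its inertia strata.

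For the reverse direction, I first handle case (i). If $Q_\mu$ is positive definite then emptiness of the variety is immediate since $f_\mu^h(x) > 0$ for all $x \neq 0$. If instead $Q_\mu$ has exactly $n$ positive eigenvalues, the hyperbolicity cone $K$ containing $\mu$ is a convex open cone on which the signature of $Q_\lambda$ is constant equal to $(n,1)$, and I would use the nested oval structure of $\cV_\bbR(g)$ to rule out a common real zero of the $f_i^h$ lying in the intersection of the quadrics $\{f_\lambda^h = 0\}$ as $\lambda$ varies over $K$. For case (ii), when $n=3$ and the spectral curve is empty, the antipodal identity $Q_{-\lambda} = -Q_\lambda$ combined with connectedness of $S^2$ forces the signature of $Q_\lambda$ to be $(2,2)$ everywhere; a direct argument (the Agrachev spectral sequence specialized to this setting, or a parity count of intersection points in $\bbP^3$) then rules out a common real zero.

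For the forward direction, set $\Omega^+_j = \{\lambda \in S^2 : Q_\lambda \text{ has at least } j \text{ positive eigenvalues}\}$. The Agrachev spectral sequence has $E_2$-page built from the cohomology of the $\Omega^+_j$ and converges to the cohomology of $\cV_\bbR(f_1^h,f_2^h,f_3^h)$; vanishing of the $E_\infty$-page imposes stringent constraints on the topology of these strata. I would translate these constraints back into the geometry of the spectral curve: the boundaries $\partial \Omega^+_j$ are unions of ovals of $\cV_\bbR(g)$, and the required nesting and contractibility of the strata should be equivalent to hyperbolicity of $g$, with the innermost nonempty stratum supplying the $\mu$ in condition (i). When the spectral curve is empty, the inertia is constant on $S^2$ and forces $n+1$ to be even with signature $((n+1)/2, (n+1)/2)$; a dimension check combined with the spectral sequence in this setting should show that $n=3$ is the unique value compatible with an empty variety, yielding case (ii). The $n=2$ addendum, in which a common real zero $x$ would force $Q_\lambda x$ to lie in a low-dimensional subspace and hence produce a $Q_\lambda$ of rank $\leq n-1$ and a singular point of the spectral curve, follows from a direct linear algebra argument on the pencil of quadrics evaluated at such a zero.

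The main obstacle is extracting the full hyperbolicity condition, including the lower bound $n$ on the positive inertia at $\mu$, from the rather abstract vanishing of the $E_\infty$-page: a careful matching between the homological constraints coming from the spectral sequence and the geometric nesting of the ovals will be required, and the parity-sensitive exceptional case with empty spectral curve (only $n=3$ being admissible) must be separated out with care.
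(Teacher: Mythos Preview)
Your forward direction and the $n=2$ addendum are broadly aligned with the paper's approach, though the $n=2$ sketch understates the work: from a common zero $v$ you only get some $Q_\alpha$ with $Q_\alpha v = 0$, i.e.\ rank at most $2$, not $1$; the rank-$2$ case requires an additional tangent-space argument (the adjugate of $Q_\alpha$ is a rank-one matrix $cvv^\top$, so the tangent space to the determinantal hypersurface at $Q_\alpha$ is exactly the quadrics vanishing at $v$, which contains all $Q_i$).

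The genuine gap is the reverse direction in case (i) when PDLC fails. You write that the hyperbolicity cone containing $\mu$ has constant signature $(n,1)$, but this is not correct: a point $\mu$ with $Q_\mu$ of signature $(n,1)$ lies \emph{between} the oval of depth $\lfloor (n+1)/2\rfloor - 1$ and the innermost oval, not inside the hyperbolicity cone. The hyperbolicity cone $\cP$ itself consists of matrices of signature $(n-1,2)$ in the non-PDLC case. Your suggestion to ``use the nested oval structure to rule out a common real zero'' is too vague to succeed here; one really needs to show that the differential $d_2^{0,n}\colon \bbZ_2 \to H^1(\Omega^n)$ in the spectral sequence is injective, and the paper does this by a nontrivial argument: it proves that there is a \emph{unique} $\lambda \in \cP \cap \bbS^2$ for which $Q_\lambda$ has a repeated negative eigenvalue (uniqueness via a pencil argument showing two such points would force the line through them to miss $\cV_\bbR(g)$ entirely, contradicting hyperbolicity; existence via a change of basis), and then invokes the explicit description of $d_2$ from \cite[Theorem B]{agrachev_systems_2012} as a cup product with a class counting such repeated-eigenvalue points mod $2$. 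Without this mechanism you have no way to kill the $\bbZ_2$ in position $(0,n)$ of the $E_2$-page, and hence no way to conclude $H_0(X)=0$.
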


The $n = 3$ statement is \cite[Theorem 7.8]{plaumann_quartic_2011} and the statement that $g$ is hyperbolic when $n \geq 4$ and the spectral curve is smooth is \cite[Corollary 2]{AgrachevHomologyIntersections}. However, we were unable to find a full statement of Theorem \ref{thm:intro_thm1} in the literature. Theorem \ref{thm:intro_thm1} is proved in Section \ref{sec:Spectral_comps}.   

We observe that condition (i) in Theorem \ref{thm:intro_thm1} gives the possible signatures (number of positive eigenvalues minus number of negative eigenvalues) of the matrices on the interior of the nested ovals of the spectral curve. On the exterior of all ovals, the matrices must have signature 0 if $n+1$ is even and $\pm 1$ if $n+1$ is odd \cite{Vinnikov1993SelfAdjoint}. Crossing from the exterior to the interior of an oval makes the signature change by 2 so that matrices on the interior of the second innermost oval (the oval of depth $\lfloor \frac{n+1}{2} \rfloor - 1$) and the exterior of the innermost convex oval have signature $\pm (n - 1)$. Therefore, there is a hyperbolicity cone of $g$ consisting of matrices which are either positive definite or have $n-1$ positive and two negative eigenvalues. Theorem \ref{thm:intro_thm1} also has an interpretation in terms of the celebrated Helton-Vinnikov Theorem \cite{HeltonVinnikovLMIsets}, which shows that every hyperbolic plane curve admits a definite determinantal representation (i.e. the defining quadratics can be chosen to satisfy PDLC). In this case, the associated projective variety is empty. Theorem \ref{thm:intro_thm1} limits the other possible representations of hyperbolic plane curves which correspond to empty varieties. Explicitly, the only representations that correspond to an empty variety are definite representations and representations for which $n$ positive eigenvalues are attained and there is a hyperbolicity cone for which the matrices have $n-1$ positive and two negative eigenvalues. Moreover, such a representation certifies the emptiness of the variety defined by the quadratics. 

In the case that the real projective variety $\cV_{\bbR}(f_1^h,f_2^h,f_3^h)$ is empty and $n \not = 2$, the spectral sequence approach relates certificates of emptiness for the solution sets of systems of inequalities coming from aggregations to the (non)intersections of polyhedral cones with a hyperbolicity cone of $g$. In particular, computations of cohomology groups reduce to computations in convex geometry. 

Our primary application of such certificates of emptiness is a new sufficient condition for the existence of a representation of $\cconv(S)$ in terms of good aggregations. In \cite{blekherman_aggregations_2022,dey_obtaining_2022}, hidden convexity properties of the quadratic maps are leveraged to generate aggregations $\lambda$ such that restriction  $Q_\lambda \vert_H$ of $Q_\lambda$ to some hyperplane $H \subseteq \bbR^{n+1}$ is positive semidefinite. When applied to hyperplanes $H$ arising as the homogenizations of hyperplanes separating a point $y \in \bbR^n$ from $\conv(\mathrm{int}(S))$, this provides a good aggregation which certifies that $y \not \in \conv(\mathrm{int}(S))$. Utilizing a spectral spectral sequence given in \cite{agrachev_systems_2012} to compute homology groups of hyperplane sections of the solution set of a system of quadratic inequalities, we employ a similar strategy and generalize \cite[Theorem 2.7]{dey_obtaining_2022} under the additional assumption that $S$ \emph{has no points at infinity}, that is $\left\{x \in \bbR^n \; \middle \vert \; \begin{bmatrix} x^\top & 0 \end{bmatrix} Q_i \begin{bmatrix} x & 0 \end{bmatrix}^\top \leq 0\right\} = \{0\}$.  

\begin{restatable}{theorem}{ClosedConvexSufficient}\label{thm:ClosedConvexSufficient}
    Assume that $\mathrm{int}(S) \not = \emptyset$, and that $\{ x \in \bbR^{n+1} \; \vert \; x^\top Q_i x \leq 0,\, i \in [3],\, x_{n+1} = 0\} = \{0\}$. Suppose that $\cV_{\bbR}(f_1^h,f_2^h,f_3^h)$ is empty and the spectral curve is smooth and nonempty.  

    \begin{itemize}
        \item[(i)] If $n\geq 3$, then the spectral curve $g$ is hyperbolic. Let $\cP\subset \bbR^3$ be the hyperbolicity cone of $g$ such that $\mathrm{int} \cP$ consists of either positive semidefinite matrices $Q_\lambda$, or matrices $Q_\lambda$ with exactly two negative eigenvalues. If no non-zero aggregation lies in $\cP$ then there are $k\leq 6$ good aggregations $\lambda^{(1)},\lambda^{(2)}, \ldots, \lambda^{(k)}$,  such that $\cconv(S) = \{x \in \bbR^n \; \vert \; f_{\lambda^{(i)}}(x) \leq 0 , \, i \in [k]\}.$ 

        \item[(ii)] If $n=2$ and $g$ is hyperbolic, then (i) still applies. If $g$ is not hyperbolic, then there is a (possibly infinite) subset $\Lambda_1 \subseteq \bbR^n_+$ of good aggregations such that $\cconv(S) = \{x \in \bbR^n \; \vert \; f_{\lambda}(x) \leq 0, \, \lambda \in \Lambda_1\}$. 
    \end{itemize}

\end{restatable}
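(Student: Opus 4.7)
The approach is to combine the hyperbolicity conclusion of Theorem \ref{thm:intro_thm1} with a hyperplane-separation argument: for each $y\notin\cconv(S)$ I produce a good aggregation $\lambda$ with $f_\lambda(y)>0$ and $f_\lambda\leq 0$ on $\cconv(S)$, and then I reduce the resulting (a priori infinite) family of good aggregations to at most six. Under the standing hypotheses (empty projective variety, smooth nonempty spectral curve, $n\geq 3$), alternative (i) of Theorem \ref{thm:intro_thm1} forces $g$ to be hyperbolic, and the nested-oval / signature rules of \cite{Vinnikov1993SelfAdjoint} single out $\cP$ as the hyperbolicity cone of interest. The no-points-at-infinity assumption combined with $\mathrm{int}(S)\neq\emptyset$ implies $\cconv(S)$ is compact, so for any $y\notin\cconv(S)$ I can strictly separate $y$ from $\cconv(S)$ by an affine hyperplane $H\subseteq\bbR^n$ with associated homogenized linear subspace $H^h\subseteq\bbR^{n+1}$.

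Next I would show that there is an aggregation $\lambda\in\bbR^3_+$ with $Q_\lambda|_{H^h}\succeq 0$. The restricted triple $(f_1^h|_{H^h},f_2^h|_{H^h},f_3^h|_{H^h})$ defines three quadrics on an $n$-dimensional subspace whose real projective zero set is empty, as it is contained in $\cV_\bbR(f_1^h,f_2^h,f_3^h)=\emptyset$. Applying the Agrachev--Lerario spectral sequence for hyperplane sections \cite{agrachev_systems_2012}, together with Theorem \ref{thm:intro_thm1} applied to the restriction, provides such a $\lambda\in\bbR^3$; the hypothesis that no nonzero aggregation lies in $\cP$ is precisely what eliminates the ``wrong'' hyperbolicity cone possibilities and lets me select $\lambda$ inside the nonnegative orthant. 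Such a $\lambda$ is automatically permissible, and a direct sign analysis (with $f_\lambda\geq 0$ on $H$ and $\cconv(S)$ on the opposite side of $H$ from $y$) confirms that it is a good aggregation separating $y$ from $\cconv(S)$.

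The main obstacle is the bound $k\leq 6$. The argument so far exhibits only an infinite family indexed by $y\notin\cconv(S)$. To cut this down, I would analyze the stratification of $\bbR^3_+$ by the hyperbolic spectral curve: the nested ovals of $\cV_\bbR(g)$ together with the Vinnikov signature rules partition $\bbR^3_+$ into a controlled number of strata on which the relevant good aggregations are constant in character, and I would push each separating $\lambda$ produced above to an extremal configuration on the boundary of such a stratum. The bound $6 = 2 \cdot 3$ should emerge from a case analysis over the three facets of $\bbR^3_+$ paired with the two admissible sides of the innermost nested oval of $\cV_\bbR(g)$ sitting closest to $\cP$. I expect this combinatorial reduction, together with the verification that the six resulting aggregations genuinely cut out $\cconv(S)$ rather than merely contain it, to be the hardest step of the proof.

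For part (ii), when $n=2$ and $g$ is hyperbolic the identical argument applies since the nested-oval combinatorics in $\bbP^2$ are still available. When $g$ is not hyperbolic, the organizing nested oval structure is absent, so I can no longer reduce the family of good aggregations to a finite set in general. Instead I would produce, for each $y\notin\cconv(S)$, an individual good aggregation by the same hyperplane-restriction strategy, collect these into a (possibly infinite) subset $\Lambda_1\subseteq\bbR^3_+$, and conclude $\cconv(S)=\{x\in\bbR^n : f_\lambda(x)\leq 0,\ \lambda\in\Lambda_1\}$.
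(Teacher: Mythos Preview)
Your architecture for producing good aggregations is essentially the paper's: hyperplane-separate $y$ from $\cconv(S)$, homogenize to a linear hyperplane $H^h$, and manufacture $\lambda\in\bbR^3_+$ with $Q_\lambda|_{H^h}\succeq 0$. The paper packages this as Proposition~\ref{prop:HyperplaneAggs} plus Theorem~\ref{thm:SuffCond}. The step you describe as ``eliminating the wrong hyperbolicity cone possibilities'' is correct in spirit but needs a sharper statement than you give: via the relative spectral sequence (Theorem~\ref{thm:Relative_Spectral_Sequence}) the paper proves that if the restricted hyperbolicity cone $\cP_{H^h}$ does not contain positive definite matrices then $\cP_{H^h}\subseteq\cP$ (Theorem~\ref{thm:HyperbolicityContainment}). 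The hypothesis $\cP\cap\bbR^3_+=\{0\}$ then kills the non-PSD certificate \emph{globally}, forcing the certificate into $\bbR^3_+$. Your sketch gestures at this but does not isolate the containment $\cP_{H^h}\subseteq\cP$, which is the actual content.

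Your reduction to $k\leq 6$ has a genuine gap. The $6$ is not ``three facets times two sides of an oval,'' and it is not obtained by pushing aggregations to extremal strata along a monotone direction---without PDLC there is no PSD direction to push along, so an improvement argument in the style of Proposition~\ref{prop:Improve_aggs} is unavailable. The paper's mechanism is different. First, restrict to the extreme rays of the cone of good aggregations lying in some facet of $\bbR^3_+$; on each facet the permissible set is an interval, so there are at most two such rays per facet, hence $\leq 6$ total (this is the set $\Lambda'$). Second, and this is the part your sketch does not supply, one must show that any good aggregation $\mu$ with all entries strictly positive is \emph{redundant}. This is done topologically: one finds facet aggregations $\lambda^{(i)},\lambda^{(j)}\in\Lambda'$ such that $\cP\subseteq\mathrm{int}\,\cone(\lambda^{(i)},\lambda^{(j)},\mu,-\mu)$ (Proposition~\ref{prop:positive_agg_doesnt_contribute}), which by Proposition~\ref{prop:General_Emptiness} certifies that $\cV_\bbR(f_\mu^h)$ misses $\bigcap_{\lambda\in\Lambda'}S_\lambda^h$; Lemma~\ref{lem:UnnecessaryAggstwo} then converts ``$f_\mu^h$ has no zeros on the current intersection and the component count is unchanged'' into ``adding $\mu$ does nothing.'' Your stratification idea supplies neither the cone-containment nor this redundancy criterion, so this step needs to be reworked along these lines.
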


Theorem \ref{thm:ClosedConvexSufficient} relaxes the PDLC condition from \cite{dey_obtaining_2022} but imposes the additional restrictions that $S$ has no points at infinity and that the spectral curve is smooth. However, the most complete results in the literature apply to the setting of strict inequalities, while Theorem \ref{thm:ClosedConvexSufficient} applies to the (technically more intricate) setting of non-strict inequalities. Theorem \ref{thm:ClosedConvexSufficient} is proved in Section \ref{sec:SufficientCondition}.

Our second application of certificates of emptiness is the development of a strategy to prove the redundancy of an aggregation in a set, and in particular reduce sets of aggregations to finite subsets. Specifically, given aggregations $\lambda^{(1)},\lambda^{(2)}, \ldots, \lambda^{(k+1)}$, we interpret the condition $\bigcap_{i = 1}^{k+1} S_{\lambda^{(i)}} = \bigcap_{i = 1}^{k}S_{\lambda^{(i)}}$ in terms of a system of equations and inequalities. An application of this strategy shows that if the variety $\cV_{\bbR}(f_1^h,f_2^h,f_3^h)$ is empty and the spectral curve is smooth and hyperbolic, then the set obtained by intersecting all permissible aggregations can be recovered by intersecting only finitely many permissible aggregations. 

\begin{restatable}{theorem}{FinitePermissibleAggs}\label{thm:FinitePermissibleAggs}
Suppose that $\cV_{\bbR}(f_1^h,f_2^h,f_3^h)$ is empty and that the spectral curve is smooth and hyperbolic. Then, there is a finite subset $\Lambda_1$ of permissible aggregations such that 

\[\left\{x \in \bbR^n \; \vert \; f_\lambda(x) \leq 0 \text{ for all } \lambda \in \Lambda_1\right\} = \left\{x \in \bbR^n \; \vert \; f_\lambda(x) \leq 0 \text{ for all permissible } \lambda \in \bbR^3_+\right\}.\]
\end{restatable}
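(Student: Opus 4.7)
The plan is to translate the redundancy of a single permissible aggregation into the emptiness of a system of quadratic inequalities, apply the spectral-curve certificate of emptiness developed earlier in the paper, and use the resulting polyhedral combinatorics in the parameter space $\bbR^3$ to extract the finite subset $\Lambda_1$. First, I would projectivize: since $S_\lambda = S_{c\lambda}$ for all $c > 0$, the permissible aggregations descend to a compact semialgebraic subset $K$ of the standard simplex $\Delta \subset \bbR^3_+$. Smoothness and hyperbolicity of $g$ give $K$ the clean description as the union of closures of those components of $\Delta \setminus \cV_\bbR(g)$ on which $Q_\lambda$ has at most one negative eigenvalue; this compactness and piecewise structure will ultimately drive the finiteness conclusion.

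Next, I would set up the redundancy criterion. For $\lambda^{(1)}, \ldots, \lambda^{(k+1)} \in K$, the equality $\bigcap_{i=1}^{k+1} S_{\lambda^{(i)}} = \bigcap_{i=1}^{k} S_{\lambda^{(i)}}$ is equivalent to the emptiness of
\[
E = \{x \in \bbR^n : f_{\lambda^{(i)}}(x) \leq 0 \text{ for } i \leq k, \ f_{\lambda^{(k+1)}}(x) > 0\}.
\]
All matrices $Q_{\lambda^{(i)}}$ lie in the fixed three-dimensional pencil $\spann(Q_1, Q_2, Q_3)$, so after homogenization the emptiness of $E$ falls within the scope of the spectral-sequence and hyperbolicity-cone framework of Section \ref{sec:Spectral_comps}. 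Under the hyperbolicity hypothesis these certificates convert emptiness into a polyhedral condition on the position of the cone $\cone\{\lambda^{(1)}, \ldots, \lambda^{(k)}, -\lambda^{(k+1)}\} \subset \bbR^3$ relative to the hyperbolicity cone $\cP$ of $g$.

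The final step is to extract the finite $\Lambda_1$. The criterion from the previous step depends only on the combinatorial incidence of the generating cone with the fixed convex cone $\cP$, so only finitely many such incidence types are possible. Combined with compactness of $K$ and the convexity of $\cP$, this should force the non-redundant aggregations to lie in finitely many ``extreme'' positions in $K$; collecting one representative per position yields $\Lambda_1$. Equivalently, only finitely many $\lambda \in K$ contribute a new facet to $\bigcap_{\lambda \in K} S_\lambda$.

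The main obstacle is rigorously extending the spectral-curve certificate of emptiness, which is most naturally stated for three linearly independent quadratics, to systems with $k+1$ constraints (only three of which are linearly independent as forms) and with one strict inequality. The strict inequality $f_{\lambda^{(k+1)}} > 0$ will need a closure or perturbation argument, and care is required at degeneracies where some $\lambda^{(i)}$ lies on the spectral curve itself, where the signature of $Q_{\lambda^{(i)}}$ jumps. Leveraging the convex-geometric structure of $\cP$ to bridge from ``finitely many combinatorial incidence types'' to a genuine finite $\Lambda_1$ is the secondary technical hurdle; once that is handled, finiteness follows from standard semialgebraic compactness considerations.
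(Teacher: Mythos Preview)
Your outline has the right conceptual ingredients---projectivize, translate redundancy into emptiness, invoke the hyperbolicity-cone certificate---but the finiteness extraction in your final step is a genuine gap. ``Finitely many combinatorial incidence types'' together with compactness of the parameter set does not, by itself, force finitely many non-redundant aggregations: compactness of $K$ holds equally well in Example~\ref{ex:n=2_nonhyperbolic_sufficient}, where infinitely many aggregations are provably required. What makes the hyperbolic case different is not compactness but the specific convex geometry of the hyperbolicity cone $\cP$, and your sketch does not explain how that geometry singles out a finite subset.

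The paper's argument is constructive rather than existential. It first names the candidate finite set $\Lambda'$ explicitly: the (at most six) unit extreme rays of $\Lambda$ lying on the facets of $\bbR^3_+$. For any remaining $\mu \in \Lambda \setminus \cone(\Lambda')$, Proposition~\ref{prop:positive_agg_doesnt_contribute} shows that the cone generated by two suitable elements of $\Lambda'$ together with $\pm\mu$ contains $\cP$ in its interior, which via Proposition~\ref{prop:General_Emptiness} gives $\cV_\bbR(f_\mu^h)\cap X(\cone(\Lambda')^\circ,f^h)=\emptyset$. Crucially, the redundancy criterion (Lemma~\ref{lem:UnnecessaryAggstwo}) needs more than this emptiness: it also needs the number of connected components of the projective set to be unchanged when $\mu$ is added. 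Your proposal does not address this second condition at all. When it holds (the contractible case, Theorem~\ref{thm:FiniteAggs}), $\Lambda'$ alone suffices; when it fails, a separate counting argument (Lemma~\ref{lem:positive_agg_components} and Theorem~\ref{thm:NoPDLCDisconnectBound}) shows each additional aggregation must strictly decrease $\dim H^0(\Omega^n)$, so at most $\dim H^0(\Omega^n(\cone(\Lambda')^\circ))-1$ further aggregations are needed. This two-case split, and the component-counting in the second case, is the substantive content you are missing.
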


Theorem \ref{thm:FinitePermissibleAggs} is proved in Section \ref{sec:AggsFinite}.

Our final consideration is the topology of the set of good aggregations as a subset of all permissible aggregations. Specifically, in Section \ref{sec:Connected_Aggs}, we consider the connectedness properties of the set of good aggregations under PDLC and the regularity assumption that $S$ has \emph{no low dimensional components}, that is $S = \mathrm{cl}(\mathrm{int}(S))$. This leads us to a bound on the number of good aggregations needed to describe $\cconv(S)$ with these assumptions.   

\begin{restatable}{theorem}{PDLCBound}\label{thm:PDLCBound}
If $Q_1,Q_2,Q_3$ satisfy PDLC, $S = \mathrm{cl}(\mathrm{int}(S))$, $S$ has no points at infinity, and $\mathrm{int}(S) \not = \emptyset$, then there is a subset $\{\lambda^{(1)},\lambda^{(2)}, \ldots, \lambda^{(r)}\}$ of good aggregations with $r \leq 4$ such that \[\cconv(S) = \bigcap_{i = 1}^r \left\{x \in \bbR^n \; \vert \; f_{\lambda^{(i)}}(x) \leq 0\right\}.\]
\end{restatable}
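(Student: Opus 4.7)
The plan is to study the topology of the set of good aggregations inside the normalized parameter space $\Delta = \{\lambda \in \bbR^3_+ : \lambda_1 + \lambda_2 + \lambda_3 = 1\}$ and show that it decomposes into at most four connected pieces, from which a single representative per piece can be extracted.

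First I would set up the geometry. Under PDLC there is $\mu \in \bbR^3$ with $Q_\mu\succ 0$, which both makes $\cV_\bbR(f_1^h,f_2^h,f_3^h)$ empty (any common zero $x$ would violate $x^\top Q_\mu x > 0$) and forces $g$ to be hyperbolic with respect to $\mu$. The no-points-at-infinity hypothesis then implies the recession cone of $S$ is trivial, so $S$ and $\cconv(S)$ are compact. Together with $S = \mathrm{cl}(\mathrm{int}(S))$, a separation argument of the type used in Section \ref{sec:SufficientCondition} guarantees that every boundary point of $\cconv(S)$ admits a supporting hyperplane of the form $\{f_\lambda = 0\}$ with $\lambda$ a good aggregation, yielding $\cconv(S) = \bigcap_{\lambda \in G}\{f_\lambda \leq 0\}$, where $G \subseteq \Delta$ is the set of good aggregations. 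Note that PDLC is genuinely stronger than the hypotheses of Theorem \ref{thm:ClosedConvexSufficient}, so one expects a strictly tighter bound than $k \leq 6$ in this regime.

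Next I would analyze $G$ via the interaction between $\Delta$ and the hyperbolicity cone $\cP$ of $g$ containing $\mu$. The set $\cP \cap \Delta$ is a convex region inside which $Q_\lambda$ is positive definite (so $\lambda$ is automatically good), while the complement of $\cP \cap \Delta$ in $\Delta$ consists of regions touching the three edges of $\Delta$, which themselves correspond to the three defining inequalities $f_i \leq 0$ (each an obvious good aggregation). Using the connectedness analysis of Section \ref{sec:Connected_Aggs}, I would show that $G$ has at most four connected components: one central, coming from $\cP \cap \Delta$, plus at most three peripheral, one along each edge of $\Delta$. Within a single component, the family $\{f_\lambda \leq 0\}$ varies continuously with $\lambda$ and is dominated (in the inclusion order) by a single extremal aggregation; taking one per component gives the claimed $r \leq 4$.

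The main obstacle is establishing the bound of four on the number of connected components of $G$. This amounts to a combinatorial-topological description of how $\cP \cap \Delta$ sits inside the triangle $\Delta$, including ruling out extra components coming from singularities of the spectral curve (which PDLC does not preclude) and confirming that each peripheral component contributes only one necessary aggregation rather than several. I would first settle the case of smooth hyperbolic spectral curves directly using Theorem \ref{thm:intro_thm1} and the nested-oval structure, and then argue by a perturbation-and-limit argument that the bound $r \leq 4$ persists in the singular case.
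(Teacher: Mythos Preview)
Your decomposition of $G$ into ``one central plus at most three peripheral'' components rests on a misreading of the geometry. The hyperbolicity cone $\cP$ of $g$ containing the positive definite matrix $Q_\mu$ cannot meet $\Delta$: if some $\lambda\in\bbR^3_+$ had $Q_\lambda\succ 0$ then $f_\lambda(x)>0$ everywhere, forcing $S_\lambda=\emptyset$ and contradicting $\mathrm{int}(S)\neq\emptyset$. So there is no ``central'' component at all. In fact the paper's Proposition \ref{prop:GoodAggsConnectedPDLC} shows that under PDLC the good aggregations occupy \emph{exactly one} connected component of $\Omega^n(-\bbR^3_+)$, not four. Thus the whole picture of counting components and picking one representative per component collapses.

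The second claim, that within a connected component the sets $S_\lambda$ are dominated in the inclusion order by a single extremal aggregation, is also false in general: two good aggregations in the same component need not satisfy $S_{\lambda^{(1)}}\subseteq S_{\lambda^{(2)}}$ or the reverse. The paper's route is quite different. It first reduces to aggregations with $|\supp(\lambda)|\le 2$ (Proposition \ref{prop:Aggs_support}), giving at most six candidates (two per edge of $\Delta$), and then case-splits on where the \emph{negative} definite hyperbolicity cone sits relative to $\bbR^3_+$. If that cone is not entirely inside $\bbR^3_+$, there is $\theta$ with $Q_\theta\succeq 0$ and one strictly positive coordinate, and translating by $\theta$ (Proposition \ref{prop:Improve_aggs}) pushes all aggregations off one edge of $\Delta$ onto the other two, leaving at most four. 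If that cone is contained in $\bbR^3_+$, separate arguments for $n=1,2$ and $n\ge 3$ (Propositions \ref{Prop:low_Degree_PDLC_Bound} and \ref{prop:Neg_def_contained_four_aggs}) give the bound, the latter in fact yielding $r\le 2$. None of this is a perturbation-and-limit from the smooth case.
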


Note that Theorem \ref{thm:PDLCBound} is related to \cite[Conjecture 3.2]{blekherman_aggregations_2022}, in which the authors conjecture the existence of a set $T$ defined by the intersection of three \emph{strict} quadratic inequalities satisfying PDLC such that the convex hull cannot be described using fewer than six good aggregations. The result of Theorem \ref{thm:PDLCBound} is that if the set defined by non-strict inequalities is sufficiently regular, then four aggregations suffice. Theorem \ref{thm:PDLCBound} is proved in Section \ref{sec:Agg_top}.

The remainder of the paper is organized as follows. In Section \ref{sec:notation_preliminaries}, we fix notation and review the relevant background from real algebraic geometry and algebraic topology. A detailed discussion of our main results and examples are presented in Section \ref{sec:Main_Results}. Section \ref{sec:Conclusions} presents conclusions. Proofs of the main results are in subsequent sections.

\section{Notation and Preliminaries}\label{sec:notation_preliminaries} 

We adopt the following notational conventions. For a positive integer $k$, the set $\{1,2,\ldots, k\}$ is denoted $[k]$. Given a subset $W \subseteq \bbR^n$, we let $\conv(W),\cone(W),\mathrm{int}(W),\mathrm{cl}(W),$ and $\partial W$ be the convex hull, conical hull, interior, closure, and boundary of $W$, respectively. We denote by $\cconv(W)$ and $\ccone(W)$ the closures of the convex hull and conical hull of $W$, respectively. The real projective $n$-space is denoted $\bbP^n$, the $n$-sphere is denoted $\bbS^n$, the closed unit $n$-ball is denoted $B^n$, and the nonnegative orthant of $\bbR^n$ is denoted $\bbR^n_+$. If $\lambda \in \bbR^n$ is a vector, then $\lambda_i$ denotes the $i^{th}$ component of $\lambda$ and $\supp(\lambda) = \{i \in [n] \; \vert \; \lambda_i \not = 0\}$ is the support of $\lambda$. The standard basis vectors of $\bbR^n$ are denoted by $e_i$, where $(e_i)_j = \begin{cases} 1 & i = j\\ 0 & \text{otherwise} \end{cases}$.

Throughout, we will consider the setting with three fixed symmetric matrices $Q_1,Q_2,Q_3 \in \bbR^{(n+1)\times (n+1)}$. For each $i \in [3]$, we have the block structure $Q_i = \begin{bmatrix} A_i & b_i\\ b_i^\top & c_i \end{bmatrix}$, where $A_i \in \bbR^{n\times n}, b_i \in \bbR^{n},$ and $c_i \in \bbR$. To these three matrices we associate the following:

\begin{itemize}
\item The quadratic functions $f_i(x) = \begin{bmatrix}x^\top & 1 \end{bmatrix} Q_i \begin{bmatrix} x^\top & 1\end{bmatrix}^\top$, $i \in [3]$.
\item The set $S = \{x \in \bbR^n \;\vert \; f_i(x) \leq 0, i\in [3]\}$
\item The homogenized functions $f_i^h(x,x_{n+1}) = \begin{bmatrix}x^\top & x_{n+1} \end{bmatrix} Q_i \begin{bmatrix} x^\top & x_{n+1}\end{bmatrix}^\top$, $i \in [3]$.
\item The homogenized set $S^h = \{(x,x_{n+1}) \in \bbR^{n+1} \;\vert \; f_i(x) \leq 0, i\in [3]\}$.
\item The determinant polynomial $g(x,y,z) = \det(xQ_1 + y Q_2 + zQ_3)$ and the spectral curve $\cV_\bbR(g) \subseteq \bbP^2$.
\end{itemize}

Moreover, given $\lambda \in \bbR^3$, we consider linear combinations:

\begin{itemize}
\item $Q_\lambda = \sum_{i=1}^3 \lambda_iQ_i$. The blocks $A_\lambda, b_\lambda,$ and $c_\lambda$ are defined analogously. 
\item $f_\lambda(x) = \sum_{i = 1}^{3}\lambda_i f_i(x)$ and the homogenization $f_\lambda^h = \sum_{i = 1}^{3}\lambda_i f_i^h(x)$.
\item $S_\lambda = \{x \in \bbR^n \;\vert \; f_\lambda(x) \leq 0\}$ and its homogenization $S_\lambda^h = \{(x,x_{n+1}) \in \bbR^{n+1} \;\vert \; f_\lambda^h(x) \leq 0\}$.
\end{itemize}

We set 
\[\Lambda = \{\lambda \in \bbR^3_+ \; \vert \; Q_\lambda \text{ has exactly one negative eigenvalue}\}.\]
Note that $\lambda$ is a permissible aggregation if and only if $\lambda \in \Lambda$ or $Q_\lambda \succeq 0$. Moreover, if $Q_\lambda$ has exactly one negative eigenvalue, then $\text{int}(S_\lambda)$ consists of either one convex component or two connected components, each of which is convex \cite[Lemma 5.2]{blekherman_aggregations_2022}. It is possible that $\lambda \in \Lambda$ and $\mathrm{int}(S_\lambda)$ has two connected components, each of which intersects $\mathrm{int}(S)$ nontrivially. In this case, $\conv(S) \not \subseteq S_\lambda$. So, we additionally define the set of \emph{good aggregations} to be the $\lambda \in \Lambda$ such that  $\conv(S) \subseteq S_\lambda$. 

Let $K \subseteq \bbR^{m}$ be a polyhedral cone. We denote the polar dual by $K^\circ = \{x \in \bbR^m \; \vert \; x^\top y \leq 0 \text{ for all } y \in K\}$. For a homogeneous quadratic map $f^h = (f_1^h,f_2^h,\ldots, f_m^h) : \bbR^{n+1} \to \bbR^m$, let 

\[X(K,f^h) = \{[x] \in \bbP^{n} \; \vert \; f^h([x]) \in K\}.\] 
In particular, given vectors $\lambda^{(1)}, \lambda^{(2)}, \ldots, \lambda^{(m)} \in \bbR^3_+$, the homogeneous quadratic map $f^h = (f_{\lambda^{(1)}}^h,f_{\lambda^{(2)}}^h, \ldots, f_{\lambda^{(m)}}^h)$, and the cone $K = -\bbR^m_+$, we have that 
\[X(-\bbR^m_+,f^h) = \left\{[x] \in \bbP^n \; \middle \vert \; x \in \bigcap_{i = 1}^{m}S_{\lambda^{(i)}}^h, x \not = 0\right\}.\]
Note that the condition $f^h([x]) \in K$ is well-defined because $f^h$ is homogeneous quadratic and $K$ is a cone and therefore closed under multiplication by positive scalars. Finally, let $\Omega(K) = K^\circ \cap \bbS^{m-1}$, $C\Omega(K) = K^\circ \cap B^m$, and
\[\Omega^{j}(K) = \{\lambda \in \Omega(K) \;\vert \; Q_\lambda  \text{ has at least } j \text{ positive eigenvalues}\}.\]
We will primarily work with the cases $K = \{0\}$ and $K = -\bbR^3_+$. When $K = \{0\}$ and $K^\circ = \bbR^3$, the set $X(K,f^h)$ is the projective variety $\cV_{\bbR}(f_1^h,f_2^h,f_3^h) \subseteq \bbP^n$ and the sets $\Omega^j(K)$ contain the  (normalized) coefficients of linear combinations of the matrices $Q_i$ with at least $j$ positive eigenvalues. In the case $K = -\bbR^3_+$ and $K^\circ = \bbR^3_+$, $X(K,f^h)$ is the solution set of the system of inequalities $f_i^h([x]) \leq 0$ for $i = 1,2,3$ and the sets $\Omega^j(K)$ contain the (normalized) aggregations corresponding to matrices with at least $j$ positive eigenvalues. 
We omit $K$ from the notation and write e.g. $\Omega(K) = \Omega$ and $X(K,f^h) = X$ when the cone $K$ and homogeneous quadratic map $f^h$ are clear from the context.

We frequently work with homogeneous quadratic maps defined by aggregations $\lambda^{(1)},\lambda^{(2)},\ldots, \lambda^{(m)} \in \Lambda$. In this setting, if $A \in \bbR^{m\times 3}$ is the matrix whose rows are $(\lambda^{(j)})^\top$, then the homogeneous quadratic map $f_A^h = (f_{\lambda^{(1)}}^h,\ldots, f_{\lambda^{(m)}}^h): \bbR^{n+1} \to \bbR^{m}$ factors though $\bbR^3$ as  

\[
\begin{tikzcd}
\mathbb{R}^{n+1} \arrow[r] & \mathbb{R}^3 \arrow[r, "A"]                                  & \mathbb{R}^{m}                   \\
x \arrow[r, maps to]       & f^h(x) = \begin{bmatrix} x^\top Q_1 x\\ x^\top Q_2 x\\ x^\top Q_3 x\end{bmatrix} \arrow[r, maps to] & Af^h(x) = \begin{bmatrix} f_{\lambda^{(1)}}^h(x)\\ \vdots \\ f_{\lambda^{(m)}}^h(x)\end{bmatrix}
\end{tikzcd}
\]
where $f^h = (f_1^h,f_2^h,f_3^h):\bbR^{n+1} \to \bbR^3$ is the homogeneous quadratic map defined by the $Q_i$. 

Now, for a fixed polyhedral cone $K \subseteq \bbR^{k+1}$, the set $X(K,f_A^h) = \{[x] \in \bbP^n \; \vert \; f_A^h(x) \in K\}$ is the same as the set $X(A^{-1}(K),f^h) = \{[x] \in \bbP^n \; \vert \; f^h(x) \in A^{-1}(K)\}$, where $A^{-1}(K) = \{v \in \bbR^3 \; \vert \; Av \in K\}$ is the preimage of $K$ under the map defined by $A$.  Computing polar duals gives that 

\[\begin{aligned}
(A^\top K^\circ)^\circ &= \{v \in \bbR^3\;\vert \; \langle A^\top y, v \rangle \leq 0 \, \text{ for all } y \in K^\circ\}\\
&= \{v \in \bbR^3 \;\vert \; \langle y, Av \rangle \leq 0 \, \text{ for all } y \in K^\circ\}\\
&= \{v \in \bbR^3 \;\vert \; Av \in (K^\circ)^\circ\}\\
&= A^{-1}(K).
\end{aligned}
\]
So, $A^{-1}(K)$ has dual cone $A^\top K^\circ \subseteq \bbR^3$. To compute homology groups of $X(K,f_A^h)$ using Theorem \ref{thm:Spectral_SequenceA}, it therefore suffices to consider dual cones in $\bbR^3$ of the form $A^\top K^\circ$. In particular, we have 

\[X(-\bbR^m_+,f_A^h) = X(\cone(\lambda^{(1)},\lambda^{(2)}, \ldots, \lambda^{(m)})^\circ,f^h).\]

Given a hyperplane $H \subseteq \bbR^{n+1}$, we denote by $Q_\lambda \vert_H$ a matrix representative of the restriction of the quadratic form $f^h_\lambda$ to $H$. In particular, if $B \in \bbR^{(n+1) \times n}$ is a matrix whose columns form an orthonormal basis of $H$ then a matrix representative is given by $B^\top Q_\lambda B$. The signature and singularity of the matrices $Q_\lambda\vert_H$ is independent of the choice of basis of $H$. We denote 
\[\Omega^{j}_H(K) = \left\{\lambda \in \Omega(K) \; \middle \vert \; Q_\lambda \vert_H \text{ has at least } j 
 \text{ positive eigenvalues}\right\}\]
for $j = 0, 1, \ldots, n$. Throughout, (co)homology groups are taken with $\bbZ_2$ coefficients. It is a central result of \cite{agrachev_systems_2012} that homology groups of $X(K,f^h)$ can be computed from the relative cohomology groups of the pairs $(C\Omega(K),\Omega^j(K))$. 

\subsection{Background}

In this section we recall relevant properties of real algebraic plane curves, computations with spectral sequences, and permissible aggregations. 

\subsubsection{Real Algebraic Plane Curves}

Let $h \in \bbR[x,y,z]$ be a homogeneous polynomial of degree $d$. The variety $C = \cV_\bbR(h) \subseteq \bbP^2$ is a \emph{real algebraic plane curve}. A nonsingular real algebraic plane curve $C$ can have multiple connected components. If a connected component $C_1$ of $C$ is contractible in $\bbP^2$ and $\bbP^2 \setminus C_1$ has two connected components, exactly one of which is contractible, then $C_1$ is called an \emph{oval}. The contractible connected component of $\bbP^2 \setminus C_1$ is called the \emph{interior} of $C_1$ and the non-contractible component of $\bbP^2 \setminus C_1$ is the \emph{exterior}. If $d$ is even, then every component of $C$ is an oval. If $d$ is odd, then there is an additional non-oval component. If $C_1, C_2, \ldots, C_k$ are ovals such that $C_i$ is contained in the interior of $C_{j}$ for $1\leq j <i \leq k$ and are the maximal set of ovals satisfying this property, then $C_k$ is said to be an oval of \emph{depth $k$} and the set $C_1,C_2,\ldots,C_k$ is a \emph{nest} of ovals. The paper \cite{degtyarev_number_2012} gives a thorough account of possible arrangements of ovals.

We are particularly interested in the case that there is a nest of ovals in $C$ of maximal depth. This maximal nest must have depth $\lfloor \frac{d}{2} \rfloor$. In this case, the curve $C$ is the hypersurface of a \emph{hyperbolic} polynomial $h \in \bbR[x,y,z]$. Recall that a homogeneous polynomial $h \in \bbR[x,y,z]$ is called hyperbolic with respect to $e \in \bbR^3$ if the univariate polynomial $h(te + a) \in \bbR[t]$ is real rooted for any $a \in \bbR^3$. The connected component of $e$ in $\bbR^3 \setminus \cV_\bbR(h)$ is an open convex cone called a \emph{hyperbolicity cone} of $h$ and $h$ is hyperbolic with respect to any point $e'$ in this cone \cite{garding_inequality_1959}. Note that if $\cP$ is a hyperbolicity cone of $h$, then its image in $\bbP^2$ is the interior of the oval of $C = \cV_\bbR(h)$ of maximal depth. Every hyperbolic plane curve possesses a definite determinantal representation \cite{HeltonVinnikovLMIsets}, but a determinantal representation of a hyperbolic   plane curve is not necessarily definite. See \cite{dym_computing_2012} for an account of determinantal representations of hyperbolic plane curves.  

\subsubsection{Spectral Sequences}

Here we provide a brief introduction to spectral sequences, focusing on the application to computing homology groups rather than an account of the underlying theory. A thorough introduction to spectral sequences is given in \cite{mccleary_users_2001}.

A \emph{(first quadrant cohomology) spectral sequence} $(E_r,d_r)$ is a collection of pages $E_r$ and differentials $d_r:E_r \to E_r$ for $r \geq 0$. Each page $E_r$ consists of a collection of $\bbZ_2$-vector spaces $E_r^{i,j}$ indexed by $\bbZ^2$, where $E_r^{i,j} = 0$ if $i < 0$ or $j < 0$. Each differential is a collection of linear maps $d_r^{i,j}: E^{i,j}_r \to E^{i+r,j-r+1}_r$ such that $d_r \circ d_r = 0$. The $r+1$-st page of the spectral sequence has terms given by the homology of $d_r$. Precisely,

\[E_{r+1}^{i,j} = \frac{\ker(d_r^{i,j})}{\mathrm{im}(d_r^{i-r,j+r-1})}.\]

Since $E_r^{i,j} = 0$ when $i<0$ or $j < 0$, it follows that for sufficiently large $r$, $\ker(d_r^{i,j}) = E^{i,j}_r$ and $\im(d_r^{i-r,j+r-1}) = 0$ so that $E_{r+1}^{i,j} \simeq E_{r}^{i,j}$ and $E_{k}^{i,j} \simeq E_{r}^{i,j}$ for all sufficiently large $k$. In this case, we say that the $i,j$ term of the spectral sequence has \emph{converged} and write $E_{r}^{i,j} = E_{k}^{i,j} = E_{\infty}^{i,j}$. We additionally say that $(E_r,d_r)$ \emph{converges} to some graded $H_*$ if $H_* \simeq \bigoplus_{i+j = *}E_\infty^{i,j}$. 

\begin{example}\label{ex:SpectralSequencePageTurn}
We conclude this section with an example computation of the $E_3$ page of a spectral sequence given the $E_2$ page. This computation forms the basis of our proof of Theorem \ref{thm:empty_var_iff_hyperbolic_curve} below. 

Suppose that $(E_r,d_r)$ is a first quadrant cohomology spectral sequence which converges to $H_{n-*}$. Suppose further that $E_2^{i,j} = \begin{cases} \bbZ_2, & (i,j) = (0,n)\text{ or } (2,n-1)\\0, & \text{otherwise} \end{cases}$ and $d_2^{0,n}:E_2^{0,n} \to E_2^{2,n-1}$ is an isomorphism. The $E_3$ page then has $E_3^{0,n} = \ker(d_2^{0,n})/\im(d_2^{-2,n+1}) = 0/0 \simeq 0$ and $E_3^{2,n-1} = \ker(d_2^{2,n-1})/\im(d_2^{0,n}) = \bbZ_2/\bbZ_2 \simeq 0$. So, the $E_3$ page has $E_3^{i,j} = 0$ for all $i,j$ and therefore $E_\infty^{i,j} = 0$ for all $i,j$. This is represented graphically by 

\begin{sseqdata}[title = $E_\page$, no y ticks, name = Spectral Sequence example, cohomological Serre grading, classes = {draw = none }]
\begin{scope}[background]
    \node at (-1,0) {n-1};
    \node at (-1,1) {n};
\end{scope}
\class["\mathbb{Z}_2"](0,1)
\class["0"](0,0)
\class["0"](1,1)
\class["0"](1,0)
\class["0"](2,1)
\class["\bbZ_2"](2,0)
\d["d_2^{0,n}" description]2(0,1)
\replacetarget["0"]
\replacesource["0"]
\end{sseqdata}

\[
\printpage[ name = Spectral Sequence example, page = 2 ] \qquad \qquad 
\printpage[ name = Spectral Sequence example, page = 3]
\]
Since $E_3^{i,j} = 0$ for all $i,j$, it follows that $E_\infty^{i,j} = 0$ for all $i,j$ and therefore we have that $H_0 \simeq \bigoplus_{i+j = n}E_\infty^{i,j} \simeq 0$. 
\end{example}

\subsubsection{Permissible Aggregations}

We are interested in aggregations $\lambda \in \bbR^3_+$ where the associated set $S_\lambda$ has convex components. This is reflected in the signature of $Q_\lambda$ as follows. 

\begin{proposition}[{\cite[Proposition 4]{yildiran_convex_2009}}] \label{prop:Semiconvex_cone}
 Let $\lambda \in \bbR^3$ and suppose that $\mathrm{int}(S_\lambda^h) \not = \emptyset$. The following are equivalent:

 \begin{enumerate}
    \item There is a linear hyperplane which does not intersect $\mathrm{int}(S_\lambda^h)$.
    \item $Q_\lambda$ has exactly one negative eigenvalue
    \item $\mathrm{int}(S_\lambda^h)$ is a union of two disjoint open convex cones which are symmetric reflections of each other with respect to the origin. 
 \end{enumerate}
\end{proposition}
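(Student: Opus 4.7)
The plan is to establish the cycle (2) $\Rightarrow$ (3) $\Rightarrow$ (1) $\Rightarrow$ (2), using orthogonal diagonalization, convex separation, and Cauchy interlacing, respectively.

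For (2) $\Rightarrow$ (3), I would orthogonally diagonalize $Q_\lambda$ and rescale the coordinates corresponding to nonzero eigenvalues so that $f_\lambda^h(y) = -y_1^2 + y_2^2 + \cdots + y_{p+1}^2$, where $p \geq 1$ is the number of positive eigenvalues. Then $\mathrm{int}(S_\lambda^h) = \{y \in \bbR^{n+1} \; \vert \; y_1^2 > y_2^2 + \cdots + y_{p+1}^2\}$ splits as the union of the two strict epigraph/hypograph regions $\pm y_1 > \sqrt{y_2^2 + \cdots + y_{p+1}^2}$; each is an open convex cone and the two are exchanged by the antipodal map $y \mapsto -y$.

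For (3) $\Rightarrow$ (1), let $C$ and $-C$ denote the two disjoint open convex cones forming $\mathrm{int}(S_\lambda^h)$. Convex separation applied to $C$ and $-C$ produces a nonzero linear functional $\ell$ with $\ell \geq 0$ on $C$ and $\ell \leq 0$ on $-C$; the separating constant is forced to be $0$ by evaluating on $tx$ with $t \to 0^+$ inside the cone. Openness of $C$ then upgrades this to strict inequality on the cones themselves, so the linear hyperplane $\{\ell = 0\}$ is disjoint from $\mathrm{int}(S_\lambda^h)$.

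For (1) $\Rightarrow$ (2), a linear hyperplane $H$ disjoint from $\mathrm{int}(S_\lambda^h) \supseteq \{f_\lambda^h < 0\}$ forces $Q_\lambda\vert_H \succeq 0$. Cauchy interlacing applied to the $n$-dimensional principal section $Q_\lambda\vert_H$ inside the $(n+1)$-dimensional $Q_\lambda$ then bounds the number of negative eigenvalues of $Q_\lambda$ above by $1$, while the hypothesis $\mathrm{int}(S_\lambda^h) \neq \emptyset$ rules out $Q_\lambda \succeq 0$. Hence $Q_\lambda$ has exactly one negative eigenvalue. The main obstacle is the bookkeeping around degenerate cases: I would need to verify that $\mathrm{int}(S_\lambda^h) = \{f_\lambda^h < 0\}$ once $Q_\lambda$ has a negative eigenvalue, and to rule out the parasitic case where $Q_\lambda$ is negative semidefinite but not definite, in which case $\mathrm{int}(S_\lambda^h) = \bbR^{n+1}$ would be incompatible with either of conditions (1) or (3).
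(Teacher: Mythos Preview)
The paper does not supply its own proof of this proposition; it is quoted verbatim from \cite{yildiran_convex_2009} and used as background. So there is no in-paper argument to compare against, and the question reduces to whether your outline is sound. It is: the cycle (2) $\Rightarrow$ (3) $\Rightarrow$ (1) $\Rightarrow$ (2) via diagonalization, separation of the two open cones through the origin, and Cauchy interlacing is the standard route and each step goes through as you describe.

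One remark on the degenerate case you flag at the end. You correctly identify the parasitic situation where $Q_\lambda$ has exactly one negative eigenvalue and \emph{no} positive eigenvalues (so $Q_\lambda$ is negative semidefinite of rank one). In that case $S_\lambda^h = \bbR^{n+1}$, condition (2) holds, and conditions (1) and (3) fail, so the equivalence as literally stated breaks. This is not a flaw in your argument but in the statement as recorded here; the original source and the applications in the paper implicitly work in the regime where $Q_\lambda$ is not negative semidefinite (equivalently $p\geq 1$ in your notation), and your assertion ``$p\geq 1$'' should be read as that standing hypothesis rather than something to be derived. With that caveat, your implication (2) $\Rightarrow$ (3) is clean, and the verification that $\mathrm{int}(S_\lambda^h)=\{f_\lambda^h<0\}$ once $p\geq 1$ is the routine check you anticipate: any point with $f_\lambda^h=0$ can be pushed to $f_\lambda^h>0$ by perturbing a positive-eigenvalue coordinate.
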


When the equivalent conditions of Proposition \ref{prop:Semiconvex_cone} are satisfied, $\mathrm{int}(S_\lambda^h)$ is called a \emph{semi-convex cone}. The existence of aggregations with exactly one negative eigenvalue  is reflected in the geometry of the set $S^h$ as follows.

\begin{proposition}\label{prop:Aggs_Exist}
If there exists $\lambda \in \bbR^3_{+}$ such that $Q_\lambda$ has $n$ positive and one negative eigenvalue, then there is a hyperplane $H$ such that $S^h \cap H = 0$. 

Conversely, suppose that $S^h \cap H = 0 $ for some hyperplane $H$ such that the $Q_i\vert_H$ satisfy PDLC. Then, there exists $\lambda \in \bbR^3_+$ such that $Q_\lambda$ has at most one negative eigenvalue.
\end{proposition}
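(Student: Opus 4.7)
The forward direction I would handle by taking $H$ to be the hyperplane orthogonal to an eigenvector $v$ of the unique negative eigenvalue of $Q_\lambda$. Then $Q_\lambda\vert_H$ is positive definite, so $\{x \in H : x^\top Q_\lambda x \leq 0\} = \{0\}$. Since $\lambda \in \bbR^3_+$ and each $f_i^h \leq 0$ on $S^h$, nonnegative aggregation gives $S^h \subseteq S_\lambda^h$, and intersecting with $H$ yields $S^h \cap H = \{0\}$.

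For the converse, my plan is to first reduce, via Cauchy interlacing, to producing $\lambda \in \bbR^3_+$ with $Q_\lambda\vert_H \succeq 0$: positive semidefiniteness of an $n \times n$ restriction of an $(n+1) \times (n+1)$ symmetric matrix forces the full matrix to have at most one negative eigenvalue. I would then introduce the homogeneous quadratic map $q_H : H \to \bbR^3$ defined by $q_H(x) = (x^\top Q_1\vert_H x,\; x^\top Q_2\vert_H x,\; x^\top Q_3\vert_H x)$. The hypothesis $S^h \cap H = \{0\}$ translates to $q_H(H) \cap (-\bbR^3_+) = \{0\}$, and the existence of the desired $\lambda$ is equivalent to the closed convex cone $\Sigma := \{\lambda \in \bbR^3 : Q_\lambda\vert_H \succeq 0\}$ meeting $\bbR^3_+$ in a nonzero vector. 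A direct polar-duality computation identifies $\Sigma$ as the polar of the conic hull of $q_H(H)$.

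The remaining step is a separation argument. If $\Sigma \cap \bbR^3_+ = \{0\}$, the PDLC hypothesis places $\mu$ in the interior of $\Sigma$, and proper separation of the two closed convex cones produces a nonzero $v \in -\bbR^3_+$ lying in $\Sigma^* = \cconv(q_H(H))$ (viewed as a convex cone). A contradiction with $v \neq 0$ would follow from $\cconv(q_H(H)) \cap (-\bbR^3_+) = \{0\}$.

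The key obstacle is exactly the convexity of the quadratic image $q_H(H)$, which in general is a nonconvex cone strictly contained in its convex hull, so that the closed convex hull might introduce spurious points in $-\bbR^3_+$. Under PDLC, however, a theorem of Polyak ensures that $q_H(H)$ is already a closed convex cone whenever $\dim H \geq 3$, and the separation argument then closes immediately. The low-dimensional cases $n = 1, 2$ I would handle by direct inspection: when $\dim H = 1$ at least one scalar $Q_i\vert_H$ must be positive (else $S^h \cap H \supsetneq \{0\}$), so $\lambda = e_i$ works; when $\dim H = 2$ the image $q_H(\bbR^2)$ admits an explicit Veronese-type parametrization, through which one can verify directly that the intersection of its closed convex conic hull with $-\bbR^3_+$ is trivial.
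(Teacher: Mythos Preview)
Your proof is correct and follows the same skeleton as the paper's: for the forward direction you take the hyperplane spanned by the positive eigenspace of $Q_\lambda$ (equivalently, the orthogonal complement of the negative eigenvector), and for the converse you reduce via Cauchy interlacing to producing $\lambda \in \bbR^3_+$ with $Q_\lambda\vert_H \succeq 0$.

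The only substantive difference is in how that last step is obtained. The paper dispatches it in one line by citing the S-lemma variant of Dey--Mu\~noz-Molewyk--Wang (their Lemma~2.3): under PDLC on $H$, emptiness of $\{x \in H : f_i^h(x) < 0,\ i \in [3]\}$ yields an aggregation $\lambda$ with $Q_\lambda\vert_H \succeq 0$. You instead unpack this S-lemma from scratch via Polyak's convexity theorem for the joint quadratic image plus a cone-separation argument, with ad~hoc treatment of $\dim H \leq 2$. Both routes are valid; yours is more self-contained but correspondingly longer, and your $n=2$ case (``verify directly via the Veronese parametrization'') is left somewhat sketchy, whereas the cited S-lemma covers all dimensions uniformly. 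There is also a minor sign slip in your duality computation ($\Sigma$ is the dual cone of $q_H(H)$, i.e., the polar of $-q_H(H)$, not of $q_H(H)$ itself), but it does not affect the argument.
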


\begin{proof}
For the first statement, take $H$ to be the hyperplane spanned by the $n$ vectors corresponding to positive eigenvalues of $Q_\lambda$. 

For the second statement, if $S^h \cap H = 0$, then $H \cap \mathrm{int}(S^h) = \{(x,x_{n+1}) \in \bbR^{n+1} \cap H \; \vert \; f^h(x,x_{n+1}) < 0\} = \emptyset$.  By the S-lemma variant \cite[Lemma 2.3]{dey_obtaining_2022}, this implies the existence of $\lambda \in \bbR^3_+$ with $Q_\lambda \vert_H \succeq 0$. So, $Q_\lambda$ has at most one negative eigenvalue by the Cauchy Interlacing Theorem. \end{proof}

We are further interested in aggregations $\lambda \in \Lambda$ where $S_\lambda$ satisfies $\conv(S) \subseteq S_\lambda$. We call such $\lambda$ \emph{good aggregations}. An aggregation $\lambda \in \Lambda$ which is not a good aggregation is a \emph{bad aggregation}.

\begin{lemma}[{\cite[Lemma 5.2]{blekherman_aggregations_2022}}]\label{lem:good_aggs}
If $\lambda \in \Lambda$, then $\mathrm{int}(S_\lambda)$ is either a convex set or a union of two disjoint convex sets. Moreover, $\lambda$ is not a good aggregation if and only if $\mathrm{int}(S_\lambda)$ is a union of two disjoint convex sets and $\mathrm{int}(S)$ has nonempty intersection with both components.
\end{lemma}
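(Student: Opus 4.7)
The proof splits into the structural statement about $\mathrm{int}(S_\lambda)$ and the characterization of bad aggregations. Throughout, I use the mild non-degeneracy implicit in the paper so that $\mathrm{int}(S) \subseteq \mathrm{int}(S_\lambda) = \{f_\lambda < 0\}$ and $S_\lambda = \overline{\mathrm{int}(S_\lambda)}$.

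\emph{Structure of $\mathrm{int}(S_\lambda)$.} Because $\lambda \in \Lambda$, the matrix $Q_\lambda$ has exactly one negative eigenvalue, so Proposition~\ref{prop:Semiconvex_cone} applied to $f_\lambda^h$ (when $\mathrm{int}(S_\lambda^h) \neq \emptyset$) yields a decomposition $\mathrm{int}(S_\lambda^h) = C_1 \sqcup C_2$ into disjoint open convex cones with $C_2 = -C_1$. Intersecting with the affine hyperplane $\{x_{n+1}=1\}$ and identifying it with $\bbR^n$ gives $\mathrm{int}(S_\lambda) = D_1 \sqcup D_2$, where $D_i := C_i \cap \{x_{n+1}=1\}$ is convex (possibly empty) and disjoint from $D_{3-i}$, which proves the first statement. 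Moreover, any line in $C_1$ would also lie in $C_2 = -C_1$, contradicting $C_1 \cap C_2 = \emptyset$; hence $C_1$ is pointed, $\overline{C_1} \cap \overline{C_2} = \{0\}$, and therefore $\overline{D_1} \cap \overline{D_2} = \emptyset$. I will use this disjointness below.

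\emph{Sufficient direction $(\Leftarrow)$.} Suppose both $D_i$ meet $\mathrm{int}(S)$, and pick $y_i \in D_i \cap \mathrm{int}(S)$. Then $[y_1,y_2] \subseteq \conv(S)$, and the univariate quadratic $\varphi(t) := f_\lambda((1-t)y_1 + t y_2)$ satisfies $\varphi(0),\varphi(1) < 0$. If $\varphi \le 0$ on all of $[0,1]$, the entire segment would lie in $S_\lambda = \overline{D_1} \sqcup \overline{D_2}$; since the segment is connected it would lie inside a single $\overline{D_i}$, contradicting $y_1 \in D_1$ and $y_2 \in D_2$. Hence $\varphi(t^*) > 0$ for some $t^* \in (0,1)$, producing a point of $\conv(S) \setminus S_\lambda$. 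Since $S_\lambda$ is closed this forces $\cconv(S) \not\subseteq S_\lambda$, so $\lambda$ is bad.

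\emph{Necessary direction $(\Rightarrow)$.} Suppose $\lambda$ is bad, so $\conv(S) \not\subseteq S_\lambda$. If $\mathrm{int}(S_\lambda)$ were a single convex set then $S_\lambda = \overline{\mathrm{int}(S_\lambda)}$ would be convex, and $S \subseteq S_\lambda$ would give $\conv(S) \subseteq S_\lambda$, a contradiction. Hence $\mathrm{int}(S_\lambda) = D_1 \sqcup D_2$ with both components nonempty. If only $D_1$ were to meet $\mathrm{int}(S)$, then $\mathrm{int}(S) \subseteq D_1$ (using $\mathrm{int}(S) \subseteq \mathrm{int}(S_\lambda)$), so $\conv(\mathrm{int}(S)) \subseteq D_1$ and $\cconv(\mathrm{int}(S)) \subseteq \overline{D_1} \subseteq S_\lambda$; combined with the paper's standing regularity $S \subseteq \overline{\mathrm{int}(S)}$ this would yield $\cconv(S) \subseteq S_\lambda$, contradicting badness. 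The central technical input is the disjointness $\overline{D_1} \cap \overline{D_2} = \emptyset$ from the structural step, which prevents the segment argument from degenerating to a mere boundary crossing; handling the quadratic-degeneracy assumption $S_\lambda = \overline{\mathrm{int}(S_\lambda)}$ and the regularity of $S$ are the remaining subtleties.
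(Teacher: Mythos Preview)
The paper does not prove this lemma; it is quoted verbatim from \cite{blekherman_aggregations_2022} without argument, so there is no ``paper's proof'' to compare against. Your write-up is therefore being judged on its own merits.

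The overall architecture (lift to the homogeneous semi-convex cone via Proposition~\ref{prop:Semiconvex_cone}, slice at $x_{n+1}=1$, then argue by convexity of the pieces) is exactly the right idea. Two of your auxiliary claims, however, are not justified.

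\emph{(i) Pointedness of $C_1$ does not give $\overline{C_1}\cap\overline{C_2}=\{0\}$.} Your argument shows the \emph{open} cone $C_1$ contains no line, but when $Q_\lambda$ is singular the closure can. For $Q_\lambda=\mathrm{diag}(-1,0,1)$ one finds $\overline{C_1}\cap\overline{C_2}$ equal to the kernel of $Q_\lambda$, not $\{0\}$; with $Q_\lambda=\mathrm{diag}(-1,1,0)$ (homogenizing coordinate last) one gets $\overline{D_1}\cap\overline{D_2}=\{0\}\neq\emptyset$ in $\bbR^2$. So the segment argument in your $(\Leftarrow)$ direction cannot rely on $\overline{D_1}\sqcup\overline{D_2}$ being a disjoint union. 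A cleaner route: use that $\varphi(t)=f_\lambda((1-t)y_1+ty_2)$ is a quadratic in $t$, negative at $0$ and $1$; if it stays $\le 0$ on $[0,1]$ it is $\le 0$ on all of $\bbR$, forcing the entire affine line through $y_1,y_2$ into $S_\lambda^h$, hence (after homogenizing) a line inside one of the open cones $C_i$---which \emph{is} impossible. Alternatively, use part (1) of Proposition~\ref{prop:Semiconvex_cone} to get an affine hyperplane separating $D_1$ from $D_2$.

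\emph{(ii) $S=\mathrm{cl}(\mathrm{int}(S))$ is not a standing hypothesis.} You invoke this in the $(\Rightarrow)$ direction, calling it ``the paper's standing regularity,'' but the paper imposes it only in specific results (e.g.\ Theorem~\ref{thm:PDLCBound}, Proposition~\ref{prop:ConnectedGoodAggs}). In fact Example~\ref{ex:NonRegular_aggstwocomponents}, where $S$ has an isolated point, exhibits a bad aggregation $\lambda=(0.75,0.25,0)$ for which $\mathrm{int}(S)$ meets only one component of $\mathrm{int}(S_\lambda)$. So the lemma as literally stated needs the regularity (or the strict-inequality setting of \cite{blekherman_aggregations_2022}); your instinct to add it is right, but you should flag it as an extra hypothesis rather than asserting the paper already assumes it.
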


Note that Lemma \ref{lem:good_aggs} implies that if the set $S$ is connected then every $\lambda \in \Lambda$ is a good aggregation.

\section{Main Results}\label{sec:Main_Results}

In this section we describe our main results in detail and provide examples. 

\subsection{Systems of Quadratics with No Solutions}\label{sec:no_solns}

Our first goal is to apply the results of \cite{agrachev_systems_2012} to certify nonexistence of solutions to systems of quadratics. The primary tool is a spectral sequence relating the homology groups of $X = X(K,f^h)$ to the relative cohomology groups of pairs $(C\Omega,\Omega^j)$. 

\begin{theorem}[{\cite[Theorem A]{agrachev_systems_2012}}]\label{thm:Spectral_SequenceA}
There is a first quadrant cohomology spectral sequence $(E_r,d_r)$ converging to $H_{n - *}(X)$ such that $E_2^{ij} = H^i(C\Omega,\Omega^{j+1})$.
\end{theorem}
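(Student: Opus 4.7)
The plan is to follow the original argument of Agrachev-Lerario: build a ``correspondence space'' inside $C\Omega \times \bbP^n$ and exploit the Leray spectral sequences of its two projections, one to extract the $E_2$ page and the other to identify the abutment. Concretely, consider
\[B = \{(\lambda, [x]) \in C\Omega \times \bbP^n \; \vert \; f^h_\lambda(x) \geq 0\}\]
with projections $p_1 : B \to C\Omega$ and $p_2 : B \to \bbP^n$. The result is then read off by matching the Leray spectral sequence of $p_1$ against the homotopical information extracted from $p_2$.

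For $p_1$, the fiber over $\lambda \in \Omega$ with positive inertia $i^+(Q_\lambda) = j$ deformation retracts onto the projectivization of a positive eigenspace of $Q_\lambda$, namely $\bbP^{j-1}$, with $\bbZ_2$-cohomology $\bbZ_2[t]/(t^j)$ concentrated in degrees $0, \ldots, j-1$. Thus the fiber cohomology jumps precisely across the nested closed strata $C\Omega \supseteq \cone(\Omega^1) \supseteq \cone(\Omega^2) \supseteq \cdots$. Running the Leray spectral sequence of $p_1$ relative to this stratification (so that the $j$-th row picks up exactly where $i^+(Q_\lambda) \geq j+1$) produces a spectral sequence whose second page is $E_2^{ij} = H^i(C\Omega, \Omega^{j+1})$.

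For $p_2$, one analyzes the fibers $\{\lambda \in C\Omega \; \vert \; \langle \lambda, f^h(x)\rangle \geq 0\}$ as a function of the position of $f^h(x)$ relative to $K$: over $[x] \in X$ the fiber collapses onto the face of $C\Omega$ dual to $f^h(x)$, while over $[x] \not \in X$ it is a proper nontrivial subset furnished by Hahn--Banach separation. A homotopy and Alexander--Lefschetz-type duality argument in $\bbP^n$ with $\bbZ_2$ coefficients identifies the total cohomology $H^{*}(B)$ with $H_{n-*}(X)$. Combined with Step 1, this produces the spectral sequence of the theorem.

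The main obstacle is making the Leray spectral sequence of $p_1$ rigorous despite the jumping fiber dimension across the strata $\Omega^j$. In the Agrachev-Lerario argument this is handled by introducing an auxiliary disc/sphere bundle on $C\Omega$ (the ``positive eigenbundle'') whose fibers uniformly encode the projectivized positive eigenspaces, then applying Thom-isomorphism and long-exact-sequence arguments on its restrictions to the strata $\Omega^j \setminus \Omega^{j+1}$. A second subtlety is tracking the duality shift arising from $p_2$ so that the abutment is exactly $H_{n-*}(X)$ and not a reindexing; working with $\bbZ_2$ coefficients simplifies this by removing all orientation considerations. Since the result is exactly \cite[Theorem A]{agrachev_systems_2012}, I would ultimately quote their proof rather than redo it, using the sketch above only to motivate the structure and indexing of the spectral sequence.
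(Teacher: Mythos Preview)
The paper does not prove this statement at all; it is quoted directly as \cite[Theorem A]{agrachev_systems_2012} and used as a black box throughout. Your proposal to sketch the Agrachev--Lerario correspondence-space argument and then cite the original source is therefore strictly more than what the paper does, and your outline of the two Leray projections is a faithful summary of that construction, so there is nothing to correct.
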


Note that the set $C\Omega$ is the intersection of a polyhedral cone and the unit ball and is therefore contractible. So, from the long exact sequence for relative cohomology,

\[\dots \to H^i(C\Omega,\Omega^{j+1}) \to H^i(C\Omega) \to H^i(\Omega^{j+1}) \to H^{i+1}(C\Omega,\Omega^{j+1}) \to \dots,\]
we see that 

\begin{equation}\label{eq:Iso_groups}
E_2^{ij} \cong \begin{cases} H^{i-1}(\Omega^{j+1}), & i \geq 2, \; \Omega^{j+1} \not = \emptyset\\
                        H^0(\Omega^{j+1})/\bbZ_2, & i = 1, \; \Omega^{j+1} \not = \emptyset\\
                        \bbZ_2, & i = 0, \; \Omega^{j+1} = \emptyset\\
                        0, & \text{otherwise}
           \end{cases}.
\end{equation}
The differential $d_2$ has an explicit formula provided by \cite[Theorem B]{agrachev_systems_2012}. Example \ref{ex:E2_example} below demonstrates how the spectral sequence of Theorem \ref{thm:Spectral_SequenceA} and the isomorphisms \eqref{eq:Iso_groups} combine to relate the homology of a variety to the cohomology groups of linear combinations of the defining quadratic forms with specified signature. 

\begin{example}\label{ex:E2_example}
This example is modified from \cite[Example 5.2]{dym_computing_2012}. Consider the matrices

\[Q_1 = \begin{bmatrix} 25 & 0 & -32 & 0\\ 0 & 25 & 0 & 24\\ -32 & 0 & 6 & 0\\ 0 & 24 & 0 & 6 \end{bmatrix}, \quad Q_2 = \begin{bmatrix} 0 & 0 & 12 & 0\\ 0 & 0 & 0 & 16\\ 12 & 0 & 4 & 0\\ 0 & 16 & 0 & 4 \end{bmatrix}, \; \text{and} \; Q_3 = \begin{bmatrix} 0 & 0 & 0 & -60\\ 0 & 0 & 10 & 0\\ 0 & 10 & 0 & 0\\ -60 & 0 & 0 & 0 \end{bmatrix}.\]
It is shown in \cite[Example 5.2]{dym_computing_2012} that $g(\lambda_1,\lambda_2,\lambda_3) = \det(\lambda_1Q_1 + \lambda_2Q_2 + \lambda_3Q_3)$ is smooth and hyperbolic with respect to $(1,0,0)$ but PDLC is not satisfied. For $K = \{0\}$, we compute that $\Omega^1(K) = \bbS^2$, $\Omega^2(K)$ is homotopy equivalent to the union of two points, and $\Omega^3(K)$ is homotopy equivalent to $\bbS^1$. So, the nonzero entries on the $E_2$ page and the potentially nonzero entries on the $E_3$ page are given by

\begin{sseqdata}[title = $E_\page$, name = Hyperbolic example, cohomological Serre grading, classes = {draw = none }, xscale = 2 ]
\class["\mathbb{Z}_2"](0,3)
\class["\bbZ_2"](1,1)
\class["\bbZ_2"](2,2)
\class["\bbZ_2"](3,0)

\d["d_2^{0,3}" description]2(0,3)
\replacesource["\qquad \ker(d_2^{0,3})"]
\replacetarget["\dfrac{\bbZ_2}{\im(d_2^{0,3})}"]
\d["d_2^{1,1}" description]2(1,1)
\replacesource["\ker(d_2^{1,1})"]
\replacetarget["\dfrac{\bbZ_2}{\im(d_2^{1,1})}"]
\end{sseqdata}

\[
\printpage[ name = Hyperbolic example, page = 2 ] \quad
\printpage[ name = Hyperbolic example, page = 3 ]
\]
Moreover, \cite[Example 5.2]{dym_computing_2012} shows that if $f_i^h$ is the quadratic form associated to $Q_i$ for $i = 1,2,3$, then the variety $\cV_\bbR(f_1^h,f_2^h,f_3^h) = \emptyset$. By the computation of the $E_3$ page, and the fact that $d_r = 0$ for $r \geq 3$, we see that $d_2^{0,3}$ and $d_2^{1,1}$ are isomorphisms.  
\end{example}

In the case that $K = \{0\}$ and $X(\{0\},f^h) = \cV_\bbR(f_1^h,f_2^h,f_3^h) \subseteq \bbP^{n}$, the emptiness of $X$ is certified by the hyperbolicity of the spectral curve together with signature information.

\begin{restatable}{theorem}{EmptyVarIFF}\label{thm:empty_var_iff_hyperbolic_curve}
      The projective variety $\cV_\bbR(f_1^h,f_2^h,f_3^h) \subseteq \bbP^{n}$ is empty if $\Omega^n(\{0\}) \not = \emptyset$ (i.e. there is $\mu \in \bbR^3$ such that $Q_\mu$ has at least $n$ positive eigenvalues) and $g$ is smooth and hyperbolic. The converse holds when $n = 1$ or $n \geq 4$.
\end{restatable}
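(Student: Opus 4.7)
The plan is to apply Theorem \ref{thm:Spectral_SequenceA} with $K=\{0\}$, in which case $\Omega = \bbS^2$ and $X(K,f^h) = \cV_\bbR(f_1^h,f_2^h,f_3^h) \subseteq \bbP^n$. Since emptiness of the projective variety $X$ is equivalent to $H_*(X) = 0$, which by convergence of the spectral sequence is in turn equivalent to $E_\infty^{i,j} = 0$ for all $(i,j)$, the argument amounts to computing the $E_2$ page and tracking the differentials $d_2$.

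For the forward direction, I split on whether $\Omega^{n+1}$ is empty. If there exists $\mu$ with $Q_\mu \succ 0$, then $f_\mu^h(x) = \sum_i \mu_i f_i^h(x) > 0$ for every nonzero $x$, so $X = \emptyset$ directly. Otherwise $\Omega^{n+1} = \emptyset$ but $\Omega^n \neq \emptyset$, and I use the nested oval structure of the smooth hyperbolic spectral curve: its preimage in $\bbS^2$ cuts the sphere into nested regions arranged symmetrically about two antipodal pole disks, the signature of $Q_\lambda$ is constant on each region and changes by $\pm 2$ across oval crossings, and by \cite{Vinnikov1993SelfAdjoint} the failure of PDLC forces the pole disks to carry signature $(n-1,2)$. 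Reading off the homotopy type of each $\Omega^{j+1}$ from this partition and substituting into the isomorphisms in \eqref{eq:Iso_groups}, I obtain an $E_2$ page whose nonzero entries (generalizing the $n=3$ pattern of Example \ref{ex:E2_example}) pair off under $d_2^{i,j}\colon E_2^{i,j} \to E_2^{i+2,j-1}$. Using the explicit formula for $d_2$ from \cite[Theorem B]{agrachev_systems_2012}, I verify that each relevant $d_2$ is an isomorphism; mimicking the template of Example \ref{ex:SpectralSequencePageTurn} then gives $E_3 = 0$, so $E_\infty = 0$ and hence $X = \emptyset$.

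For the converse, assume $X = \emptyset$ and $g$ is smooth. When $n \geq 4$, \cite[Corollary 2]{AgrachevHomologyIntersections} already implies $g$ is hyperbolic, so I need only prove $\Omega^n \neq \emptyset$; I argue by contradiction, showing that if every $Q_\lambda$ has at most $n-1$ positive eigenvalues, then the same region-by-region analysis leaves a surviving $E_\infty^{i,j} \neq 0$, contradicting $H_*(X) = 0$. The case $n = 1$ reduces to a direct analysis of three binary quadratic forms on $\bbP^1$. The main technical obstacle throughout is the bookkeeping for regions in the complement of the lifted spectral curve in the non-PDLC case---identifying each $\Omega^{j+1}$ up to homotopy and verifying that the paired $d_2$'s are isomorphisms---where the parity of $n+1$ controls whether the equatorial component is a cylinder or two annuli separated by a lifted pseudo-line.
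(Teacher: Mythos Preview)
Your overall architecture matches the paper's: use Theorem~\ref{thm:Spectral_SequenceA} with $K=\{0\}$, split on PDLC, read off the $E_2$ page from the nested-oval structure of the smooth hyperbolic spectral curve, and invoke \cite[Theorem~B]{agrachev_systems_2012} for $d_2$. The converse for $n\geq 4$ via \cite[Corollary~2]{AgrachevHomologyIntersections} plus a spectral-sequence contradiction for $\Omega^n=\emptyset$ is also what the paper does (Lemma~\ref{lem:Empty_variety_hyperbolic_curve}).

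However, there is a genuine gap in the forward direction. You write that you will ``verify that each relevant $d_2$ is an isomorphism'' using Theorem~B, and you flag this as ``bookkeeping.'' It is not. Theorem~B (stated in the paper as Theorem~\ref{thm:Spectral_Sequence_d2}) computes $d_2^{0,n}(1)$ as the mod-$2$ intersection number of a $2$-cell in $C\Omega$ with the locus $\mathcal{Q}\setminus\cD_n$ of matrices whose two smallest eigenvalues coincide. Concretely, you must show that the number of $\lambda\in\cP\cap\bbS^2$ for which $Q_\lambda$ has a \emph{repeated negative eigenvalue} is odd. Nothing in the oval combinatorics or the homotopy types of the $\Omega^{j+1}$ gives this; it is a separate fact about the linear system $\{Q_\lambda\}$. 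The paper devotes Lemma~\ref{lem:Repeated_neg_on_pencil} and Proposition~\ref{prop:Unique_neg_rep} to proving that there is \emph{exactly one} such $\lambda$: first, if two such points existed (normalized to eigenvalue $-1$), the pencil through them would stay inside $\mathrm{int}(\cP)$ for all $t\in\bbR$, contradicting smoothness of $g$ via hyperbolicity; second, existence is obtained by a coordinate change diagonalizing one $Q_\lambda$ to $\mathrm{diag}(1,\ldots,1,-1,-1)$. Without this argument your claim that $d_2^{0,n}\colon\bbZ_2\to H^1(\Omega^n)\cong\bbZ_2$ is an isomorphism is unsupported, and the forward implication does not go through.

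A smaller point: once the oval structure is pinned down in the non-PDLC case, only $d_2^{0,n}$ matters for $H_0(X)$, since $\Omega^n$ is connected (an annulus) and $\Omega^{n-1}$ is contractible (a disk), so $E_2^{1,n-1}=E_2^{2,n-2}=0$ already. You do not need to check ``each relevant $d_2$''---just this one---but that one is the hard one.
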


The statement that when $n \geq 4$, emptiness of $X(\{0\},f^h)$ implies the hyperbolicity of $g$ is \cite[Corollary 2]{AgrachevHomologyIntersections}. Note that the case $\Omega^{n+1} \not = \emptyset$ implies that $Q_1,Q_2,Q_3$ satisfy PDLC, which is considered in the context of aggregations of quadratic inequalities in \cite{dey_obtaining_2022}. Additionally, Theorem \ref{thm:empty_var_iff_hyperbolic_curve} is related to the result \cite[Theorem 7.8]{plaumann_quartic_2011}, which characterizes the emptiness of $X = \cV_\bbR(f_1^h,f_2^h,f_3^h)$ in the case $n = 3$. In this case, an empty spectral curve also certifies that $X = \emptyset$. 

The characterization of the emptiness of projective varieties leads to a computation for solution sets of inequalities.

\begin{restatable}{proposition}{GeneralEmptiness}\label{prop:General_Emptiness}
If $X(\{0\},f^h) = \cV_\bbR(f^h_1, f^h_2, f^h_3) = \emptyset$, then for a polyhedral cone $K \not = 0$, the set $X(K,f^h) = \emptyset$ if and only if $\Omega^n(K) \not = \emptyset$ and either $H^1(\Omega^n(K)) \neq 0$ or $\Omega^{n+1}(K) \not = \emptyset$. 
\end{restatable}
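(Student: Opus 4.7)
The plan is to specialize Theorem \ref{thm:Spectral_SequenceA} to the pair $(K, f^h)$, yielding a first-quadrant cohomology spectral sequence with $E_2^{ij} = H^i(C\Omega(K),\Omega^{j+1}(K))$ converging to $H_{n-*}(X(K,f^h))$. Then $X(K,f^h) = \emptyset$ is equivalent to $E_\infty^{i,j} = 0$ for every $(i,j)$ with $i+j \leq n$. Since $Q_\lambda$ has at most $n+1$ positive eigenvalues, $\Omega^{j+1}(K) = \emptyset$ whenever $j \geq n+1$, so the isomorphisms \eqref{eq:Iso_groups} pin down most entries of the $E_2$ page automatically; the essential information is concentrated in the rows $j = n$ and $j = n-1$, whose entries are controlled respectively by $\Omega^{n+1}(K)$ and $\Omega^n(K)$.

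The easy half of the ``if'' direction handles the case $\Omega^{n+1}(K) \neq \emptyset$: choose $\lambda \in K^\circ$ with $Q_\lambda \succ 0$, so that $\langle \lambda, f^h(x)\rangle = x^\top Q_\lambda x > 0$ for every $x \neq 0$, whereas $\langle \lambda, y\rangle \leq 0$ for every $y \in K$ by polar duality. Hence $f^h(x) \notin K$ for any $x \neq 0$, certifying $X(K,f^h) = \emptyset$ without invoking the spectral sequence.

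For the remaining cases, I would study the diagonal $i+j = n$, which captures $H_0(X(K,f^h))$. Assuming $\Omega^{n+1}(K) = \emptyset$, the isomorphisms \eqref{eq:Iso_groups} give $E_2^{0,n} = \bbZ_2$ and $E_2^{2,n-1} \cong H^1(\Omega^n(K))$ (taken to be $0$ if $\Omega^n(K) = \emptyset$), with all other entries on the diagonal $i+j = n$ vanishing. Mimicking Example \ref{ex:SpectralSequencePageTurn}, the collapsing of $E_2^{0,n}$ at $E_\infty$ occurs exactly when $d_2^{0,n}$ is injective, and this forces both $\Omega^n(K) \neq \emptyset$ and $H^1(\Omega^n(K)) \neq 0$. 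The analogous inspection of the diagonal $i + j = n-1$ establishes that $\Omega^n(K) = \emptyset$ is impossible, because then every possible target $E_2^{r, n-r}$ of a differential emanating from $E_2^{0,n-1} = \bbZ_2$ also vanishes, leaving a surviving class in $H_1(X(K,f^h))$.

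The principal obstacle is verifying that, when $\Omega^n(K) \neq \emptyset$ and $H^1(\Omega^n(K)) \neq 0$, the differential $d_2^{0,n}$ is actually an isomorphism rather than merely a map between nonzero groups. I would resolve this by exploiting naturality of the spectral sequence under the inclusion of pairs $(C\Omega(K),\Omega^{j+1}(K)) \hookrightarrow (B^3,\Omega^{j+1}(\{0\}))$: the hypothesis $\cV_\bbR(f_1^h,f_2^h,f_3^h) = \emptyset$ forces the spectral sequence for $K = \{0\}$ to collapse, so the corresponding $d_2$ there must already be nonzero on the analogous classes, and naturality transports this nontriviality to $K$. As a fallback, the explicit description of $d_2$ from \cite[Theorem B]{agrachev_systems_2012} gives a direct formula that can be checked against a nontrivial cohomology class in $H^1(\Omega^n(K))$.
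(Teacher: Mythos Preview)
Your proposal follows essentially the same route as the paper: run Theorem~\ref{thm:Spectral_SequenceA}, dispatch $\Omega^{n+1}(K)\neq\emptyset$ by a direct positivity argument, and reduce everything else to whether $d_2^{0,n}:\bbZ_2\to H^1(\Omega^n(K))$ is injective. The only substantive divergence is in how you certify that injectivity for the ``if'' direction. The paper does it by direct computation: it observes that $H^1(\Omega^n(K))\neq 0$ forces $\cP\subset K^\circ$, then invokes Proposition~\ref{prop:Unique_neg_rep} and Theorem~\ref{thm:Spectral_Sequence_d2} to evaluate $d_2^{0,n}(1)$ as the mod~$2$ count of $\lambda\in\cP\cap\bbS^2$ with repeated negative eigenvalue, which is exactly one. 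Your naturality argument is a legitimate alternative and is arguably cleaner in spirit---the hypothesis $X(\{0\},f^h)=\emptyset$ already forces the analogous $d_2^{0,n}$ for $K=\{0\}$ to be an isomorphism, and restriction transports this---but to close it you still need that the restriction map $H^1(\Omega^n(\{0\}))\to H^1(\Omega^n(K))$ is injective, i.e.\ that a nontrivial loop in $\Omega^n(K)$ actually winds around $\cP$; this uses the same hyperbolicity geometry. Your fallback via Theorem~B is exactly the paper's argument.

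One point you should tighten: the claim that ``all other entries on the diagonal $i+j=n$ vanish'' is not automatic. In particular $E_2^{1,n-1}=H^0(\Omega^n(K))/\bbZ_2$ and $E_2^{2,n-2}=H^1(\Omega^{n-1}(K))$ need to vanish for the ``if'' direction to yield $H_0(X)=0$, and neither follows from \eqref{eq:Iso_groups} alone. (The paper glosses over this as well.) Both do hold once you know $\cP\subset\mathrm{int}(K^\circ)$: using hyperbolicity with respect to a point $p\in\cP$, the segment from any $q\in\Omega^n(K)$ toward $p$ stays in $K^\circ$ by convexity and in the annulus $\Omega^n(\{0\})$ until it meets $\partial\cP$, which gives a deformation retraction of $\Omega^n(K)$ onto a circle. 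Your side remark about the $i+j=n-1$ diagonal and $H_1(X)$ is unnecessary; the case $\Omega^n(K)=\emptyset$ is already excluded because then $E_2^{2,n-1}=0$ and nothing can kill $E_2^{0,n}=\bbZ_2$, so $H_0(X)\neq 0$.
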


The result of Proposition \ref{prop:General_Emptiness} is that non-existnce of solutions of systems of $3$ quadratic inequalities can be certified via convex geometry when the polynomial $g$ is smooth and hyperbolic. Indeed, $H^1(\Omega^n(K)) \not = 0$ means that there is a noncontractible loop in the set of combinations of the matrices $Q_i$ with coefficients in the dual cone $K^\circ$ which have at least $n$ positive eigenvalues. This occurs if and only if PDLC is not satisfied and the hyperbolicity cone of $g$ corresponding to matrices with two negative eigenvalues is contained in $K^\circ$, while $\Omega^{n+1}(K) \not = \emptyset$ if and only if PDLC is satisfied and $K^\circ$ intersects the cone of positive definite matrices. On the other hand, in the exceptional cases $n = 2,3$ where the variety is empty but $g$ is not hyperbolic, any system of inequalities which is not equivalent to the vanishing of all $f_i^h$ has a solution.

\subsection{A Sufficient Condition for Obtaining the Convex Hull via Aggregations}\label{sec:suff_agg}

Here, we apply the topological machinery to develop a new sufficient condition for $\cconv(S)$ to be given by good aggregations. In \cite{blekherman_aggregations_2022,dey_obtaining_2022}, sufficient conditions for the convex hull to be given by aggregations are derived using the following idea. 

\begin{restatable}{proposition}{HyperplaneAggs}\label{prop:HyperplaneAggs}
    Suppose that $\mathrm{int}(S) \not = \emptyset$ and that the matrices $Q_i$ satisfy the following property:

\begin{equation}\tag{*}\label{eq:emptiness_cert}
\begin{split}
&\text{ If } \alpha^\top x < \beta \text{ is a valid inequality on } S, \text{ then for the hyperplane }\\ &H = \{(x,x_{n+1})\; \vert \; \alpha^\top x = \beta x_{n+1}\} \subseteq \bbR^{n+1}, \text{ there exists } \lambda\in \bbR^3_+ \text{ such that } Q_\lambda \vert_{H} \succ 0. 
\end{split}
\end{equation}

Then, there is a set $\Lambda_1 \subseteq \bbR^3_+$ of good aggregations such that $\cconv(S) = \bigcap_{\lambda \in \Lambda_1}S_\lambda $.
\end{restatable}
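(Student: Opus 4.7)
The plan is to build $\Lambda_1$ by applying property \eqref{eq:emptiness_cert} once for each point $y \notin \cconv(S)$, but with the affine threshold $\beta$ in the separating inequality chosen to pass \emph{exactly} through $y$. This single clever choice will make the resulting aggregation $\lambda$ simultaneously a good aggregation and a certificate that $y \notin S_\lambda$, yielding both inclusions in $\cconv(S) = \bigcap_{\lambda \in \Lambda_1} S_\lambda$ at once.

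Given $y \notin \cconv(S)$, I would first use the Hahn--Banach separation theorem to produce $\alpha \in \bbR^n \setminus \{0\}$ and $\gamma \in \bbR$ with $\alpha^\top x \leq \gamma$ on $\cconv(S)$ and $\alpha^\top y > \gamma$. Setting $\beta := \alpha^\top y$, the inequality $\alpha^\top x < \beta$ is strictly valid on $S$. Applying \eqref{eq:emptiness_cert} yields $\lambda \in \bbR^3_+$ with $Q_\lambda\vert_H \succ 0$ on the linear hyperplane $H = \{(x,x_{n+1}) \in \bbR^{n+1} : \alpha^\top x = \beta x_{n+1}\}$.

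The core step is verifying that this $\lambda$ is a good aggregation. Cauchy interlacing forces $Q_\lambda$ to have at most one negative eigenvalue. Since $\mathrm{int}(S) \neq \emptyset$ contains some $x_0$ with $f_i(x_0) < 0$ for each $i$, and $\lambda \neq 0$, one has $f_\lambda(x_0) < 0$, ruling out $Q_\lambda \succeq 0$; thus $\lambda \in \Lambda$. By Proposition \ref{prop:Semiconvex_cone}, $\mathrm{int}(S_\lambda^h) = C \sqcup (-C)$ for an open convex cone $C$. Because $f_\lambda^h > 0$ on $H \setminus \{0\}$, the two closed components $\bar C$ and $\overline{-C}$ lie on opposite sides of $H$. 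Every $(x,1)$ with $x \in S$ lies in $S_\lambda^h$ on the side $\alpha^\top x - \beta x_{n+1} < 0$, so $S \times \{1\}$ is contained in a single component, say $\bar C$. The convex set $\bar C \cap \{\alpha^\top x - \beta x_{n+1} < 0\}$ then contains $\conv(S) \times \{1\}$, yielding $\conv(S) \subseteq S_\lambda$ and, upon closure, $\cconv(S) \subseteq S_\lambda$.

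Finally, the choice $\beta = \alpha^\top y$ places $(y,1)$ on $H$, so $f_\lambda(y) = f_\lambda^h(y,1) > 0$ and $y \notin S_\lambda$. Collecting these $\lambda$'s as $y$ ranges over $\bbR^n \setminus \cconv(S)$ gives the desired $\Lambda_1$. The main obstacle I anticipate is the good-aggregation verification: since $S_\lambda$ is typically disconnected and a generic $\lambda \in \Lambda$ need not be good, one needs $H$ to \emph{sharply} separate the two cones of $S_\lambda^h$ and lie entirely on the correct side of $S \times \{1\}$. The fact that property \eqref{eq:emptiness_cert} produces exactly such an $H$, and that setting $\beta = \alpha^\top y$ simultaneously forces $(y,1) \in H$, is the structural observation that makes the whole proof go through in one pass.
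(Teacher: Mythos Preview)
Your proposal is correct and follows essentially the same route as the paper: separate $y\notin\cconv(S)$ by a hyperplane chosen to pass through $y$ (i.e.\ $\beta=\alpha^\top y$), apply \eqref{eq:emptiness_cert} to obtain $\lambda$ with $Q_\lambda\vert_H\succ 0$, use Cauchy interlacing together with $\mathrm{int}(S)\neq\emptyset$ to force exactly one negative eigenvalue, and then observe that $H$ separates the two convex pieces of $S_\lambda$ with $S$ entirely on one side while $(y,1)\in H$ gives $f_\lambda(y)>0$. Your write-up is in fact a bit more explicit than the paper's (you spell out the semiconvex-cone structure via Proposition~\ref{prop:Semiconvex_cone} and the exclusion of the $Q_\lambda\succeq 0$ case), but there is no substantive difference in strategy.
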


In \cite{blekherman_aggregations_2022,dey_obtaining_2022}, condition \eqref{eq:emptiness_cert} is imposed via hidden convexity results. In particular, \cite{dey_obtaining_2022} requires PDLC and \cite{blekherman_aggregations_2022} requires hidden hyperplane convexity (i.e. the restriction of $f^h$ to any hyperplane has convex image). Both conditions imply property \eqref{eq:emptiness_cert}.   

Proposition \ref{prop:General_Emptiness} implies that the two possibilities for certificates that $S^h \cap H = 0$ are the existence of a PSD aggregation of $Q_i\vert_H$ or a noncontractible loop in the set of aggregations of the matrices $Q_i\vert_H$ with at least $n-1$ positive eigenvalues (i.e., $H^1(\Omega_H^{n-1}(-\bbR^3_+)) \neq 0$). So, we seek a condition under which every certificate takes the form of an aggregation $\lambda$ with $Q_\lambda\vert_H \succeq 0$. When the polynomials $g(\lambda) = \det(Q_\lambda)$ and $g_H(\lambda) = \det(Q_\lambda\vert_H)$ are both hyperbolic, the first step towards such a condition is understanding the relationship between the hyperbolicity cones $\cP$ of $g$ and $\cP_H$ of $g_H$. If the matrices $Q_i$ satisfy PDLC, then $g_H$ interlaces $g$ so that $\cP \subseteq \cP_H$. If PDLC is not satisfied, the following relationship between $\cP$ and $\cP_H$ holds.

\begin{restatable}{theorem}{HyperbolicityContainment}\label{thm:HyperbolicityContainment}
Suppose $\cV_{\bbR}(f_1^h,f_2^h,f_3^h) = \emptyset$ and $g$ is smooth and hyperbolic with a hyperbolicity cone $\cP$ which contains matrices with exactly two negative eigenvalues. Then, if $g_H$ is smooth and hyperbolic, either $\cP_H$ contains positive definite matrices or $\cP_H \subseteq \cP$. 
\end{restatable}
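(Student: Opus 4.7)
Suppose $\cP_H$ does not contain any positive definite matrix. By the signature analysis for smooth hyperbolic plane curves recalled immediately after Theorem~\ref{thm:intro_thm1}, applied to the degree-$n$ polynomial $g_H$, every $\lambda \in \mathrm{int}(\cP_H)$ has $Q_\lambda|_H$ of signature $(n-2,2)$. The plan is to show $\cP_H \subseteq \cP$ (after possibly replacing $\cP_H$ by $-\cP_H$) by reducing, via a Schur complement computation, to a sign question about a single continuous function on $\mathrm{int}(\cP_H)$, and then to exclude the two bad alternatives by a topological argument on the nested-oval structure of the two hyperbolic spectral curves.

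After an orthonormal change of coordinates in $\bbR^{n+1}$ I may assume $H = e_{n+1}^\perp$, so that $Q_\lambda = \begin{bmatrix} A_\lambda & b_\lambda \\ b_\lambda^\top & c_\lambda \end{bmatrix}$ with $A_\lambda = Q_\lambda|_H$ invertible on $\mathrm{int}(\cP_H)$. The Schur complement $\phi(\lambda) = c_\lambda - b_\lambda^\top A_\lambda^{-1} b_\lambda$ is then continuous on $\mathrm{int}(\cP_H)$, and the block-matrix determinant formula gives $g(\lambda) = g_H(\lambda)\,\phi(\lambda)$. Sylvester's law of inertia identifies $\phi(\lambda) > 0$ with $Q_\lambda$ having signature $(n-1,2)$, i.e.\ $\lambda \in \pm\cP$; $\phi(\lambda) < 0$ with $Q_\lambda$ having signature $(n-2,3)$; and $\phi(\lambda) = 0$ with $\lambda \in \cV_\bbR(g)$. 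The theorem reduces to the claim that $\phi > 0$ throughout the connected open cone $\mathrm{int}(\cP_H)$.

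Because $\mathrm{int}(\cP_H)$ is connected and $\phi$ is continuous, it suffices to rule out (i) a zero of $\phi$ in $\mathrm{int}(\cP_H)$, i.e.\ $\mathrm{int}(\cP_H) \cap \cV_\bbR(g) \neq \emptyset$, and (ii) $\phi < 0$ throughout $\mathrm{int}(\cP_H)$. To fix signs, I would first produce a reference point by a dimension count: the $2$-dimensional negative eigenspace $V_-(Q_\mu)$ for $\mu \in \mathrm{int}(\cP)$ defines a $3$-parameter family of $2$-planes in $\bbR^{n+1}$, and the condition $V_-(Q_\mu) \subseteq H$ is codimension $2$ in the Grassmannian $\mathrm{Gr}(2,n+1)$, so generically a $1$-parameter subset of $\mathrm{int}(\cP)$ satisfies it; by Cauchy interlacing such $\mu$ has $\mathrm{sig}(Q_\mu|_H) = (n-2,2)$ and therefore lies in $\pm\mathrm{int}(\cP_H)$. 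Fixing the sign of $\cP_H$ so that this $\mu$ lies in $\mathrm{int}(\cP_H)$ gives $\phi(\mu) > 0$, ruling out (ii) provided (i) is also excluded.

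The main obstacle is ruling out (i), equivalently showing that the innermost oval of $g_H$ does not cross any oval of $\cV_\bbR(g)$. Both $\cV_\bbR(g)$ and $\cV_\bbR(g_H)$ are smooth hyperbolic plane curves of degrees $n+1$ and $n$, and their innermost ovals bound convex disks in $\bbP^2$ projecting from $\cP$ and $\cP_H$ respectively. I expect this nonintersection to follow from combining a Bezout-style count for the real intersection points of the two curves with the classification of nested-oval arrangements of smooth hyperbolic plane curves: the convex disk image of $\mathrm{int}(\cP_H)$ cannot be threaded by an oval of $\cV_\bbR(g)$ without violating the nesting of the ovals of $g_H$ (since crossings would force the innermost $g_H$-oval to contain a puncture) or the degree bound on transverse real intersections. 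Once this topological step is in place, combining it with the reference point constructed above forces $\phi > 0$ on $\mathrm{int}(\cP_H)$ and thus $\cP_H \subseteq \cP$, completing the proof.
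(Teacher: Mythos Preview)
Your Schur-complement set-up is clean, but the proposal has two genuine gaps and one incorrect reduction that together leave the argument incomplete.

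\textbf{The reduction to $\phi>0$ is not equivalent to $\cP_H\subseteq\cP$.} You assert that $\phi(\lambda)>0$, i.e.\ $Q_\lambda$ of signature $(n-1,2)$, means $\lambda\in\pm\cP$. This is false: under the standing hypotheses the annulus between the ovals of depth $\lfloor\tfrac{n+1}{2}\rfloor-2$ and $\lfloor\tfrac{n+1}{2}\rfloor-1$ also consists of matrices of signature $(n-1,2)$ (going outward from $\Omega^n$ the signature drops by $2$ at each oval). So ``$\phi>0$ on $\mathrm{int}(\cP_H)$'' alone does not give $\cP_H\subseteq\cP$. Your two-step plan (rule out a zero of $\phi$, produce a reference point in $\cP\cap\cP_H$) would salvage this, but then everything rests on those two steps.

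\textbf{The reference-point step is not a proof.} A dimension count does not guarantee that the map $\mu\mapsto V_-(Q_\mu)$ from the $2$-dimensional set $\mathrm{int}(\cP)\cap\bbS^2$ actually hits the codimension-$2$ Schubert locus $\{V\subseteq H\}$ for a \emph{given} $H$; the image could simply miss it. Worse, even if such a $\mu$ exists, the conclusion ``$Q_\mu|_H$ has signature $(n-2,2)$, hence $\mu\in\pm\mathrm{int}(\cP_H)$'' repeats the same error for $g_H$: signature $(n-2,2)$ also occurs in the depth-$(\lfloor\tfrac{n}{2}\rfloor-2)$ annulus of $\cV_\bbR(g_H)$, not only in $\cP_H$.

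\textbf{Step (i) is explicitly left open.} You say you ``expect'' a B\'ezout/nesting argument to show $\mathrm{int}(\cP_H)\cap\cV_\bbR(g)=\emptyset$, but give no argument. B\'ezout only bounds complex intersections; it does not by itself control which ovals of $g$ the innermost oval of $g_H$ can cross.

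The paper proves the theorem by an entirely different route that does not attempt to produce a reference point. It first establishes, via the relative spectral sequence of Theorem~\ref{thm:Relative_Spectral_Sequence} applied to the empty pair $(X,X\cap\bar H)$, that $H^1(\Omega^{n-1}_H(\{0\}),\Omega^n(\{0\}))=0$ (Corollary~\ref{cor:restrict_cohomology_cert}). Then a parity/interlacing lemma (Lemma~\ref{lem:zeros_odd}) on the roots of $g$ and $g_H$ along generic lines through $\cP$ shows that if $\cP_H\not\subseteq\cP$ and $\cP_H$ is not definite, then the image of $\cP_H$ must sit in the annulus between the depth-$(\lfloor\tfrac{n+1}{2}\rfloor-2)$ and depth-$(\lfloor\tfrac{n+1}{2}\rfloor-1)$ ovals of $g$; a loop around $\partial\cP_H$ then represents a nontrivial class in $H^1(\Omega^{n-1}_H,\Omega^n)$, contradicting the vanishing result. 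In effect the paper replaces your missing reference point and your missing B\'ezout step by a single cohomological obstruction derived from the spectral sequence.
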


Combining the ideas in Proposition \ref{prop:HyperplaneAggs} and Theorem \ref{thm:HyperbolicityContainment} suggests that in order to describe $\cconv(S)$ using good aggregations, it suffices to separate the hyperbolicity cone $\cP$ of $g$ from the cone $\bbR_+^3$ of aggregations. We say that the set $S$ \emph{has no points at infinity} if $\{x \in \bbR^n \; \vert \; f_i^h(x,0) \leq 0 \text{ for all } i \in [3]\} = \{0\}$.

\begin{restatable}{theorem}{SuffCond}\label{thm:SuffCond}
    Suppose that $\cV_\bbR(f_1^h,f_2^h,f_3^h) = \emptyset$, the polynomial $g$ is smooth and hyperbolic, $\mathrm{int}(S) \not = \emptyset$, and $S$ has no points at infinity. Let $\cP$ be the hyperbolicity cone of $g$ where $\lambda \in \mathrm{int} \cP$ implies that either $Q_\lambda \succ 0$ or $Q_\lambda$ has exactly two negative eigenvalues. If no nonzero aggregation lies in $\cP$, then $\cconv(S) = \bigcap_{\lambda \in \Lambda_1}S_\lambda$ for some set of good aggregations $\Lambda_1$.
\end{restatable}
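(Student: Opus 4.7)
The plan is to verify property \eqref{eq:emptiness_cert} of Proposition \ref{prop:HyperplaneAggs}, after which the theorem follows. Since $S$ has no points at infinity its recession cone is trivial, so the closed set $S$ is compact and $\cconv(S)$ is closed. Any $y \notin \cconv(S)$ thus admits a strict separating inequality $\alpha^\top x < \beta$ valid on $S$. Fix such data and set $H = \{(x,x_{n+1}) \in \bbR^{n+1} : \alpha^\top x = \beta x_{n+1}\}$. A case analysis on $x_{n+1}$ shows $S^h \cap H = \{0\}$: for $x_{n+1} > 0$ rescale to $x_{n+1} = 1$ to contradict strict validity on $S$; for $x_{n+1} < 0$ use that $S^h = -S^h$; for $x_{n+1} = 0$ invoke ``no points at infinity''. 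Equivalently, $X(-\bbR^3_+, f^h_H) = \emptyset$ for the restricted quadratic map $f^h_H = (f_1^h|_H, f_2^h|_H, f_3^h|_H): H \cong \bbR^n \to \bbR^3$.

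Next, apply Proposition \ref{prop:General_Emptiness} to $f^h_H$; its projective vanishing locus is empty, being contained in $\cV_\bbR(f_1^h,f_2^h,f_3^h) = \emptyset$. Using the restricted proposition's dichotomy, either (a) $\Omega^{n}_H(-\bbR^3_+) \neq \emptyset$, which directly produces $\lambda \in \bbR^3_+$ with $Q_\lambda|_H \succ 0$, or (b) $H^1(\Omega^{n-1}_H(-\bbR^3_+)) \neq 0$, i.e., a loop in the set of $\lambda \in \bbR^3_+$ with $Q_\lambda|_H$ having at least $n-1$ positive eigenvalues that is noncontractible there. We rule out (b). Projectivizing to $\bbP^2$, the loop sits in the image of $\bbR^3_+$, a triangle and hence simply connected, so it bounds a disk; noncontractibility in $\Omega^{n-1}_H$ forces this disk to enclose a complementary component, namely (up to sign) the projectivized hyperbolicity cone $\cP_H$ of $g_H$. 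Hence $\cP_H \cap \bbR^3_+ \neq \{0\}$. Theorem \ref{thm:HyperbolicityContainment} then splits into two sub-cases: either $\cP_H$ consists of matrices with $Q_\lambda|_H \succ 0$, so any $\lambda \in \cP_H \cap \bbR^3_+$ recovers the conclusion of (a); or $\cP_H \subseteq \cP$, forcing $\cP \cap \bbR^3_+ \neq \{0\}$, contradicting the hypothesis.

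The chief technical obstacle is that Theorem \ref{thm:HyperbolicityContainment} requires $g_H$ smooth and hyperbolic, while an arbitrary separating hyperplane $H$ need not satisfy this. I would address this by a perturbation argument: the $(\alpha,\beta)$ for which $g_H$ is smooth form a dense set (a Bertini-type statement applied to the family of hyperplane sections), and for such nearby data the desired aggregation $\lambda$ can be extracted at the limit, using closedness of the PSD cone and that strict separation persists under perturbation on the compact set $S$. A secondary point requiring care is identifying $\cP_H$ as precisely the component enclosed by the noncontractible loop—this amounts to checking that the innermost-oval interior is the only sign region compatible with the signature drop recorded by $H^1(\Omega^{n-1}_H) \neq 0$. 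With \eqref{eq:emptiness_cert} verified, Proposition \ref{prop:HyperplaneAggs} delivers a set $\Lambda_1$ of good aggregations with $\cconv(S) = \bigcap_{\lambda \in \Lambda_1} S_\lambda$, completing the proof.
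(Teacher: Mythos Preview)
Your overall architecture matches the paper's: verify property \eqref{eq:emptiness_cert} and invoke Proposition \ref{prop:HyperplaneAggs}, using Proposition \ref{prop:General_Emptiness} on the restricted map to obtain the dichotomy (a)/(b), and ruling out (b) via Theorem \ref{thm:HyperbolicityContainment} together with the hypothesis $\cP \cap \bbR^3_+ = \{0\}$. In the smooth-$g_H$ case your argument is essentially the paper's (the paper phrases the conclusion as $\cP_H \subseteq \bbR^3_+$ rather than $\cP_H \cap \bbR^3_+ \neq \{0\}$, but either suffices once $\cP_H \subseteq \cP$).

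The substantive divergence is your treatment of the non-smooth case. You propose a Bertini-type perturbation of $H$ followed by a limit, and you correctly flag that this is the weak point: the limiting aggregation may only satisfy $Q_\lambda|_H \succeq 0$ rather than $\succ 0$, and controlling which branch of the dichotomy survives under perturbation is delicate. The paper avoids this entirely. It does not perturb $H$; instead it appeals directly to Corollary \ref{cor:restrict_cohomology_cert}, which is proved via the relative spectral sequence of Theorem \ref{thm:Relative_Spectral_Sequence} and requires no smoothness or hyperbolicity of $g_H$. The paper argues that if $H^1(\Omega^{n-1}_H(-\bbR^3_+)) \neq 0$ and $\cP \cap \bbR^3_+ = \{0\}$, then Cauchy interlacing produces a $\lambda \in \bbR^3_+$ outside the depth-$(\lfloor (n+1)/2\rfloor -1)$ oval with $Q_\lambda|_H$ having at most $n-2$ positive eigenvalues, so the nontrivial class in $H^1(\Omega^{n-1}_H(-\bbR^3_+))$ yields a nontrivial class in $H^1(\Omega^{n-1}_H, \Omega^n)$, contradicting Corollary \ref{cor:restrict_cohomology_cert}. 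This is cleaner than a limiting argument and closes the gap you identified; you should replace your perturbation sketch with a direct appeal to that corollary.
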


We demonstrate by an example that $\cconv(S)$ may not be given by aggregations when a nonzero aggregation lies in $\cP$. 

\begin{example}\label{ex:ch_not_aggs}
This example is modified from \cite[Example 5.2]{dym_computing_2012}. Consider the matrices

\[Q_1 = \begin{bmatrix} 25 & 0 & -32 & 0\\ 0 & 25 & 0 & 24\\ -32 & 0 & 6 & 0\\ 0 & 24 & 0 & 6 \end{bmatrix}, \quad Q_2 = \begin{bmatrix} 0 & 0 & 12 & 0\\ 0 & 0 & 0 & 16\\ 12 & 0 & 4 & 0\\ 0 & 16 & 0 & 4 \end{bmatrix}, \; \text{and} \; Q_3 = \begin{bmatrix} 0 & 0 & 0 & -60\\ 0 & 0 & 10 & 0\\ 0 & 10 & 0 & 0\\ -60 & 0 & 0 & 0 \end{bmatrix}.\]
It is shown in \cite[Example 5.2]{dym_computing_2012} that $g(\lambda_1,\lambda_2,\lambda_3) = \det(\lambda_1Q_1 + \lambda_2Q_2 + \lambda_3Q_3)$ is hyperbolic with respect to $(1,0,0)$ and that the real variety $\cV_\bbR(f_1^h,f_2^h,f_3^h) \subseteq \bbP^3$ is empty.  Here the convex hull of $S$ is not given by aggregations, as shown in Figure \ref{fig:CH_not_Aggs}. 

\begin{figure}[t]
\begin{subfigure}[b]{0.47\textwidth}
    \includegraphics[width = \textwidth]{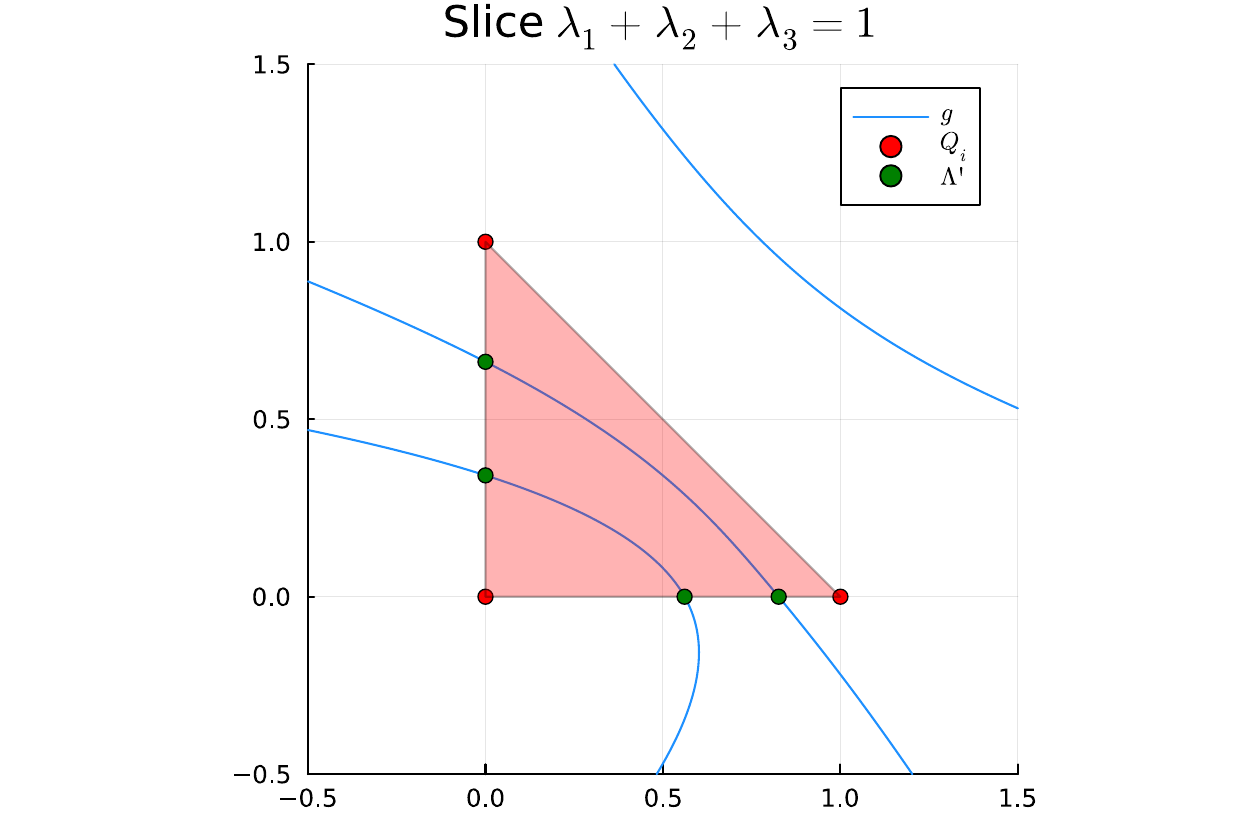}
\end{subfigure}
\begin{subfigure}[b]{0.47\textwidth}
    \includegraphics[width = 0.8\textwidth]{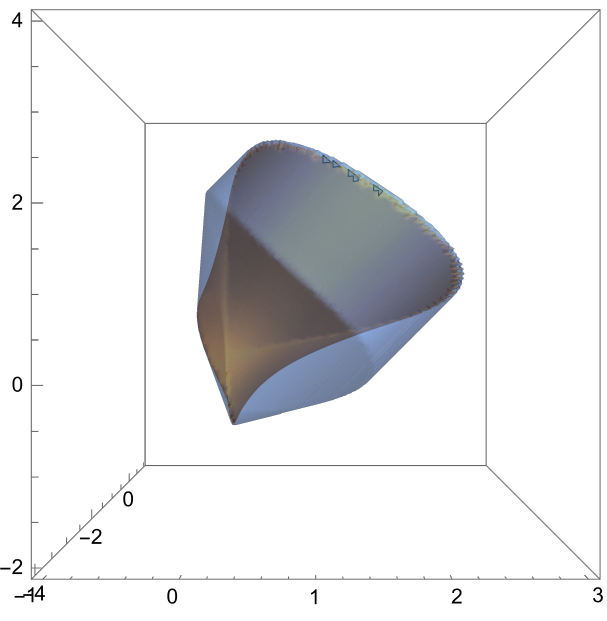}
\end{subfigure}
\caption{The spectral curve and its relation to the defining inequalities in Example \ref{ex:ch_not_aggs} (left). Note that one of the defining inequalities is contained in the hyperbolicity cone of $g$ and that the intersection of $S_\lambda$ for $\lambda \in \Lambda'$ gives a strict superset of $\cconv(S)$ (right).}
\label{fig:CH_not_Aggs}
\end{figure}
\bigskip

On the other hand, if we modify $f_1$ to be 

\begin{equation}\label{eq:modified_ex_yesch}\tilde{f}_1 = \begin{bmatrix} x^\top & 1 \end{bmatrix}\left(0.3Q_1 + 0.4Q_2 + 0.3Q_3\right) \begin{bmatrix} x^\top & 1\end{bmatrix}^\top,
\end{equation}
then the real variety $\cV_\bbR(\tilde{f}_1^h,f_2^h,f_3^h)$ is still empty. However, no nonzero aggregation lies in $\cP$ and the convex hull of the set defined by the inequalities is given by aggregations. This is shown in Figure \ref{fig:CH_given_by_Aggs}.

\begin{figure}[t]
\begin{subfigure}[b]{0.47\textwidth}
    \includegraphics[width = \textwidth]{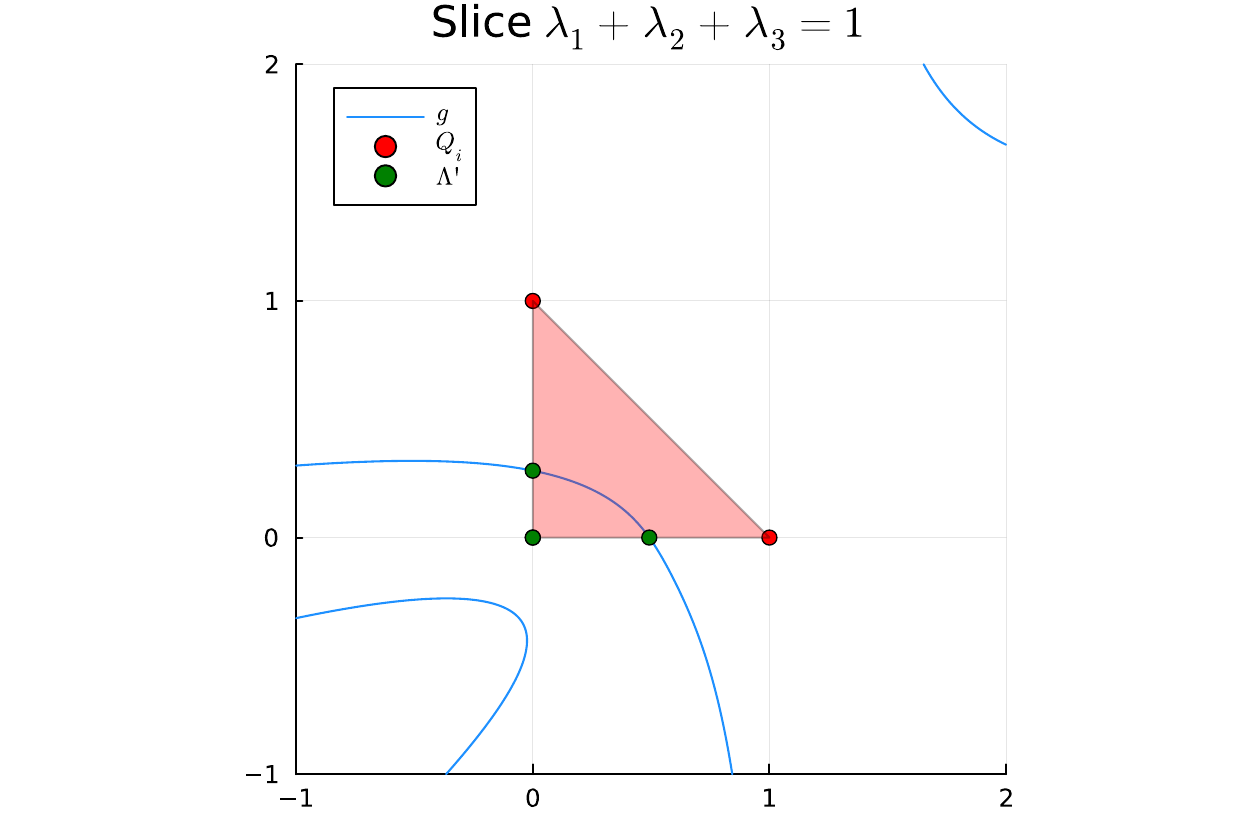}
\end{subfigure}
\begin{subfigure}[b]{0.47\textwidth}
    \includegraphics[width = 0.8\textwidth]{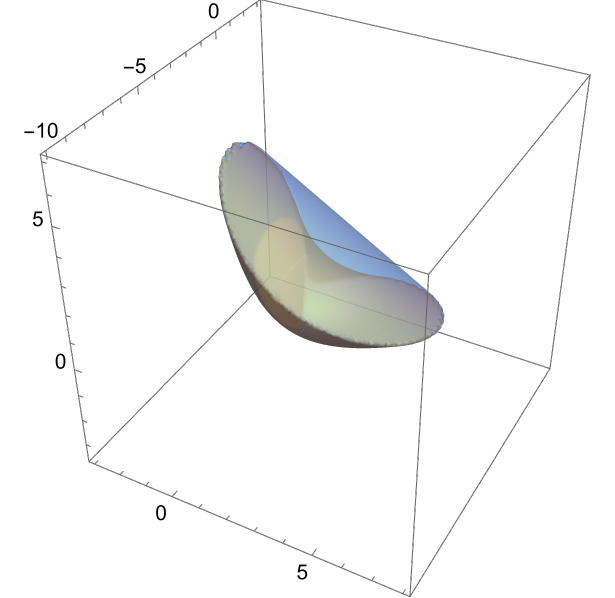}
\end{subfigure}
\caption{The spectral curve and its relation to the modified system of inequalities \eqref{eq:modified_ex_yesch} (left). Note that the conical hull of the defining inequalities has empty intersection with the hyperbolicity cone of $g$ and that the intersection of $S_\lambda$ for $\lambda \in \Lambda'$ gives $\cconv(S)$ (right).}
\label{fig:CH_given_by_Aggs}
\end{figure}

\end{example}

Finally, we consider the case where $n = 2$, $\cV_{\bbR}(f_1^h,f_2^h,f_3^h) = \emptyset$, but $g$ is not hyperbolic. In this setting, we have that for any plane $H \subseteq \bbR^3$, the restrictions $Q_1\vert_H, Q_2\vert_H, Q_3 \vert_H$ satisfy PDLC. It follows that $\cconv(S)$ is given by good aggregations, but it may be necesssary to use infinitely many. 

\begin{theorem}\label{thm:n=2_nonhyperbolic_sufficient}
Suppose that $n = 2$, $\cV_\bbR(f_1^h,f_2^h,f_3^h) = \emptyset$, and $g$ is smooth but not hyperbolic. Then there is a (possibly infinite) set $\Lambda_1 \subseteq \bbR^3_+$ of good aggregations such that $\cconv(S) = \bigcap_{\lambda \in \Lambda_1}S_{\lambda}$.  
\end{theorem}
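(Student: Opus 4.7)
The strategy is to apply Proposition \ref{prop:HyperplaneAggs}: expressing $\cconv(S)$ as an intersection of good aggregations reduces to verifying property \eqref{eq:emptiness_cert}, namely that every valid inequality $\alpha^\top x < \beta$ on $S$ produces a homogenized hyperplane $H \subseteq \bbR^{n+1}$ for which some $\lambda \in \bbR^3_+$ satisfies $Q_\lambda\vert_H \succ 0$. The technical core, anticipated by the prose preceding the theorem, is the sub-claim that under the current hypotheses every 2-dimensional linear subspace $H \subseteq \bbR^3$ has restrictions $Q_1\vert_H, Q_2\vert_H, Q_3\vert_H$ satisfying PDLC.

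To prove this sub-claim, I would observe that the restricted variety $\cV_\bbR(f_1^h\vert_H, f_2^h\vert_H, f_3^h\vert_H) \subseteq \bbP(H) \simeq \bbP^1$ is empty, as a subvariety of the (already empty) full variety. The restricted spectral polynomial $g_H(\lambda) = \det(Q_\lambda\vert_H)$ is a ternary quadratic form. When $g_H$ is smooth, the converse direction of Theorem \ref{thm:empty_var_iff_hyperbolic_curve} in the case $n = 1$ yields that $g_H$ is smooth hyperbolic with $\Omega_H^1(\{0\}) \neq \emptyset$. A hyperbolicity cone of $g_H$ is one connected component of the complement of the zero locus of $g_H$, and on this component $\det Q_\lambda\vert_H$ has constant nonzero sign. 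A $2 \times 2$ symmetric matrix with nonzero determinant is necessarily definite (positive or negative), so the hyperbolicity cone consists entirely of definite matrices, and PDLC follows (possibly after negating $\lambda$). The degenerate cases in which $g_H$ is not smooth are handled separately: $g_H \equiv 0$ is ruled out because it would force a common kernel vector for the $Q_i\vert_H$, contradicting the emptiness of the restricted variety, and the remaining possibilities (two distinct lines, or a double line) can be dispatched by a direct case analysis using the smoothness of the original $g$.

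Granting the sub-claim, the proof concludes along the lines of Proposition \ref{prop:HyperplaneAggs}. Any $y \notin \cconv(S)$ admits a separating valid inequality on $S$ whose homogenization is a hyperplane $H$ with $H \cap \mathrm{int}(S^h) = \emptyset$. Combining PDLC for $Q_i\vert_H$ with the S-lemma variant used in the proof of Proposition \ref{prop:Aggs_Exist} yields $\lambda \in \bbR^3_+$ with $Q_\lambda\vert_H \succeq 0$, and the strict separation of $y$ from $S$ promotes this to an aggregation separating $y$ from $S$, which is good by Lemma \ref{lem:good_aggs}. Collecting these aggregations over all valid separating hyperplanes produces the (possibly infinite) set $\Lambda_1$ with $\cconv(S) = \bigcap_{\lambda \in \Lambda_1} S_\lambda$. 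The principal obstacle lies in the sub-claim: although the smooth case follows cleanly from Theorem \ref{thm:empty_var_iff_hyperbolic_curve}, the singular possibilities for $g_H$ must be ruled out or treated by hand using the global structure imposed by the smooth non-hyperbolic cubic $g$, and care is required since PDLC must hold uniformly for every plane $H$, not merely for generic ones.
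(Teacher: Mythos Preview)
Your high-level strategy matches the paper's: verify condition \eqref{eq:emptiness_cert} and invoke Proposition \ref{prop:HyperplaneAggs}. The difference lies in how the positive-definite aggregation on $H$ is produced. The paper (within the proof of Theorem \ref{thm:ClosedConvexSufficient}) does not argue via PDLC of the restrictions followed by the S-lemma; instead it appeals directly to Proposition \ref{prop:General_Emptiness} applied to the restricted system on $H$, which says that either some $\lambda \in \bbR^3_+$ has $Q_\lambda\vert_H \succ 0$ or $H^1(\Omega_H^{1}) \neq 0$, and then excludes the second alternative cohomologically: since $g$ is not hyperbolic, a nontrivial class in $H^1(\Omega_H^1)$ would survive to the relative group $H^1(\Omega_H^1, \Omega^2)$, contradicting the $n=2$ case of Corollary \ref{cor:restrict_cohomology_cert}. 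This argument is uniform in $H$ and never requires $g_H$ to be smooth.

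Your route via Lemma \ref{lem:n=1_empty_var} is more elementary and mirrors the prose preceding the theorem, but the singular-$g_H$ case you flag is a real obstacle rather than a formality. Singularity of $g_H$ is precisely linear dependence of the $Q_i\vert_H$, i.e., some nonzero $c$ with $Q_c\vert_H = 0$; smoothness of $g$ only forces $\rk Q_c = 2$ with signature $(1,1,0)$, and the residual pencil $\{Q_\mu\vert_H : \mu \in \bbR^3/\bbR c\}$ is then a two-dimensional linear space of $2\times 2$ symmetric matrices which can lie entirely in $\{\det \leq 0\}$ (for instance the span of $\mathrm{diag}(1,-1)$ and the off-diagonal matrix), so PDLC may genuinely fail. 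Your sketch ``direct case analysis using smoothness of $g$'' does not by itself close this; you would need further input tying the specific planes $H$ that arise as separating hyperplanes to the structure of the spectral curve. The paper's cohomological route sidesteps the issue entirely.
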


Theorem \ref{thm:n=2_nonhyperbolic_sufficient} is demonstrated in the following example from \cite{dey_obtaining_2022}.

\begin{example}[{\cite[Proposition 2.8]{dey_obtaining_2022}}]\label{ex:n=2_nonhyperbolic_sufficient}
    Consider the matrices 

    \[Q_1 = \begin{bmatrix} 1 & 0 & 0\\ 0 & 0 & 0\\ 0 & 0 & -1 \end{bmatrix}, \quad Q_2 = \begin{bmatrix} 0 & 0 & 0\\ 0 & 1 & 0\\ 0 & 0 & -1 \end{bmatrix}, \quad \text{and} \quad Q_3 = \begin{bmatrix} -1 & 0 & 1\\ 0 & -1 & 1\\ 1 & 1 & -1 \end{bmatrix},\]
    so that 

    \[S = \{(x,y) \in \bbR^2 \; \vert \; x^2 - 1\leq 0, y^2 - 1 \leq 0, -(x-1)^2 - (y-1)^2 + 1 \leq 0\}.\]
    The determinant is 

    \[g(\lambda) = \det \begin{bmatrix} \lambda_1 - \lambda_3 & 0 & \lambda_3\\ 0 & \lambda_2 - \lambda_3 & \lambda_3\\ \lambda_3 & \lambda_3 & -(\lambda_1 + \lambda_2 + \lambda_3)\end{bmatrix} = -\lambda_1^2\lambda_2 - \lambda_1\lambda_2^2 + \lambda_1^2\lambda_3 + \lambda_1\lambda_2\lambda_3 + \lambda_2^2\lambda_3 - \lambda_1\lambda_3^2 - \lambda_2\lambda_3^2 + \lambda_3^3\]
    The affine cone over the spectral curve is displayed below in Figure \ref{fig:cubic_nonhyperbolic}. It is smooth, but $g$ is not hyperbolic. As shown in \cite[Proposition 2.8]{dey_obtaining_2022}, we have that $\cconv(S)$ is given by the intersection of infinitely many good aggregations and is a proper subset of the intersection of any finite number of good aggregations. 

    \begin{figure}[t]
        \centering
        \includegraphics[width = 0.5\textwidth]{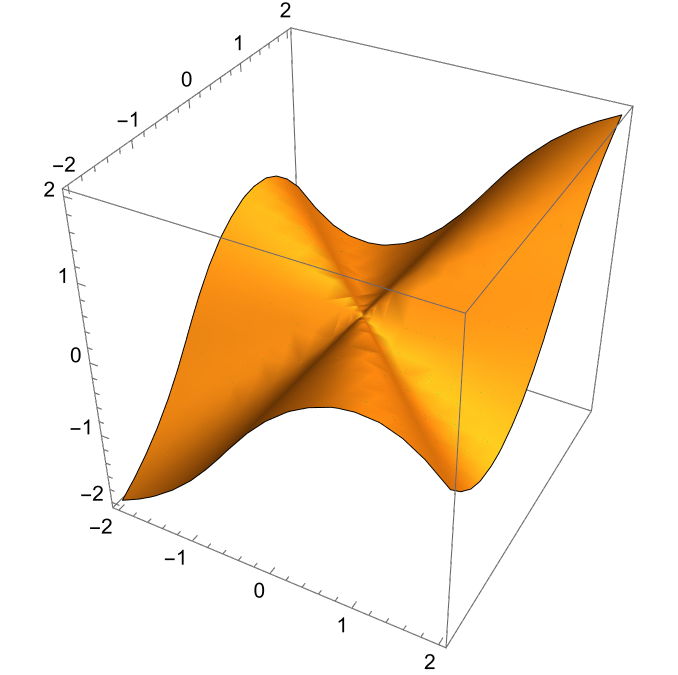}
        \caption{The affine cone over the spectral curve in Example \ref{ex:n=2_nonhyperbolic_sufficient}. The spectral curve is smooth but not hyperbolic. In this case, $\cconv(S)$ is obtained as the intersection of good aggregations, but any finite subset of good aggregations will give a strict superset of $\cconv(S)$.} 
        \label{fig:cubic_nonhyperbolic}
    \end{figure}
\end{example}

\subsection{Bounds on the Number of Needed Aggregations}\label{sec:AggsFinite}

Using the tools developed in Section \ref{sec:no_solns}, we show that if the real projective variety $\cV_{\bbR}(f_1^h,f_2^h,f_3^h)$ is empty and the spectral curve is hyperbolic, then a finite number of permissible aggregations suffices to recover the set defined by all permissible aggregations. An immediate reduction is that it suffices to consider only unit length generators of the extreme rays of the set $\Lambda$. To see this, note that $S_\lambda = S_{t\lambda}$ for $t > 0$ and if $\omega \in \Lambda$ and $\omega = t\lambda^{(1)} + (1-t)\lambda^{(2)}$ for some $t \in (0,1)$ and $\lambda^{(1)},\lambda^{(2)} \in \Lambda$, then $S_\omega \subseteq S_{\lambda^{(1)}} \cap S_{\lambda^{(2)}}$ since

\[f_\omega(x) = f_{t\lambda^{(1)} +(1-t)\lambda^{(2)}}(x) = tf_{\lambda^{(1)}}(x) + (1-t)f_{\lambda^{(2)}}(x) \leq 0.\]

We further reduce the number of needed aggregations by applying the following property.

\begin{restatable}{lemma}{UnnecessaryAggstwo}\label{lem:UnnecessaryAggstwo}
    Let $\lambda^{(1)},\lambda^{(2)},\ldots, \lambda^{(k+1)} \in \Lambda$. Set $f_k^h = (f_{\lambda^{(1)}}^h, \ldots, f_{\lambda^{(k)}}^h)$ and $f_{k+1}^h = (f_{\lambda^{(1)}},\ldots, f_{\lambda^{(k+1)}}^h)$. Suppose that $\cV_{\bbR}(f_{\lambda^{(k+1)}}^h) \cap X(-\bbR^{k+1}_+, f_{k+1}^{h}) = \emptyset$ and that $X(-\bbR^{k+1}_+,f_{k+1}^h)$ has the same number of connected components as $X(-\bbR^k_+,f_k^h)$. Then,

    \[X(-\bbR^{k+1}_+,f_{k+1}^h) = X(-\bbR^{k}_+,f_{k}^h).\]
    
    \end{restatable}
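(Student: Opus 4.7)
The inclusion $X(-\bbR^{k+1}_+, f_{k+1}^h) \subseteq X(-\bbR^k_+, f_k^h)$ is automatic, since the former is cut out from the latter by the single additional inequality $f_{\lambda^{(k+1)}}^h([x]) \leq 0$. Write $Y = X(-\bbR^k_+, f_k^h)$ and $Z = X(-\bbR^{k+1}_+, f_{k+1}^h)$ for brevity. The plan is to use the hypothesis $\cV_\bbR(f_{\lambda^{(k+1)}}^h) \cap Z = \emptyset$ to upgrade this set inclusion into a decomposition of $Y$ into a disjoint union of open pieces, and then invoke the component-count hypothesis to force $Z = Y$.

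First I would observe that every $[x] \in Z$ automatically satisfies $f_{\lambda^{(k+1)}}^h([x]) < 0$: by definition $f_{\lambda^{(k+1)}}^h([x]) \leq 0$, and the case $f_{\lambda^{(k+1)}}^h([x]) = 0$ is ruled out by the hypothesis that $\cV_\bbR(f_{\lambda^{(k+1)}}^h) \cap Z = \emptyset$. Next I would argue the stronger fact that $\cV_\bbR(f_{\lambda^{(k+1)}}^h) \cap Y = \emptyset$: if $[x] \in Y$ were to satisfy $f_{\lambda^{(k+1)}}^h([x]) = 0$, then the inequality $f_{\lambda^{(k+1)}}^h([x]) \leq 0$ would hold and, combined with the defining inequalities of $Y$, would force $[x] \in Z$, contradicting the hypothesis. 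Note that the sign of the homogeneous quadratic $f_{\lambda^{(k+1)}}^h$ is well-defined on $\bbP^n$, so this argument makes sense projectively.

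Consequently $Y$ partitions as
\[
Y \;=\; \bigl(Y \cap \{f_{\lambda^{(k+1)}}^h < 0\}\bigr) \,\sqcup\, \bigl(Y \cap \{f_{\lambda^{(k+1)}}^h > 0\}\bigr) \;=\; Z \,\sqcup\, (Y \setminus Z),
\]
and both pieces are open in $Y$ since $\{f_{\lambda^{(k+1)}}^h < 0\}$ and $\{f_{\lambda^{(k+1)}}^h > 0\}$ are open in $\bbP^n$. Because $Y$ is a semialgebraic set, it is locally connected and has finitely many connected components, each of which is both open and closed in $Y$. Each component therefore lies entirely in $Z$ or entirely in $Y \setminus Z$, so $Z$ is exactly a union of connected components of $Y$, and the components of $Z$ are precisely those components of $Y$ that are contained in $Z$.

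Finally I would close the argument by counting: if $Z$ contains $n_Z$ components of $Y$ and $Y \setminus Z$ contains $n_{Y\setminus Z}$ components of $Y$, then the number of connected components of $Y$ equals $n_Z + n_{Y\setminus Z}$ while the number of connected components of $Z$ equals $n_Z$. The hypothesis that these two counts agree forces $n_{Y \setminus Z} = 0$, i.e.\ $Y \setminus Z = \emptyset$, giving $Z = Y$ as desired. The only step that required genuine input beyond elementary set manipulation was the upgrade $\cV_\bbR(f_{\lambda^{(k+1)}}^h) \cap Z = \emptyset \Rightarrow \cV_\bbR(f_{\lambda^{(k+1)}}^h) \cap Y = \emptyset$; everything else is point-set topology applied to semialgebraic sets, so I do not expect a serious obstacle.
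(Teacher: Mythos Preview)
Your proof is correct and follows essentially the same route as the paper's: observe the trivial inclusion $Z\subseteq Y$, upgrade the hypothesis to $\cV_\bbR(f_{\lambda^{(k+1)}}^h)\cap Y=\emptyset$ via the observation that a zero of $f_{\lambda^{(k+1)}}^h$ in $Y$ would automatically lie in $Z$, conclude that $f_{\lambda^{(k+1)}}^h$ has constant nonzero sign on each component of $Y$, and then use the equal-component-count hypothesis to force the sign to be negative on every component. Your write-up is in fact a bit more careful than the paper's (you make explicit the local connectedness of semialgebraic sets to justify that components of $Y$ cannot be split by the open decomposition), but there is no substantive difference in strategy.
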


Lemma \ref{lem:UnnecessaryAggstwo} allows us to conclude that an aggregation is not necessary by applying Proposition \ref{prop:General_Emptiness} with a cone of the form $K = \bbR^k \times \{0\}$. By the hypothesis that the determinantal polynomial $g$ is hyperbolic, the computation of $H^1(\Omega^n(K))$ reduces to arguments about separating a polyhedral cone from a hyperbolicity cone of $g$. Let $\mathrm{ex}(\Lambda)$ be the set of generators of extreme rays of $\Lambda$ with unit length and set

\[\Lambda' = \{\lambda \in \mathrm{ex}(\Lambda) \; \vert \; |\supp(\lambda) | \leq 2 \}.\]

Note that $|\Lambda'| \leq 6$ since there can be at most two extreme rays of $\Lambda$ contained in each facet of $\bbR^3_+$. Under a condition that ensures connectedness, $\Lambda'$ suffices to describe all permissible aggregations. Specifically, we enforce that $\Omega^n(\cone(\Lambda')^\circ)$ is contractible, i.e, the set of matrices $Q_\lambda$ for $\lambda \in \cone(\Lambda') \cap \bbS^2$ which have at least $n$ positive eigenvalues is connected and has no higher cohomology. Using the spectral sequence of Theorem \ref{thm:Spectral_SequenceA}, we see that the condition that $\Omega^n(\cone(\Lambda')^\circ)$ is contractible is met when $\bigcap_{\lambda \in \Lambda'}S_\lambda$ is connected. In particular, $\Omega^n(\cone(\Lambda')^\circ)$ is contractible when $\bigcap_{\lambda \in \Lambda'}S_\lambda$ is convex.  

\begin{restatable}{theorem}{FiniteAggs}\label{thm:FiniteAggs}
    Suppose that $\cV_{\bbR}(f_1^h,f_2^h,f_3^h) \subseteq \bbP^n$ is empty, the spectral curve is smooth and  hyperbolic, and that $\Omega^n(\cone(\Lambda')^\circ)$ is contractible. Then, 

\[\bigcap_{\lambda \in \Lambda'}S_\lambda = \bigcap_{\lambda \in \Lambda}S_\lambda.\]
\end{restatable}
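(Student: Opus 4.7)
The strategy is to show that for each $\lambda \in \Lambda$ one has $\bigcap_{\mu \in \Lambda'} S_\mu \subseteq S_\lambda$; the reverse inclusion is immediate from $\Lambda' \subseteq \Lambda$. The case $\lambda \in \cone(\Lambda')$ is trivial by writing $f_\lambda$ as a nonnegative combination of the $f_{\lambda^{(i)}}$'s, so the substantive case is $\lambda \in \Lambda \setminus \cone(\Lambda')$, which I would handle by invoking Lemma~\ref{lem:UnnecessaryAggstwo} with $\lambda^{(k+1)} := \lambda$.

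To verify hypothesis (a) of the lemma, I would use the factorization of $f_{k+1}^h$ through $\bbR^3$ described in Section~\ref{sec:notation_preliminaries}. The set $\cV_\bbR(f_\lambda^h) \cap X(-\bbR^{k+1}_+, f_{k+1}^h)$ then coincides with $X(K', f^h)$ for a cone $K' \subseteq \bbR^3$ whose polar dual is $(K')^\circ = \cone(\Lambda') + \spann(\lambda) =: W_\lambda$. Emptiness would then follow from Proposition~\ref{prop:General_Emptiness}: non-emptiness of $\Omega^n(K')$ holds because $\lambda \in W_\lambda$ and $Q_\lambda$ has $n$ positive eigenvalues (since $\lambda \in \Lambda$). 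For the second condition, I would use the hyperbolicity of $g$ together with $\cV_\bbR(f_1^h,f_2^h,f_3^h) = \emptyset$ and Theorem~\ref{thm:intro_thm1} to locate a hyperbolicity cone $\cP$ of $g$ whose interior contains either PD matrices (PDLC case) or matrices with $n-1$ positive and $2$ negative eigenvalues (non-PDLC case). Since $\lambda \notin \cone(\Lambda')$, the cone $W_\lambda$ strictly extends $\cone(\Lambda')$ along the line $\spann(\lambda)$, and I would argue this extension always reaches $\cP$, yielding either $\Omega^{n+1}(K') \neq \emptyset$ (PDLC case) or a loop in $\Omega^n(K')$ encircling the innermost oval of the spectral curve (non-PDLC case, giving $H^1(\Omega^n(K')) \neq 0$).

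Hypothesis (b) --- equality of the numbers of connected components --- I would deduce from the contractibility hypothesis together with the spectral sequence of Theorem~\ref{thm:Spectral_SequenceA} and the isomorphisms \eqref{eq:Iso_groups}. Contractibility of $\Omega^n(\cone(\Lambda')^\circ)$ forces $X(-\bbR^k_+, f_k^h)$ to be connected, and (a) then supplies the vanishing of enough $E_\infty$-terms on the diagonal $i+j = n$ for the augmented collection to conclude that $X(-\bbR^{k+1}_+, f_{k+1}^h)$ is also connected.

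The main obstacle is the topological step in (a): verifying that $W_\lambda$ always encounters the hyperbolicity cone $\cP$ in the required manner. As indicated in the preamble of the theorem, this reduces to a convex-geometric argument about separating the polyhedral cone $\cone(\Lambda')$ from a hyperbolicity cone of $g$, crucially exploiting that $W_\lambda$ contains the full line $\spann(\lambda)$ rather than lying only in $\bbR^3_+$ --- so the presence of the new direction $\lambda \notin \cone(\Lambda')$ is precisely what makes $W_\lambda$ span enough of $\bbR^3$ to reach $\cP$.
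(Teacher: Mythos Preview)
Your overall architecture matches the paper's: reduce to $\lambda^{(k+1)} \in \Lambda \setminus \cone(\Lambda')$, apply Lemma~\ref{lem:UnnecessaryAggstwo}, and verify its emptiness hypothesis via Proposition~\ref{prop:General_Emptiness} by showing the dual cone $W_\lambda = \cone(\Lambda') + \spann(\lambda)$ contains the hyperbolicity cone $\cP$ in its interior. The connectedness hypothesis is also handled essentially as you suggest (though note that once hypothesis~(a) and connectedness of $X(-\bbR^k_+,f_k^h)$ are established, hypothesis~(b) is automatic: $f_{\lambda^{(k+1)}}^h$ has constant nonzero sign on the single component, and that sign is negative since $S \neq \emptyset$).

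The genuine gap is precisely the step you flag as ``the main obstacle.'' The claim that $W_\lambda$ ``always reaches $\cP$'' is not obvious and requires a real argument; merely adding the line $\spann(\lambda)$ to $\cone(\Lambda')$ does not by itself force containment of $\cP$ in the interior. The paper fills this in with two ingredients you do not mention. First, it reduces to the case where $\lambda^{(k+1)}$ lies on $C$, the (portion of the) oval of depth $\lfloor (n+1)/2 \rfloor - 1$ carrying matrices with one negative eigenvalue: any $\lambda^{(k+1)} \in \Lambda \setminus \cone(\Lambda')$ with $g(\lambda^{(k+1)}) \neq 0$ can be written as a conical combination of an element of $\cone(\Lambda')$ and an element of $C$, so it suffices to treat $\lambda^{(k+1)} \in C$. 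Second, it uses Lemma~\ref{lem:Second_oval_intersection} (which exploits $S \neq \emptyset$ to rule out $\cP \subseteq \bbR^3_+$) to locate two specific elements $\lambda^{(i)},\lambda^{(j)} \in \Lambda' \cap C$ and a point $e \in \cP$ with the segment from $\lambda^{(k+1)}$ through $\cone(\lambda^{(i)},\lambda^{(j)})$ hitting $e$, and then invokes Proposition~\ref{prop:positive_agg_doesnt_contribute} (whose proof in turn rests on the root-counting Lemma~\ref{lem:Agg_Not_in_Oval}) to conclude $\cP \subseteq \mathrm{int}\big(\cone(\lambda^{(i)},\lambda^{(j)}) + \spann(\lambda^{(k+1)})\big)$. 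Since this smaller cone sits inside your $W_\lambda$, your claimed containment follows --- but the work lives in those three auxiliary results, not in a direct convexity argument about $W_\lambda$.
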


We show in Example \ref{ex:positive_agg_needed} that if the contractibility condition on $\Omega^n(\cone(\Lambda')^\circ)$ is not satisfied, then it may be necessary to use an aggregation with strictly positive entries. In  this case, the aggregation with strictly positive entries has the effect of cutting off a connected component of $\bigcap_{\lambda \in \Lambda'}S_\lambda$. 

\begin{example}\label{ex:positive_agg_needed}
    We construct a system of quadratics such that $\bigcap_{\lambda \in \Lambda'} S_\lambda$ has three connected components, but for some $\mu$ with strictly positive entries, $S_\mu \cap \left(\bigcap_{\lambda \in \Lambda'}S_\lambda\right)$ has only two components. As in Example \ref{ex:E2_example}, consider the matrices 

    \[M_1 = \begin{bmatrix} 25 & 0 & -32 & 0\\ 0 & 25 & 0 & 24\\ -32 & 0 & 6 & 0\\ 0 & 24 & 0 & 6 \end{bmatrix}, \quad M_2 = \begin{bmatrix} 0 & 0 & 12 & 0\\ 0 & 0 & 0 & 16\\ 12 & 0 & 4 & 0\\ 0 & 16 & 0 & 4 \end{bmatrix}, \; \text{and} \; M_3 = \begin{bmatrix} 0 & 0 & 0 & -60\\ 0 & 0 & 10 & 0\\ 0 & 10 & 0 & 0\\ -60 & 0 & 0 & 0 \end{bmatrix}.\]

    The matrices do not satisfy PDLC and $\det(xM_1 + yM_2 + zM_3)$ is hyperbolic with respect to $(1,0,0)$. We construct the defining quadratics $f_i(x,y,z) = \begin{bmatrix}x & y & z & 1 \end{bmatrix} Q_i \begin{bmatrix} x & y & z & 1\end{bmatrix}^\top$ with $Q_i$ defined as follows:

    \[\begin{aligned}
    Q_1 = M_1 + 1.5M_2  =\begin{bmatrix} 25& 0 & -14 & 0\\ 0 & 25 & 0 & 48\\ -14 & 0 & 12 & 0\\ 0 & 48 & 0 & 12\end{bmatrix}\\
    Q_2 = M_1 -2M_2 + 2M_3 = \begin{bmatrix}25 & 0 & -56 & -120\\ 0 & 25 & 20 & -8\\ -56 & 20 & -2 & 0\\ -120 & -8 & 0 & -2 \end{bmatrix}\\
    Q_3 = M_1 -2M_2 - 2M_3 = \begin{bmatrix}25 & 0 & -56 & 120\\ 0 & 25 & -20 & -8\\ -56 & -20 & -2 & 0\\ 120 & -8 & 0 & -2 \end{bmatrix}
    \end{aligned}\]

    We numerically compute that the elements of $\Lambda'$ are appropriately normalized vectors in the directions of the following vectors $\lambda^{(1)},\lambda^{(2)}, \lambda^{(3)}$. Since the semialgebraic set $S_\lambda$ is invariant under positive scalings of $\lambda$, we work directly with the $\lambda^{(i)}$.  We also take an aggregation $\mu$ which lies on the oval of depth $\lfloor\frac{n+1}{2} \rfloor -1$ with strictly positive entries. 
    
    \[\lambda^{(1)} = e_1,\quad \lambda^{(2)} = (1,0.68725,0), \quad \lambda^{(3)} = (1,0,0.68725), \quad \mu = (0.1429,0.4286,0.4286)\]

    The intersection of the cone $K = \cone(\lambda^{(1)},\lambda^{(2)},\lambda^{(3)})$ with $\Omega^{n}(-\bbR^3_+)$ has three connected components and the intersection of the cone $\tilde{K} = \cone(\lambda^{(1)},\lambda^{(2)},\lambda^{(3)},\mu)$ with $\Omega^{n}(-\bbR^3_+)$ has two connected components. The corresponding semialgebraic sets $\bigcap_{i =1}^{3} S_{\lambda^{(i)}}$ and $S_\mu \cap \left(\bigcap_{i = 1}^3S_{\lambda^{(i)}}\right)$ have three and two connected components, respectively. However, as in the proof of Theorem \ref{thm:FiniteAggs}, $\cV_{\bbR}(f_\mu^h) \cap \left(\bigcap_{i =1}^{3} S_{\lambda^{(i)}}\right) = \emptyset$. So intersecting with $S_\mu$ has the effect of cutting off a component of $\left(\bigcap_{i =1}^{3} S_{\lambda^{(i)}}\right)$. In particular, we see that $\bigcap_{\lambda \in \Lambda'}S_\lambda \not = \bigcap_{\lambda \in \Lambda}S_\lambda$. The spectral curve and the relevant semialgebraic sets are shown in Figure \ref{fig:Need_positive_agg}.

    \begin{figure}[t]
    \centering
    \begin{subfigure}[b]{0.3\textwidth}
    \includegraphics[width = \textwidth]{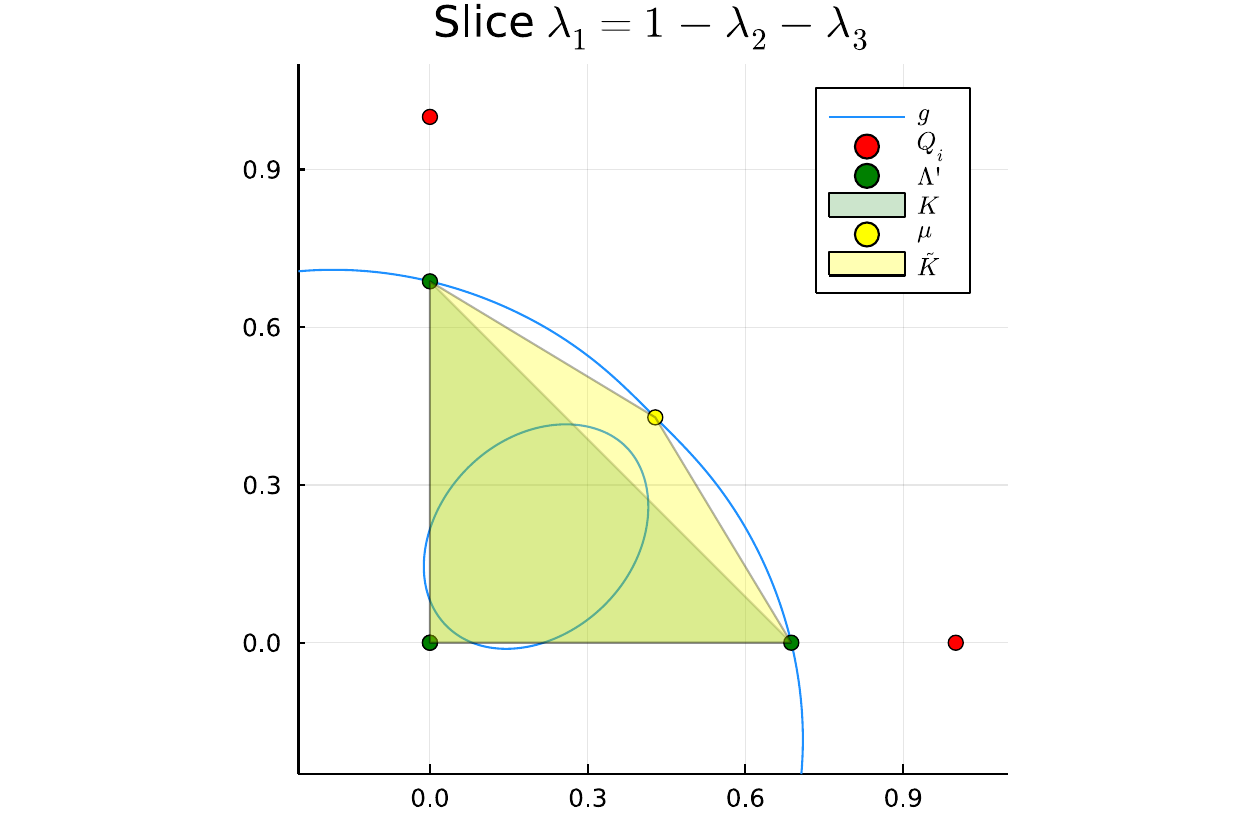}
    \end{subfigure}
    \begin{subfigure}[b]{0.3\textwidth}
    \includegraphics[width = 0.8\textwidth]{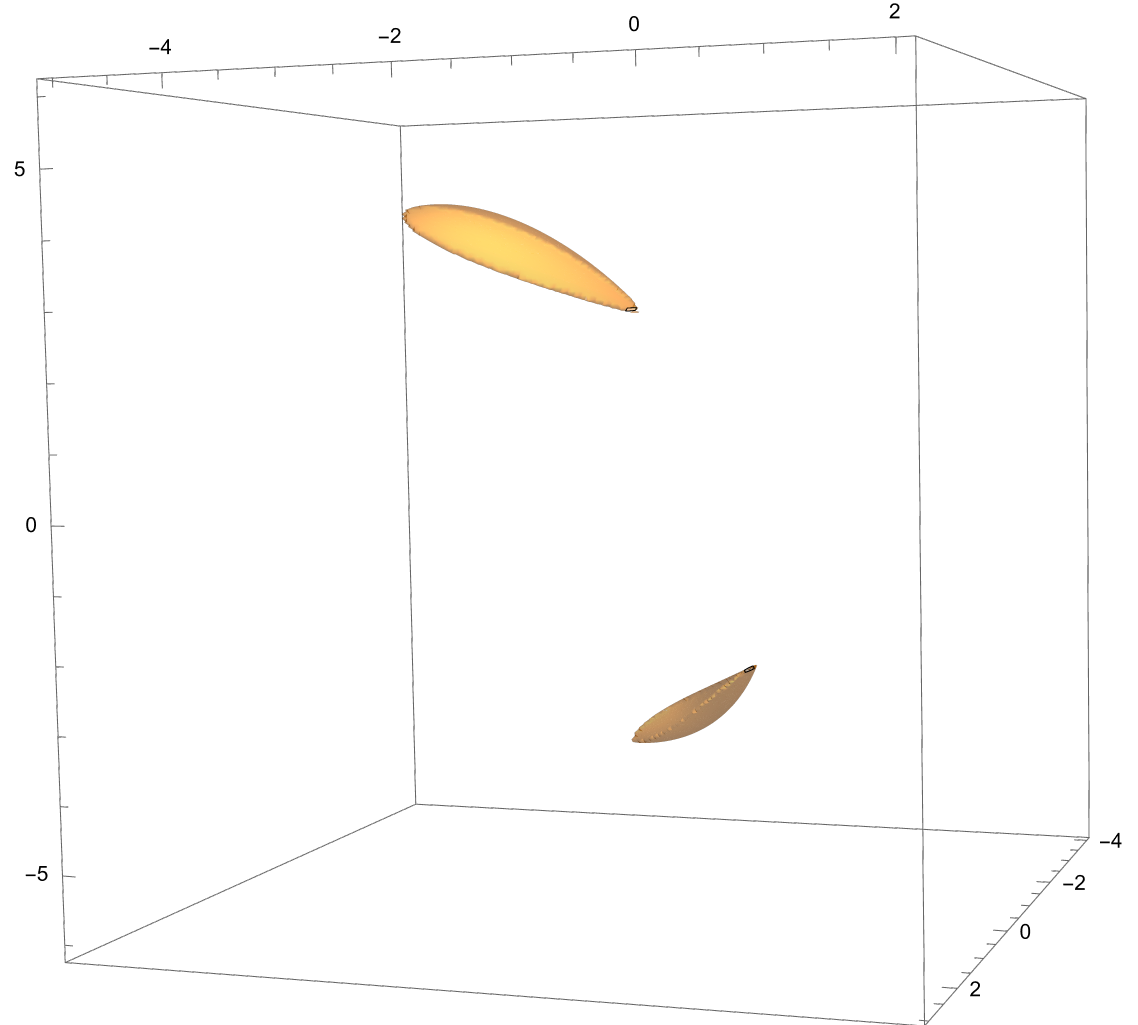}
    \end{subfigure}
    \begin{subfigure}[b]{0.3\textwidth}
    \includegraphics[width = 0.8\textwidth]{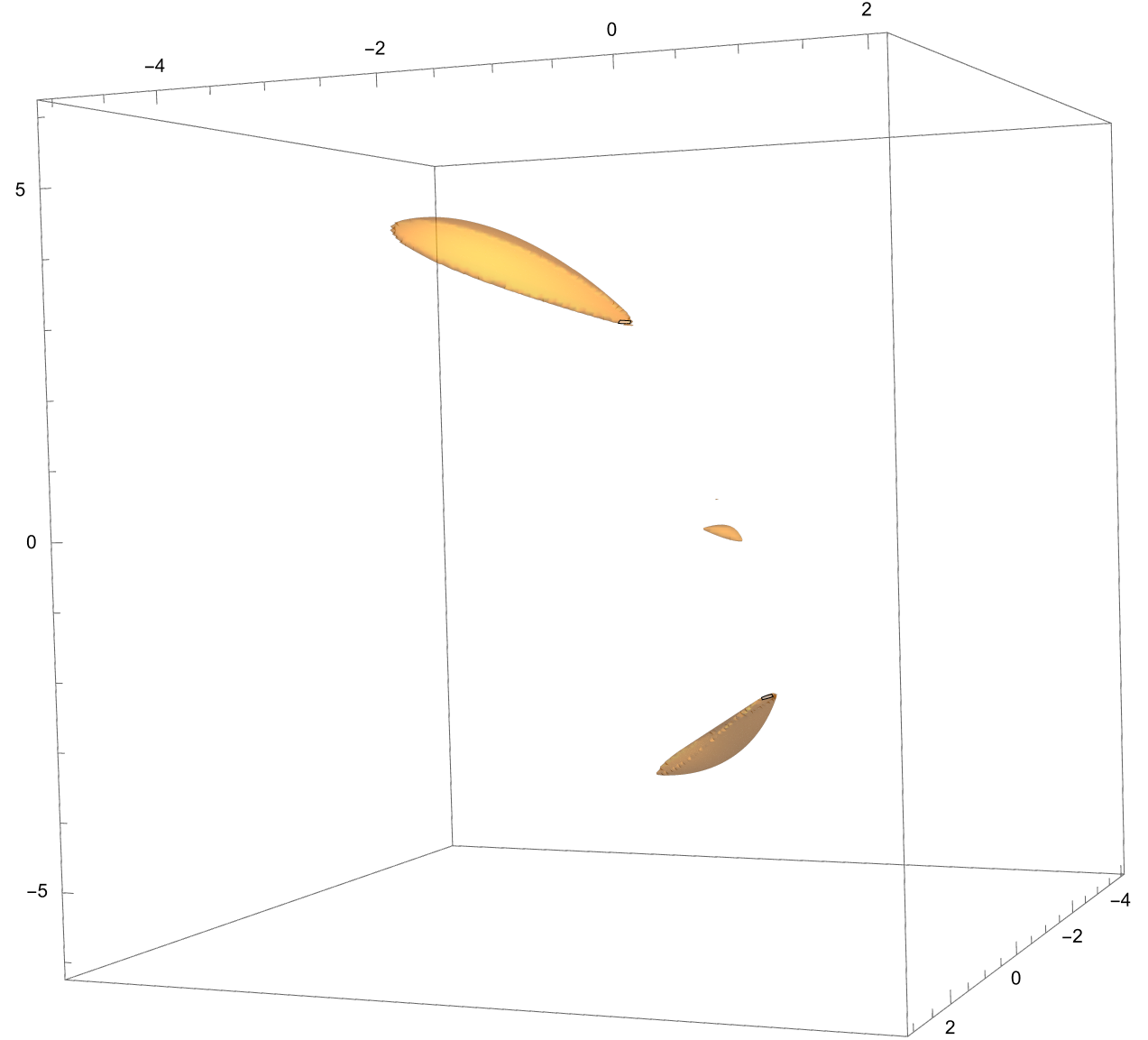}
    \end{subfigure}
    \caption{Plots for Example \ref{ex:positive_agg_needed}. The left figure displays the spectral curve and relevant aggregations and cones. The center figure shows the set $S_\mu \cap \left(\bigcap_{\lambda \in \Lambda'}S_\lambda\right)$ which has two connected components. The right figure shows $\bigcap_{\lambda \in \Lambda'}S_\lambda,$ which has three connected components.}
    \label{fig:Need_positive_agg}
    \end{figure}
\end{example}

The construction in Example \ref{ex:positive_agg_needed} inspires the following bound on the number of needed aggregations in terms of the number of connected components of the set $\Omega^n(\cone(\Lambda')^\circ)$.

\begin{restatable}{theorem}{NoPDLCDisconnectBound}\label{thm:NoPDLCDisconnectBound}
    Suppose that $\cV_{\bbR}(f_1^h,f_2^h,f_3^h) \subseteq \bbP^n$ is empty,
    the spectral curve is smooth and hyperbolic, and  $\Omega^n(\cone(\Lambda')^\circ)$ is not contractible. Then, there is a set $\Lambda_1 \subseteq \Lambda$ with $|\Lambda_1| \leq |\Lambda'| + \dim(H^0(\Omega^n(\cone(\Lambda')^\circ))) - 1$ such that 

    \[\bigcap_{\lambda \in \Lambda_1}S_\lambda = \bigcap_{\lambda \in \Lambda} S_\lambda.\]
\end{restatable}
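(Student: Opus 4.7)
I would prove the theorem by iteratively refining $\Lambda'$ into $\Lambda_1$, adding at most $k-1$ aggregations where $k := \dim H^0(\Omega^n(\cone(\Lambda')^\circ))$. The proof combines a spectral-sequence bound on the number of connected components of $\bigcap_{\lambda \in \Lambda'}S_\lambda$ with an iterative component-removal argument based on the contrapositive of Lemma \ref{lem:UnnecessaryAggstwo}.

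\textbf{Step 1 (Component bound).} First, I would show that $\bigcap_{\lambda \in \Lambda'}S_\lambda$ has at most $k$ connected components. Apply Theorem \ref{thm:Spectral_SequenceA} with $K := \cone(\Lambda')^\circ$ and $f^h = (f_1^h, f_2^h, f_3^h)$. Under the hypotheses (emptiness of $\cV_\bbR(f_1^h, f_2^h, f_3^h)$, smoothness and hyperbolicity of $g$), the $E_2$ page is concentrated in only a few positions, as in Example \ref{ex:E2_example} and the computation underlying Theorem \ref{thm:FiniteAggs}. Hyperbolicity forces the $d_2$ differential to act as an isomorphism on many of these pairs, and the surviving $E_\infty^{i,j}$ contributions to $H_0(X(K, f^h))$ are controlled by $H^0(\Omega^n(K))$, whose dimension is $k$. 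Passing from the projective set $X(K,f^h)$ to its affine chart gives the bound on connected components of $\bigcap_{\lambda \in \Lambda'}S_\lambda$.

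\textbf{Step 2 (Iterative cutting).} Set $\Lambda_0 := \Lambda'$. At step $i$, if $\bigcap_{\lambda \in \Lambda_i}S_\lambda = \bigcap_{\lambda \in \Lambda}S_\lambda$, halt; otherwise pick $\mu \in \Lambda$ with $S_\mu \not\supseteq \bigcap_{\lambda \in \Lambda_i}S_\lambda$ and set $\Lambda_{i+1} := \Lambda_i \cup \{\mu\}$. By the contrapositive of Lemma \ref{lem:UnnecessaryAggstwo}, either the number of connected components strictly decreases, or $\cV_\bbR(f_\mu^h)$ meets the refined projective set. In the latter case, I would perturb $\mu$ inside $\Lambda$ --- using the openness afforded by the smoothness of $\cV_\bbR(g)$ and the geometry of the hyperbolicity cones of $g$ --- so the boundary intersection vanishes while the strict containment persists. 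This is exactly the cutting mechanism exhibited in Example \ref{ex:positive_agg_needed}, where a well-chosen positive aggregation cleanly excises a component. Since the component count starts at most $k$ and drops by at least one per iteration, the procedure terminates in at most $k-1$ steps (using that $\bigcap_{\lambda \in \Lambda}S_\lambda \supseteq S$ has at least one component when nonempty; if it is empty, one emptiness-certifying aggregation produced by Proposition \ref{prop:General_Emptiness} suffices).

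\textbf{Main obstacle.} The principal technical hurdle lies in Step 2: guaranteeing that $\mu \in \Lambda$ can be chosen (or perturbed) so that $\cV_\bbR(f_\mu^h)$ is disjoint from the updated projective intersection while still strictly reducing the semialgebraic set. This requires a transversality-and-openness argument controlling the interaction between the polyhedral cone $\cone(\Lambda_i)$ and the hyperbolicity cones of $g$, and is the crux of extending the contractible-case result Theorem \ref{thm:FiniteAggs} to the present setting.
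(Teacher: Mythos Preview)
Your high-level strategy --- iteratively augment $\Lambda'$ by aggregations with strictly positive entries, and argue that each necessary addition strictly decreases a component count that starts at $k=\dim H^0(\Omega^n(\cone(\Lambda')^\circ))$ and is bounded below by $1$ --- is exactly the paper's approach.  The paper also uses Lemma~\ref{lem:UnnecessaryAggstwo} in essentially the contrapositive direction you describe, and Lemma~\ref{lem:Connected_Omega_n} gives the identification between $\dim H^0(\Omega^n(K^\circ))$ and the number of components of $X(K^\circ,f^h)$ that you invoke in Step~1.

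Where your proposal diverges is precisely at what you flag as the ``main obstacle'': ensuring that $\cV_\bbR(f_\mu^h)$ misses the current projective intersection.  You propose a perturbation/transversality argument, which is left vague and whose feasibility is unclear.  The paper avoids this entirely: Proposition~\ref{prop:positive_agg_doesnt_contribute} (resting on the root-counting Lemma~\ref{lem:Agg_Not_in_Oval}) shows directly that for \emph{any} positive aggregation $\mu$ on the relevant oval, the hyperbolicity cone $\cP$ is contained in the interior of $\cone(\lambda^{(i)},\lambda^{(j)},\mu,-\mu)$ for suitable $\lambda^{(i)},\lambda^{(j)}\in\Lambda'$.  Via Proposition~\ref{prop:General_Emptiness} this forces $\cV_\bbR(f_\mu^h)\cap X(\cone(\Lambda')^\circ,f^h)=\emptyset$ automatically --- no perturbation is needed.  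The monotone decrease of the component count is then handled on the $\Omega^n$ side by Lemma~\ref{lem:positive_agg_components}, which shows that every component of $\Omega^n(K_j^\circ)$ meets $\Omega^n(K_0^\circ)$, so $\dim H^0(\Omega^n(K_j^\circ))$ is nonincreasing in $j$ and bounded below by $1$.  Replacing your perturbation sketch with these two ingredients closes the gap.
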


\subsection{Topology of Good Aggregations}\label{sec:Agg_top}

Our final goal is an investigation of the topology of good and bad aggregations. 

\begin{restatable}{proposition}{ConnectedGoodAggs}\label{prop:ConnectedGoodAggs}
    Suppose that $S = \mathrm{cl}(\mathrm{int}(S))$. Then, every connected component of $\Lambda$ consists of only good aggregations or of only bad aggregations.
\end{restatable}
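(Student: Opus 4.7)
The plan is to pass to a continuous path $\gamma:[0,1] \to \Lambda$ and show goodness is $t$-independent along $\gamma$. Since $\Lambda$ is semialgebraic and hence locally path-connected, its connected components coincide with its path-connected components, and the proposition will follow.

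Along such a path, $Q_{\gamma(t)}$ has a unique---and therefore simple---negative eigenvalue $\alpha(t)$. Because $[0,1]$ is simply connected, I can choose a unit eigenvector $v(t)$ for $\alpha(t)$ depending continuously on $t$. Writing the spectral decomposition
\[f^h_{\gamma(t)}(x) \;=\; \alpha(t)\bigl(v(t)^\top x\bigr)^2 + \sum_{j=1}^n \alpha_j(t)\bigl(u_j(t)^\top x\bigr)^2, \qquad \alpha_j(t) \geq 0,\]
and combining with Proposition \ref{prop:Semiconvex_cone}, the semi-convex cone $\mathrm{int}(S^h_{\gamma(t)})$ is split by the linear hyperplane $\{v(t)^\top x = 0\}$ into its two opposite convex cone components $C^+_t$ and $C^-_t = -C^+_t$.

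The key step is to fix any $p \in \mathrm{int}(S)$ and track $\phi_p(t) := v(t)^\top (p,1)$. Since $f_i(p) < 0$ for every $i \in [3]$, we have $f_{\gamma(t)}(p) < 0$ for every $t$, so $(p,1) \in \mathrm{int}(S^h_{\gamma(t)})$; the decomposition above forces $\phi_p(t) \neq 0$, as otherwise $f^h_{\gamma(t)}(p,1)$ would be a sum of nonnegative terms. Hence $\phi_p$ is continuous and nonvanishing on $[0,1]$, so its sign $\sigma_p \in \{+,-\}$ is independent of $t$.

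To conclude, I invoke Lemma \ref{lem:good_aggs}: $\gamma(t)$ is good exactly when all lifted points $(p,1)$, $p\in \mathrm{int}(S)$, lie in a common one of $C^\pm_t$, i.e., when the signs $\sigma_p$ agree across $p \in \mathrm{int}(S)$---a condition patently independent of $t$. The hypothesis $S = \mathrm{cl}(\mathrm{int}(S))$ enters precisely to ensure that goodness can be detected by $\mathrm{int}(S)$ alone, so that Lemma \ref{lem:good_aggs} suffices. The main obstacle I anticipate is justifying the continuous choice of $v(t)$; this succeeds because ``exactly one negative eigenvalue'' is preserved along $\gamma$, keeping the negative eigenvalue simple and the eigenline well-defined throughout the path.
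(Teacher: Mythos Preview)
Your argument is correct, and it takes a genuinely different route from the paper's proof. The paper argues topologically: it shows that the set of good aggregations and the set of bad aggregations are each \emph{closed} in $\Lambda$ (by passing to limits of sequences and tracking the unit eigenvector of the negative eigenvalue, with a case split on whether $A_{\lambda}\succeq 0$). Since these two closed sets partition $\Lambda$, every connected component must lie entirely in one of them. Your approach instead exploits that $\Lambda$ is semialgebraic, hence locally path-connected, and then runs a continuity argument along a single path: the simple negative eigenvalue yields a continuously varying eigenline, the sign $\sigma_p$ of $v(t)^\top(p,1)$ is well-defined and locally constant for each $p\in\mathrm{int}(S)$, and Lemma~\ref{lem:good_aggs} translates ``good'' into ``all $\sigma_p$ agree,'' which is manifestly $t$-independent.

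Both proofs rest on the same geometric insight---the negative eigenspace furnishes a separating hyperplane for the two convex pieces of $\mathrm{int}(S_\lambda)$---and both use the hypothesis $S=\mathrm{cl}(\mathrm{int}(S))$ in the same place, namely to make Lemma~\ref{lem:good_aggs} the correct detector of goodness. Your version is more explicit and avoids the paper's case analysis, at the cost of invoking local path-connectedness of semialgebraic sets and the (standard) fact that a simple eigenvalue admits a continuous eigenvector section over a simply connected parameter space. The paper's closed/closed partition argument is slightly more robust in that it does not need path-connectedness at all, only connectedness.
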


The assumption that $S$ satisfies the regularity condition $S = \mathrm{cl}(\mathrm{int}(S))$ cannot be removed, as demonstrated by the following example.

\begin{example}\label{ex:NonRegular_aggstwocomponents}
This example is modified from \cite[Example 2.22]{blekherman_aggregations_2022} Consider the three matrices 

\[Q_1 = \begin{bmatrix}-1 & 0 & 0.5\\ 0 & 0 & 0 \\ 0.5 & 0 & 0\end{bmatrix}, \qquad Q_2 = \begin{bmatrix}1 & 0 & 0\\ 0 & 1 & 0 \\ 0 & 0 & -1\end{bmatrix}, \quad \text{ and } \quad Q_3 = \begin{bmatrix} 1 & 0 & 0\\ 0 & 1 & 0 \\ 0 & 0 & -3\end{bmatrix}\]
and the associated quadratic functions $f_i$. Note that the linear combination

\[Q_1 + 4Q_2 -2Q3 = \begin{bmatrix} 1 & 0 & 0.5\\ 0 & 2 & 0 \\ 0.5 & 0 & 2\end{bmatrix}\]
is positive definite. The set 

\[S = \left\{x = (x_1,x_2) \in \bbR^2 \; \vert \; \|x\|_2^2 \leq 1, x_1 \leq 0 \right\} \cup \{e_1\}\]
does not satisfy $S = \mathrm{cl}(\mathrm{int}(S))$. Now, $\Lambda$ is connected, but there exist both good and bad aggregations ($(0.5,0.5,0)$ is a good aggregation while $(0.75,0.25,0)$ is a bad aggregation). On the other hand, $\Omega^n(-\bbR^3_+)$ has two connected components.  
Additionally, note that every element $\lambda \in \Lambda$ satisfies the property $\conv(\mathrm{int}(S)) \subseteq \mathrm{int}(S_\lambda)$. 
\end{example}

Finally, under the assumption of PDLC, there is a unique connected component of $\Omega^n(-\bbR^3_+)$ consisting of good aggregations.

\begin{restatable}{proposition}{GoodAggsConnectedPDLC}\label{prop:GoodAggsConnectedPDLC}
    If $Q_1,Q_2,Q_3$ satisfy PDLC, $\mathrm{int}(S) \not = \emptyset$, and $S$ has no points at infinity, then exactly one connected component of $\Omega^n(-\bbR^3_+)$ consists of good aggregations.
\end{restatable}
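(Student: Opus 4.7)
The plan is to identify a distinguished good aggregation, establish that its connected component in $\Omega^n(-\bbR^3_+)$ consists entirely of good aggregations, and rule out any other entirely-good components.

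For \emph{existence}, the PDLC hypothesis on the $Q_i$ implies PDLC for the top-left blocks $A_i = Q_i\vert_H$ at the hyperplane at infinity $H = \{x_{n+1}=0\}$, since submatrices of positive definite matrices are positive definite. The no-points-at-infinity hypothesis says $S^h \cap H = \{0\}$, so applying the converse direction of Proposition \ref{prop:Aggs_Exist} yields $\lambda_0 \in \bbR^3_+$ with $A_{\lambda_0} \succ 0$. Then $f_{\lambda_0}$ is strictly convex with bounded sublevel set $S_{\lambda_0}$ containing $S$, so $\conv(S) \subseteq S_{\lambda_0}$ and $\lambda_0$ is a good aggregation; Cauchy interlacing places $\lambda_0 \in \Omega^n(-\bbR^3_+)$, and I let $C^*$ be the connected component containing $\lambda_0$. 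The open convex set $\{\lambda \in \bbR^3_+ : A_\lambda \succ 0\}$, normalized to $\Omega$, is connected and consists entirely of good aggregations by the same reasoning, so lies in $C^*$.

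Next, I would show $C^*$ consists entirely of good aggregations by arguing that the set $G \subseteq C^*$ of good aggregations is both open and closed in $C^*$. Closedness follows from continuity of $f_\lambda$ in $\lambda$, together with compactness of $\conv(S)$ (which is a consequence of the boundedness of $S$, itself following from the no-points-at-infinity hypothesis). For openness, if $\lambda \in G$ is positive semidefinite then $S_\lambda$ is convex and perturbations in $\Omega^n$ preserve good containment by continuity; if $\lambda \in \Lambda$, then $\conv(S)$ lies inside one of the two convex components of $S_\lambda$, and these components deform Hausdorff-continuously as $\lambda$ varies in $\Omega^n$, preserving the containment for small perturbations.

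For \emph{uniqueness}, I would show that any good aggregation $\lambda$ lies in $C^*$ by analyzing the convex combination path $\mu_t = (1-t)\lambda_0 + t\lambda$ for $t \in [0,1]$. Convexity gives $\conv(S) \subseteq S_{\mu_t}$ for all $t$. The crucial step is showing $\mu_t \in \Omega^n$ throughout, providing a path in $\Omega^n$ from $\lambda_0$ to $\lambda$ and hence $\lambda \in C^*$. The idea is to invoke Proposition \ref{prop:Semiconvex_cone}: by no-points-at-infinity, $\mathrm{int}(S^h)$ consists of two symmetric convex cones in $\{x_{n+1} > 0\}$ and $\{x_{n+1} < 0\}$ separated by the hyperplane $\{x_{n+1} = 0\}$, and this structure is inherited by $\mathrm{int}(S_{\mu_t}^h)$, forcing $Q_{\mu_t}$ to have at most one negative eigenvalue. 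The main obstacle is justifying this last step rigorously: a priori, $\mathrm{int}(S_{\mu_t}^h)$ could be strictly larger than $\mathrm{int}(S^h)$ and form a single connected set corresponding to $Q_{\mu_t}$ having two negative eigenvalues, even though it contains the two-cone structure. Ruling this out likely requires a refined signature analysis combining the PDLC and no-points-at-infinity hypotheses, possibly with input from the spectral sequence of Theorem \ref{thm:Spectral_SequenceA} applied to the relevant sublevel sets.
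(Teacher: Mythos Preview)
Your existence argument is sound. The gap you identify in the uniqueness step is real and not easily patched by signature-tracking along the path $\mu_t$: convex combinations of two matrices each having one negative eigenvalue can genuinely acquire two negative eigenvalues (take diagonal matrices with negative entries in different coordinates), so there is no a~priori reason the segment stays in $\Omega^n$. Your suggested fix via Proposition~\ref{prop:Semiconvex_cone} does not close this: knowing that $\mathrm{int}(S_{\mu_t}^h)$ \emph{contains} two symmetric convex cones does not prevent it from being a single connected region corresponding to two negative eigenvalues, exactly as you note.

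The paper takes a completely different route and never attempts to connect good aggregations by explicit paths. It exploits Lemma~\ref{lem:Connected_Omega_n}, which gives $H_0(X(K,f^h)) \cong H^0(\Omega^n(K))$ whenever $\cV_\bbR(f_1^h,f_2^h,f_3^h)=\emptyset$ and both sides are nonempty. One first invokes Theorem~\ref{thm:ClosedConvexSufficient} and Remark~\ref{rem:finite_good_aggs_PDLC} to write $\cconv(S) = \bigcap_{i=1}^r S_{\lambda^{(i)}}$ for finitely many good aggregations, sets $K=\cone(\lambda^{(1)},\ldots,\lambda^{(r)})$, and argues by contradiction: if $\Omega^n(K^\circ)$ had two components, then by the lemma $X(K^\circ,f^h)$ would have two components, hence so would the affine intersection $T=\bigcap_{\text{good }\lambda} S_\lambda$. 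But Remark~\ref{rem:PDLC_affine_good} forces every component of $T$ to meet $S$, so the connected set $\conv(S)\subseteq T$ would meet two components of $T$, a contradiction. Thus the spectral sequence enters only through Lemma~\ref{lem:Connected_Omega_n}, translating the problem into connectedness of a semialgebraic set in $\bbP^n$ rather than a path problem in the space of aggregations.

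A secondary issue: your step~2 (openness of the good set in $C^*$) is also more delicate than stated. When $\conv(S)$ touches $\partial S_\lambda$, a small perturbation of $\lambda$ can push a boundary point of $\conv(S)$ outside $S_\mu$; compare Example~\ref{ex:NonRegular_aggstwocomponents}, which satisfies PDLC, $\mathrm{int}(S)\neq\emptyset$, and no points at infinity, yet has good and bad aggregations in the same component of $\Lambda$. The paper's argument sidesteps this by working directly with the cone generated by good aggregations rather than attempting an open/closed decomposition.
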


Using the topology of good aggregations as a subset of all permissible aggregations, we prove Theorem \ref{thm:PDLCBound}. Specifically, under PDLC and regularity hypotheses on $S$, there is a set of $r\leq 4$ good aggregations $\lambda^{(1)}, \lambda^{(2)}, \ldots, \lambda^{(r)}$ such that $\bigcap_{i = 1}^{r}S_{\lambda^{(i)}}$. The proof is in Section \ref{sec:Connected_Aggs}.

\section{Conclusions}\label{sec:Conclusions}

We have presented an analysis of aggregations of three quadratic inequalities under the assumption that the projective variety of the defining quadratics is empty. In particular, we provide a sufficient condition for the description of $\cconv(S)$ via good aggregations which does not depend on PDLC. We show that the semialgebraic set defined by the intersection of all permissible aggregations can be described by the intersection of finitely many permissible aggregations when the spectral curve is hyperbolic. If the set is connected, then we can explicitly compute these aggregations. 

Our approach differs from previous approaches to this problem in that we heavily utilize tools from algebraic topology. These tools are infrequent in the optimziation literature, but this work demonstrates that these topological tools can have useful applications. Our analysis is limited to the case of three inequalities and an empty variety, as we heavily rely on the structure of hyperbolic plane curves. We leave open the case of more defining inequalities or a nonempty variety. In these cases, the spectral hypersurface may be non-hyperbolic or not a plane curve, preventing a direct translation of our results.

\section{Spectral Sequence Computations}\label{sec:Spectral_comps}

In this section, we present the spectral sequence computations used to derive the results in Section \ref{sec:no_solns}. We first concentrate on the case that $n \geq 4$, $K = \{0\}$, and $X = \cV(f_1^h,f_2^h,f_3^h) \subseteq \bbP^n$ and prove that the emptiness of $X$ implies hyperbolicity of $g(\lambda) = \det(Q_\lambda)$. Note that the case $n = 3$ is proved in \cite[Theorem 7.8]{plaumann_quartic_2011}. 

We begin by relating the signature of a matrix $Q_\lambda$ to the location of $\lambda$ relative to the ovals of the spectral curve $\cV_\bbR(g)$ when it is smooth and nonempty. 

\begin{lemma}\label{lem:signature_ovals}
If $\lambda \in \Omega^n(\{0\})$ and the spectral curve $\cV_\bbR(g)$ is smooth, then $[\lambda] \in \bbP^2$ lies in the interior of an oval of $\cV_\bbR(g)$ of depth at least $\lfloor \frac{n+1}{2}\rfloor - 1$.
\end{lemma}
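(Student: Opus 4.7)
The plan is to derive the oval depth bound by tracking the signature of $Q_\lambda$ along a path in $\bbP^2$ from $[\lambda]$ to the exterior of all ovals, and showing that too few ovals cannot accommodate the necessary signature change.

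First, I would record the relevant signature data. The determinant $g$ has degree $n+1$, so $\cV_\bbR(g) \subseteq \bbP^2$ is a smooth curve of that degree; on the complement $\bbP^2 \setminus \cV_\bbR(g)$ the signature of $Q_\lambda$ is locally constant. By the cited result of Vinnikov, on the unique non-contractible component of this complement --- the exterior of all ovals --- the signature is $0$ if $n+1$ is even and $\pm 1$ if $n+1$ is odd.

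Next, I would verify that a transverse crossing of $\cV_\bbR(g)$ at a smooth point shifts the signature by exactly $\pm 2$. Smoothness of $\cV_\bbR(g)$ at a point $[\mu]$ forces $g$ to vanish to order $1$ there, and since $g(\lambda) = \det(Q_\lambda)$ vanishes at $[\mu]$ to the order $\dim \ker Q_\mu$, this kernel must be one-dimensional. Hence exactly one eigenvalue of $Q_\lambda$ crosses zero as $\lambda$ passes through $[\mu]$, shifting the signature by $\pm 2$.

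With these tools in place, I would finish the argument as follows. Suppose $\lambda \in \Omega^n(\{0\})$, so $Q_\lambda$ has at least $n$ positive eigenvalues; since $[\lambda] \notin \cV_\bbR(g)$, $Q_\lambda$ has no zero eigenvalue and therefore at most one negative eigenvalue, so $|\mathrm{sig}(Q_\lambda)| \geq n-1$. Let $k$ be the depth of the innermost oval whose interior contains $[\lambda]$ (set $k=0$ if there is no such oval). Choose a smooth path in $\bbP^2$ from $[\lambda]$ to a point in the exterior of all ovals arranged so that it meets $\cV_\bbR(g)$ only at finitely many smooth points and transversally there; by standard Jordan-type arguments for each oval (which bounds an embedded disk in $\bbP^2$), one can realize such a path that crosses exactly $k$ ovals. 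Each crossing shifts the signature by $\pm 2$, so
\[
2k \;\geq\; \bigl|\mathrm{sig}(Q_\lambda) - \mathrm{sig}_{\mathrm{ext}}\bigr| \;\geq\; |\mathrm{sig}(Q_\lambda)| - |\mathrm{sig}_{\mathrm{ext}}| \;\geq\; (n-1) - \varepsilon,
\]
where $\varepsilon = 0$ if $n+1$ is even and $\varepsilon = 1$ if $n+1$ is odd. A short case analysis on the parity of $n+1$ yields $k \geq \lfloor (n+1)/2 \rfloor - 1$ in both cases, which is the desired conclusion.

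The main obstacle, I expect, will be giving a clean justification that the minimum number of oval crossings along a path from $[\lambda]$ to the exterior equals the depth of the innermost oval containing $[\lambda]$, together with the realizability of such a path by a transverse smooth path meeting only smooth points of the curve; once this is in hand, the signature bookkeeping is immediate.
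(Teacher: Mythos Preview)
Your proposal is correct and follows essentially the same approach as the paper: both use Vinnikov's result for the signature on the exterior of all ovals, the fact that smoothness forces a one-dimensional kernel so that each transverse crossing changes the signature by $\pm 2$, and then bound the depth by counting crossings. The paper phrases the bookkeeping in terms of the number of positive eigenvalues rather than signature, but this is a cosmetic difference.
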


\begin{proof}
Let $x \in \bbS^2$ be such that $[x] \in \bbP^2$ lies on the exterior of every oval of $\cV_\bbR(g)$. By \cite{Vinnikov1993SelfAdjoint}, if $n+1$ is even $Q_x$ has $\frac{n+1}{2}$ positive and negative eigenvalues and if $n+1$ is odd, one of $Q_x$ and $-Q_x$ has $\lfloor \frac{n+1}{2} \rfloor + 1$ positive and $\lfloor \frac{n+1}{2} \rfloor $ negative eigenvalues. Since $g$ is nonsingular, it follows that if $[\lambda]$ is in the interior of an oval of depth $k$ and the exterior of all ovals of depth $k+1$, then $Q_\lambda$ has at most $\frac{n+1}{2} + k$ positive eigenvalues if $n+1$ is even and $\lfloor \frac{n+1}{2}\rfloor + k + 1$ positive eigenvalues if $n+1$ is odd. So, if $Q_\lambda$ has $n$ positive eigenvalues and $n+1$ is even, $[\lambda]$ lies in the interior of an oval of depth at least $\frac{n+1}{2} -1$ and if $n+1$ is odd, $[\lambda]$ is an element of the interior of an oval of depth at least $\lfloor \frac{n+1}{2} \rfloor -1$
\end{proof}

The spectral sequence of Theorem \ref{thm:Spectral_SequenceA} then gives the following. 

\begin{lemma}[cf {\cite[Corollary 2]{ AgrachevHomologyIntersections}}]\label{lem:Empty_variety_hyperbolic_curve}
Suppose that $n \geq 4$ and the spectral curve is smooth. If $X = \cV_{\bbR}(f_1^h,f_2^h,f_3^h) \subseteq \bbP^n$ is empty, then $g$ is hyperbolic and $\Omega^n(\{0\}) \not =  \emptyset$. 
\end{lemma}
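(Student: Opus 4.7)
The plan is to apply the spectral sequence of Theorem \ref{thm:Spectral_SequenceA} with $K = \{0\}$, for which $X = \cV_\bbR(f_1^h,f_2^h,f_3^h) \subseteq \bbP^n$, $C\Omega = B^3$, and $\Omega = \bbS^2$. The hypothesis $X = \emptyset$ forces $H_{n-*}(X) = 0$ for every $*$, so every term on $E_\infty$ vanishes. Since $C\Omega = B^3$ is three-dimensional, only the columns $i \in \{0,1,2,3\}$ of $E_2$ are nonzero, so the sequence stabilizes by $E_4$ and only $d_2$ and $d_3$ can eliminate classes.

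The key entry to track is $E_2^{0,n}$. If $\Omega^{n+1} \neq \emptyset$ (PDLC case), pick $\mu$ with $Q_\mu \succ 0$; then $g(t\mu + a)$ is, up to a scalar, the characteristic polynomial of a self-adjoint pencil through a positive definite matrix and hence real-rooted, so $g$ is hyperbolic with respect to $\mu$ and $\Omega^n \supseteq \Omega^{n+1}$ is nonempty. So assume $\Omega^{n+1} = \emptyset$; then \eqref{eq:Iso_groups} gives $E_2^{0,n} = \bbZ_2$, and for $E_\infty^{0,n} = 0$ some outgoing differential must be injective on this class. The only options are $d_2^{0,n}: \bbZ_2 \to E_2^{2,n-1} \cong H^1(\Omega^n)$ or $d_3^{0,n}: E_3^{0,n} \to E_3^{3,n-2}$, where $E_2^{3,n-2} \cong H^2(\Omega^{n-1})$. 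Exactly as in the computation of Example \ref{ex:SpectralSequencePageTurn}, one of these targets must therefore be nonzero, and in particular $\Omega^n \neq \emptyset$.

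The heart of the argument is converting this cohomological condition into hyperbolicity of $g$. I would use Lemma \ref{lem:signature_ovals} to locate $\Omega^n$ inside the preimage in $\bbS^2$ of the interiors of ovals of depth at least $\lfloor (n+1)/2\rfloor - 1$, and exploit the standard fact (due to Vinnikov) that $Q_\lambda$ has signature $0$ for $n+1$ even, or $\pm 1$ for $n+1$ odd, on the component exterior to all ovals, and that the signature jumps by $\pm 2$ across each smooth oval. Since an open subset of $\bbS^2$ has nontrivial top cohomology only when it equals $\bbS^2$, $H^2(\Omega^{n-1}) \neq 0$ would force every $Q_\lambda$ to have at least $n-1$ positive eigenvalues, which by the exterior signature count is only consistent with a nest of ovals of maximal depth $\lfloor (n+1)/2 \rfloor$. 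Similarly, $H^1(\Omega^n) \neq 0$ produces an annular component of $\Omega^n$ whose two boundary circles must lie on two nested ovals, again pinning down a maximal nest. A smooth plane curve of degree $n+1$ carrying such a nest is hyperbolic by definition, so $g$ is hyperbolic and its innermost hyperbolicity cone contributes an element of $\Omega^n$.

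The main obstacle is ruling out exotic configurations in which $\Omega^n$ or $\Omega^{n-1}$ acquires higher cohomology from something other than a maximal nest (for instance, disjoint disks each containing several shallow ovals). This is precisely where the hypothesis $n \geq 4$ is essential: with $n+1 \geq 5$, the Vinnikov signature on the exterior region is so far below $n-1$ that no non-nested arrangement can supply enough positive eigenvalues, and the topology of $\Omega^n$ is thereby determined. For small $n$ this signature gap closes and genuine exceptions appear, which is why the cases $n = 2, 3$ require the separate treatments of Theorem \ref{thm:intro_thm1} and \cite[Theorem 7.8]{plaumann_quartic_2011}.
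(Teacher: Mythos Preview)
Your overall strategy matches the paper's: run the spectral sequence with $K=\{0\}$, track the survival of $E_2^{0,n}\cong\bbZ_2$, and read off hyperbolicity from the resulting cohomological constraint. The PDLC case and the deduction of hyperbolicity from $H^1(\Omega^n)\neq 0$ via Lemma~\ref{lem:signature_ovals} are both handled correctly.

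The gap is in your treatment of the $d_3$ branch. You assert that $H^2(\Omega^{n-1})\neq 0$, i.e.\ $\Omega^{n-1}=\bbS^2$, ``is only consistent with a nest of ovals of maximal depth.'' This is not right, and no argument along those lines will work: there is no arrangement of ovals for which every $Q_\lambda$ has at least $n-1$ positive eigenvalues when $n\geq 4$. The clean observation (which is what the paper uses) is that $\Omega^{n-1}$ contains no antipodal pair: if $Q_\lambda$ has $\geq n-1$ positive eigenvalues then $Q_{-\lambda}$ has at most $(n+1)-(n-1)=2<n-1$ positive eigenvalues. Hence $\Omega^{n-1}$ is a proper open subset of $\bbS^2$, $H^2(\Omega^{n-1})=0$, and $E_2^{3,n-2}=0$. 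This kills the $d_3$ branch outright rather than folding it into the hyperbolicity conclusion. Relatedly, your intermediate claim ``in particular $\Omega^n\neq\emptyset$'' is unjustified where you place it, since at that moment you have not yet excluded the possibility that only $H^2(\Omega^{n-1})$ is nonzero; once $E_2^{3,n-2}=0$ is established, $d_2^{0,n}$ must be injective, $H^1(\Omega^n)\neq 0$, and both $\Omega^n\neq\emptyset$ and hyperbolicity follow from your correct annulus argument.
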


\begin{proof}

For notational simplicity, take $X = X(\{0\},f^h)$ and $\Omega^j = \Omega^j(\{0\})$. Let $(E_r,d_r)$ be the spectral sequence of Theorem \ref{thm:Spectral_SequenceA}.

First, suppose that the matrices $Q_i$ satisfy PDLC so that there is $\lambda \in \bbR^3$ with $Q_\lambda \succ 0$. Then $g$ is hyperbolic with respect to $\lambda$ and $Q_\lambda$ has $n+1 > n$ positive eigenvalues so that $\Omega^n \not = \emptyset$. 

Next we show that if the maximum number of positive eigenvalues attained by $Q_\lambda$ for $\lambda \in \bbR^3$ is at most $n-1$, then $X \not = \emptyset$. 
Using \eqref{eq:Iso_groups}, we see that $E^{ij}_2 = H^{i-1}(\Omega^{j+1}) =0$ for $i \geq 4$ and that $E_2^{2,n-1} = 0$. Note that $\Omega^{n-1}$ is either empty or a proper subset of $\mathbb{S}^2$. This follows since $n \geq 4$ and therefore if $\lambda$ is such that $Q_\lambda$ has at least $n-1$ positive eigenvalues, then $Q_{-\lambda}$ has at most $(n+1) - (n-1) =  2 < n-1$ positive eigenvalues. So $E_2^{3,n-2} = H^2(\Omega^{n-1}) = 0$. This implies that $E_2^{i,n-i+1} = 0$ for all $i \geq 2$ and therefore $E_2^{0,n} \simeq E_\infty^{0,n} \simeq \bbZ_2$. So, $H_0(X) \not = 0$ and therefore $X$ is nonempty. 

Finally, suppose that there exists $\lambda \in \bbR^3$ such that $Q_\lambda$ has exactly $n$ positive eigenvalues and that PDLC is not satisfied. We apply Theorem \ref{thm:Spectral_SequenceA} and the hypothesis that $X = \emptyset$ to compute the groups $H^i(\Omega^j)$. Note that $Q_{-\lambda}$ has at most 1 positive eigenvalue so that $\Omega^{j+1}$ is a proper subset of $\bbS^2$ for $j \geq 1$. Since $n \geq 4$, this implies that $E^{3,j}_2 \simeq H^2(\Omega^{j+1}) = 0$ for $j \geq n-3$. So, the $E_2$ page of the spectral sequence has the following form:

\begin{sseqdata}[title = $E_\page$, no y ticks, name = empty var top four, cohomological Serre grading, classes = {draw = none }, xscale = 2]
\begin{scope}[background]
    \node at (-0.6,0) {n-3};
    \node at (-0.6,1) {n-2};
    \node at (-0.6,2) {n-1};
    \node at (-0.6,3) {n};
\end{scope}
\class["\mathbb{Z}_2"](0,3)
\class["0"](0,0)
\class["0"](0,1)
\class["0"](0,2)

\class["0"](1,3)
\class["H^0(\Omega^n)/\bbZ_2"](1,2)
\class["H^0(\Omega^{n-1})/\bbZ_2"](1,1)
\class["H^0(\Omega^{n-2})/\bbZ_2"](1,0)

\class["0"](2,3)
\class["H^1(\Omega^n)"](2,2)
\class["H^1(\Omega^{n-1})"](2,1)
\class["H^1(\Omega^{n-2})"](2,0)

\foreach \y in {0,...,3}{
    \class["0"](3,\y)
}
\end{sseqdata}

\[
\printpage[name = empty var top four  , page = 2] 
\]

Note that $E_2^{1,n-1} \simeq E_\infty^{1,n-1} \simeq H^0(\Omega^n)/\bbZ_2$ and $E_2^{2,n-2} \simeq E_\infty^{2,n-2} \simeq H^1(\Omega^{n-1})$. Since $X = \emptyset$, Theorem \ref{thm:Spectral_SequenceA} implies that

\[0 = H_{0}(X) \simeq \bigoplus_{i+j = n}E^{i,j}_\infty.\]
In particular, $H^0(\Omega^n)/\bbZ_2 = 0$, $H^1(\Omega^{n-1}) = 0$, and the map $d_2^{0,n}:\bbZ_2 \to H^1(\Omega^n)$ is injective.  By Lemma \ref{lem:signature_ovals}, $\Omega^n$ is a subset of the interior of ovals of depth at least $\lfloor \frac{n+1}{2}\rfloor - 1$. Since $H^1(\Omega^n) \not = 0$, there is anest ovals of depth $\lfloor \frac{n+1}{2} \rfloor$. So, the curve $\cV_{\bbR}(g)$ is hyperbolic. \end{proof}

For the converse statement, note that if PDLC holds, then the variety is empty. The remaining case is that $g$ is hyperbolic with a hyperbolicity cone $\cP$ which consists of matrices with exactly $n-1$ positive and two negative eigenvalues. In this case, we must compute the differential $d_2^{0,n}$. In particular, we want to show that $d_2^{0,n}:\bbZ_2 \to H^1(\Omega^n)$ is injective. Let $\mathcal{Q}$ be the vector space of quadratic forms on $\bbR^{n+1}$. For $q \in \mathcal{Q}$, let $\rho_1(q) \geq \rho_2(q) \geq \ldots \geq \rho_{n+1}(q)$ be the eigenvalues of $q$ and $\cD_j = \{q \in \mathcal{Q} \; \vert \; \rho_j(q) > \rho_{j+1}(q)\}.$

\begin{theorem}[{\cite[Theorem B]{agrachev_systems_2012}}]\label{thm:Spectral_Sequence_d2}
There is a formula for the differential $d_2$ in terms of matrices with repeated eigenvalues. Explicitly, 

\[d_2(x) = (x \smile \bar{f}^* \gamma_{1,j})\vert_{(C\Omega, \Omega^j)} \quad \text{ for } x \in H^*(C\Omega,\Omega^{j+1}),\]
where $\bar{f}:\bbR^3 \to \mathcal{Q}$ is defined by $\bar{f}(\lambda) = f_\lambda^h$, and $\bar{f}^*$ is the induced map on cohomology. The value of the class $\gamma_{1,j} \in H^2(\mathcal{Q},\cD_j)$ on the image of $\sigma:B^2 \to \mathcal{Q}$ with $\sigma(\partial B^2) \subseteq \cD_j$ is equal to the intersection number of $\sigma(B^2)$ and $\mathcal{Q} \setminus \cD_j \mod 2$.
\end{theorem}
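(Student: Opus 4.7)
The full proof is that of \cite[Theorem B]{agrachev_systems_2012}; the plan to re-derive it would run as follows. The spectral sequence of Theorem \ref{thm:Spectral_SequenceA} is built from a geometric resolution of $X(K,f^h)$: one constructs a space fibered over $C\Omega$ whose total space deformation retracts onto $X(K,f^h)$ and whose fiber over $\omega$ is (up to homotopy) a projective subspace of $\bbP^n$ whose dimension tracks the number of positive eigenvalues of $Q_\omega$. A Leray-type argument applied to this projection, together with the filtration of $C\Omega$ by the loci $\Omega^{j+1}$, produces the $E_2$ page $E_2^{ij} = H^i(C\Omega,\Omega^{j+1})$ as in Theorem \ref{thm:Spectral_SequenceA}.

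To obtain the cup product formula for $d_2$, I would interpret $d_2$ as the primary obstruction to extending a cohomology class across the codimension-one boundaries in $C\Omega$ where the fiber dimension jumps. For stratified fibrations of this form, the primary obstruction is cup product with a characteristic class Poincar\'e-dual to the jumping locus. In the ambient space $\mathcal{Q}$ of quadratic forms on $\bbR^{n+1}$, the jumping locus is the complement $\mathcal{Q}\setminus \cD_j$ of symmetric forms whose $j$-th and $(j+1)$-th eigenvalues coincide, which is a codimension-two subvariety by the standard Arnold computation for transverse eigenvalue crossings. Its Poincar\'e--Lefschetz dual naturally lives in $H^2(\mathcal{Q},\cD_j)$ and is precisely $\gamma_{1,j}$; the intersection-number description in the theorem is the usual characterization of such a relative duality class by transverse count of a representing 2-disk whose boundary lies in $\cD_j$. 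Naturality of cup product and of pullback by the linear embedding $\bar f:\bbR^3\to \mathcal{Q}$ (which sends the filtration of $C\Omega$ by $\Omega^j$ into the stratification of $\mathcal{Q}$ by eigenvalue coincidences) then delivers the formula $d_2(x) = (x\smile \bar f^*\gamma_{1,j})\vert_{(C\Omega,\Omega^j)}$ on the appropriate relative cohomology group.

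The main obstacle is step two: rigorously identifying $d_2$ on the spectral sequence with the primary obstruction class, and matching the intersection-number description of $\gamma_{1,j}$ with the spectral-sequence boundary at the cochain level. One must fix an explicit cochain model of the spectral sequence, verify that transverse perturbations of a 2-cell $\sigma:B^2\to \mathcal{Q}$ with $\sigma(\partial B^2)\subseteq \cD_j$ meet $\mathcal{Q}\setminus\cD_j$ in isolated points (so that the mod $2$ count is well-defined and homotopy-invariant among such $\sigma$), and then show that this count agrees with the differential in the chosen model. This transversality and normal-bundle analysis for the eigenvalue-crossing locus, together with a verification that the resulting class is independent of all chain-level choices, is the technical heart of the argument in \cite{agrachev_systems_2012}.
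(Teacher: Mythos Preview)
The paper does not prove this theorem; it is quoted verbatim from \cite[Theorem B]{agrachev_systems_2012} and used as a black box in the subsequent arguments (Proposition~\ref{prop:Unique_neg_rep} and Theorem~\ref{thm:empty_var_iff_hyperbolic_curve}). There is therefore no ``paper's own proof'' to compare your proposal against.

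Your sketch is a reasonable outline of how the argument in \cite{agrachev_systems_2012} proceeds: the spectral sequence is indeed built from a Leray-type construction over $C\Omega$ with fibers tracking positive eigenspaces, and $d_2$ is identified with cup product by the pullback of a universal class dual to the eigenvalue-coincidence locus. Your identification of the main technical point---making the transversality and obstruction-theoretic interpretation rigorous at the cochain level---is accurate. But for the purposes of this paper none of this is needed: you should simply cite the result and move on, as the authors do.
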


In light of Theorem \ref{thm:Spectral_Sequence_d2}, the differential $d_2^{0,n}$ is computed by understanding the set of matrices with repeated negative eigenvalue inside the hyperbolicity cone $\cP$ of $g$. To show that $d_2^{0,n}$ is injective, it suffices to show that there is exactly one $\lambda\in \mathrm{int} \cP \cap \bbS^2$ with repeated negative eigenvalue. In this case, it will follow that $\gamma_{1,n}$ and therefore $d_2^{0,n}(1)$ are nonzero. We proceed in several steps: Lemma $\ref{lem:Repeated_neg_on_pencil}$ and Proposition \ref{prop:Unique_neg_rep} give the existence and uniqueness of  $\lambda\in \mathrm{int} \cP \cap \bbS^2$ with repeated negative eigenvalue, which is leveraged for the computation of $d_2^{0,n}(1)$ in the proof of Theorem \ref{thm:empty_var_iff_hyperbolic_curve}.

We first show that if the matrices corresponding to two points in $\cP$ have a repeated negative eigenvalue, then the matrices corresponding to every point along the affine pencil connecting them has at least 2 negative eigenvalues.

\begin{lemma}\label{lem:Repeated_neg_on_pencil}
Suppose that $\Omega^n \not = \emptyset$ and $g$ is smooth and hyperbolic with hyperbolicity cone $\cP$ containing matrices with exactly two negative eigenvalues. If $\lambda^{(1)}, \lambda^{(2)} \in \cP$ are such that $Q_{\lambda^{(1)}}$ and $Q_{\lambda^{(2)}}$ have repeated negative eigenvalue $-1$, then for each $t \in \bbR$, $tQ_{\lambda^{(1)}} + (1-t)Q_{\lambda^{(2)}}$ has at least 2 negative eigenvalues. 
\end{lemma}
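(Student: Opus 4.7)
The plan is to introduce the shifted matrices $P_i := Q_{\lambda^{(i)}} + I$ for $i \in \{1,2\}$. Because $\lambda^{(i)} \in \cP$ and $\cP$ is a hyperbolicity cone containing matrices with exactly two negative eigenvalues, every $Q_\lambda$ with $\lambda \in \cP$ has signature $(n-1,2)$; combined with the hypothesis that the two negative eigenvalues of $Q_{\lambda^{(i)}}$ both equal $-1$, this forces the remaining $n-1$ eigenvalues of $Q_{\lambda^{(i)}}$ to be strictly positive. Hence each $P_i$ is positive semidefinite of rank $n-1$, with $2$-dimensional kernel $V_i$ equal to the $(-1)$-eigenspace of $Q_{\lambda^{(i)}}$. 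The identity
\[
M(t) + I := tQ_{\lambda^{(1)}} + (1-t)Q_{\lambda^{(2)}} + I = tP_1 + (1-t)P_2
\]
will be used to control the number of negative eigenvalues of $M(t)$ by restricting $tP_1+(1-t)P_2$ to $V_1$ or $V_2$.

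For $t\in[0,1]$ the segment $\lambda(t) = t\lambda^{(1)}+(1-t)\lambda^{(2)}$ lies in the convex hyperbolicity cone $\cP$, so $g(\lambda(t))\neq 0$ and the signature of $M(t)$ is constant along the segment. Evaluating at the endpoints shows $M(t)$ has exactly two negative eigenvalues for every $t\in[0,1]$.

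For $t > 1$, I rewrite the identity as $M(t)+I = tP_1 - (t-1)P_2$. The restriction to the $2$-dimensional subspace $V_1 = \ker P_1$ collapses to $(M(t)+I)\big|_{V_1} = -(t-1)P_2\big|_{V_1}$, which is negative semidefinite since $t-1 > 0$ and $P_2 \succeq 0$. By the min-max principle, the existence of a $2$-dimensional subspace on which a symmetric form is $\preceq 0$ forces at least two non-positive eigenvalues, so $M(t) + I$ has at least two eigenvalues $\leq 0$, equivalently $M(t)$ has at least two eigenvalues $\leq -1 < 0$. The case $t < 0$ is handled symmetrically: on $V_2 = \ker P_2$ one has $(M(t)+I)\big|_{V_2} = tP_1\big|_{V_2} \preceq 0$ because $t < 0$ and $P_1 \succeq 0$.

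The crucial device is the $+I$-shift, which trades the hypothesis of a repeated eigenvalue at $-1$ for positive semidefiniteness of $P_1, P_2$ with prescribed $2$-dimensional kernels, reducing what looks like a delicate eigenvalue-tracking problem on the pencil $M(t)$ to a one-line application of min-max against the fixed subspaces $V_i$. The only conceptual obstacle I anticipate is justifying $P_i \succeq 0$ — that is, that no eigenvalue of $Q_{\lambda^{(i)}}$ other than the two $-1$'s is negative — and this is exactly where the membership $\lambda^{(i)}\in\cP$ is used, via the constancy of signature on the hyperbolicity cone.
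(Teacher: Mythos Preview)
Your proof is correct and essentially identical to the paper's: both restrict $M(t)$ to the two-dimensional $(-1)$-eigenspace of $Q_{\lambda^{(1)}}$ (resp.\ $Q_{\lambda^{(2)}}$) for $t\geq 1$ (resp.\ $t\leq 0$), use that $-1$ is the minimum eigenvalue of the other matrix to bound the quadratic form by $-1$, and invoke the variational characterization. Your $+I$ shift and the matrices $P_i$ are just a convenient repackaging of the paper's inequality $v^\top Q_{\lambda^{(2)}}v \geq -1$ as $P_2 \succeq 0$.
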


\begin{proof}
First, note that the statement holds for all $t \in [0,1]$ by convexity of $\cP$. Let $V$ be the two dimensional subspace corresponding to the eigenvalue $-1$ of $Q_{\lambda^{(1)}}$ and $U$ be the two dimensional subspace corresponding to the eigenvalue $-1$ of $Q_{\lambda^{(2)}}$. Then, for $t \geq 1$, and for all $v \in V$ with $v^\top v = 1$, we have that 

\[v^\top(tQ_{\lambda^{(1)}} + (1-t)Q_{\lambda^{(2)}})v = -t + (1-t)v^\top Q_{\lambda^{(2)}} v \leq -1,\]
where the inequality holds since $-1$ is the lowest eigenvalue of $Q_{\lambda^{(2)}}$ and therefore $v^\top Q_{\lambda^{(2)}}v \geq -1$. Similarly, if $u \in U$ has $u^\top u = 1$ and $t \leq 0$, then 

\[u^\top(tQ_{\lambda^{(1)}} + (1-t)Q_{\lambda^{(2)}})u = tu^\top Q_{\lambda^{(1)}} u - (1-t) = t (u^\top Q_{\lambda^{(1)}} u  +1) - 1 \leq -1,\]
where the inequality holds since $u^\top Q_{\lambda^{(1)}} u \geq -1$. 

The claim then follows from the variational characterization of eigenvalues. 
\end{proof}

We now show that if the hyperbolicity cone $\cP$ is such that $\lambda \in \mathrm{int} \cP$ implies that $Q_\lambda$ has exactly two negative eigenvalues, then there is a unique $\lambda \in \cP \cap \bbS^2$ such that $Q_\lambda$ has a repeated negative eigenvalue. In particular, this will imply that the differential $d_2^{0,n}$ is nontrivial.

\begin{proposition}\label{prop:Unique_neg_rep}
    Suppose that $\Omega^n \not = \emptyset$ and $g$ is smooth and hyperbolic with hyperbolicity cone $\cP$ containing matrices with exactly 2 negative eigenvalues. Then, there is a unique $\lambda \in \cP \cap \bbS^2$ such that $Q_\lambda$ has a repeated negative eigenvalue. 
\end{proposition}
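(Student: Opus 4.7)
The plan is to prove uniqueness using Lemma \ref{lem:Repeated_neg_on_pencil} combined with the signature structure on the regions of $\bbP^2 \setminus \cV_\bbR(g)$, and to prove existence via a topological obstruction to extending the eigenvector line field of $\alpha_2$ across the disk $\cP \cap \bbS^2$.

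For uniqueness, I would suppose that $\lambda^{(1)}, \lambda^{(2)} \in \cP \cap \bbS^2$ are distinct with both $Q_{\lambda^{(i)}}$ having a repeated negative eigenvalue. After rescaling each representative in $\bbR^3$ (preserving its projective class and keeping it in $\cP$) so that both repeated eigenvalues equal $-1$, Lemma \ref{lem:Repeated_neg_on_pencil} gives that every matrix on the affine pencil $\ell(t) = t\lambda^{(1)} + (1-t)\lambda^{(2)}$ has at least two negative eigenvalues for all $t \in \bbR$. Since $\cP$ is a proper cone and $\lambda^{(1)} \neq \lambda^{(2)}$, the line $\ell(t)$ exits $\cP$ for some $t > 1$, crossing the smooth bounding oval at a generic point. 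Immediately outside $\cP$, the eigenvalue $\alpha_2$ which was negative inside has passed through zero and become positive, so $Q_{\ell(t)}$ there has signature $(n,1)$ with exactly one negative eigenvalue. This contradicts the pencil bound from the lemma, forcing $\lambda^{(1)} = \lambda^{(2)}$. The signature assignment on the adjacent region as $(n,1)$ rather than $(1,n)$ follows from smoothness of $\cV_\bbR(g)$ together with continuity of $\alpha_2$ through zero; it is also forced by the hypothesis $\Omega^n \neq \emptyset$ via the Vinnikov signature constraints reviewed in the introduction (for $n \geq 3$ no other region can realize $n$ positive eigenvalues, and for $n = 1,2$ the same conclusion holds via the exterior region).

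For existence, I would argue by contradiction. Suppose no $\lambda \in \cP \cap \bbS^2$ has $Q_\lambda$ with a repeated negative eigenvalue. Then on $\mathrm{cl}(\cP \cap \bbS^2)$ the two smallest eigenvalues $\alpha_1(\lambda) < \alpha_2(\lambda) \leq 0$ are strictly separated, with $\alpha_2(\lambda) = 0$ only on the boundary oval. Smoothness of $\cV_\bbR(g)$ forces $\ker Q_\lambda$ to be one-dimensional there, so the eigenvector direction for $\alpha_2$ extends continuously to a line-field map $\tilde v_2 \colon \mathrm{cl}(\cP \cap \bbS^2) \to \bbP^n$ with $\tilde v_2(\lambda) = [\ker Q_\lambda]$ on the boundary. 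The restriction of $\tilde v_2$ to the boundary circle is a loop, and I would argue that in the non-PDLC setting considered here (matrices in $\cP$ with signature $(n-1, 2)$) this kernel loop represents the nontrivial class in $\pi_1(\bbP^n) \cong \bbZ_2$, contradicting its extension to the disk. The nontriviality reflects the signature jump $(n-1,2) \mapsto (n,1)$ across the oval: the dimension of the negative eigenspace drops by one when crossing outward, so the second-negative eigendirection must half-twist along the loop in order to match. Making this rigorous would use a local normal form of $Q_\lambda$ in a tubular neighborhood of a smooth point of the oval, expanding the eigenvalues and eigenvectors to first order in the transverse coordinate.

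The main obstacle will be the existence step: rigorously establishing the $\pi_1$-nontriviality of the kernel loop over the bounding oval. The uniqueness argument follows cleanly from Lemma \ref{lem:Repeated_neg_on_pencil} together with the signature structure. For existence, the topological invariant in question is closely connected to the theta-characteristic data of $\cV_\bbR(g)$ underlying our determinantal representation, and a clean derivation would likely proceed either by explicit local eigenvalue expansion or by comparison with the Helton–Vinnikov PSD representation of $g$, using the contrast between the two representations to detect the obstruction.
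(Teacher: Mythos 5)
Your uniqueness argument is essentially sound and is, if anything, cleaner than the paper's.  Both routes begin identically, invoking Lemma \ref{lem:Repeated_neg_on_pencil} to show that the affine pencil through $\lambda^{(1)}, \lambda^{(2)}$ carries at least two negative eigenvalues, hence cannot touch $\partial\cP$ (on which, by smoothness, exactly one eigenvalue vanishes and only one is negative), hence remains in $\mathrm{int}(\cP)$ for all $t \in \bbR$.  The paper then argues that $g$ restricted to the projective line through $[\lambda^{(1)}]$ and $[\lambda^{(2)}]$ has no finite root, which by hyperbolicity forces a multiplicity-$(n+1)$ intersection at the point at infinity, contradicting smoothness.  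You instead observe that a hyperbolicity cone of a smooth curve of degree at least two is pointed, so it cannot contain an affine line; this is more elementary and avoids the flex/multiplicity discussion entirely.  One small technical remark: the crossing need not be at a ``generic'' point of $\partial\cP$ — smoothness of $\cV_\bbR(g)$ already gives corank one at every boundary point — and you should be careful that the signature of the adjacent region is $(n,1)$ rather than $(n-2,3)$; that is indeed forced by the oval structure and $\Omega^n \neq \emptyset$, as you note.

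The existence step, however, contains a genuine gap that you yourself flag.  Your plan is to show that if no $\lambda \in \cP \cap \bbS^2$ has a repeated negative eigenvalue, then the second-lowest eigendirection defines a continuous map from the disk $\mathrm{cl}(\cP \cap \bbS^2)$ to $\bbP^n$ restricting to the kernel line field on the bounding circle, and then to argue that this kernel loop represents the nontrivial class of $\pi_1(\bbP^n) \cong \bbZ_2$.  The last step is precisely the content you have not established — and it is not a technicality.  Whether the kernel loop is $\pi_1$-nontrivial is a topological invariant of the determinantal representation (related to the theta characteristic of the Helton--Vinnikov picture, as you observe), and computing it is the entire difficulty.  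Saying that ``the signature jump forces a half-twist'' is a heuristic, not a proof, and your own suggestion that one would need a local normal form or a comparison with a PSD representation is an acknowledgement that the key step is missing.  The paper sidesteps this issue entirely: it fixes any $\lambda \in \mathrm{int}(\cP) \cap \bbS^2$, congruence-transforms by $B$ so that $B^\top Q_\lambda B = \mathrm{diag}(1,\dots,1,-1,-1)$ (which manufactures a repeated negative eigenvalue at $\lambda$ without changing the spectral curve, the hyperbolicity cone, or the projective variety), applies the already-proved uniqueness to conclude the count in the transformed system is exactly one, feeds this into the intersection-number formula of Theorem \ref{thm:Spectral_Sequence_d2} to deduce that the transformed (hence the original) variety is empty, and then reads the parity of the count in the original coordinates back off the same spectral-sequence calculation.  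This realizes your topological invariant concretely and proves oddness without having to identify the homotopy class of any explicit loop.  If you want to pursue the line-field route, you would need to independently establish the $\pi_1$-nontriviality of the kernel loop for a representation with interior signature $(n-1,2)$; as it stands, the existence half of your proposal is incomplete.
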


\begin{proof}
First, there can be at most one such $\lambda$. Suppose for the sake of a contradiction that there were $\lambda^{(1)}, \lambda^{(2)} \in \cP$ such that $Q_{\lambda^{(1)}}$ and $Q_{\lambda^{(2)}}$ have repeated negative eigenvalues and $\lambda^{(1)}$ is not a scalar multiple of $\lambda^{(2)}$. By rescaling the $\lambda^{(i)}$ if necessary, we can take the repeated negative eigenvalue to be $-1$. Lemma \ref{lem:Repeated_neg_on_pencil} then implies that for all $t \in \bbR$, the matrix $tQ_{\lambda^{(1)}} + (1-t)Q_{\lambda^{(2)}}$ has at least two negative eigenvalues. This implies that $t\lambda^{(1)} + (1-t)\lambda^{(2)} \in \mathrm{int}(\cP)$ for all $t\in \bbR$ since $\omega \in \partial \cP$ impies that $Q_\omega$ has $n-1$ positive eigenvalues, one negative eigenvalue, and $0$ as an eigenvalue of multiplicity 1. In particular, $g(t\lambda^{(1)} + (1-t)\lambda^{(2)}) \not = 0$ for $t \in \bbR$. On the other hand, $g$ is hyperbolic with respect to $\lambda^{(2)}$, so that $g(\lambda^{(2)} + t(\lambda^{(1)} - \lambda^{(2)})) \in \bbR[t]$ is real rooted. So, the homogenized polynomial $g(s\lambda^{(2)} + t(\lambda^{(1)} - \lambda^{(2)})) \in \bbR[s,t]$ must have a zero of multiplicity $n+1$ at the point $[0:1] \in \bbP^1$, a contradiction with the hypothesis that $g$ is smooth. So, there is at most one $\hat{\lambda} \in \cP$ such that $Q_{\hat{\lambda}}$ has $-1$ as a repeated negative eigenvalue and therefore there is at most one $\lambda = \hat{\lambda}/\|\hat{\lambda}\| \in \cP \cap \bbS^2$ such that $Q_\lambda$ has a repeated negative eigenvalue.

We now show that there is exactly one such $\lambda$. Our strategy is to choose a coordinate system where a matrix with repeated negative eigenvalue exists. Let $\lambda \in \mathrm{int} \cP \cap \bbS^2$ and let $v^{(1)},v^{(2)}, \ldots, v^{(n+1)}$ be an orthonormal basis of eigenvectors of $Q_\lambda$ corresponding to the eigenvalues $\rho_1(Q_\lambda) \geq \rho_2(Q_\lambda) \geq \ldots \geq \rho_{n+1}(Q_\lambda)$. Set $B = \begin{bmatrix}\frac{1}{\sqrt{|\rho_{1}(Q_\lambda)|}}v^{(1)} &  \frac{1}{\sqrt{|\rho_{1}(Q_\lambda)|}}v^{(2)} & \dots & \frac{1}{\sqrt{|\rho_{n+1}(Q_\lambda)|}}v^{(n+1)}\end{bmatrix}$. Then, $x \mapsto Bx$ defines a change of coordinates on $\bbP^{n}$. In these coordinates, the quadratic form $f_\lambda^h$ is represented by the diagonal matrix 

\[ B^\top Q_\lambda B = \begin{bmatrix} 1 & & & &\\ & \ddots & & &\\ & &1 & &\\ & & & -1& \\& & & & -1\end{bmatrix}\]
Next, note that for $\omega \in \bbR^3$, $\det(B^\top Q_\omega B) = \det(B)^2g(\omega)$ so that the zero set and hyperbolicity cone of $g$ are preserved. By the above discussion, $\lambda$ is the unique element of $\cP \cap \bbS^2$ such that $B^\top Q_\lambda B$ has a repeated negative eigenvalue. Theorem \ref{thm:Spectral_Sequence_d2} then implies that the variety $\cV_{\bbR}(B^\top Q_1B, B^\top Q_2 B, B^\top Q_3B)\subseteq \bbP^n$ is empty. Since nonexistence of real points on a variety is preserved under a real coordinate change of $\bbP^n$, this implies that $\cV_\bbR(f_1^h,f_2^h,f_3^h)$ is also be empty. By Theorem \ref{thm:Spectral_Sequence_d2} and the preceding discussion, this implies that there is an odd number of and therefore exactly one $\lambda \in \cP \cap \bbS^2$ such that $Q_\lambda$ has a repeated negative eigenvalue. \end{proof}

We treat the small $n$ cases separately and prove the theorem.

\begin{lemma}\label{lem:n=1_empty_var}
Suppose that $n = 1$ and that $g(\lambda)$ is smooth. Then, $\cV_\bbR(f_1^h,f_2^h,f_3^h) = \emptyset$ if and only if PDLC holds. 
\end{lemma}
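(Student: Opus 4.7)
The plan is to observe that when $n=1$ both sides of the biconditional hold automatically. The key remark is that the space of symmetric $2 \times 2$ matrices has dimension $3$, so any three linearly independent symmetric $2\times 2$ matrices $Q_1, Q_2, Q_3$ form a basis of this space. Consequently the identity matrix $I_2$ can be written as $I_2 = \sum_{i=1}^{3}\lambda_i Q_i = Q_\lambda$ for some $\lambda \in \bbR^3$, and this $Q_\lambda$ is positive definite, so PDLC holds without any further hypothesis. As a byproduct, the smoothness of $g$ is automatic as well: changing basis so that the $Q_i$ become the standard basis of symmetric $2\times 2$ matrices transforms $g$ into the nondegenerate ternary quadratic form $\lambda_1\lambda_3 - \lambda_2^2$, whose projective zero set is a smooth conic.

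With PDLC established unconditionally, I would then prove the easy implication that PDLC implies $\cV_\bbR(f_1^h, f_2^h, f_3^h) = \emptyset$. Suppose $Q_\mu \succ 0$ for some $\mu \in \bbR^3$. Then $f_\mu^h(x) = x^\top Q_\mu x > 0$ for every $x \in \bbR^2 \setminus \{0\}$. If $[x] \in \cV_\bbR(f_1^h, f_2^h, f_3^h)$ were a real common zero, then $f_\mu^h(x) = \sum_{i=1}^{3}\mu_i f_i^h(x) = 0$ would contradict strict positivity of $f_\mu^h$ at the nonzero vector $x$. Hence the projective variety is empty. Since PDLC always holds under the standing hypotheses, the variety is always empty, and both directions of the biconditional are true in a trivial sense.

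I do not anticipate a real obstacle; the lemma is a boundary case recorded for completeness, so that the proof of Theorem \ref{thm:empty_var_iff_hyperbolic_curve} can uniformly treat $n=1$ alongside the substantive case $n \geq 4$ handled by Lemma \ref{lem:Empty_variety_hyperbolic_curve}, where neither conclusion is automatic from dimension counts alone.
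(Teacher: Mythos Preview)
Your argument is correct. The observation that three linearly independent $2\times 2$ symmetric matrices necessarily span the whole $3$-dimensional space, and therefore hit a positive definite matrix, is exactly the elementary route, and it makes both sides of the biconditional hold unconditionally.

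The paper's primary proof takes a different path: it argues by contradiction, assuming PDLC fails, and then uses the spectral sequence of Theorem~\ref{thm:Spectral_SequenceA}. Under the smoothness hypothesis, failure of PDLC forces every $Q_\lambda$ to have signature $(1,1)$, so $\Omega^1 = \bbS^2$ and $H^2(\Omega^1)\cong\bbZ_2$; the shape of the $E_2$ page then yields $E_\infty^{0,1}\cong\bbZ_2$, hence $H_0(X)\neq 0$ and $X\neq\emptyset$. Immediately after that argument the paper records precisely your dimension-count proof as an alternative. So your approach is not merely different but is one the authors themselves endorse; what the spectral-sequence proof buys is uniformity with the machinery used for $n\geq 4$, while your argument is shorter and shows that in the $n=1$ case the hypothesis of smoothness is in fact automatic.
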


\begin{proof}
If PDLC holds, then $X = \cV_\bbR(f_1^h,f_2^h,f_3^h) = \emptyset$. Suppose that PDLC does not hold. Then, by the hypothesis that $g$ is smooth, we have that $Q_\lambda$ has one positive and one negative eigenvalue for all $\lambda \in \bbR^3$ and therefore $H^2(\Omega^1) \cong \bbZ_2$. So, if $(E_r,d_r)$ is the spectral sequence of Theorem \ref{thm:Spectral_SequenceA}, then the $E_2$ page has the form 

\begin{sseqdata}[title = $E_\page$, name = empty var n = 1, cohomological Serre grading, classes = {draw = none }]

\class["\mathbb{Z}_2"](0,1)
\class["0"](0,0)

\class["0"](1,1)
\class["0"](1,0)

\class["0"](2,1)
\class["0"](2,0)

\class["0"](3,1)
\class["\bbZ_2"](3,0)

\end{sseqdata}

\[
\printpage[name = empty var n = 1 , page = 2] 
\]
so that $0 = d_2^{0,1} = d_r^{0,1}$ for all $r$. In particular, this implies that $E_{\infty}^{0,1} \simeq \bbZ_2$ and therefore $H_0(X) = \bigoplus_{i + j = 1}E_{\infty}^{i,j} \not = 0$ and $X \not = \emptyset$. \end{proof}

There is also an elementary proof of Lemma \ref{lem:n=1_empty_var} which follows by observing that when $n = 1$, the space of quadratics has dimension 3, so if the quadratic forms $f_1^h,f_2^h,f_3^h$ are linearly independent, they span the entire space of quadratics and therefore satisfy PDLC. 

\begin{lemma}\label{lem:n=2_empty_var}
Suppose that $n = 2$. If $\cV_\bbR(f_1^h,f_2^h,f_3^h) \not = \emptyset$, then $g(\lambda)$ is not smooth.  \end{lemma}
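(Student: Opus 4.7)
The plan is to exhibit a real singular point $[\mu] \in \bbP^2$ of the spectral curve directly from a common zero of the defining forms, using Jacobi's formula for the gradient of the determinant. Let $x \in \bbR^3 \setminus \{0\}$ satisfy $x^\top Q_i x = 0$ for $i = 1,2,3$, and consider the linear map $\psi \colon \bbR^3 \to \bbR^3$ defined by $\psi(\lambda) = Q_\lambda x$. The key observation is that
\[
x^\top \psi(\lambda) = \sum_{i=1}^{3} \lambda_i\, x^\top Q_i x = 0,
\]
so $\im(\psi) \subseteq x^\perp$ is at most two-dimensional. Hence $\ker \psi$ contains some nonzero $\mu$, for which $Q_\mu x = 0$ and in particular $g(\mu) = \det Q_\mu = 0$.

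To conclude that $g$ is singular at $\mu$, I would compute the gradient via Jacobi's formula, $\partial_j g(\mu) = \trace(\mathrm{adj}(Q_\mu) Q_j)$, and split on the rank of $Q_\mu$. If $\rk Q_\mu \leq 1$, then every $2 \times 2$ minor of $Q_\mu$ vanishes, so $\mathrm{adj}(Q_\mu) = 0$ and each $\partial_j g(\mu)$ is zero. If $\rk Q_\mu = 2$, then $\ker Q_\mu = \spann(x)$, and writing $Q_\mu$ in an orthonormal eigenbasis shows that $\mathrm{adj}(Q_\mu) = c\, xx^\top$ for some nonzero scalar $c$ (the product of the two nonzero eigenvalues of $Q_\mu$, appropriately normalized). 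Then $\partial_j g(\mu) = c\, x^\top Q_j x = 0$ for each $j$ by the choice of $x$.

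To finish I would consolidate the two possibilities: either $Q_i x = 0$ for every $i$, in which case $g$ vanishes identically and $\cV_\bbR(g) = \bbP^2$ is not a smooth plane cubic; or $g \not\equiv 0$ and $[\mu]$ is a real singular point of the cubic $\cV_\bbR(g)$. In either outcome $g$ fails to be smooth. The only mildly delicate step is the rank-$2$ computation of $\mathrm{adj}(Q_\mu)$, but this is a standard calculation: the classical adjugate of a symmetric rank-$2$ matrix in $\bbR^{3 \times 3}$ is a nonzero rank-one symmetric matrix supported on its one-dimensional kernel.
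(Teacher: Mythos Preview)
Your argument is correct and follows essentially the same route as the paper: find $\mu$ with $Q_\mu x = 0$ from the observation that all $Q_i x$ lie in $x^\perp$, then use Jacobi's formula and the fact that $\mathrm{adj}(Q_\mu)$ is a multiple of $xx^\top$ (or zero) to kill the gradient. The paper phrases the rank-$2$ step as a non-transverse intersection of $\spann\{Q_1,Q_2,Q_3\}$ with the determinantal hypersurface, but this is the same computation you do directly via $\partial_j g(\mu) = \trace(\mathrm{adj}(Q_\mu)Q_j) = c\,x^\top Q_j x = 0$. One small quibble: your final dichotomy ``either $Q_i x = 0$ for every $i$ \ldots\ or $g \not\equiv 0$'' is not quite exhaustive as stated, but this is harmless since you have already produced $\mu$ with $g(\mu)=0$ and $\nabla g(\mu)=0$, and that alone forces $g$ to be non-smooth regardless of whether $g$ vanishes identically.
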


\begin{proof}

Suppose that $v \in \bbR^3$ is nonzero and $f_i^h(v) = 0$ for $i = 1,2,3$. Then, we have that the vectors $Q_1v, Q_2v,Q_3v$ are all orthogonal to $v$ and therefore $\dim(\spann_\bbR\{Q_1v,Q_2v,Q_3v\}) \leq 2$. So, there is $\alpha \in \bbR^3$ such that $0 = \sum_{i =1}^3 \alpha_iQ_iv = Q_\alpha v$. In particular, $Q_\alpha$ has rank at most two. If $Q_\alpha$ is rank one, then $\alpha$ is a singular point of $\cV_{\bbR}(g)$. Suppose that $Q_\alpha$ has rank two. Let $\cD \subseteq \mathrm{Sym}_3(\bbR)$ be the hypersurface in the space of $3 \times 3$ symmetric real matrices defined by the vanishing of the determinant. Note that $Q_\alpha \in \cD$. By Jacobi's formula, we have that $\nabla \det (Q_\alpha) = \mathrm{adj } Q_\alpha = cvv^\top$ for some constant $c$. This implies that $T_{Q_\alpha}\cD$, the tangent space to $\cD$ at $Q_\alpha$, is the space of quadratic forms which vanish at $v$, as $\langle \mathrm{adj } Q_\alpha, Q \rangle = 0$ implies that $\mathrm{tr}(vv^\top Q) = v^\top Q v = 0$. But then, $Q_i \in T_{Q_\alpha}\cD$ for each $i \in [3]$. So, $\spann_\bbR(\{Q_i \; \vert \; i \in [3]\})$ intersects $\cD$ non-transversely at $Q_\alpha$ and therefore $\alpha$ is a singular point of $g$. \end{proof}

We are now ready to prove Theorem \ref{thm:empty_var_iff_hyperbolic_curve}. 

\EmptyVarIFF*
\begin{proof}
 The cases $n = 1$, $n = 2$, and $n = 3$ are treated by Lemma \ref{lem:n=1_empty_var}, Lemma \ref{lem:n=2_empty_var}, and \cite[Theorem 7.8]{plaumann_quartic_2011}, respectively. The ``only if" statement for $n\geq 4$ is Lemma \ref{lem:Empty_variety_hyperbolic_curve} \cite[Corollary 2]{AgrachevHomologyIntersections}. If PDLC holds, then $X(\{0\},f^h) = \emptyset$ for any $n$.

In the remaining case where PDLC does not hold and $n \geq 4$, we have that $\Omega^n \not = \emptyset$ and $H^1(\Omega^n) \simeq H^2(C\Omega,\Omega^n) \not = 0$. Let $\sigma:B^2 \to C\Omega$ be a representative of the nontrivial class in $H^2(C\Omega,\Omega^n)$. Then, with notation as in the statement of Theorem \ref{thm:Spectral_Sequence_d2}, $\gamma_{1,n}(\bar{f}(\sigma))$ is equal to the number of $\lambda \in \cP \cap \bbS^2$ such that $Q_\lambda$ has a repeated negative eigenvalue. So, by Theorem \ref{thm:Spectral_Sequence_d2} and Proposition \ref{prop:Unique_neg_rep}, there is exactly one such $\lambda$ and we compute that 

\[d_2^{0,n}(1)(\sigma) = 1 \smile \bar{f}^*\gamma_{1,n}(\sigma) = \gamma_{1,n}(\bar{f}(\sigma)) = 1.\] 

So, $d_2^{0,n}:\bbZ_2 \to H^1(\Omega^n)$ is injective and therefore $E_3^{0,n} \simeq E_\infty^{0,n} \simeq 0$. This then implies that $H_0(X) \simeq \bigoplus_{i+j = n}E_\infty^{0,n} \simeq 0$.\end{proof}

Together, the results of this section imply Theorem \ref{thm:intro_thm1}.

\IntroTheoremOne*

\begin{proof}
The statement for $n = 3$ is \cite[Theorem 7.8]{plaumann_quartic_2011}. The remaining statements are Lemmas \ref{lem:Empty_variety_hyperbolic_curve}, \ref{lem:n=1_empty_var}, and \ref{lem:n=2_empty_var} and Theorem \ref{thm:empty_var_iff_hyperbolic_curve}.
\end{proof}

Theorem \ref{thm:empty_var_iff_hyperbolic_curve} places strong structure restrictions on the spectral curve $\cV_\bbR(g)$ in the case that $n \geq 3$ and the real variety is empty. Certificates for emptiness of the projective variety $\cV_\bbR(f_1^h,f_2^h,f_3^h)$ can in turn be leveraged to construct certificates of emptiness for systems of inequalities, as demonstrated in the following proposition. 

\GeneralEmptiness*

\begin{proof}
To simplify notation, we take $\Omega^j = \Omega^j(K)$ for all $j$ and $X = X(K,f^h)$. If $\Omega^{n+1}\neq \emptyset$, then there is $\lambda \in K^\circ \cap S^2$ with $Q_\lambda \succ 0$ and therefore $X = \emptyset$. 

Next, note that $\Omega^n \neq \emptyset$ is a necessary condition for $X = \emptyset$. Otherwise, as in the proof of Lemma \ref{lem:Empty_variety_hyperbolic_curve}, the spectral sequence of Theorem \ref{thm:Spectral_SequenceA} would have that $E_2^{0,n} \simeq E_\infty^{0,n} \simeq \bbZ_2$ is a direct summand of $H_0(X)$ and therefore $H_0(X) \neq 0$. 

Now, suppose that $\Omega^{n+1} = \emptyset$ and $\Omega^n \neq \emptyset$. Note that the hypothesis that $K \not = 0$ implies that $K^\circ \not = \bbR^3$ and therefore $H^2(\Omega^j) = 0$ for all $j$. So, the $E_2$ page of the spectral sequence of Theorem \ref{thm:Spectral_SequenceA} has first four columns

\[
\printpage[name = empty var top four  , page = 2] 
\]

So if $X = \emptyset$, then $d_2^{0,n}:\bbZ_2 \to H^1(\Omega^n)$ is injective and therefore $H^1(\Omega^n) \neq \emptyset$. 

Conversely, suppose that $H^1(\Omega^n) \neq \emptyset$. As in the proof of Theorem \ref{thm:empty_var_iff_hyperbolic_curve}, this implies that the polynomial $g$ is hyperbolic with a hyperbolicity cone $\cP$ such that $\lambda \in \cP$ implies that $Q_\lambda$ has exactly two negative eigenvalues. Moreover, there is exactly one $\lambda \in \cP \cap \bbS^2$ such that $Q_\lambda$ has a repeated negative eigenvalue. But then, the differential $d_2^{0,n}:\bbZ^2 \to H^1(\Omega^n)$ is injective, which implies that $E_3^{0,n} \simeq E_\infty^{0,n} \simeq 0$ and therefore $H_0(X) = 0$.  

\end{proof}

Proposition \ref{prop:General_Emptiness} allows us to certify emptiness of solutions sets of systems of quadratic inequalities using convexity. In particular, if $\cV_\bbR(f_1^h,f_2^h,f_3^h) = \emptyset$ with a smooth hyperbolic spectral curve (as is guaranteed for $n\geq 3$), the condition $\Omega^{n+1}(K) \not = \emptyset$ is equivalent to the statement that there is a hyperbolicity cone $\cP$ of $g$ which intersects $K^\circ$ and that $\lambda\in \cP$ implies $Q_\lambda \succ 0$. Similarly, the condition $H^1(\Omega^n(K)) \not = 0$ is equivalent to the conditions that $\cP\setminus \{0\} \subseteq \mathrm{int}(K^\circ)$ and $Q_\lambda$ has exactly two negative eigenvalues for $\lambda \in \cP$. 

We conclude this section with a computation of the number of connected components of the set $X(K,f^h)$ which will be used in later sections. 

\begin{lemma}\label{lem:Connected_Omega_n}
Suppose that $\cV_\bbR(f_1^h,f_2^h,f_3^h) = \emptyset$ and $K$ is a nonzero polyhedral cone such that $X(K,f^h) \not = \emptyset$ and that $\Omega^n(K)\neq \emptyset$. Then, $H_0(X(K,f^h))\simeq H^0(\Omega^n(K))$.
\end{lemma}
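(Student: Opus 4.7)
The plan is to apply the spectral sequence $(E_r, d_r)$ of Theorem \ref{thm:Spectral_SequenceA}, which converges to $H_{n-*}(X(K, f^h))$, and show that on the antidiagonal $i + j = n$ only the entries $E_\infty^{0, n} \cong \bbZ_2$ and $E_\infty^{1, n-1} \cong H^0(\Omega^n(K))/\bbZ_2$ survive, whose direct sum recovers $H^0(\Omega^n(K))$.

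First I would invoke Proposition \ref{prop:General_Emptiness} contrapositively: the hypotheses $\cV_\bbR(f_1^h, f_2^h, f_3^h) = \emptyset$, $X(K, f^h) \neq \emptyset$, and $\Omega^n(K) \neq \emptyset$ together force $\Omega^{n+1}(K) = \emptyset$ and $H^1(\Omega^n(K)) = 0$. Combined with the fact that $K \neq 0$ implies $K^\circ \subsetneq \bbR^3$, so every $\Omega^j(K)$ sits in a proper subset of $\bbS^2$ with $H^i = 0$ for $i \geq 2$, the isomorphisms \eqref{eq:Iso_groups} determine the antidiagonal $i+j=n$ entries as $E_2^{0, n} = \bbZ_2$, $E_2^{1, n-1} = H^0(\Omega^n(K))/\bbZ_2$, $E_2^{2, n-2} = H^1(\Omega^{n-1}(K))$, and $E_2^{i, n-i} = 0$ for $i \geq 3$. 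I then verify that all differentials involving these entries vanish: $d_2^{0, n}$ maps into $E_2^{2, n-1} = H^1(\Omega^n(K)) = 0$; outgoing $d_r$ from $(1, n-1)$ and $(2, n-2)$ land at positions with first coordinate $\geq 3$; and the only potential incoming differential $d_2^{0, n-1}:E_2^{0,n-1}\to E_2^{2,n-2}$ has vanishing source since $\Omega^n(K) \neq \emptyset$. So each antidiagonal term stabilizes at $E_2$.

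The main obstacle is showing $E_\infty^{2, n-2} = H^1(\Omega^{n-1}(K)) = 0$. Here I would leverage the structural consequences of the emptiness of $\cV_\bbR(f_1^h, f_2^h, f_3^h)$ from Theorem \ref{thm:intro_thm1}: in the main case $n \geq 4$, $g$ is smooth hyperbolic with a hyperbolicity cone whose interior consists of either positive definite matrices or matrices with exactly two negative eigenvalues, so that the positive-signature component of $\Omega^{n-1}(\{0\})$ is topologically an open disk in $\bbS^2$ bounded by arcs of the spectral curve. Since $\Omega^{n+1}(K) = \emptyset$, the spherically convex set $K^\circ \cap \bbS^2$ avoids the innermost (PD) oval region. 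A convexity argument then shows that any loop in $\Omega^{n-1}(K)$ bounds a spherically convex disk in $K^\circ \cap \bbS^2$ which cannot leave $\Omega^{n-1}(\{0\})$, so $\Omega^{n-1}(K)$ is simply connected. The low-dimensional cases $n \leq 3$ are handled separately via Lemmas \ref{lem:n=1_empty_var}, \ref{lem:n=2_empty_var} and the classification in \cite{plaumann_quartic_2011}. Once $H^1(\Omega^{n-1}(K)) = 0$ is established, we obtain
\[
H_0(X(K, f^h)) \cong E_\infty^{0, n} \oplus E_\infty^{1, n-1} = \bbZ_2 \oplus H^0(\Omega^n(K))/\bbZ_2 \cong H^0(\Omega^n(K)),
\]
as claimed. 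The geometric simple-connectedness step is the most delicate part of the argument and depends essentially on the nested-oval structure guaranteed by the hyperbolicity of $g$.
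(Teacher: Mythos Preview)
Your overall strategy coincides with the paper's: apply the spectral sequence of Theorem~\ref{thm:Spectral_SequenceA}, use Proposition~\ref{prop:General_Emptiness} contrapositively to obtain $\Omega^{n+1}(K)=\emptyset$ and $H^1(\Omega^n(K))=0$, and then read off $H_0(X(K,f^h))\cong E_\infty^{0,n}\oplus E_\infty^{1,n-1}\cong\bbZ_2\oplus H^0(\Omega^n(K))/\bbZ_2$. The paper's proof simply displays an $E_2$ page with $E_2^{2,n-2}=0$ and asserts $E_2=E_\infty$; you are more explicit in isolating the vanishing of $H^1(\Omega^{n-1}(K))$ as the nontrivial step, which is a genuine point the paper passes over without justification.

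That said, your sketch for this step is not yet a proof. The sentence ``any loop in $\Omega^{n-1}(K)$ bounds a spherically convex disk in $K^\circ\cap\bbS^2$ which cannot leave $\Omega^{n-1}(\{0\})$'' is the crux, and ``cannot leave'' is exactly what needs arguing. The cleaner way to phrase the underlying fact (for $n\geq 4$ with smooth hyperbolic $g$) is: $\Omega^{n-1}(\{0\})$ is a single open topological disk in $\bbS^2$, and the intersection of a simply connected open planar set with a convex open set always has trivial $H^1$ --- because any point enclosed by a loop in the intersection must lie in the convex set (by convexity) and hence must lie outside the simply connected set, contradicting simple connectedness. You should make this argument explicit rather than invoking an unspecified ``convexity argument.'' Also note two loose ends: the lemma as stated does \emph{not} assume the spectral curve is smooth, so the nested-oval picture you rely on is not immediately available and your reduction via Theorem~\ref{thm:intro_thm1} needs that hypothesis; and your treatment of the cases $n\leq 3$ is only gestured at --- for instance, when $n=2$ or $n=3$ the set $\Omega^{n-1}(\{0\})$ can have more complicated structure and the disk argument requires adaptation.
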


\begin{proof}
Let $(E_r,d_r)$ be the spectral sequence of Theorem \ref{thm:Spectral_SequenceA}. By Proposition \ref{prop:General_Emptiness} and the hypotheses that $X(K,f^h) \neq 0$ and $\Omega^n(K) \neq \emptyset$, the $E_2$ page has the form 

\begin{sseqdata}[title = $E_\page$, no y ticks, name = Connected Omega n, cohomological Serre grading, classes = {draw = none }, xscale = 2]
\begin{scope}[background]
    \node at (-0.6,0) {n-2};
    \node at (-0.6,1) {n-1};
    \node at (-0.6,2) {n};
\end{scope}

\class["\mathbb{Z}_2"](0,2)
\class["0"](0,1)
\class["0"](0,0)

\class["H^0(\Omega^n(K))/\mathbb{Z}_2"](1,1)
\class["H^0(\Omega^{n-1}(K))/\mathbb{Z}_2"](1,0)
\class["0"](1,2)

\class["0"](2,1)
\class["0"](2,0)
\class["0"](2,2)
\end{sseqdata}

\[
\printpage[name = Connected Omega n , page = 2] 
\]
and $E_2^{i,j} \simeq E_\infty^{i,j}$ for all $i,j \in \bbZ$. So, 

\[H_0(X(K,f^h)) \simeq \bbZ_2 \oplus \left(H^0(\Omega^n(K))/\mathbb{Z}_2\right) \simeq H^0(\Omega^n(K)).\] \end{proof}

\section{Finiteness of Aggregations}\label{sec:Finiteness}

In this section, we develop a strategy to reduce sets of aggregations to finite subsets under the assumption that $\cV_\bbR(f_1^h,f_2^h,f_3^h) = \emptyset$ and the spectral curve is smooth and hyperbolic. In particular, we show that under these assumptions, there is a finite subset $\Lambda_1 \subseteq \Lambda$ such that $\bigcap_{\lambda \in \Lambda}S_\lambda = \bigcap_{\lambda \in \Lambda_1}S_\lambda$. In order to effectively apply the techniques presented in Section \ref{sec:no_solns}, we interpret the necessity of a given aggregation in terms of the emptiness of a certain system of quadratic equations and inequalities. 

\UnnecessaryAggstwo*

\begin{proof}
Note that $X(-\bbR_+^k,f_k^h) \subseteq X(-\bbR_+^{k+1},f_{k+1}^h)$. It remains to show the reverse inclusion. If $f_{\lambda^{(k+1)}}^h([x]) \not = 0$ for all $[x] \in X(-\bbR^{k+1}_+, f_{k+1}^h)$, then $f_{\lambda^{(k+1)}}^h$ has constant nonzero sign on each connected component of $X(-\bbR^{k},f^h_k)$. Since $X(-\bbR_+^{k+1},f_{k+1}^h)$ has the same number of connected components as $X(-\bbR^{k}_+,f^h_k)$, this implies that $f_{\lambda^{(k+1)}}^h$ is negative on each component of $X(-\bbR^k_+,f^h_k)$. So, the equality holds. 
\end{proof}

\begin{remark}\label{rem:UnnecessaryAggsProjAff}
While Lemma \ref{lem:UnnecessaryAggstwo} is stated projectively, its conclusion implies the affine statement $\bigcap_{j = 1}^{k} S_{\lambda^{(j)}} = \bigcap_{j = 1}^{k+1}S_{\lambda^{(j)}}$. To see this, note that if $x \in \bigcap_{j = 1}^{k} S_{\lambda^{(j)}}$ but $x \not \in S_{\lambda^{(k+1)}}$, then $f_{\lambda^{(k+1)}}(x) > 0$ so that $f_{\lambda^{(k+1)}}^h((x,1)) > 0$ and therefore $[(x,1)] \in X(-\bbR^k_+,f_k^h) \setminus X(-\bbR^{k+1}_+, f_{k+1}^h)$.
\end{remark}

The hypotheses in Lemma \ref{lem:UnnecessaryAggstwo} can be checked by applying the tools in Section \ref{sec:no_solns} using the cone $K = -\bbR^{k}_+ \times \{0\}$. 

We apply the construction in Lemma \ref{lem:UnnecessaryAggstwo} specifically to the set of unit length generators of extreme rays of $\Lambda$ which lie on a proper face of $\bbR^3_+$,

\[\Lambda' = \{\lambda \in \mathrm{ex}(\Lambda) \; \vert \;  |\supp(\lambda)| \leq 2\}.\]
Note that $\Lambda'$ has at most 6 elements since each facet of $\bbR^3_+$ can contain at most two extreme rays of $\Lambda$. The aggregations in $\Lambda'$ suffice to describe $\conv(\mathrm{int}(S))$ when the $Q_i$ satisfy PDLC by \cite[Theorem 2.17]{blekherman_aggregations_2022}. We have the following description of elements of $\Lambda'$. 

\begin{proposition}\label{prop:Lambda_prime_elts}
    If $\lambda \in \Lambda'$, then either $\lambda$ is a standard basis vector of $\bbR^3$ or $g(\lambda) = 0$. 
\end{proposition}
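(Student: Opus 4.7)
The argument splits into two cases according to $|\supp(\lambda)|$. If $|\supp(\lambda)| = 1$, then $\lambda$ is a nonnegative unit vector with a single nonzero coordinate, so $\lambda$ equals a standard basis vector $e_i$, matching the first alternative in the statement.

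The substantive case is $|\supp(\lambda)| = 2$, where the goal is to show $g(\lambda) = 0$; I plan to proceed by contradiction. Suppose instead $g(\lambda) \neq 0$, so $Q_\lambda$ is nonsingular and, because $\lambda \in \Lambda$, has exactly one negative eigenvalue. Continuity of the eigenvalues of $Q_\mu$ in $\mu \in \bbR^3$ then implies that ``having no zero eigenvalue and exactly one negative eigenvalue'' is an open condition on $\mu$, so there is an open neighborhood $U$ of $\lambda$ in $\bbR^3$ throughout which $Q_\mu$ has exactly one negative eigenvalue.

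Write $\supp(\lambda) = \{i,j\}$ and choose $\epsilon > 0$ small enough that $\epsilon < \lambda_i$ and $\mu_1 := \lambda + \epsilon e_i$, $\mu_2 := \lambda - \epsilon e_i$ both lie in $U$. Then $\mu_1, \mu_2 \in \bbR^3_+$ and each $Q_{\mu_k}$ has exactly one negative eigenvalue, so $\mu_1, \mu_2 \in \Lambda$. Neither $\mu_k$ is a positive scalar multiple of $\lambda$, since the ratio of its $i$-th and $j$-th coordinates is $(\lambda_i \pm \epsilon)/\lambda_j \neq \lambda_i/\lambda_j$. Yet $\lambda = \tfrac{1}{2}(\mu_1 + \mu_2)$ exhibits $\lambda$ as the midpoint of two non-collinear elements of $\Lambda$, contradicting the assumption that $\lambda$ generates an extreme ray of $\Lambda$. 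Hence $g(\lambda) = 0$.

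I do not anticipate a genuine obstacle in executing this plan; the only point meriting care is the meaning of ``extreme ray'' for the possibly nonconvex cone $\Lambda$, which I read in the standard convex-analytic sense (a ray $R \subseteq \Lambda$ is extreme if $r \in R$ and $r = s + t$ with $s,t \in \Lambda$ force $s,t \in R$). Under this reading the midpoint decomposition above directly violates extremality, closing the argument.
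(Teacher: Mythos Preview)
Your proof is correct and follows essentially the same approach as the paper: assume $g(\lambda)\neq 0$, use openness of the signature condition to find nearby perturbations in $\Lambda$, and exhibit $\lambda$ as a nontrivial convex combination contradicting extremality. The only cosmetic difference is the choice of perturbation direction (you use $\pm\epsilon e_i$, the paper uses $\pm\epsilon(e_i+e_j)$), which is immaterial.
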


\begin{proof}
If $\lambda$ is not a standard basis vector, then $\lambda = \lambda_ie_i + \lambda_je_j$ for some $i\not = j \in [3]$ and $\lambda_i,\lambda_j > 0$. Since $\lambda \in \Lambda'$, if $g(\lambda) \not = 0$, then $Q_\lambda$ has exactly $n$ positive and one negative eigenvalue. So, for $\epsilon > 0$ sufficiently small, $\lambda - \epsilon(e_i + e_j)$ and $\lambda +\epsilon(e_i + e_j)$ are elements of $\bbR^3_+$ and $Q_{\lambda\pm \epsilon(e_i + e_j)}$ also has exactly $n$ positive and one negative eigenvalue. But then, $\lambda$ does not span an extreme ray of $\Lambda$, a contradiction.\end{proof}

We now work towards applying Lemma \ref{lem:UnnecessaryAggstwo} to the case that $\lambda^{(1)},\lambda^{(2)},\ldots, \lambda^{(k)}$ is an enumeration of $\Lambda'$ and $\lambda^{(k+1)} \in \Lambda \setminus \cone(\Lambda')$. First, we characterize faces of the relevant cone. 

\begin{proposition}\label{prop:Faces_ATK}
Suppose that $\lambda^{(1)},\lambda^{(2)},\ldots, \lambda^{(k)}$ is an enumeration of $\Lambda'$ and $\lambda^{(k+1)} \in \Lambda \setminus \cone(\Lambda')$. Let $A \in \bbR^{(k+1) \times 3}$ have rows $\left(\lambda^{(i)}\right)^\top$. If $F = \{t\lambda^{(k+1)} + s \lambda^{(i)} \;\vert \; i\in [k], t\in \bbR, s\in \bbR_+\}$ is a face of $A^\top(\bbR^k_+ \times \bbR)$ then $g(\lambda^{(i)}) = 0$.
\end{proposition}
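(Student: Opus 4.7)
The plan is to argue by contradiction, assuming $g(\lambda^{(i)}) \neq 0$. By Proposition~\ref{prop:Lambda_prime_elts}, $\lambda^{(i)}$ must then be a standard basis vector, and without loss of generality I take $\lambda^{(i)} = e_1$. My strategy exploits two structural facts about $\overline{\cone(\Lambda)}$: since $\overline{\cone(\Lambda)} \subseteq \bbR^3_+$, each coordinate hyperplane $\{x_j = 0\}$ supports $\overline{\cone(\Lambda)}$, so the face $\overline{\cone(\Lambda)} \cap \{x_j = 0\}$ equals the conical hull of the extreme rays of $\overline{\cone(\Lambda)}$ lying on it; and these extreme rays all have support of size at most $2$ and hence belong to $\Lambda'$.

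First I would observe that $\lambda^{(k+1)}$ must have strictly positive entries: otherwise some coordinate $\lambda^{(k+1)}_j$ would vanish, placing $\lambda^{(k+1)} \in \overline{\cone(\Lambda)} \cap \{x_j = 0\} \subseteq \cone(\Lambda')$ and contradicting $\lambda^{(k+1)} \in \Lambda \setminus \cone(\Lambda')$. Next, since $C = \cone(\Lambda') + \bbR\lambda^{(k+1)}$ is polyhedral and $F$ is a face, there is a linear functional $\ell \colon \bbR^3 \to \bbR$ with $\ell \geq 0$ on $C$ and $F = C \cap \ker \ell$. The conditions $\pm \lambda^{(k+1)}, e_1 \in F$ force $\ell(\lambda^{(k+1)}) = \ell(e_1) = 0$, and this determines $\ell$ up to sign; after a sign choice one has $\ell(x) = \lambda^{(k+1)}_3 x_2 - \lambda^{(k+1)}_2 x_3$.

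The contradiction then comes from comparing two descriptions of $\overline{\cone(\Lambda)} \cap \{x_2 = 0\}$. On the one hand, $g(e_1) \neq 0$ together with $e_1 \in \mathrm{ex}(\Lambda)$ forces $Q_{e_1}$ to have exactly one negative eigenvalue (the signature is locally constant off $\cV_\bbR(g)$), so $e_1 \in \Lambda$. By continuity of eigenvalues, a neighborhood of $e_1$ in $\bbR^3_+$ sits inside $\Lambda$, and its intersection with $\{x_2 = 0\}$ is a $2$-dimensional neighborhood of $e_1$ in the quadrant $\{x_2 = 0\} \cap \bbR^3_+$; hence $\overline{\cone(\Lambda)} \cap \{x_2 = 0\}$ is a $2$-dimensional pointed convex cone and has at least two distinct extreme rays. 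On the other hand, every $\mu \in \Lambda' \cap \{x_2 = 0\}$ satisfies $\ell(\mu) = -\lambda^{(k+1)}_2 \mu_3 \geq 0$, which combined with $\lambda^{(k+1)}_2 > 0$ and $\mu \in \bbR^3_+$ forces $\mu_3 = 0$ and hence $\mu \in \bbR_+ e_1$. Consequently $\overline{\cone(\Lambda)} \cap \{x_2 = 0\}$ is generated by the single ray $\bbR_+ e_1$, contradicting its $2$-dimensionality.

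The hard part will be making the face-theoretic claims precise, namely that the extreme rays of $\overline{\cone(\Lambda)}$ lying on a coordinate facet of $\bbR^3_+$ are exactly the elements of $\Lambda'$ on that facet, and that $F$ is cut out from $C$ by a single supporting hyperplane. Both are standard for closed convex cones once the definition of extreme ray implicit in the paper is pinned down, but they are what tie together the hyperbolicity information from Proposition~\ref{prop:Lambda_prime_elts} with the convex-geometric dimension count above.
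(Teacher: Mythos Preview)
Your approach and the paper's share the same skeleton: take a supporting functional for the face $F$, invoke Proposition~\ref{prop:Lambda_prime_elts} to reduce to $\lambda^{(i)}=e_1$, and derive a contradiction from the structure of $\Lambda'$ on the coordinate facets through $e_1$. The paper's route to the contradiction is more direct than your dimension count and sidesteps both difficulties you flag as ``the hard part.'' Rather than passing through $\overline{\cone(\Lambda)}$ and identifying its extreme rays with elements of $\Lambda'$, the paper argues constructively: since $g(e_1)\neq 0$, the matrix $Q_{e_1}$ has exactly $n$ positive and one negative eigenvalue, so walking along the edge from $e_1$ toward $e_2$ one either stays in $\Lambda$ all the way (whence $e_2\in\Lambda'$) or there is some $te_1+(1-t)e_2\in\Lambda'$ with $t\in[0,1)$. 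Calling this element $\lambda^{(j)}$, the inequality $v^\top\lambda^{(j)}>0$ together with $v^\top e_1=0$ forces $v^\top e_2>0$. The identical argument on $\cone(e_1,e_3)$ gives $v^\top e_3>0$, and then strict positivity of all entries of $\lambda^{(k+1)}$ yields $v^\top\lambda^{(k+1)}>0$, contradicting $v^\top\lambda^{(k+1)}=0$.

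Two places where your version needs tightening. First, the explicit formula $\ell(x)=\lambda^{(k+1)}_3x_2-\lambda^{(k+1)}_2x_3$ fixes a sign that is not determined by $\ell(e_1)=\ell(\lambda^{(k+1)})=0$ alone; the correct sign is dictated by $\ell\geq 0$ on $\cone(\Lambda')$, and with the opposite sign your inequality on $\{x_2=0\}$ becomes vacuous and the argument must instead be run on $\{x_3=0\}$. The paper's symmetric use of both edges handles both signs at once. Second, your conclusion that $\overline{\cone(\Lambda)}\cap\{x_2=0\}$ is generated by $\Lambda'\cap\{x_2=0\}$ requires identifying extreme rays of $\overline{\cone(\Lambda)}$ with elements of $\mathrm{ex}(\Lambda)$ as the paper defines it; this can be bypassed entirely by the edge-walk above, which directly produces the second element of $\Lambda'$ on the facet and makes the $\overline{\cone(\Lambda)}$ machinery unnecessary.
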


\begin{proof}
Note that if $\lambda^{(k+1)} \in \Lambda \setminus \cone(\Lambda')$, then $\lambda^{(k+1)}$ has strictly positive entries. Let $v \in \bbR^3$ support the face $F$ so that $v^\top \lambda^{(k+1)} = v^\top \lambda^{(i)} = 0$ and $v^\top \lambda^{(j)} > 0$ for $j \in [k]\setminus \{i\}$. Suppose for the sake of a contradiction that $g(\lambda^{(i)}) \not = 0$. By Proposition \ref{prop:Lambda_prime_elts}, this implies that $\lambda^{(i)}$ is a standard basis vector of $\bbR^3$. Relabeling if necessary, take $\lambda^{(i)} = e_1$. 

Next, note that since $\lambda^{(i)} = e_1$ and $g(\lambda^{(i)}) \not = 0$, the matrix $Q_{\lambda^{(i)}} = Q_1$ has exactly $n$ positive and one negative eigenvalue. Therefore, either $e_2 \in \Lambda'$ or there is $t \in [0,1)$ such that $te_1 + (1-t)e_2 \in \Lambda'$. But then $v^\top e_2 > 0$. Similarly, $v^\top e_3 >0$. Since $\lambda^{(k+1)}$ was assumed to have strictly positive entries, $v^\top \lambda^{(k+1)} > 0$, a contradiction with the construction of $v$. 
\end{proof}

The following lemmas will also be used in the proof of Theorem \ref{thm:FiniteAggs}. Both are concerned with the behavior of aggregations which lie on the oval of depth $\lfloor \frac{n+1}{2} \rfloor - 1$. Let $C$ be the part of the affine cone over the oval of depth $\lfloor \frac{n+1}{2}\rfloor - 1$ where if $\omega \in C$ then $Q_\omega$ has $n-1$ positive eigenvalues, one negative eigenvalue, and 0 as an eigenvalue of multiplicity one. Lemma \ref{lem:Agg_Not_in_Oval} establishes a root-counting property. Lemma \ref{lem:Second_oval_intersection} describes the intersection of $C$ with $\bbR^3_+$. 

\begin{lemma}\label{lem:Agg_Not_in_Oval} 
Let $\lambda \in \Lambda \cap C$. Then, if $e \in \cP$, there is exactly one root $t^*$ of $g(te + (1-t)\lambda)$ for $t \in (0,1]$ and $t^*e + (1-t^*)\lambda$ lies on $\partial \cP$.
\end{lemma}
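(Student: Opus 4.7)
I would parameterize the affine segment as $\gamma(t) = te + (1-t)\lambda$ for $t \in [0,1]$ and view its image in $\bbP^2$ as a projective arc $\Gamma$ from $[\lambda]$ to $[e]$. The strategy combines three ingredients: (a) hyperbolicity of $g$ with respect to $e$, which forces the projective line through $[e]$ to meet $\cV_\bbR(g)$ in a full count of $n+1$ real intersections, with exactly two on each of the nested ovals (since $[e]$ lies in the innermost oval); (b) the nested-oval structure of the smooth hyperbolic curve, which implies that the only component of $\cV_\bbR(g)$ sitting inside the open disk $O \subseteq \bbP^2$ bounded by the second innermost oval is the innermost oval $\partial \cP$; and (c) the convexity of the hyperbolicity cone $\cP$ in $\bbR^3$.

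The first key step will be to show that $\Gamma \setminus \{[\lambda]\} \subseteq O$. For $t > 0$ small, $\gamma(t)$ is a small displacement of $[\lambda]$ toward the interior point $[e] \in \cP \subseteq O$, so the arc enters $O$. The projective line through $[\lambda]$ and $[e]$ meets $\partial O$ in exactly two real points (smoothness plus the fact that $[e]$ is interior to $O$), one of which is $[\lambda]$ itself. If $\Gamma$ were to leave $O$ before reaching $[e]$, it would have to cross $\partial O$ at the second intersection point and then cross it a third time to re-enter and reach $[e] \in O$, contradicting the intersection count. Once $\Gamma \setminus \{[\lambda]\} \subseteq O$ is established, nestedness rules out any crossing by $\Gamma$ of an outer oval (each outer oval has $O$ inside its own interior disk, so $\Gamma$ stays inside each of them) as well as any crossing of a pseudo-line component, which, being non-contractible in $\bbP^2$, cannot lie inside the contractible disk $O$. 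Consequently, every root of $g \circ \gamma$ on $(0,1]$ corresponds to a crossing of $\partial \cP$. The second key step is to use the convexity of $\cP$: since $\lambda \notin \cP$ and $e \in \cP$, the segment $\gamma$ crosses $\partial \cP$ at exactly one $t^* \in (0,1]$, which is then the unique root of $g(te + (1-t)\lambda)$ on $(0,1]$ and satisfies $\gamma(t^*) \in \partial \cP$.

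The main obstacle is the confinement step $\Gamma \setminus \{[\lambda]\} \subseteq O$, and in particular ruling out that $\Gamma$ could exit $O$ and later re-enter after crossing a more exterior oval (a real concern when $n \geq 5$ and a third innermost oval is present). The clean resolution is the two-point intersection count with each oval coming from hyperbolicity of $g$ with respect to $e$, together with smoothness. I would also handle the two degenerate configurations: if the projective line is tangent to $\partial O$ at $[\lambda]$, then the second intersection coincides with $[\lambda]$ and $\Gamma$ trivially remains in $\overline{O}$; if $e \in \partial \cP$, the single crossing degenerates to $t^* = 1$, still giving exactly one root on $(0,1]$.
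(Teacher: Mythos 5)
Your plan takes a genuinely different route from the paper. The paper first replaces $\{Q_i\}$ by a \emph{definite} determinantal representation of the same spectral curve (Helton--Vinnikov), which leaves the zero set of $g$ --- and hence the roots on the line $L$ --- unchanged, and then does a direct count of the $n+1$ real roots of $g\bigl(te+(1-t)\lambda\bigr)$ using the signatures at $t=0$, at $t=1$, and as $t\to\pm\infty$. You instead work projectively with the arc $\Gamma$, the nested-oval topology, and convexity of $\cP$. The arguments are morally cousins, but the paper's reduction buys one thing your outline lacks: control over the \emph{direction} in which the segment leaves $[\lambda]$.

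That is exactly where your proposal has a gap. The assertion ``for $t>0$ small, $\gamma(t)$ is a small displacement of $[\lambda]$ toward the interior point $[e]\in\cP\subseteq O$, so the arc enters $O$'' is not justified. The disk $O$ bounded by the oval of depth $\lfloor\frac{n+1}{2}\rfloor-1$ is \emph{not} a hyperbolicity cone and need not be convex, so pointing a segment at an interior point does not force it to enter $O$ at first order. Your two-point intersection count shows that once the arc is inside $O$ it cannot exit and return to $[e]$, but it cannot exclude the alternative picture in which $\gamma(t)$ leaves $O$ immediately at $t=0^+$, crosses some outer ovals, and re-enters $O$ through the second intersection point $p$ at some $t_p\in(0,1)$ before reaching $\partial\cP$. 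In that picture exactly two boundary crossings of $\partial O$ occur ($[\lambda]$ and $p$), so the intersection count is undisturbed --- yet there would be at least two roots of $g\circ\gamma$ in $(0,1]$ (at $t_p$ and at the $\partial\cP$ crossing), contradicting the lemma. Thus the confinement step is the content, not a formality.

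The paper's definite-representation trick is what closes this: with $\tilde Q_e\succ 0$ and $v$ the null vector of $\tilde Q_\lambda$, the zero eigenvalue of $\tilde Q_{\gamma(t)}$ has derivative $v^\top\tilde Q_{e-\lambda}v=v^\top\tilde Q_e v>0$ at $t=0$, so $\gamma(t)$ moves into the interior side of $C$. Equivalently, the paper's root count (one root at $t=0$, at least one in $(0,1]$, at least $n-1$ outside $[0,1]$ from the sign flip of $\tilde Q_{\lambda+t(e-\lambda)}$ at $t\to\pm\infty$, against total degree $n+1$) forces the conclusion without ever addressing the local geometry of $O$. If you want to keep the projective picture, you would need to supply an argument for the initial direction --- and the clean way to do that is to invoke the same definite-representation reduction, at which point you have essentially reproduced the paper's proof.
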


\begin{proof}
First, we can reduce to the case that $g$ is a definite representation of the spectral curve. Indeed, such a definite representation always exists \cite{HeltonVinnikovLMIsets} and the intersection points of the curve with the line $L = \{te + (1-t)\lambda \; \vert \; t \in \bbR\}$ do not depend on the representation of the curve. 

Note that there is at least one such root since $e \in \cP$. Now, there are at least $n-1$ roots in $(-\infty,0) \cup (1,\infty)$ since for $t$ sufficiently large, $Q_{\lambda + t(e - \lambda)}$ and $Q_{\lambda - t(e-\lambda)}$ have opposite signature. Since $0$ is a root and there is a root in $(0,1]$, there can be no other roots, as $g(te + (1-t)\lambda)$ has degree $n+1$.

\end{proof}

\begin{lemma}\label{lem:Second_oval_intersection}
 Suppose that $C \cap \bbR^3_+ \not = \emptyset$. Then, each connected component of $C \cap \bbR^3_+$ intersects a proper face of $\bbR^3_+$. Moreover, if a connected component of $C \cap \bbR^3_+$ intersects $\mathrm{int}(\bbR^3_+)$, then there exist $\lambda^{(1)}\not = \lambda^{(2)}$ in the intersection $C \cap \bbR^3_+$ with $|\supp(\lambda^{(i)})| \leq 2$. 
\end{lemma}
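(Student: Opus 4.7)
The plan is to exploit the topology of $C$ as a connected two-dimensional surface in $\bbR^3 \setminus \{0\}$. I would first observe that $C$ is connected: since the oval of depth $\lfloor (n+1)/2 \rfloor - 1$ is a contractible closed curve in $\bbP^2$, its preimage in $\bbR^3 \setminus \{0\}$ splits into two antipodal pieces, distinguished by whether $Q_\omega$ (rather than $Q_{-\omega}=-Q_\omega$) has $n-1$ positive and one negative eigenvalue. The sign condition in the definition of $C$ picks exactly one of these pieces, so $C$ is connected and homeomorphic to $\bbS^1 \times \bbR_{>0}$, a topological cylinder ruled by the rays through a single lift of the oval.

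For part~(1), I would take a connected component $K$ of $C \cap \bbR^3_+$ and suppose, toward a contradiction, that $K$ meets no proper face of $\bbR^3_+$, equivalently $K \subseteq \mathrm{int}(\bbR^3_+)$. The set $K$ is closed in $C$ as a connected component of the relatively closed set $C \cap \bbR^3_+$, and it is open in $C$ because each $\omega \in K$ admits an ambient Euclidean ball contained in $\mathrm{int}(\bbR^3_+)$ whose intersection with $C$ is a $C$-neighborhood of $\omega$ inside $K$. Connectedness of $C$ then forces $K = C$, i.e.\ $C \subseteq \mathrm{int}(\bbR^3_+)$. To exclude this remaining case I would invoke the nesting of ovals: if the lift of the depth-$(\lfloor (n+1)/2 \rfloor - 1)$ oval defining $C$ sits inside the open positive spherical triangle $\mathrm{int}(\bbR^3_+) \cap \bbS^2$, then so does the disk it bounds in $\bbS^2$, and hence also the innermost oval bounding $\cP$. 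This would force $\cP$ to contain strictly positive aggregations, contradicting the way this lemma is applied in the surrounding argument.

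For part~(2), assume a component $K$ meets $\mathrm{int}(\bbR^3_+)$, so by part~(1) we can take $K \neq C$. Because $C$ is a $2$-manifold and $K$ has nonempty interior in $C$, the relative boundary of $K$ in $C$ is a nonempty closed $1$-dimensional subset of $C \cap \partial \bbR^3_+$. A closed $1$-dimensional subset of the cylinder $C$ cannot consist of a single point, so it contains at least two distinct points $\lambda^{(1)} \neq \lambda^{(2)}$ lying in $C \cap \bbR^3_+$ with $|\supp(\lambda^{(i)})| \leq 2$, giving the required pair.

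The main obstacle is the degenerate case $K = C$, where the single lift of the oval sits entirely inside the open positive spherical triangle; the remaining steps are elementary point-set topology on a connected $2$-manifold. Ruling out that degenerate case in a self-contained way will require the geometric positioning of the oval forced by the nested structure of the hyperbolic spectral curve relative to the faces of $\bbR^3_+$, rather than pure topology of $C$ alone.
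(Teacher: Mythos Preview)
Your overall strategy matches the paper's: reduce to the degenerate case $K=C$, then argue that the disk bounded by $C$ (and hence $\cP$) is trapped in $\bbR^3_+$, which forces $S=\emptyset$ via Proposition~\ref{prop:General_Emptiness}. The clopen argument for part~(1) is exactly what the paper does.

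There is, however, a genuine gap in your part~(2). The sentence ``so by part~(1) we can take $K\neq C$'' does not follow. Part~(1) only excludes $K=C$ under the additional hypothesis $K\subseteq\mathrm{int}(\bbR^3_+)$; it says nothing against $K=C$ with $C\subseteq\bbR^3_+$ touching $\partial\bbR^3_+$. In that situation your relative-boundary argument collapses, since $\partial_C K=\emptyset$. The paper handles this by the same mechanism as part~(1): if $C$ lies in the closed triangle $\bbR^3_+\cap\bbS^2$, then (since the antipodal lift $-C$ lies in $-\bbR^3_+$ and is disjoint from the triangle) the Jordan disk that $C$ bounds inside the triangle must be the interior disk of the oval, so $\cP\subseteq\bbR^3_+$ and Proposition~\ref{prop:General_Emptiness} gives $X(-\bbR^3_+,f^h)=\emptyset$, contradicting $S\neq\emptyset$. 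You should invoke this explicitly rather than saying it ``contradicts the way this lemma is applied.''

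A smaller point: the ``closed $1$-dimensional subset cannot be a single point'' phrasing is loose and does not by itself rule out a single ray. The clean statement is on the sphere: once $K\neq C$, the set $K\cap\bbS^2$ is a proper closed arc of the circle $C\cap\bbS^2$, and since it meets $\mathrm{int}(T)$ it has positive length, hence two distinct endpoints, both necessarily on $\partial T$. That gives the two $\lambda^{(i)}$ with $|\supp(\lambda^{(i)})|\le 2$ directly. This is what the paper is doing when it says a single boundary point forces $C_1=C$.
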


\begin{proof}
Note that the interior of $C$ consists of $\lambda$ such that $Q_\lambda$ has $n$ positive and 1 negative eigenvalue. If a connected component $C_1$ of $C \cap \bbR^3_+$ does not intersect a proper face of $\bbR^3_+$, then $C_1$ is contained entirely in $\mathrm{int}(\bbR^3_+)$. This implies that the image of $C_1$ in $\bbP^2$ is an oval of the spectral curve of depth $\lfloor \frac{n+1}{2}\rfloor - 1$. Since there is only one such component, we have that $C_1 = C$. Now, the hyperbolicity cone $\cP$ is contained in the interior of the region bounded by $C$ and therefore if $C$ does not intersect a proper face of $\bbR^3_+$, we have that $\cP \subseteq \mathrm{int}(\bbR^3_+)$. By Proposition \ref{prop:General_Emptiness}, this implies that $S = \emptyset$, a contradiction. 

The preceding argument shows that $\bbR^3_+$ cannot contain the entirety of the region bounded by $C$. If a connected component $C_1$ of $C \cap \bbR^3_+$ contains an element with strictly positive entries and has a unique element $\lambda$ with $\left|\supp(\lambda)\right| \leq 2$, then the entirety of the region bounded by $C_1$ is contained in $\bbR^3_+$. In particular, we have that $C_1 = C$ and applying the preceding argument gives the desired contradiction. 
\end{proof}

We now make the following calculation which will be used in the proof of Theorem \ref{thm:FiniteAggs}.

\begin{proposition}\label{prop:positive_agg_doesnt_contribute}
Assume that $C \cap \bbR^3_+ \not = \emptyset$. Suppose further that there are $e \in \cP$ and $\lambda^{(1)},\lambda^{(2)} \in \Lambda \cap C$ such that $t\mu + (1-t)e = \lambda$ for some $\lambda \in \cone(\lambda^{(1)},\lambda^{(2)})$ and $0\leq t\leq 1$. Set $A^\top = \begin{bmatrix} \lambda^{(1)} & \lambda^{(2)} & \mu \end{bmatrix}$. Then,

\[\cP \subseteq \mathrm{int}\left(A^\top\left(\bbR^2_+\times \bbR \right)\right).\]
\end{proposition}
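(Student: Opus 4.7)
I would show $\cP \subseteq \mathrm{int}(W)$, where $W := A^\top(\bbR^2_+ \times \bbR) = \{a_1\lambda^{(1)} + a_2\lambda^{(2)} + b\mu : a_1, a_2 \geq 0,\; b \in \bbR\}$. In the non-degenerate case where $\lambda^{(1)}, \lambda^{(2)}, \mu$ are linearly independent, $W$ is a $3$-dimensional wedge with $\mathrm{int}(W) = \{a_1, a_2 > 0\}$ and $\partial W \subseteq \spann(\lambda^{(1)}, \mu) \cup \spann(\lambda^{(2)}, \mu)$. The degenerate case $t = 1$ would force $\mu \in \cone(\lambda^{(1)}, \lambda^{(2)})$ and collapse $W$; I would show this is inconsistent with $\cP \neq \emptyset$ being a full-dimensional hyperbolicity cone.

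Using the hypothesis $t\mu + (1-t)e = \lambda = a_1\lambda^{(1)} + a_2\lambda^{(2)}$ with $a_1, a_2 \geq 0$ and $t < 1$, I solve explicitly
\[
e \;=\; \frac{a_1}{1-t}\lambda^{(1)} + \frac{a_2}{1-t}\lambda^{(2)} - \frac{t}{1-t}\mu \;\in\; W.
\]
Since $\cP$ is an open, connected, convex cone meeting $W$ at $e$, to conclude $\cP \subseteq \mathrm{int}(W)$ it suffices by connectedness to prove
\[
\cP \cap \spann(\lambda^{(i)}, \mu) = \emptyset \qquad \text{for } i = 1, 2, \qquad (\star)
\]
since $\cP$ cannot exit $\mathrm{int}(W)$ without meeting $\partial W$, and $\partial W$ is contained in the union of these two planes. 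Note that once $(\star)$ holds, $e \in \cP$ forces both $a_1, a_2 > 0$ in the above representation (otherwise $e$ would lie in one of the excluded planes).

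I would prove $(\star)$ by contradiction. Suppose $\xi = c\lambda^{(1)} + d\mu \in \cP$. Applying Lemma~\ref{lem:Agg_Not_in_Oval} with $\lambda^{(1)} \in \Lambda \cap C$ and the hyperbolic-cone element $\xi$, the polynomial $g(s\xi + (1-s)\lambda^{(1)})$ has exactly one root in $(0, 1]$, lying on $\partial \cP$. Since the whole segment lies in $\spann(\lambda^{(1)}, \mu)$, the projective line $[\spann(\lambda^{(1)}, \mu)]$ enters the projective image $[\cP]$ of the innermost oval. The restriction $g\vert_{\spann(\lambda^{(1)}, \mu)}(u, v) = g(u\lambda^{(1)} + v\mu)$ is divisible by $v$ (because $g(\lambda^{(1)}) = 0$), so $[\lambda^{(1)}]$ is a simple intersection of this projective line with $\cV_\bbR(g)$, and by hyperbolicity the remaining $n$ intersections are all real. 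The hypothesis independently constrains $[\mu]$ to lie on the projective line through $[e] \in [\cP]$ that meets the chord $[\cone(\lambda^{(1)}, \lambda^{(2)})]$ at $[\lambda]$. Combining this with Lemma~\ref{lem:Agg_Not_in_Oval} applied to $e$ and $\lambda^{(1)}$, and invoking the parity of intersection of a projective line with each oval of the spectral curve, I would deduce that the position of $[\mu]$ forbids $[\spann(\lambda^{(1)}, \mu)]$ from entering the interior of the innermost oval, contradicting the assumption $\xi \in \cP$.

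\textbf{Main obstacle.} The hard step is the combinatorial argument in $(\star)$: tracking the $n+1$ real intersections of $[\spann(\lambda^{(1)}, \mu)]$ with $\cV_\bbR(g)$ and showing that the positional constraints on $[\mu]$ imposed by the hypothesis are incompatible with the line entering the innermost oval. The nested-oval structure of hyperbolic plane curves and Lemma~\ref{lem:Agg_Not_in_Oval} are the essential ingredients, but the precise case analysis (e.g., whether $[\mu]$ lies inside or outside $[C]$, and whether the chord $[\cone(\lambda^{(1)}, \lambda^{(2)})]$ enters $[\cP]$) requires careful bookkeeping.
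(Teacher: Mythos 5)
Your proposal reproduces the two-step structure of the paper's proof: first show that $\cP$ meets $W := A^\top(\bbR^2_+ \times \bbR)$, then show $\cP$ avoids $\partial W$ and conclude by connectedness of the open convex cone $\cP$. For the first step you simply rearrange $t\mu + (1-t)e = a_1\lambda^{(1)} + a_2\lambda^{(2)}$ to write $e$ as an explicit element of $W$; this is cleaner and more direct than the paper, which reaches the same point by positing a separating hyperplane and contradicting it. Your reformulation of the boundary-avoidance as $(\star)$ is also sound, and the distinction between the full planes $\spann(\lambda^{(i)},\mu)$ you use and the half-planes $\{a\mu+b\lambda^{(i)}\,:\,a\in\bbR,\,b\ge 0\}$ the paper uses is immaterial: since $\cP \subseteq \cP_2$ (the cone over the region bounded by $C$) and $\cP_2 \cap \spann(\mu,\lambda^{(i)}) = \cone(\mu,\lambda^{(i)})$, the plane can only meet $\cP$ inside the half-plane anyway.

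The genuine gap, which you yourself flag, is the proof of $(\star)$. Your outline invokes Lemma~\ref{lem:Agg_Not_in_Oval} and then gestures at a parity-of-intersections argument over the nested ovals, leaving the case analysis unresolved. The paper's proof of this step is much shorter and does not require a general intersection-parity bookkeeping: it applies Lemma~\ref{lem:Agg_Not_in_Oval} directly (with $\lambda = \mu$, so note that the argument tacitly assumes $\mu \in \Lambda \cap C$, which holds in every application of the proposition), and observes that if the half-plane through $\mu$ and $\lambda^{(i)}$ contained a point $e'$ of $\cP$, then the segment from $\mu$ to $e'$ would carry \emph{two} zeros of $g$ — one where it meets $\partial\cP$ and one where it passes through $\cV_\bbR(g)$ again — contradicting the lemma's exact count of one root in $(0,1]$. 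Your sketch identifies the same lemma as the crucial tool and even notes (correctly) that the collinearity constraint from $t\mu + (1-t)e = \lambda$ is the hypothesis you must exploit, but the argument as written never actually produces the second root; until that is done the proposal is incomplete at its only nontrivial step.
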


\begin{proof}
We proceed in two steps. First, we show that $\cP \cap \mathrm{int}\left(A^\top\left(\bbR^2_+ \times \bbR\right)\right) \neq \emptyset$. Suppose for the sake of a contradiction that $A^\top(\bbR_+^2 \times \bbR) \cap \cP = \emptyset$. Let $v$ be normal to a separating hyperplane oriented so that $v^\top y > 0$ for all $y \in \cP$ and $v^\top w \leq 0$ for all $w \in A^\top(\bbR_+^2 \times \bbR)$. Then $v^\top \mu < 0$ by the expression $t\mu = \lambda - (1-t)e$. But then, if $(c_1,c_2,d) \in \bbR^2_+ \times \bbR$, we have that $v^\top \left(d\mu + \sum_{j = 1}^{2}c_j\lambda^{(j)}\right)$ can take both positive and negative values by considering large enough positive and negative values of $d \in \bbR$. So, $v$ cannot be the normal to a separating hyperplane, the desired contradiction. 

We now show that $\{t\mu + s\lambda^{(i)} \; \vert \; t \in \bbR, s \in \bbR_+\}$ does not intersect $\cP$. This follows from Lemma \ref{lem:Agg_Not_in_Oval} since if this intersection contained some element $e$, then there would be two roots of $g$ restricted to the line segment between $\mu$ and $e$. 
\end{proof}

The inclusion of cones in the conclusion of Proposition \ref{prop:positive_agg_doesnt_contribute} implies that either $\Omega^{n+1}((A^\top(\bbR^2_+ \times \bbR))^\circ) \not = \emptyset$ or that the set $\Omega^n((A^\top(\bbR^2_+ \times \bbR))^\circ)$ has nontrivial first cohomology group. This calculation is leveraged to prove Theorem \ref{thm:FiniteAggs}.

\FiniteAggs*

\begin{proof}

Enumerate $\Lambda' = \{\lambda^{(1)},\lambda^{(2)},\ldots, \lambda^{(k)}\}$ and let $\lambda^{(k+1)} \in \Lambda \setminus \cone(\Lambda')$. We will show that $\lambda^{(k+1)}$ is not necessary to describe the set defined by all aggregations by showing that $\bigcap_{i = 1}^k S_{\lambda^{(i)}} = \bigcap_{i = 1}^{k+1}S_{\lambda^{(i)}}$. Note that if $g(\lambda^{(k+1)}) \not= 0$, then we can write $\lambda^{(k+1)}$ as a conical combination of an element of $\cone(\Lambda ')$ and an element of $C$, where $C$ is the portion of the affine cone over the oval of depth $\lfloor \frac{n+1}{2}\rfloor - 1$ which contains matrices with one negative eigenvalue. So, it suffices to take $\lambda^{(k+1)} \in C$. By the hypothesis that $S \not = \emptyset$, the hyperbolicity cone $\cP \not \subseteq \bbR^3_+$. This implies that there is $e \in \cP$ and $\lambda^{(i)},\lambda^{(j)}\in \Lambda$ such that $t\lambda^{(k)} + (1-t)e = \lambda$ for some $\lambda \in \cone(\lambda^{(i)},\lambda^{(j)})$ and $0\leq t\leq 1$. Since $\lambda^{(k+1)} \not \in \cone(\Lambda')$, we can take $\lambda^{(i)},\lambda^{(j)} \in C$. 

Set $K = \cone(\lambda^{(i)},\lambda^{(j)},\mu,-\mu)$. By Proposition \ref{prop:positive_agg_doesnt_contribute}, $\cP \subseteq \mathrm{int}(K^\circ)$ and therefore either $H^1(\Omega^{n}(K^\circ)) \not = 0$ or $\Omega^{n+1}(K^\circ) \neq \emptyset$. By Proposition \ref{prop:General_Emptiness}, this implies that $X(K^\circ,f^h) = \emptyset$, i.e. $\cV_\bbR(f_{\lambda^{(k+1)}}^h) \cap X(-\bbR^2_+, (f_{\lambda^{(i)}}^h,f_{\lambda^{(j)}}^h)) = \emptyset$. This in turn implies that $\cV_{\bbR}(f_{\lambda^{(k+1)}}^h) \cap X(\cone(\Lambda')^\circ, f^h) = \emptyset$. By the hypothesis that $\Omega^n(\cone(\Lambda')^\circ)$ is contractible and Lemma \ref{lem:UnnecessaryAggstwo}, this implies that $\lambda^{(k+1)}$ is unnecessary, so that $\bigcap_{i =  1}^{k}S_{\lambda^{(i)}} = \bigcap_{i = 1}^{k+1}S_{\lambda^{(i)}}$.
\end{proof}

Figure \ref{fig:Cohomology} illustrates the argument of the proof of Theorem \ref{thm:FiniteAggs}. Again, we take the system of quadratics from Example \ref{ex:ch_not_aggs} so that the curve $\cV_{\bbR}(g)$ is hyperbolic but $g$ is not a definite representation. The aggregation $\lambda^{(k+1)}$ which lies in the interior of $\bbR^3$ does not contribute to the intersection of all aggregations of correct signature, which is given by $\bigcap_{\lambda \in \Lambda'}S_{\lambda}$. 

\begin{figure}[t]
    \centering
    \includegraphics[width = 0.5\textwidth]{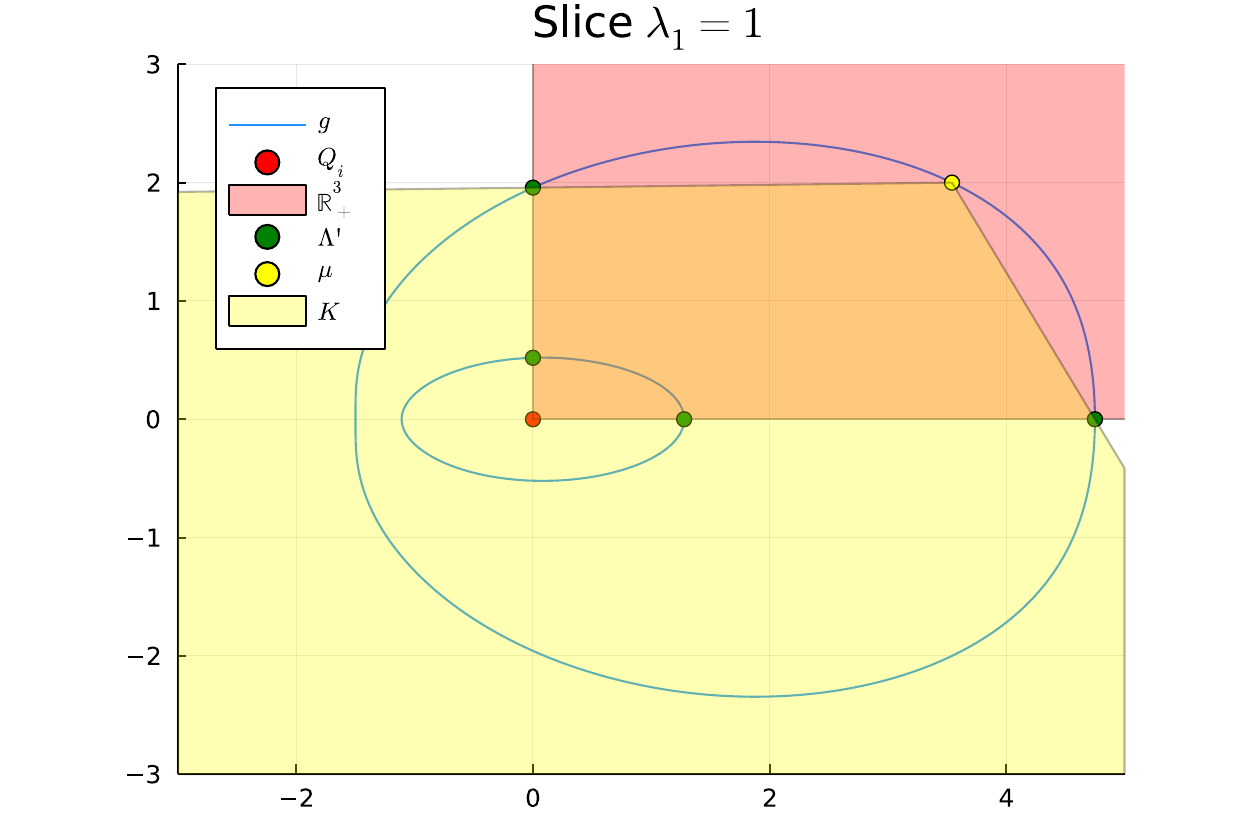}
    \caption{An illustration of Theorem \ref{thm:FiniteAggs}. The set described by aggregations of the correct signature is $\bigcap_{\lambda \in \Lambda'}S_\lambda$. The aggregation $\mu$ does not contribute to this description. This is certified by the fact that the cone $K = A^\top (\bbR^4_+\times \bbR)$ contains the hyperbolicity cone of $g$ in its interior. In cohomological terms, $H^1(\Omega^n(K^\circ)) = \bbZ_2$.}
    \label{fig:Cohomology}
\end{figure}

As shown in Example \ref{ex:positive_agg_needed}, the set defined by the intersection of all permissible aggregations is not necessarily defined by the intersection of aggregations in $\Lambda'$ when the assumption on the contractibility of $\Omega^n(\cone(\Lambda')^\circ)$ is not met. However, we are able to develop a bound on the number of aggregations with positive entries needed when PDLC is not satisfied and the set defined by all permissible aggregations is not connected. First, we observe that adding an aggregation with strictly positive entries to $\Lambda'$ cannot make the number of connected components increase. 

\begin{lemma}\label{lem:positive_agg_components}
Set $K_0 = \cone(\Lambda')$ and let $\mu^{(1)}, \mu^{(2)}, \ldots, \mu^{(N)} \in \Lambda$ have strictly positive entries. For each $1\leq j\leq N$, set $K_j = \cone(\Lambda' \cup \bigcup_{i = 1}^{j}\{\mu^{(i)}\})$. Then, every connected component of $\Omega^{n}(K_j^\circ)$ intersects a component of $\Omega^{n}(K_0^\circ)$. Moreover, 

\[1 \leq \dim H^{0}(\Omega^n(K_N^\circ)) \leq \dim H^{0}(\Omega^n(K_{N-1}^\circ)) \leq  \dots \leq \dim H^0(\Omega^n(K_1^\circ)) \leq \dim H^0(\Omega^n(K_0^\circ)).\] 
\end{lemma}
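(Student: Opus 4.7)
The first statement is immediate from $K_0 \subseteq K_j$: since $\Omega^n(K^\circ) = K \cap \bbS^2 \cap \{\lambda : Q_\lambda \text{ has at least } n \text{ positive eigenvalues}\}$, we have $\Omega^n(K_0^\circ) \subseteq \Omega^n(K_j^\circ)$, and the ``containing component'' map from $\pi_0(\Omega^n(K_0^\circ))$ to $\pi_0(\Omega^n(K_j^\circ))$ is well-defined; its surjectivity is exactly the intersection claim in the lemma.

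I would establish both the surjectivity and the chain of $\dim H^0$ inequalities by inducting on $j$, reducing to the single statement that the inclusion $\Omega^n(K_{j-1}^\circ) \hookrightarrow \Omega^n(K_j^\circ)$ induces an injection on $H^0$ with $\bbZ_2$ coefficients. This injectivity is equivalent to the condition that every connected component of $\Omega^n(K_j^\circ)$ meets $\Omega^n(K_{j-1}^\circ)$, and since $K_j = \cone(K_{j-1} \cup \{\mu^{(j)}\})$, it becomes the geometric claim that no connected component of the ``added wedge'' $\Omega^n(K_j^\circ) \setminus \Omega^n(K_{j-1}^\circ)$ is isolated from $\Omega^n(K_{j-1}^\circ)$.

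To prove this geometric claim I plan to use the hyperbolicity of $g$ (Theorem \ref{thm:empty_var_iff_hyperbolic_curve}) and Lemma \ref{lem:signature_ovals} to describe $\Omega^n \subseteq \bbS^2$ explicitly in terms of the nested ovals of the spectral curve: a single topological disk in the PDLC case, a single topological annulus in the non-PDLC case. Because $\mu^{(j)} \in \mathrm{int}(\bbR^3_+)$ and each $K_i \subseteq \bbR^3_+$, the cone wedge $K_j \setminus K_{j-1}$ is a connected region that abuts $\partial K_{j-1}$ along a full relative-interior facet and contains the point $\mu^{(j)}$, which lies in $\Omega^n$ whenever $g(\mu^{(j)}) \neq 0$. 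I would then combine this connectivity of the wedge with the explicit shape of $\Omega^n$ to argue that any connected piece of $\Omega^n$ inside the wedge is linked to $\Omega^n(K_{j-1}^\circ)$ by a path in $\Omega^n \cap K_j$, either through the disk (PDLC case) or around the annulus (non-PDLC case).

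For the lower bound $\dim H^0(\Omega^n(K_N^\circ)) \geq 1$, it suffices to exhibit a point of $\Omega^n \cap K_N$: any (generic) $\mu^{(j)}$ with $g(\mu^{(j)}) \neq 0$ has $Q_{\mu^{(j)}}$ with $n$ positive and one negative eigenvalue, placing $\mu^{(j)} \in \Omega^n \cap K_N$ by construction. The principal obstacle is the geometric claim that every component of the added wedge has closure meeting $\Omega^n(K_{j-1}^\circ)$: although $K_j \setminus K_{j-1}$ is itself connected and intersects $\Omega^n$, this intersection could a priori split into several pieces, some of which might fail to approach the facet $\partial K_{j-1} \cap K_j$ through $\Omega^n$. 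Ruling this out is the delicate step, and will require carefully combining the hyperbolic structure of $\Omega^n$ with the strict positivity of $\mu^{(j)}$ to exclude a situation in which $\Omega^n$ ``re-enters'' the wedge through a region disjoint from $\partial K_{j-1} \cap \Omega^n$.
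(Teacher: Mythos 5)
The proposal correctly identifies the structure of the statement (surjectivity of $\pi_0(\Omega^n(K_0^\circ)) \to \pi_0(\Omega^n(K_j^\circ))$) and reduces it to a geometric claim, but then explicitly admits that the key step — ruling out a component of $\Omega^n(K_j^\circ)$ that sits entirely inside the wedge $K_j \setminus K_{j-1}$ and never reaches $\partial K_{j-1}$ — is not resolved. Your closing paragraph (``Ruling this out is the delicate step, and will require carefully combining...'') is precisely where the argument is needed, and the wedge/annulus picture you sketch does not by itself prevent a ``trapped'' component. So this is a genuine gap, not merely a loose end.

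The paper closes this gap by a global argument that bypasses the wedge decomposition entirely. Instead of tracking how $\Omega^n$ meets the wedge $K_j \setminus K_{j-1}$, it shows directly that \emph{no} component of $\Omega^n(K_j^\circ)$ can consist entirely of points with all three coordinates strictly positive. If such a component existed, its boundary would have to be a full affine cone over an oval of $\cV_\bbR(g)$ whose interior has matrices with $n$ positive eigenvalues; by hyperbolicity (Theorem~\ref{thm:empty_var_iff_hyperbolic_curve} and Lemma~\ref{lem:signature_ovals}) this oval has depth $\lfloor \frac{n+1}{2}\rfloor - 1$, so $K_j$ would contain the entire innermost nest, hence the hyperbolicity cone $\cP$ in its interior; Proposition~\ref{prop:General_Emptiness} then forces $X(K_j^\circ,f^h) = \emptyset$, contradicting $\emptyset \neq X(-\bbR^3_+,f^h) \subseteq X(K_j^\circ,f^h)$. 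It follows that every component contains a point on a facet of $\bbR^3_+$, hence a point of $\cone(\Lambda') = K_0$, and the monotone chain of $\dim H^0$ drops out of the nested inclusions $K_0 \subseteq K_{j-1} \subseteq K_j$ without any per-wedge induction. One further small issue in your sketch: for the lower bound $\dim H^0(\Omega^n(K_N^\circ)) \geq 1$ you point to a ``generic'' $\mu^{(j)}$ with $g(\mu^{(j)}) \neq 0$, but the hypothesis only gives $\mu^{(j)} \in \Lambda$, which permits all the $\mu^{(j)}$ to lie on the spectral curve (in which case $Q_{\mu^{(j)}}$ has only $n-1$ positive eigenvalues and $\mu^{(j)} \notin \Omega^n$). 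The paper instead observes $\emptyset \neq X(-\bbR^3_+,f^h) \subseteq X(K_N^\circ,f^h)$ and reads off $\dim H_0(X(K_N^\circ,f^h)) \geq 1$ from the spectral sequence of Theorem~\ref{thm:Spectral_SequenceA}, which is unconditional.
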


\begin{proof}
We first show that there are no connected components of $\Omega^n(K_j^\circ)$ where the points contain all positive entries. Suppose for the sake of a contradiction that there was such a connected component. Then, its boundary would form an oval of $\cV_\bbR(g)$ whose interior contains matrices with $n$ positive eigenvalues. If $\cV_\bbR(f_1^h,f_2^h,f_3^h) = \emptyset$, then $\cV_\bbR(g)$ is hyperbolic and the only oval which contains matrices with $n$ positive eigenvalues in its interior is the oval of depth $\lfloor \frac{n+1}{2}\rfloor - 1$. So, if $K_j$ contains this oval, it contains the oval of maximum depth and therefore the hyperbolicity cone of $g$ which consists of either positive definite matrices or matrices with exactly two negative eigenvalues. By Proposition \ref{prop:General_Emptiness}, this certifies that $X(K_j^\circ,f^h) = \emptyset$. This is a contradiction since $S \not = \emptyset$ and therefore $X(-\bbR^3_+,f^h) \not = \emptyset$ but $X(-\bbR^3_+,f^h) \subseteq X(K_j^\circ,f^h)$.

This implies that every connected component of $\Omega^n(K_j^\circ)$ has nonempty intersection with $\Omega^n(K_0^\circ)$. Indeed, there is $\lambda$ in this component with $\lambda_i = 0$ for some $i \in [3]$. Therefore either $\lambda \in \Lambda'$ or $\lambda$ is a conical combination of two elements of $\Lambda'$. Note that since $\Omega^n(K_0^\circ) \subseteq \Omega^n(K_{j-1}^\circ) \subseteq \Omega^n(K_j^\circ)$ for all $j$, this implies that  each connected component of $\Omega^n(K_j^\circ)$ intersects $\Omega^{n}(K_{j-1}^\circ)$ .

The last claim follows since $\Omega^n(K_{j-1}^\circ) \subseteq \Omega^n(K_j^\circ)$ for each $j$. So, if each connected component of $\Omega^n(K_j^\circ)$ intersects $\Omega^n(K_{j-1}^\circ)$, then there are at least as many connected components of $\Omega^n(K_{j-1}^\circ)$ as connected components of $\Omega^n(K_j^\circ)$. Finally, $\dim H^0(\Omega^n(K^\circ_N))\not=0$ by Theorem \ref{thm:Spectral_SequenceA} since $\emptyset \not = X(-\bbR^3_+,f^h)\subseteq X(K_N^\circ,f^h)$ and therefore $\dim H_0(X(K_N^\circ,f^h)) \geq 1$.
\end{proof}

We can now prove Theorem \ref{thm:NoPDLCDisconnectBound}. 

\NoPDLCDisconnectBound*

\begin{proof}
By Lemma \ref{lem:UnnecessaryAggstwo} and Proposition \ref{prop:positive_agg_doesnt_contribute}, given some finite list of aggregations $\lambda^{(1)},\lambda^{(2)},\ldots, \lambda^{(k)}$ and the cone $K_0 = \cone(\{\lambda^{(i)}\; \vert \; i \in [k]\})$, a sequence of aggregations $\mu^{(1)},\mu^{(2)},\ldots, \mu^{(N)}\in \Lambda$, each with positive entries, contains only necessary elements only if $\dim H^0(\Omega^n(K_j^\circ)) \not = \dim H^0(\Omega^n(K_{j-1}^\circ))$, where $K_j = \cone(K_0\cup \bigcup_{\ell = 1}^j\{\mu^{(\ell)}\})$. By Lemma \ref{lem:positive_agg_components}, this only happens if  $\dim H^0(\Omega^n(K_{j})^\circ) < \dim H^0(\Omega^n(K_{j-1}^\circ))$. In particular, the maximal length $N$ of a sequence of aggregations with positive entries such that $\dim H^0(\Omega^n(K_{j}^\circ))<\dim H^0(\Omega^n(K_{j-1}^\circ))$ for all $1\leq j\leq N$, starting with $K_0 = \cone(\Lambda')$, is at most $\dim H^0(\Omega^n(\cone(\Lambda')^\circ)) - 1$.
\end{proof}

Together, Theorems \ref{thm:FiniteAggs} and \ref{thm:NoPDLCDisconnectBound} imply Theorem \ref{thm:FinitePermissibleAggs}.

\FinitePermissibleAggs*

\begin{proof}
If $\Omega^n(\cone(\Lambda')^\circ)$ is contractible, then $\Lambda_1 = \Lambda'$ by Theorem \ref{thm:FiniteAggs}. Otherwise, the result is Theorem \ref{thm:NoPDLCDisconnectBound}.
\end{proof}

\section{A Sufficient Condition for Obtaining the Convex Hull via Aggregations}\label{sec:SufficientCondition}

In this section, we present proofs of the results stated in Section \ref{sec:suff_agg}. Throughout this section, we assume that $\cV_\bbR(f_1^h,f_2^h,f_3^h) = \emptyset$. Our approach relies on certifying that $S^h \cap H = 0$ using positive definite aggregations of the restrictions $Q_i\vert_H$.

We first prove a variant of {\cite[Lemma 5.4]{dey_obtaining_2022}} that is compatible with the closed inequality and projective setting. 

\begin{lemma}\label{lem:valid_hyperplanes}
Suppose that $S^h \cap \{(x,x_{n+1}) \in \bbR^{n+1} \; \vert\; x_{n+1} = 0\} = \{0\}$. If $\alpha \in \bbR^{n}$ and $\beta \in \bbR$ are such that $\alpha^\top x < \beta$ for all $x \in S$ and if $H = \{(u,u_{n+1}) \in \bbR^{n+1} \; \vert \; \alpha^\top u = \beta u_{n+1}\}$, then $S^h \cap H = 0$. 
\end{lemma}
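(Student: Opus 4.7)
The plan is to take an arbitrary point $(u, u_{n+1}) \in S^h \cap H$ and show that it must be the origin by splitting into cases based on the sign of the last coordinate $u_{n+1}$ and using the homogeneity of $f_i^h$ together with the strict inequality $\alpha^\top x < \beta$ on $S$.

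First I would dispose of the case $u_{n+1} = 0$: by definition of $S^h$ the point $(u,0)$ satisfies $f_i^h(u,0) \leq 0$ for all $i \in [3]$, so by the hypothesis that $S$ has no points at infinity, $u = 0$ and we are done. Next, for the generic case $u_{n+1} \neq 0$, I would exploit the fact that $f_i^h$ is homogeneous of even degree 2, so that $(u,u_{n+1}) \in S^h$ if and only if $(-u,-u_{n+1}) \in S^h$; after possibly negating, we may assume $u_{n+1} > 0$. Then the point $y := u/u_{n+1}$ satisfies $f_i(y) = f_i^h(y,1) = u_{n+1}^{-2}f_i^h(u,u_{n+1}) \leq 0$, so $y \in S$. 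On the other hand, the condition $(u,u_{n+1}) \in H$ says $\alpha^\top u = \beta u_{n+1}$, which rescaled gives $\alpha^\top y = \beta$. This directly contradicts the hypothesis $\alpha^\top x < \beta$ for all $x \in S$.

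There is no real obstacle here; the statement is a routine homogenization argument, and the only subtlety is the sign handling (without the ``no points at infinity'' assumption, the case $u_{n+1}=0$ could fail, and without the evenness of the degree we could not freely negate). The role of the lemma in what follows is to convert a valid affine linear inequality on $S$ into a linear hyperplane $H$ through the origin in $\bbR^{n+1}$ which meets $S^h$ only at the origin, enabling an application of a restriction argument (e.g.\ the S-lemma variant used in Proposition~\ref{prop:Aggs_Exist}) to produce a good aggregation.
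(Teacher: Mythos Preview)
Your proof is correct and follows the same approach as the paper's own proof, which is a one-line contrapositive: if $0 \neq (\hat u, \hat u_{n+1}) \in S^h \cap H$, then $\hat u / |\hat u_{n+1}| \in S$ while $\alpha^\top(\hat u / \hat u_{n+1}) = \beta$. You simply spell out the case split on the sign of $u_{n+1}$ and the role of the no-points-at-infinity hypothesis more explicitly than the paper does.
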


\begin{proof}
If $0 \not = (\hat{u},\hat{u}_{n+1}) \in S^h \cap H$, then $\frac{\hat{u}}{|\hat{u}_{n+1}|} \in S$ and $\alpha^\top \left(\frac{\hat{u}}{\hat{u}_{n+1}}\right) = \beta$.
\end{proof}

We now record a variant of the proof strategy of \cite[Theorem 2.4]{dey_obtaining_2022} and \cite[Theorem 2.8]{blekherman_aggregations_2022}.

\HyperplaneAggs*

\begin{proof}
Suppose that $y \not \in \cconv(S)$. Then, there is $\alpha\in \bbR^n$ such that $\alpha^\top x < \alpha^\top y$ for all $x \in S$. Let $\beta = \alpha^\top y$ and $H = \{(x,x_{n+1}) \; \vert \; \alpha^\top x = \beta x_{n+1}\}$. If property \eqref{eq:emptiness_cert} holds, then there is $\lambda \in \bbR^3_+$ such that $Q_\lambda \vert_H \succ 0$. By the Cauchy Interlacing Theorem and the hypothesis that $\mathrm{int}(S) \not = \emptyset$, it follows that $Q_\lambda$ has exactly one negative eigenvalue. Since $Q_\lambda \vert_H \succ 0$, this in turn implies that either $S_\lambda$ is convex or that $S_\lambda$ consists of two disjoint convex connected components separated by the affine hyperplane $\{x \in \bbR^n \; \vert \; \alpha^\top x = \beta\}$. Since $\alpha^\top x < \beta$ is valid on $S$, this implies that $\cconv(S)$ is contained in a single convex connected component of $S_\lambda$ and therefore $\lambda$ is a good aggregation which certifies that $y \not \in \cconv(S)$.  \end{proof}

In light of Proposition \ref{prop:HyperplaneAggs}, any condition which ensures that the emptiness of $S^h\cap H$ is certified by positive definite aggregations of the $Q_i\vert_H$ also ensures that $\cconv(S)$ has a representation as the intersection of good aggregations.

We first restrict our attention to the case that the polynomials $g(\lambda) = \det(Q_\lambda)$ and $g_H(\lambda) = \det(Q_\lambda \vert_H)$ are both smooth and hyperbolic. The first step is to understand the relationship between the curves $\cV_\bbR(g)$ and $\cV_\bbR(g_H)$. In the case that $Q_1,Q_2,Q_3$ satisfy PDLC, then $g_H$ interlaces $g$ \cite{dym_computing_2012}. In the case that $\cV_\bbR(f_1^h,f_2^h,f_3^h) = \emptyset$ but the $Q_i$ do not satisfy PDLC, the relationship is more subtle.  If $g$ is hyperbolic, we set $\cP$ to be the hyperbolicity cone of $g$ which contains either positive definite matrices or matrices with exactly two negative eigenvalues.

If $\cV_{\bbR}(f_1^h,f_2^h,f_3^h) = \emptyset$, then $\cV_{\bbR}(f_1^h,f_2^h,f_3^h) \cap H = \emptyset$. Theorem \ref{thm:empty_var_iff_hyperbolic_curve} then implies that a hyperbolicity cone $\cP_H$ of $g_H$ contains either positive definite matrices or matrices with exactly two negative eigenvalues. We apply interlacing properties of eigenvalues to understand how the ovals of $\cV_{\bbR}(g)$ and $\cV_\bbR(g_H)$ interact by restricting to a pencil (see also \cite{thompson1991pencils,thompson1976characteristic}).

\begin{lemma}\label{lem:zeros_odd} 
Suppose that $\lambda^{(1)}, \lambda^{(2)} \in \bbR^3$ are such that $g(\lambda^{(i)}) = 0$ for $i = 1,2$ and that $g(t\lambda^{(1)} + (1-t)\lambda^{(2)}) \not = 0$ for $t \in (0,1)$. Suppose further that $g_H(\lambda^{(i)}) \not= 0$ for $i = 1,2$. The number of zeros of $g_H(t\lambda^{(1)} + (1-t)\lambda^{(2)})$ for $t \in (0,1)$ is even if $Q_{\lambda^{(1)}}$ and $Q_{\lambda^{(2)}}$ have the same signature and odd otherwise.
\end{lemma}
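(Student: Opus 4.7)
The plan is to parametrize the pencil by $\omega(t) = t\lambda^{(1)} + (1-t)\lambda^{(2)}$ and analyze the univariate polynomial $p_H(t) := g_H(\omega(t))$, whose roots on $(0,1)$ are exactly what we need to count. Since $p_H(0),\, p_H(1) \neq 0$ by hypothesis, and since $\mathrm{sign}(p_H(t)) = (-1)^{n_-(Q_{\omega(t)}|_H)}$ whenever $Q_{\omega(t)}|_H$ is nonsingular, the parity of the number of zeros of $p_H$ on $(0,1)$ (counted with multiplicity) equals the parity of $n_-(Q_{\lambda^{(1)}}|_H) - n_-(Q_{\lambda^{(2)}}|_H)$. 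So the task reduces to comparing these two negative-inertia indices, and then relating them to the signatures of the full matrices $Q_{\lambda^{(i)}}$.

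The first auxiliary step I would carry out is to show that each $Q_{\lambda^{(i)}}$ has nullity exactly one. From the containment $\ker(Q_{\lambda^{(i)}}) \cap H \subseteq \ker(Q_{\lambda^{(i)}}|_H)$ and the fact that $H$ has codimension one, a nullity of two or more for $Q_{\lambda^{(i)}}$ would force $Q_{\lambda^{(i)}}|_H$ to be singular, contradicting $g_H(\lambda^{(i)}) \neq 0$; and $g(\lambda^{(i)}) = 0$ gives nullity at least one. With this pinned down, Cauchy interlacing applied to $Q_{\lambda^{(i)}}$ and $Q_{\lambda^{(i)}}|_H$, together with the nonsingularity of the restriction, forces $n_-(Q_{\lambda^{(i)}}|_H) = n_-(Q_{\lambda^{(i)}})$ after a short case check on where the unique zero eigenvalue sits among the ordered eigenvalues.

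The second step analyzes the signatures via the open segment. For $t \in (0,1)$, $g(\omega(t)) \neq 0$ means $Q_{\omega(t)}$ is invertible with eigenvalues never crossing zero, so $n_-(Q_{\omega(t)})$ is a constant $k$ and $\sigma(Q_{\omega(t)}) = N - 2k$ with $N = n+1$. Because each endpoint matrix has nullity exactly one, precisely one eigenvalue of $Q_{\omega(t)}$ must degenerate to zero as $t \to 0^+$ (and similarly at $1^-$); whether the degenerating eigenvalue is positive or negative determines $n_-(Q_{\lambda^{(i)}}) \in \{k, k-1\}$ and $\sigma(Q_{\lambda^{(i)}}) \in \{N - 2k - 1,\, N - 2k + 1\}$ in coupled fashion, with $\sigma(Q_{\lambda^{(i)}}) = N - 2k - 1$ corresponding to $n_-(Q_{\lambda^{(i)}}) = k$ and $\sigma(Q_{\lambda^{(i)}}) = N - 2k + 1$ corresponding to $n_-(Q_{\lambda^{(i)}}) = k - 1$. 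Hence $\sigma(Q_{\lambda^{(1)}}) = \sigma(Q_{\lambda^{(2)}})$ if and only if $n_-(Q_{\lambda^{(1)}}) = n_-(Q_{\lambda^{(2)}})$, which by the first paragraph gives an even count of zeros; otherwise the two negative-inertia values differ by exactly one, producing an odd count.

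The main technical obstacle is bookkeeping rather than depth: the interlacing step has minor boundary cases (when the zero eigenvalue of $Q_{\lambda^{(i)}}$ is at the top or bottom of the spectrum), and the endpoint eigenvalue analysis relies on continuity of eigenvalues to guarantee that exactly one eigenvalue of $Q_{\omega(t)}$ degenerates. The conceptual crux is noting that the nonvanishing hypothesis $g_H(\lambda^{(i)}) \neq 0$ forces $Q_{\lambda^{(i)}}$ to be as generic a singular matrix as possible (nullity one), so that restriction to $H$ preserves the negative inertia exactly, and the entire parity question can be reduced to a one-eigenvalue accounting at each endpoint of the pencil.
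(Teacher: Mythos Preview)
Your proposal is correct and follows essentially the same approach as the paper: compare the signs of $g_H$ at the two endpoints via the negative-inertia indices of $Q_{\lambda^{(i)}}\vert_H$, which Cauchy interlacing pins down from those of $Q_{\lambda^{(i)}}$. You are in fact more careful than the paper on one point: you explicitly argue that the hypothesis $g_H(\lambda^{(i)})\neq 0$ forces $Q_{\lambda^{(i)}}$ to have nullity exactly one, whereas the paper simply asserts this structure without justification; your additional analysis of the constant interior signature and the single degenerating eigenvalue at each endpoint likewise makes explicit what the paper leaves implicit when it restricts the ``otherwise'' case to signatures differing by exactly one positive eigenvalue.
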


\begin{proof}
Let $r \in \bbZ$ be such that $Q_{\lambda^{(1)}}$ has $r$ positive eigenvalues, $n-r$ negative eigenvalues, and 0 as an eigenvalue of multiplicity one. Then, $Q_{\lambda^{(1)}}\vert_H$ also has $r$ positive eigenvalues and $n-r$ negative eigenvalues by the Cauchy Interlacing Theorem. 

If $Q_{\lambda^{(1)}}$ and $Q_{\lambda^{(2)}}$ have the same signature, then $Q_{\lambda^{(2)}}\vert_H$ also has $r$ positive and $n-r$ negative eigenvalues. In particular, $g_H(\lambda^{(1)})$ and $g_H(\lambda^{(2)})$ have the same sign which implies that $g_H(t\lambda^{(1)} + (1-t)\lambda^{(2)}) = 0$ for an even number of $t \in (0,1)$. 

On the other hand, if $Q_{\lambda^{(2)}}$ has $r+1$ positive eigenvalues, $n-r-1$ negative eigenvalues, and 0 as an eigenvalue of multiplicity one, then $Q_{\lambda^{(2)}}\vert_H$ has $r+1$ positive eigenvalues and $n-r-1$ negative eigenvalues. In particular, $g_H(\lambda^{(1)})$ and $g_H(\lambda^{(2)})$ have different signs and therefore $g_H(t\lambda^{(1)} + (1-t)\lambda^{(2)})$ for an odd number of $t\in (0,1)$. The case where $Q_{\lambda^{(2)}}$ has $r-1$ positive and $n-r+1$ negative eigenvalues is similar.
\end{proof}

Lemma \ref{lem:zeros_odd} reduces the possible interactions between $\cV_\bbR(g)$ and $\cV_\bbR(g_H)$. In order to prove Theorem \ref{thm:HyperbolicityContainment}, we use the following spectral sequence from \cite{agrachev_systems_2012} which computes relative homology groups of hyperplane sections of the solution set to a system of quadratic inequalities.

\begin{theorem}[{\cite[Theorem D]{agrachev_systems_2012}}]\label{thm:Relative_Spectral_Sequence}
    Fix a polyhedral cone $K \subseteq \bbR^m$, a homogenous quadratic map $p:\bbR^{n+1} \to \bbR^m$, and a hyperplane $\bar{H}\subseteq \bbP^n$. There is a first quadrant cohomology spectral sequence $(G_r,d_r)$ converging to $H_{n-*}(X(K,p),X(K,p)\cap \bar{H})$ with 

    \[G_2^{i,j} = H^i(\Omega^j_H(K), \Omega^{j+1}(K)) \text{ for } j > 0, \quad G_2^{i,0} = H^i(C\Omega(K),\Omega^1(K)).\]
\end{theorem}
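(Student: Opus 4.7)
The plan is to extend the construction underlying Theorem \ref{thm:Spectral_SequenceA} to the relative setting of the pair $(X(K,p), X(K,p)\cap \bar{H})$. Recall that the absolute spectral sequence of Theorem \ref{thm:Spectral_SequenceA} is established by introducing an incidence variety $B$ equipped with two projections $\pi_1:B \to \bbP^n$ and $\pi_2: B \to C\Omega(K)$: the first is a homotopy equivalence onto $X(K,p)$, and the second is a fibration whose fiber over $\lambda$ is a projectivization of the positive cone of $Q_\lambda$, with dimension determined by the number of positive eigenvalues of $Q_\lambda$. The stratification of the base $C\Omega(K)$ by the signature strata $\Omega^j(K)$ then yields the spectral sequence via the Leray filtration of $\pi_2$.

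To handle the relative pair, I would carry out the same incidence construction simultaneously on $\bbP^n$ and on $\bar{H}$, producing a pair $(B, B_H)$ with $B_H = \pi_1^{-1}(\bar{H})$. The first projection gives an isomorphism $H_*(B, B_H) \cong H_*(X(K,p), X(K,p)\cap \bar{H})$ because the homotopy equivalence $\pi_1:B \to X(K,p)$ restricts to a homotopy equivalence $\pi_1:B_H \to X(K,p)\cap \bar{H}$. The second projection restricts to $\pi_2:B_H \to C\Omega(K)$, and its fiber over $\lambda$ is the projectivization of the positive cone of the restriction $Q_\lambda\vert_H$; by Cauchy interlacing this fiber has dimension either equal to or one less than the fiber of $\pi_2: B \to C\Omega(K)$, yielding in particular the base inclusion $\Omega^{j+1}(K) \subseteq \Omega^j_H(K)$.

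The Leray spectral sequence of $\pi_2$ for the pair $(B, B_H)$ then produces the claimed $E_2$ page. At stratum level $j \geq 1$, the nontrivial contribution comes from the locus $\Omega^j_H(K) \setminus \Omega^{j+1}(K)$, where the fiber of $\pi_2$ on $B$ is one dimension larger than its counterpart on $B_H$; excision identifies this contribution with $H^i(\Omega^j_H(K), \Omega^{j+1}(K))$. The ambient $j = 0$ stratum contributes $H^i(C\Omega(K), \Omega^1(K))$, accounting for the base outside any positive signature stratum. Convergence to $H_{n - *}$ rather than $H^*$ follows from fiberwise Poincar\'e-Lefschetz duality, as in the original proof of Theorem \ref{thm:Spectral_SequenceA}.

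The main obstacle will be the careful verification of the filtration structure at singular loci, in particular strata where $Q_\lambda$ becomes degenerate or where $H$ fails to be transverse to an eigenspace of $Q_\lambda$, since at these points the fiber topology of $\pi_2$ undergoes transitions that must be reconciled between $B$ and $B_H$. A related technical step is constructing the excision collar neighborhoods of $\Omega^{j+1}(K)$ inside $\Omega^j_H(K)$: generically this is achieved by an eigenvalue-gradient flow, but at higher-codimension loci one must verify the flow extends smoothly enough to identify the filtration quotients with the stated relative cohomology.
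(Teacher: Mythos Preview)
The paper does not prove this statement; it is quoted verbatim as \cite[Theorem D]{agrachev_systems_2012} and used as a black box (in the proof of Corollary \ref{cor:restrict_cohomology_cert}). So there is no ``paper's own proof'' to compare your proposal against.

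That said, your sketch is a reasonable outline of how the relative version is obtained in \cite{agrachev_systems_2012}: one indeed repeats the incidence-variety/Leray construction behind Theorem \ref{thm:Spectral_SequenceA} for the pair $(B,B_H)$, and the interlacing inequality is what forces $\Omega^{j+1}(K)\subseteq \Omega^j_H(K)$ so that the relative groups $H^i(\Omega^j_H(K),\Omega^{j+1}(K))$ are well-defined. Your caveats about degenerate strata and collar neighborhoods are the right places to be careful, but for the purposes of this paper the result is simply imported, and you need not supply the argument.
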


Applying Theorem \ref{thm:Relative_Spectral_Sequence} to hyperplane sections of an empty variety gives the following

\begin{corollary}\label{cor:restrict_cohomology_cert}
    Fix a hyperplane $H \subseteq \bbR^{n+1}$. Suppose that $\cV_\bbR(f_1^h,f_2^h,f_3^h) = \emptyset$, and that the polynomial $g$ is smooth and hyperbolic, and that $\cP$ does not contain a positive definite matrix. Then, $H^1(\Omega^{n-1}_H(\{0\}),\Omega^n(\{0\})) = 0$. That is, every noncontractible loop in $\Omega^{n-1}_H(\{0\})$ can be deformed to a noncontractible loop in $\Omega^{n}(\{0\})$. 
\end{corollary}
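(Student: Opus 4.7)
The approach is to invoke Theorem \ref{thm:Relative_Spectral_Sequence} with cone $K = \{0\}$, quadratic map $f^h = (f_1^h, f_2^h, f_3^h)$, and the projective hyperplane $\bar H \subseteq \bbP^n$ corresponding to $H$. This produces a first-quadrant cohomology spectral sequence $(G_r, d_r)$ with
\[
G_2^{i,j} = H^i(\Omega^j_H(\{0\}), \Omega^{j+1}(\{0\})) \quad \text{for } j > 0,
\]
converging to $H_{n-*}(X, X \cap \bar H)$, where $X = \cV_\bbR(f_1^h, f_2^h, f_3^h)$. Since $X = \emptyset$ by hypothesis, both members of the pair are empty, so $H_*(X, X \cap \bar H) = 0$ and $G_\infty^{i,j} = 0$ for every $(i,j)$.

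The cohomology group in the statement is exactly $G_2^{1,n-1}$. The plan is to show that no differentials on any page affect the position $(1, n-1)$, so that $G_2^{1,n-1} = G_\infty^{1,n-1} = 0$. First, the incoming differentials $d_r^{1-r,\,n+r-2} \to G_r^{1,n-1}$ have source zero for all $r \geq 2$, since $1 - r < 0$ in a first-quadrant spectral sequence. Second, the outgoing differentials $d_r^{1,n-1} \colon G_r^{1,n-1} \to G_r^{1+r,\,n-r}$ for $r \geq 2$ land in subquotients of $G_2^{1+r,\,n-r}$, and I would show these latter groups vanish, forcing each $d_r^{1,n-1} = 0$ and hence $G_r^{1,n-1}$ constant in $r$.

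The vanishing $G_2^{1+r,n-r} = 0$ for $r \geq 2$ is a dimension argument. Both $\Omega^j_H(\{0\})$ and $\Omega^{j+1}(\{0\})$ are semialgebraic subsets of $\bbS^2$, so their singular cohomology vanishes in degrees $\geq 3$; for $r \geq 3$ this immediately forces $H^{1+r}(\Omega^{n-r}_H(\{0\}), \Omega^{n-r+1}(\{0\})) = 0$ via the long exact sequence of the pair. The critical case is $r = 2$: the long exact sequence
\[
H^2(\Omega^{n-2}_H(\{0\})) \to H^2(\Omega^{n-1}(\{0\})) \to H^3(\Omega^{n-2}_H(\{0\}), \Omega^{n-1}(\{0\})) \to H^3(\Omega^{n-2}_H(\{0\})) = 0
\]
identifies $G_2^{3,n-2}$ as a quotient of $H^2(\Omega^{n-1}(\{0\}))$. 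If $\Omega^{n-1}(\{0\}) \subsetneq \bbS^2$, then $H^2(\Omega^{n-1}(\{0\})) = 0$ because a proper open subset of $\bbS^2$ has vanishing top cohomology. If instead $\Omega^{n-1}(\{0\}) = \bbS^2$, then every $Q_\lambda$ has at least $n-1$ positive eigenvalues, and by the Cauchy Interlacing Theorem so does every restriction $Q_\lambda|_H$ up to one eigenvalue, forcing $\Omega^{n-2}_H(\{0\}) = \bbS^2$ as well. In this second case the map $H^2(\bbS^2) \to H^2(\bbS^2)$ is an isomorphism, so the cokernel and hence $G_2^{3,n-2}$ vanish.

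The final sentence of the corollary follows from the long exact sequence of the pair $(\Omega^{n-1}_H(\{0\}), \Omega^n(\{0\}))$: vanishing of $H^1$ of the pair makes the inclusion-induced map $H^1(\Omega^{n-1}_H(\{0\})) \to H^1(\Omega^n(\{0\}))$ injective, and $\bbZ_2$-coefficient universal coefficients then give the dual statement that $H_1(\Omega^n(\{0\})) \to H_1(\Omega^{n-1}_H(\{0\}))$ is surjective. The main obstacle I anticipate is the $r = 2$ dimension check: verifying that the dichotomy on $\Omega^{n-1}(\{0\})$ is exhaustive and that Cauchy interlacing genuinely forces $\Omega^{n-2}_H(\{0\}) = \bbS^2$ in the degenerate branch, irrespective of the choice of hyperplane $H$, is where the hypotheses that $g$ is smooth hyperbolic and $\cP$ has no positive definite matrix enter to pin down the signature structure.
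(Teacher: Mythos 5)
Your overall strategy matches the paper's: apply Theorem \ref{thm:Relative_Spectral_Sequence} with $K = \{0\}$, use the convergence of $G_r$ to the zero groups $H_{n-*}(\emptyset,\emptyset)$, and show that the entry $G_2^{1,n-1} = H^1(\Omega^{n-1}_H(\{0\}),\Omega^n(\{0\}))$ survives unchanged to $G_\infty$. The paper, however, splits into cases $n = 2$ and $n \geq 3$, and for $n \geq 3$ it kills the outgoing differentials by explicitly computing the homotopy types of the $\Omega^{j+1}$ (union of two points, a point, $\bbS^1$, or empty) instead of invoking a dichotomy on $\Omega^{n-1}(\{0\})$.

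There is a genuine gap in your treatment of the $j = 0$ row. Theorem \ref{thm:Relative_Spectral_Sequence} does \emph{not} give $G_2^{i,0}$ as a relative cohomology group of a pair of subsets of $\bbS^2$: the formula is $G_2^{i,0} = H^i(C\Omega(\{0\}), \Omega^1(\{0\}))$, and since $C\Omega(\{0\}) = B^3$ and (under the corollary's hypotheses) $\Omega^1(\{0\}) = \bbS^2$, we get $G_2^{i,0} = H^i(B^3,\bbS^2)$. Your ``dimension argument'' for the vanishing of the outgoing targets treats every row as a pair of subsets of $\bbS^2$ with cohomology concentrated in degrees $\leq 2$; this is false for the bottom row at $i = 3$, where $H^3(B^3,\bbS^2;\bbZ_2) \cong \bbZ_2 \neq 0$. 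For $n \geq 3$ the row $j = 0$ is only hit at position $i = n+1 \geq 4$, where $H^{n+1}(B^3,\bbS^2) = 0$, so your conclusion happens to be correct there even though the stated reasoning (``subsets of $\bbS^2$'') doesn't apply to that pair. But for $n = 2$ the unique relevant differential $d_2^{1,1}$ lands precisely in $G_2^{3,0} = H^3(B^3,\bbS^2) = \bbZ_2$, so the claimed vanishing of the target fails and the argument that $G_2^{1,1}$ equals $G_\infty^{1,1} = 0$ breaks down. The paper handles $n = 2$ as a separate case, observing only that $d_2^{1,1}$ is injective; you should either argue $n = 2$ separately (e.g.\ by analyzing $d_2^{1,1}$ or the $(0,2)$ entry directly) or restrict the corollary to $n \geq 3$ for the present purposes.

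Two smaller points. First, for $n \geq 3$ the second branch of your $r = 2$ dichotomy (the case $\Omega^{n-1}(\{0\}) = \bbS^2$) is vacuous: under the hypothesis that $\cP$ contains no positive definite matrices, points $\lambda$ in $-\cP$ have $Q_\lambda$ with only $2$ positive eigenvalues, and points $\lambda$ in the annular region adjacent to $-\cP$ have fewer than $n-1$ positive eigenvalues (this is exactly the signature analysis from Lemma \ref{lem:signature_ovals}); so $\Omega^{n-1}(\{0\})$ is always a proper subset and the Cauchy-interlacing contingency you flag as ``the main obstacle'' never actually arises. Second, the paper's computation of the homotopy types of $\Omega^{j+1}$ (used to kill all of column $i = 3$ at once, not just position $(3,n-2)$) is the substantive ingredient that your proof replaces with the openness-of-proper-subset observation; your shortcut is correct for the single target position you need, but you should verify explicitly that $\Omega^{n-1}(\{0\})$ is open, which it is because ``at least $n-1$ positive eigenvalues'' is an open condition on a continuous family of symmetric matrices.
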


\begin{proof}
Let $(G_r,d_r)$ be the spectral sequence of Theorem \ref{thm:Relative_Spectral_Sequence}. Set $X = \cV_{\bbR}(f_1^h,f_2^h,f_3^h)$, $\Omega^j_H = \Omega^j_H(\{0\})$, and $\Omega^j = \Omega^j(\{0\})$ for notational simplicity. 

First suppose that $n = 2$. We then have that $G_\infty^{1,1} \simeq \ker(d_2^{1,1}:G_2^{1,1} \to G_2^{3,0})$. Since $X$ and $X \cap \bar{H}$ are empty,

\[0 = H_0(X,X\cap \bar{H}) \simeq G_{\infty}^{2,0}\oplus G_{\infty}^{1,1} \oplus G_{\infty}^{0,2}.\]
Therefore, $G_{\infty}^{1,1} \simeq 0$ and $d_2^{1,1}$ is injective. But $G_2^{3,0} = H^3(C\Omega, \Omega^1) = H^3(B^3,\bbS^2) = 0$ and therefore $G_2^{1,1} = H^1(\Omega_H^{1},\Omega^2) = 0$.

Now suppose that $n \geq 3$. We first show that $G^{i,j}_2 = 0$ when $i\geq 3$ and $j \geq 1$. From the long exact sequence of the pair $(\Omega^j_H,\Omega^{j+1})$, we get the following exact sequence

\[\cdots \to H^{i-1}(\Omega^{j+1}) \to H^{i}(\Omega_H^{j},\Omega^{j+1}) \to H^{i}(\Omega_H^{j}) = 0 \to \cdots,\]
where $H^i(\Omega_H^{j}) = 0$ for $i \geq 3$. So it suffices to show that $H^{i-1}(\Omega^{j+1}) = 0$ for $i\geq 3$ and $j \geq 1$. By Theorem \ref{thm:empty_var_iff_hyperbolic_curve} and the hypothesis that $\cP$ contains matrices with exactly two negative eigenvalues, we compute that if $n\geq 4$, then $\Omega^{j+1}$ is homotopy equivalent to the union of two points when $j = 1$, homotopy equivalent to a point when $2\leq j \leq n-2$, homotopy equivalent to $\bbS^1$ when $j = n-1$, and empty when $j \geq n$. If $n = 3$, $\Omega^{1+1}$ is homotopy equivalent to the union of two points, $\Omega^{2+1}$ is homotopy equivalent to $\bbS^1$, and $\Omega^{3+1} = \emptyset$.

In all cases, we see that $H^{i-1}(\Omega^{j+1}) = 0$ when $i \geq 3$ so that 

\[0 \to H^i(\Omega^i_H,\Omega^{j+1}) \to 0\]
is exact. So, $G^{i,j}_2 = H^i(\Omega^i_H,\Omega^{j+1}) = 0$ when $i\geq 3$ and $j \geq 1$. 

Next, since $\spann\{Q_1,Q_2,Q_3\}$ does not contain a positive semidefinite matrix, it does not contain a negative semidefinite matrix so that $\Omega^1 = \bbS^2$. This implies that $G^{i,0} = H^i(C\Omega, \Omega^1) = H^i(B^3,\bbS^2) = 0$ for $i \geq 4$. 

The vanishing of $G_2^{i,j}$ for $i\geq 3, j \geq 1$ and $i \geq 4$ and the hypothesis that $n \geq 3$ imply that $H^1(\Omega^{n-1}_H,\Omega^n) \simeq G_2^{1,n-1} \simeq G_\infty^{1,n-1}$. Since $X$ and $X \cap \bar{H}$ are empty, and since 

\[0 = H_0(X,X\cap \bar{H}) \simeq \bigoplus_{j = 0}^{n} G^{n-j,j}_\infty\]
by Theorem \ref{thm:Relative_Spectral_Sequence}, $H^1(\Omega^{n-1}_H,\Omega^n) = 0$. 
\end{proof}

We are now able to prove Theorem \ref{thm:HyperbolicityContainment}.

\HyperbolicityContainment*

\begin{proof}  
Suppose that $\cP_H$ does not contain positive definite matrices. By Theorem \ref{thm:empty_var_iff_hyperbolic_curve}, if $\lambda \in \cP_H$ then $Q_\lambda\vert_H$ has exactly two negative eigenvalues and $n - 2$ positive eigenvalues. Suppose for the sake of a contradiction that $\cP_H \not \subseteq \cP$.  

Let $e \in \cP$ and $a \in \bbR^3$ such that the line $\ell = \{te + (1-t)a \; \vert \; t \in \bbR\} \subseteq \bbR^3$ has $\ell \cap \cV_{\bbR}(g) \cap \cV_\bbR(g_H) = \emptyset$ and the intersections $\ell \cap \cV_{\bbR}(g)$ and $\ell \cap \cV_\bbR(g_H)$ are transverse. This is possible since $\cV_{\bbR}(g) \cap \cV_\bbR(g_H)$ has at most finitely many points. By hyperbolicity of $g$, the polynomial $g(te + (1-t)a)$ has roots $t_1 < t_2 < \ldots < t_{n+1}$. Let $2\leq j \leq n-1$ be such that $t_j < 1 < t_{j+1}$. Then, $g_H(te + (1-t)a)$ has roots $s_1 < s_2 < \ldots < s_n$. By Lemma \ref{lem:zeros_odd} and the Cauchy Interlacing Theorem, there are either three roots $s_{\ell} < s_{\ell+1} < s_{\ell + 2}$ in between $t_{j-2}$ and $t_{j-1}$ or between $t_{j+2}$ and $t_{j + 3}$ or that there are two roots $t_j < s_\ell <  s_{\ell+1}<t_{j+1},$ where $t_{j-2} = -\infty$ if $j-2 = 0$ and $t_{j+3} = \infty$ if $j + 3 = n+2$. By the hypothesis that $\cP_H \not \subseteq \cP$, the last case is impossible. 

In the first two cases, the image of $\cP_{H} \cap \bbS^2$ in $\bbP^2$ is contained in the interior of the oval of $\cV_\bbR(g)$ of depth $\lfloor \frac{n+1}{2} \rfloor - 2$ and the exterior of the oval of $\cV_\bbR(g)$ of depth $\lfloor \frac{n+1}{2} \rfloor - 1$. Note that the image of $\Omega^n$ in $\bbP^2$ is the intersection of the interior of the oval of $\cV_\bbR(g)$ of depth $\lfloor \frac{n+1}{2} \rfloor - 1$ and the exterior of the oval of $\cV_\bbR(g)$ of depth $\lfloor \frac{n+1}{2} \rfloor$. 

Now, the images of $\cP_H \cap \bbS^2$ and $\cP \cap \bbS^2$ in $\bbP^2$ are bounded by two disjoint ovals, neither of which contains the other. Let $\sigma$ be a representative of a nontrivial class in $H_1(\Omega_H^{n}) \simeq H^1(\Omega_H^n)$. By the above observation, $\sigma$ can be chosen so that its image in  $\bbP^2$ intersects the image of $\Omega^n$ and therefore gives a nontrivial class in $H_1(\Omega_H^{n-1},\Omega^n)\simeq H^1(\Omega_H^{n-1},\Omega^n)$, contradicting Corollary \ref{cor:restrict_cohomology_cert}. 
\end{proof}

Figure \ref{fig:Hyperplane_Possibilities} shows the two possible containment patterns for $g$ and $g_H$ on the curve given in Example \ref{ex:ch_not_aggs}. 

\begin{figure}[t]
\begin{subfigure}[b]{0.47\textwidth}
    \includegraphics[width = \textwidth]{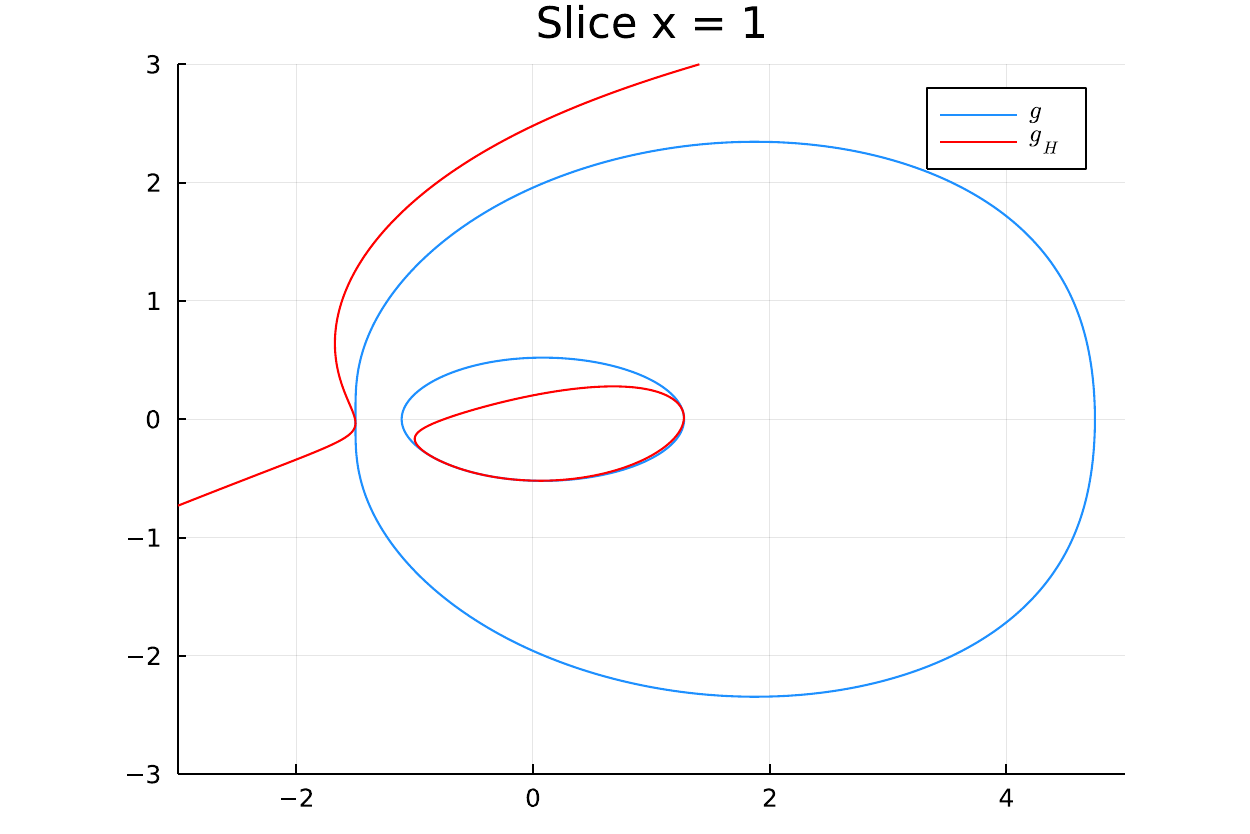}
\end{subfigure}
\begin{subfigure}[b]{0.47\textwidth}
    \includegraphics[width =\textwidth]{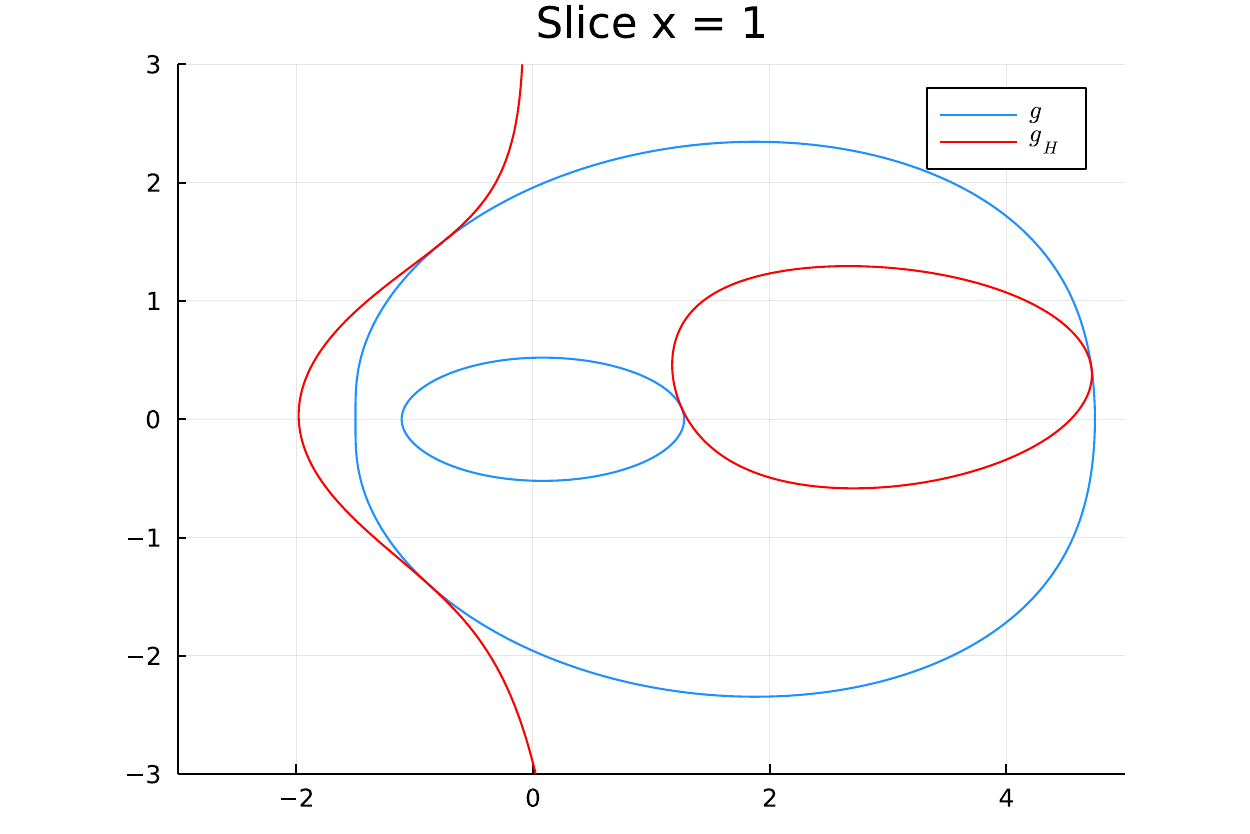}
\end{subfigure}
\caption{The two possible containment patterns for $\cP$ and $\cP_H$ where $g$ is given as in Example \ref{ex:ch_not_aggs}. On the left, $\cP_H \subseteq \cP$ and the emptiness of $\cV_{\bbR}(f^h_1,f^h_2,f^h_3) \cap H$ is certified by $Q_\lambda \vert_H$ having two negative eigenvalues for $\lambda \in \cP_H$ and $H^1(\Omega^{n-1}_H) \not = 0$. On the right, $\cP_H \not \subseteq \cP$ and $Q_\lambda \vert_H \succ 0$ for $\lambda \in \cP_H$.}
\label{fig:Hyperplane_Possibilities}
\end{figure}

We now combine the ideas of Theorem \ref{thm:HyperbolicityContainment} and Proposition \ref{prop:HyperplaneAggs} to obtain a sufficient condition for describing $\cconv(S)$ via aggregations.

\SuffCond*

\begin{proof}
It suffices to show that these hypotheses ensure that \eqref{eq:emptiness_cert} is satisfied. If $\lambda \in \cP$ implies that $Q_\lambda \succ 0$, then the result is \cite[Theorem 2.4]{dey_obtaining_2022}. Otherwise, let $H$ be a hyperplane such that $S^h \cap H = 0$. This implies that the set $X(K,f^h) \cap \bar{H} = \emptyset$ where $K = -\bbR^3_{+}$ is the nonpositive orthant. By Proposition \ref{prop:General_Emptiness},  either there is $\lambda \in \bbR^3_+$ such that $Q_\lambda\vert_H \succ 0$ or $H^1(\Omega^{n-1}_H(K)) \neq 0$. In either case, $g_H$ is hyperbolic if it is smooth. However, since $\Omega^{n-1}(K) \subseteq K^\circ \cap \bbS^2$, a nontrivial group $H^1(\Omega^{n-1}_H(K))$ would imply that $\cP_H \subseteq K^\circ = \bbR^3_+$. On the other hand, by the assumption that $\cV_\bbR(f_1^h,f_2^h,f_3^h) = \emptyset$ and Theorem \ref{thm:HyperbolicityContainment},  $\cP_H \subseteq \cP$. Since $\cP \cap \bbR^3_+ = \emptyset$, this therefore implies that $H^1(\Omega^n(K)) = 0$.

If $g_H$ is not smooth and $H^1(\Omega^{n-1}_H(K)) \neq 0$, then by the Cauchy Interlacing Theorem and since $\cP \cap \bbR^3_+ = 0$, there is $\lambda \in K^\circ$ such that $[\lambda] \in \bbP^2$ lies on the outside of the oval $\cV_\bbR(g)$ of depth $\lfloor \frac{n+1}{2}\rfloor -1$ and such that $Q_\lambda\vert_H$ has at most $n-2$ positive eigenvalues. But then, a nontrivial class in $H^1(\Omega^{n-1}_H(K))$ gives a nontrivial class of  $H^1(\Omega^{n-1}_H(K),\Omega^n(K))$, contradicting Corollary \ref{cor:restrict_cohomology_cert}. 

So, there is $\lambda \in \bbR^3_+$ with $Q_\lambda \vert_H \succeq 0$ and therefore \eqref{eq:emptiness_cert} is satisfied.     
\end{proof}

\begin{remark}
Under the conditions of Theorem \ref{thm:SuffCond}, the assumption that $S$ has no points at infinity can be certified via aggregations. This follows by applying Theorem \ref{thm:HyperbolicityContainment} with $H = \{(x,x_{n+1}) \in \bbR^{n+1} \; \vert \; x_{n+1} = 0\}$, as the hypothesis that $\cP \cap \bbR^3_+ = \emptyset$ implies that the certificate that $S^h \cap H = 0$ cannot come from matrices with two negative eigenvalues.  
\end{remark}

We conclude this section by proving Theorem \ref{thm:ClosedConvexSufficient}.

\ClosedConvexSufficient*

\begin{proof}
The statement that $\cconv(S)$ is given by the intersection of good aggregations when $g$ is hyperbolic is Theorem \ref{thm:SuffCond}. In the $n = 2$ case when $g$ is not hyperbolic, note that condition \eqref{eq:emptiness_cert} is satisfied: A nontrivial class in $H^1(\Omega_H^1(K))$ would give a nontrivial class in $H^1(\Omega_H^1(K),\Omega^2(K))$ in this case, contradicting Corollary \ref{cor:restrict_cohomology_cert}. Therefore by Proposition \ref{prop:General_Emptiness}, there is $\lambda \in \bbR^3_+$ such that $Q_\lambda \vert_H \succ 0$.  

It remains to show that when the spectral curve is hyperbolic, a finite number of good aggregations recovers $\cconv(S)$. For each $y \in \bbR^n \setminus \cconv(S)$, let $\lambda^{(y)} \in \bbR^3$ be a good aggregation which certifies that $y \not \in \cconv(S)$ and set $\Lambda_1 = \{\lambda^{(y)} \; \vert \; y \in \bbR^n \setminus \cconv(S)\}$ to be the set of all such $\lambda^{(y)}$. Now, if $\lambda^{(1)}, \lambda^{(2)}, \ldots, \lambda^{(k)}$ are the unit length generators of the extreme rays of $\Lambda_1$ with $|\supp(\lambda^{(i)})| \leq 2$, then $\bigcap_{i = 1}^k S_{\lambda^{(i)}} = \bigcap_{\lambda \in \Lambda_1} S_\lambda$ by Lemma \ref{lem:UnnecessaryAggstwo} and Propositions \ref{prop:positive_agg_doesnt_contribute} and \ref{prop:General_Emptiness}, as in the proof of Theorem \ref{thm:FiniteAggs}. Note that $k \leq 6$  since each $\lambda^{(i)}$ has $|\supp(\lambda^{(i)})|\leq 2$ and generates an extreme ray of $\Lambda_1$. \end{proof}

\section{Topology of the Set of Permissible Aggregations}\label{sec:Connected_Aggs}

In this section we provide proofs of the statements in Section \ref{sec:Agg_top}. Throughout this section, we assume that $\mathrm{int}(S) \not = \emptyset$. Recall that $\Lambda = \{\lambda \in \bbR^3_+ \; \vert \; Q_\lambda \text{ has exactly one negative eigenvalue}\}$ and that a permissible aggregation $\lambda \in \Lambda$ is a \emph{good aggregation} if $\conv(S) \subseteq S_\lambda$ and a \emph{bad aggregation} otherwise.

\ConnectedGoodAggs*

\begin{proof}
We first show that the set of good aggregations is closed in $\Lambda$. Let $\lambda^{(i)} \to \lambda$ be a sequence of aggregations which converges to $\lambda \in \Lambda$. Recall that each matrix $Q_\lambda$ has block strucutre $Q_\lambda = \begin{bmatrix} A_\lambda & b_\lambda\\ b_\lambda^\top & c_\lambda \end{bmatrix}$, where $A_\lambda \in \bbR^{n \times n}$ defines the homogeneous part of $f_\lambda$. If $\mathrm{int}(S_{\lambda^{(i)}})$ is convex for sufficiently large $i$, then
$A_{\lambda^{(i)}} \succeq 0$ for sufficiently large $i$. Since the cone of positive semidefinite matrices is closed, $A_\lambda \succeq 0$ so that $\mathrm{int}(S_\lambda)$ is convex and therefore $\lambda$ is a good aggregation. Otherwise, suppose that $\mathrm{int}(S_{\lambda^{(i)}})$ has two convex components for large $i$. Let $(\alpha_i,\beta_i)\in \bbR^{n+1}$ be the unit length eigenvector corresponding to the negative eigenvalue of $Q_{\lambda^{(i)}}$ with sign chosen such that $\alpha_i^\top x \leq \beta_i$ for all $x \in S$. This is possible since each $\lambda^{(i)}$ is a good aggregation and the affine hyperplane defined by $\alpha_i^\top x = \beta_i$ separates the two components of $\mathrm{int}(S_{\lambda^{(i)}})$. Now, $\alpha_i \to \alpha$ and $\beta_i \to \beta$ where $(\alpha,\beta)$ is an eigenvector of $Q_\lambda$ corresponding to the negative eigenvalue and $\alpha^\top x \leq \beta$ for all $x \in S$. So, $\lambda$ is a good aggregation.

On the other hand, the set of bad aggregations is also closed in $\Lambda$. Let $\lambda^{(i)}$ be a sequence of aggregations which converges to $\lambda \in \Lambda$. Let $(\alpha_i,\beta_i) \in \bbR^{n+1}$ be the unit length eigenvector of $Q_{\lambda^{(i)}}$ corresponding to the negative eigenvalue of $Q_{\lambda^{(i)}}$ oriented so that $\alpha_i \to \alpha$ and $\beta_i \to \beta$ where $(\alpha,\beta)$ is an eigenvector of $Q_\lambda$ corresponding to the negative eigenvalue. Since each $\lambda^{(i)}$ is a bad aggregation, there are $x,y \in \mathrm{int}(S)$ such that $(\alpha^{(i)})^\top x < \beta^{(i)}$ and $(\alpha^{(i)})^\top y > \beta^{(i)}$ for sufficiently large $i$. This implies that $\alpha^\top x \leq \beta$ and $\alpha^\top y \geq \beta$. Since $x,y \in \mathrm{int}(S)$, this implies the existence of $\tilde{x},\tilde{y} \in \mathrm{int}(S)$ such that $\alpha^\top \tilde{x} < \beta$ and $\alpha^\top \tilde{y} > \beta$. Therefore $\lambda$ is a bad aggregation.

If $\Lambda_1$ is a connected component of $\Lambda$, then we can write $\Lambda_1$ as the disjoint union of good aggregations in $\Lambda_1$ and bad aggregations in $\Lambda_1$. Since $\Lambda_1$ is connected, this implies that $\Lambda_1$ consists entirely of good aggregations or entirely of bad aggregations.
\end{proof}

\begin{remark}\label{rem:PDLC_affine_good}
Note that Theorem \ref{thm:ClosedConvexSufficient} implies that if PDLC is satisfied and $S$ has no points at infinity, then non-intersection of $\cconv(S)$ with an affine hyperplane $H$ is can be certified with a good aggregation. In particular, if $T$ is the intersection of $S_\lambda$ for all good aggregations $\lambda$, then every connected component of $T$ intersects a component of $S$. Indeed, if $T$ had a component which did not intersect $S$, then there would be an affine hyperplane $H$ with $H\cap T \not = \emptyset$ but $H \cap \conv(S) = \emptyset$. 
\end{remark}

We now restrict our attention to the PDLC case. Note that here, unlike in Sections \ref{sec:Finiteness} and \ref{sec:SufficientCondition}, we do not assume that the spectral curve is smooth. The following two results are key ingredients in the proof of \cite[Theorem 2.17]{blekherman_aggregations_2022} and our Theorem \ref{thm:PDLCBound}. The first states that aggregations can be improved by translating in the direction of a positive semidefinite matrix.

\begin{proposition}[{\cite[Proposition 9.1]{blekherman_aggregations_2022}}]\label{prop:Improve_aggs}
    Suppose that there is $\omega \in \bbR^3$ such that $Q_\omega \succ 0$. Set $\Theta = \{\theta \in \bbR^3 \; \vert \; Q_\theta \succeq 0\}$ and $\Lambda_1 = \{\lambda \in \Lambda \; \vert \; \lambda \text{ is a good aggregation and } S_\lambda \not = \bbR^n\}$. If $\lambda \in \Lambda_1$ and $\theta \in \Theta$ are such that $\lambda' = \lambda + \theta \in \bbR^3_+ \setminus \{0\}$, then $\lambda' \in \Lambda_1$ and $S_{\lambda'} \subseteq S_\lambda$.
\end{proposition}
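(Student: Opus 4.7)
The plan is to verify three claims in sequence: the containment $S_{\lambda'}\subseteq S_\lambda$, the signature condition $\lambda'\in\Lambda$, and goodness of $\lambda'$. The common ingredient for all three is the pointwise inequality $f_\theta(x)\geq 0$ on $\bbR^n$, which is immediate from $Q_\theta\succeq 0$ applied to the augmented vector $[x^\top,\,1]^\top$.

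For the containment, I would write $f_{\lambda'}=f_\lambda+f_\theta$; then $f_{\lambda'}(x)\leq 0$ forces $f_\lambda(x)\leq 0$, so $S_{\lambda'}\subseteq S_\lambda$, and in particular $S_{\lambda'}\neq\bbR^n$. For the signature, Weyl's inequality applied to $Q_{\lambda'}=Q_\lambda+Q_\theta$ with $Q_\theta\succeq 0$ shows that each ordered eigenvalue of $Q_{\lambda'}$ is at least the corresponding eigenvalue of $Q_\lambda$, so $Q_{\lambda'}$ has at most one negative eigenvalue. To exclude the case $Q_{\lambda'}\succeq 0$, I would use the standing assumption $\mathrm{int}(S)\neq\emptyset$: any $x\in\mathrm{int}(S)$ satisfies $f_i(x)<0$ for every $i\in[3]$, so since $\lambda'\in\bbR^3_+\setminus\{0\}$ we get $f_{\lambda'}(x)<0$, contradicting $f_{\lambda'}\geq 0$. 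Hence $\lambda'\in\Lambda$.

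For goodness, I would use a continuous deformation. Set $\lambda_t=\lambda+t\theta=(1-t)\lambda+t\lambda'$ for $t\in[0,1]$; this is a convex combination of $\lambda,\lambda'\in\bbR^3_+$, hence remains in $\bbR^3_+$, and applying the previous two steps with $tQ_\theta\succeq 0$ in place of $Q_\theta$ shows $\lambda_t\in\Lambda$ and $S_{\lambda_t}\subseteq S_\lambda\neq\bbR^n$ for every $t$. The set $G=\{t\in[0,1]:\lambda_t\text{ is good}\}$ contains $0$, and it suffices to prove $G$ is both open and closed in $[0,1]$. Closedness is a continuity argument in the spirit of the proof of Proposition \ref{prop:ConnectedGoodAggs}: the unique unit negative eigenvector of $Q_{\lambda_t}$ depends continuously on $t$ (since the negative eigenvalue is simple throughout the path), so the separating hyperplane from Lemma \ref{lem:good_aggs} (when $\mathrm{int}(S_{\lambda_t})$ has two components) deforms continuously, and any limit of good parameters produces a valid separating hyperplane.

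The hard part will be openness of $G$, particularly at parameters where $\mathrm{int}(S_{\lambda_t})$ changes topological type between one convex component and two. Because $A_\theta\succeq 0$, the block $A_{\lambda_t}=A_\lambda+tA_\theta$ is monotone in $t$ in the Loewner order, so such a transition occurs at most once along the path and corresponds to $A_{\lambda_t}$ crossing the boundary of the PSD cone. Away from the transition the number of components of $\mathrm{int}(S_{\lambda_t})$ is locally constant and the unique component meeting $\mathrm{int}(S)$ deforms continuously; at the transition value $t_0$, I would argue directly that the single convex component opens along the kernel direction of $A_{\lambda_{t_0}}$ into two half-components, with $\mathrm{int}(S)$ remaining inside exactly one by the continuous-deformation argument. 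Together with connectedness of $[0,1]$, this forces $1\in G$, so $\lambda'$ is good and therefore $\lambda'\in\Lambda_1$.
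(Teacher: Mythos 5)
The paper imports this statement from \cite[Proposition 9.1]{blekherman_aggregations_2022} without reproving it, so there is no in-paper proof to compare against; I will assess your argument on its own merits. Your first two steps are correct and clean: $f_\theta\geq 0$ gives $S_{\lambda'}\subseteq S_\lambda\neq\bbR^n$ immediately, Weyl's inequality gives at most one negative eigenvalue for $Q_{\lambda'}$, and the observation that $\lambda'\in\bbR^3_+\setminus\{0\}$ together with $\mathrm{int}(S)\neq\emptyset$ rules out $Q_{\lambda'}\succeq 0$ (since $\mathrm{int}(S)\subseteq\mathrm{int}(S_{\lambda'})$, which would be empty for a nonzero PSD $Q_{\lambda'}$).

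The third step has a genuine gap, which you yourself flag as ``the hard part.'' Your clopen argument along $\lambda_t=\lambda+t\theta$ rests on openness of the good set, and the sketch at the transition parameter $t_0$ is not a proof: the assertion that ``$\mathrm{int}(S)$ remain[s] inside exactly one'' component is precisely the conclusion one needs, not something the deformation picture supplies. Moreover the monotonicity of $A_{\lambda_t}$ only controls when $A_{\lambda_t}$ crosses the PSD boundary; the number of connected components of $\mathrm{int}(S_{\lambda_t})$ can also change \emph{within} the indefinite regime of $A_{\lambda_t}$ (it depends on $b_{\lambda_t},c_{\lambda_t}$ as well), so ``such a transition occurs at most once along the path'' is not justified. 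None of this machinery is needed: work homogeneously. By Proposition \ref{prop:Semiconvex_cone}, $\mathrm{int}(S_\lambda^h)=K\sqcup(-K)$ and $\mathrm{int}(S_{\lambda'}^h)=K'\sqcup(-K')$ with $K,K'$ open convex cones, and $f_{\lambda'}^h=f_\lambda^h+f_\theta^h\geq f_\lambda^h$ forces $\mathrm{int}(S_{\lambda'}^h)\subseteq\mathrm{int}(S_\lambda^h)$. Since $K'$ is connected it lies in $K$ or in $-K$, say $K'\subseteq K$, whence $-K'\subseteq -K$. Slicing by $\{x_{n+1}=1\}$, each component of $\mathrm{int}(S_{\lambda'})$ is contained in the corresponding component of $\mathrm{int}(S_\lambda)$. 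By Lemma \ref{lem:good_aggs} and goodness of $\lambda$, $\mathrm{int}(S)$ meets at most one component of $\mathrm{int}(S_\lambda)$, hence at most one component of $\mathrm{int}(S_{\lambda'})$; so $\lambda'$ is good, with no case analysis on path parameters or component counts required.
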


The second states that the intersection over all aggregations is given by the intersection over aggregations with support at most two. 

\begin{proposition}[{\cite[Proposition 9.2]{blekherman_aggregations_2022}}]\label{prop:Aggs_support}
 Suppose that there is $\omega \in \bbR^3$ such that $Q_\omega \succ 0$. Set $\Lambda_1 = \{\lambda \in \Lambda \; \vert \; \lambda \text{ is a good aggregation and } S_\lambda \not = \bbR^n\}$ and $\Lambda_2 = \{\lambda \in \Lambda_1 \; \vert \; |\supp(\lambda)| \leq 2\}$. Then $\bigcap_{\lambda \in \Lambda_1} S_\lambda = \bigcap_{\lambda \in \Lambda_2}S_\lambda$. 
\end{proposition}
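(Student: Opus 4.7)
The plan is to establish the nontrivial inclusion $\bigcap_{\lambda \in \Lambda_2}S_\lambda \subseteq \bigcap_{\lambda \in \Lambda_1}S_\lambda$; the reverse follows immediately from $\Lambda_2 \subseteq \Lambda_1$. I would prove this by exhibiting, for each $\lambda \in \Lambda_1$, some $\mu \in \Lambda_2$ with $S_\mu \subseteq S_\lambda$. The workhorse is Proposition~\ref{prop:Improve_aggs}: adding any $\theta \in \Theta$ to $\lambda$ produces $\lambda + \theta \in \Lambda_1$ with $S_{\lambda+\theta}\subseteq S_\lambda$, provided $\lambda + \theta \in \bbR^3_+ \setminus \{0\}$. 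So the task reduces to a geometric question about translates of the cone $\Theta$.

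If $|\supp(\lambda)| \leq 2$, then $\lambda \in \Lambda_2$ and $\mu = \lambda$ works. Otherwise $\lambda \in \mathrm{int}(\bbR^3_+)$, and I need to produce $\theta \in \Theta$ such that $\mu := \lambda + \theta$ lies in $\bbR^3_+$, has at least one vanishing coordinate, and is nonzero; equivalently, the translated set $\lambda + \Theta$ must meet $\partial \bbR^3_+ \setminus \{0\}$. The origin is automatically excluded, since $0 \in \lambda + \Theta$ would force $-\lambda \in \Theta$, i.e.\ $Q_\lambda \preceq 0$, which contradicts $\lambda \in \Lambda$ (where $Q_\lambda$ has exactly one negative eigenvalue).

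To produce the desired boundary point in the generic situation $\Theta \not\subseteq \bbR^3_+$, fix $\theta_0 \in \Theta$ with some coordinate $(\theta_0)_i < 0$ and set $t^* = \sup\{t \geq 0 : \lambda + t\theta_0 \in \bbR^3_+\}$. This supremum is strictly positive because $\lambda$ lies in the open orthant, and finite because $\lambda_i + t(\theta_0)_i$ becomes negative for large $t$. At $t = t^*$, at least one coordinate of $\lambda + t^*\theta_0$ vanishes while the rest remain nonnegative; since $\Theta$ is a cone, $t^*\theta_0 \in \Theta$, and Proposition~\ref{prop:Improve_aggs} gives $\mu := \lambda + t^*\theta_0 \in \Lambda_1$ with $S_\mu \subseteq S_\lambda$ and $|\supp(\mu)| \leq 2$, hence $\mu \in \Lambda_2$.

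The main obstacle is the degenerate case $\Theta \subseteq \bbR^3_+$, where Proposition~\ref{prop:Improve_aggs} cannot reduce support because every $\theta \in \Theta$ is coordinatewise nonnegative. In this regime, PDLC forces the positive definite $\omega$ to lie in $\bbR^3_+$; for any nonzero $x \in \bbR^{n+1}$ then
\[
0 < x^\top Q_\omega x = \sum_i \omega_i\, x^\top Q_i x,
\]
so some $f_i^h(x) > 0$, giving $S^h = \{0\}$ and hence $S = \emptyset$. With $S$ empty every element of $\Lambda$ is trivially a good aggregation, so $\Lambda_1 = \Lambda$, and the equality of intersections must be established separately. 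The plan here is a perturbation: replace each $Q_i$ by $Q_i + \epsilon R_i$ for suitable $R_i$ and small $\epsilon > 0$ so that the perturbed cone is no longer contained in $\bbR^3_+$, apply the generic-case argument to obtain $\mu^\epsilon$, and extract a subsequential limit $\mu \in \Lambda_2$ satisfying $S_\mu \subseteq S_\lambda$. Verifying that the limit preserves membership in $\Lambda_2$ and the desired containment is the most delicate part of the proof.
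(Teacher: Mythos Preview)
This proposition is quoted from \cite{blekherman_aggregations_2022} and is not proved in the present paper, so there is no proof here to compare against. That said, your generic-case argument---pushing a full-support $\lambda$ along a ray $\lambda + t\theta_0$ with $\theta_0 \in \Theta$ having a negative coordinate until a coordinate vanishes, then invoking Proposition~\ref{prop:Improve_aggs}---is exactly the standard mechanism, and it is the same device the paper itself uses in Proposition~\ref{prop:drag_away_from_edge}.

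The gap you identify in the degenerate case $\Theta \subseteq \bbR^3_+$ is a phantom, and your perturbation plan is unnecessary. Under the standing hypothesis $\mathrm{int}(S) \neq \emptyset$ (which is in force everywhere this proposition is applied, and is presumably a standing assumption in the original source), that case cannot occur: if $x \in \mathrm{int}(S)$ then $f_i(x) < 0$ for all $i$, so for any nonzero $\theta \in \bbR^3_+$ one has $f_\theta(x) = \sum_i \theta_i f_i(x) < 0$, while $Q_\theta \succeq 0$ would force $f_\theta(x) = \begin{bmatrix} x^\top & 1\end{bmatrix} Q_\theta \begin{bmatrix} x^\top & 1\end{bmatrix}^\top \geq 0$. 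Hence $\Theta \cap (\bbR^3_+ \setminus \{0\}) = \emptyset$, and since $\omega \in \Theta \setminus \{0\}$, necessarily $\Theta \not\subseteq \bbR^3_+$. So your ``generic'' case is in fact the only case, and the proof is complete once you observe this; the perturbation sketch (which, as you concede, is the most delicate and least developed part) can be dropped entirely.
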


\begin{remark}\label{rem:finite_good_aggs_PDLC} Since the convex combination of good aggregations is a good aggregation, it follows from Proposition \ref{prop:Aggs_support} that at most six aggregations are necessary to describe the intersection of all good aggregations when PDLC holds. This was observed in the setting of open inequalities in \cite{blekherman_aggregations_2022}. We will show in Theorem \ref{thm:PDLCBound} that this number can be further reduced to four in the case of closed inequalities when $S$ has no low dimensional components and no points at infinity. 
\end{remark}

\GoodAggsConnectedPDLC*

\begin{proof}

By Theorem \ref{thm:ClosedConvexSufficient}, $\cconv(S)$ can be described as the intersection of finitely many good aggregations. (Compare with \cite[Theorem 2.23]{blekherman_aggregations_2022}, as the $Q_\lambda \not = 0$ for all nonzero $\lambda \in \bbR^3_+$ by the hypothesis that the $Q_i$ are linearly independent and that the $Q_i$ satisfy hidden hyperplane convexity since they satisfy PDLC \cite{blekherman_aggregations_2022}). By Remark \ref{rem:finite_good_aggs_PDLC}, there are $\lambda^{(1)}, \lambda^{(2)}, \ldots, \lambda^{(r)}$, $r \leq 6$ such that $\cconv(S) = \bigcap_{i = 1}^{6}S_{\lambda^{(i)}}$. Set $K = \cone(\lambda^{(1)},\lambda^{(2)},\ldots, \lambda^{(r)})$.

Suppose for the sake of a contradiction that $\Omega^n(K^\circ)$ has two connected components. It then follows by Lemma \ref{lem:Connected_Omega_n} that $\dim H_0(X(K^\circ,f^h)) = 2$. This implies that the set $T$ defined by the intersection of the $S_\lambda$ for all good aggregations $\lambda$ has two connected components. By Remark \ref{rem:PDLC_affine_good}, this implies that $S$ intersects both components. But then, $\conv(S)$ intersects both components, which is a contradiction since $\conv(S)$ is connected. So, the set of good aggregations is connected in $\Lambda$.  \end{proof}

Finally, we develop a certificate that $\conv(S) = \bbR^n$ under the assumption of PDLC. 

\begin{proposition}\label{prop:PDLC_no_invertible_agg_trivial_CH}
If $Q_1,Q_2,Q_3$ satisfy PDLC and $\Omega^{n}(-\bbR^3_+) = \emptyset$ then $\conv(S) = \bbR^n$.
\end{proposition}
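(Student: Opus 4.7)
The plan is a proof by contradiction. Suppose $\conv(S) \neq \bbR^n$. Since $\mathrm{int}(S) \neq \emptyset$ is the standing assumption of this section, $\conv(S)$ is a convex set with nonempty interior; a convex set whose closure equals $\bbR^n$ must itself equal $\bbR^n$ (its interior is all of $\bbR^n$), so $\cconv(S)$ is a proper closed convex subset of $\bbR^n$. Picking $y \in \bbR^n \setminus \cconv(S)$ and strictly separating via Hahn--Banach yields $\alpha \in \bbR^n \setminus \{0\}$ and $\beta \in \bbR$ with $\alpha^\top x \leq \beta$ on $\cconv(S)$ and $\alpha^\top y > \beta$. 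Setting $\beta' := (\beta + \alpha^\top y)/2$ produces a strictly valid inequality $\alpha^\top x < \beta'$ on $S$.

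Next I would apply property \eqref{eq:emptiness_cert}, which holds under PDLC by the discussion following Proposition \ref{prop:HyperplaneAggs} and is essentially the content of the PDLC hypothesis in \cite[Theorem 2.4]{dey_obtaining_2022}. For the homogenized hyperplane $H = \{(x, x_{n+1}) \in \bbR^{n+1} \; \vert \; \alpha^\top x = \beta' x_{n+1}\}$, this produces $\lambda \in \bbR^3_+$ (necessarily nonzero) with $Q_\lambda \vert_H \succ 0$. Since $Q_\lambda \vert_H$ is a strictly positive definite $n \times n$ matrix, all $n$ of its eigenvalues are strictly positive. The Cauchy Interlacing Theorem then forces the $(n+1) \times (n+1)$ matrix $Q_\lambda$ to have at least $n$ strictly positive eigenvalues. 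Normalizing, $\lambda / \|\lambda\| \in \Omega^n(-\bbR^3_+)$, contradicting the hypothesis $\Omega^n(-\bbR^3_+) = \emptyset$.

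The only delicate ingredient is the implication PDLC $\Rightarrow$ \eqref{eq:emptiness_cert}: a strict (not merely semidefinite) positive certificate on the restriction to an arbitrary separating hyperplane. This rests on the hidden convexity of the image of $(f_1^h, f_2^h, f_3^h)$ under PDLC, equivalently a strict S--lemma for three quadratic forms, and is imported directly from \cite{dey_obtaining_2022}. All other steps --- Hahn--Banach separation, the shift from $\beta$ to $\beta'$ to upgrade weak to strict validity, and the Cauchy interlacing count of positive eigenvalues --- are routine.
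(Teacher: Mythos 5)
Your proof is correct and follows the same overall strategy as the paper: separate $y$ from $\cconv(S)$, restrict to the separating hyperplane $H$, obtain a positive definite aggregation $Q_\lambda\vert_H \succ 0$ with $\lambda \in \bbR^3_+$, and use the Cauchy Interlacing Theorem to conclude $\lambda/\|\lambda\| \in \Omega^n(-\bbR^3_+)$, a contradiction. The one real divergence is the source of the positive-definite certificate. You invoke property \eqref{eq:emptiness_cert} as a black-box consequence of PDLC, citing the discussion after Proposition \ref{prop:HyperplaneAggs} and \cite[Theorem 2.4]{dey_obtaining_2022}. The paper instead obtains it from Proposition \ref{prop:General_Emptiness}: since $\cV_\bbR(f_1^h,f_2^h,f_3^h)\cap H = \emptyset$ and $S^h\cap H^h = \{0\}$, the cohomological dichotomy gives either $\Omega^n_H(-\bbR^3_+)\neq\emptyset$ or $H^1(\Omega^{n-1}_H(-\bbR^3_+))\neq 0$, and PDLC (inherited by the $Q_i\vert_H$) rules out the $H^1$ alternative, leaving a strictly positive definite aggregation on $H$. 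What your citation glosses over is that the S-lemma variant \cite[Lemma 2.3]{dey_obtaining_2022}, as used in Proposition \ref{prop:Aggs_Exist}, only produces $Q_\lambda\vert_H\succeq 0$, whereas your interlacing count requires strict positive definiteness; the paper's Proposition \ref{prop:General_Emptiness} is precisely what supplies that upgrade. Since the paper does informally assert that PDLC implies \eqref{eq:emptiness_cert}, your argument is not wrong, but it is no more elementary than the paper's --- the strictness has to come from somewhere. On a small point, your opening remarks carefully distinguishing $\conv(S)$ from $\cconv(S)$ are actually more precise than the paper's own phrasing, which starts from $y\in\bbR^n\setminus\conv(S)$ but then separates from $\cconv(S)$ without comment.
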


\begin{proof}
Suppose for the sake of a contradiction that there is $y \in \bbR^n \setminus \conv(S)$. Since $\{y\}$ is compact, there exist $\alpha \in \bbR^n$ and  $\beta_1,\beta_2 \in \bbR$ with $\beta_1 < \beta_2$ such that $\alpha^\top y = \beta_2$ and $\alpha^\top x < \beta_1$ for all $x \in \cconv(S)$. Let $H \subseteq \bbR^n$ be the hyperplane defined by $\alpha^\top x = \beta_2$ and $H^h = \{(x,x_{n+1}) \; \vert\; \alpha^\top x = \beta_2 x_{n+1}\}$ its homogenization so that $\cconv(S) \cap H = \emptyset$ and $S^h \cap H^h = \{0\}$. Note that $\cV_\bbR(f_1^h, f_2^h,f_3^h) \cap H = \emptyset$ and therefore by proposition \ref{prop:General_Emptiness}, we have that either $\Omega^n_H(-\bbR^3_+) \not = \emptyset$ or $H^1(\Omega^{n-1}_H(-\bbR^3_+)) \not = 0$. However, since the $Q_i$ satisfy PDLC, so do the $Q_i \vert_H$ and therefore  $H^1(\Omega^{n-1}_H(-\bbR^3_+)) = 0$. Since $\Omega^n(-\bbR^3_+) = \emptyset$, it follows that $\Omega^n_H(-\bbR^3_+) = \emptyset$ by the Cauchy Interlacing Theorem. So, no such $y$ can exist and therefore $\cconv(S) = \bbR^n$. 
\end{proof}

We now show that if PDLC is satisfied, $S$ has no points at infinity, and $S = \mathrm{cl}(\mathrm{int}(S))$, then $\cconv(S) = \bigcap_{i = 1}^r S_{\lambda^{(i)}}$ for some $r \leq 4$. Note that \cite[Conjecture 3.2]{blekherman_aggregations_2022} conjectures the existence of a set defined by \emph{strict} quadratic inequalities which cannot be described as the intersection of fewer than six good aggregations. It was observed in \cite[Theorem 2.7]{dey_obtaining_2022} that if $S = \mathrm{cl}(\mathrm{int}(S))$ and $\conv(\mathrm{int}(S)) = \bigcap_{\lambda \in \Lambda_1} \mathrm{int}(S_\lambda)$ for some set of good aggregations $\Lambda_1$, then $\cconv(S) = \bigcap_{\lambda \in \Lambda_1}S_\lambda$.

We use a similar strategy to \cite{blekherman_aggregations_2022} of improving aggregations in the direction of a positive semidefinite combination. Note that if PDLC is satisfied and $S \not = \emptyset$, then the hyperbolicity cone of $g$ which consists of positive definite matrices does not intersect $\bbR^3_+$ away from 0. However, it is posible that the hyperbolicity cone of $g$ which contains negative definite matrices intersects $\bbR^3_+$ and that this hyperbolicity is contained strictly in $\bbR^3_+$. 

We first deal with the case that the cone of negative definite matrices is not strictly contained in $\bbR^3_+$ and employ the strategy of Proposition \ref{prop:Improve_aggs} to improve aggregations along the direction of a positive semidefinite matrix. 

\begin{proposition}\label{prop:drag_away_from_edge}
Spupose that $Q_1,Q_2,Q_3$ satisfy PDLC, $\mathrm{int}(S) \not = \emptyset$, and $S$ has no points at infinity. Assume that the hyperbolicity cone of $g$ which contains negative definite matrices is not contained in $\bbR^3_+$. Then, there are $\lambda^{(1)},\lambda^{(2)}, \ldots, \lambda^{(r)}$ such that $r\leq 4$ and $\cconv(S) = \bigcap_{i = 1}^r S_{\lambda^{(i)}}$. 
\end{proposition}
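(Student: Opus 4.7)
The plan is to sharpen the bound of six aggregations coming from Proposition \ref{prop:Aggs_support} down to four using the hypothesis on $-\Theta$ together with the monotonicity provided by Proposition \ref{prop:Improve_aggs}. First, under PDLC the variety $\cV_{\bbR}(f_1^h,f_2^h,f_3^h)$ is empty and the (smooth) spectral curve is hyperbolic, so Theorem \ref{thm:ClosedConvexSufficient} applies: $\cconv(S) = \bigcap_{\lambda \in \Lambda_1}S_\lambda$ for some set of good aggregations with $S_\lambda \neq \bbR^n$. Proposition \ref{prop:Aggs_support} restricts to $\Lambda_2 = \{\lambda \in \Lambda_1 : |\supp(\lambda)| \leq 2\}$, and by Proposition \ref{prop:GoodAggsConnectedPDLC} combined with closure of good aggregations under convex combinations in the unique good connected component, $\Lambda_2$ can be reduced to unit length generators of the extreme rays of $\cone(\Lambda_2)$. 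Each facet of $\bbR^3_+$ contributes at most two such generators, giving $|\Lambda_2| \leq 6$.

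The hypothesis that $-\Theta \not\subseteq \bbR^3_+$ is equivalent, by passing to negatives, to the existence of $\theta \in \Theta$ (i.e.\ $Q_\theta \succeq 0$) with $\theta_j > 0$ for some $j$; after permuting coordinates assume $\theta_1 > 0$. Proposition \ref{prop:Improve_aggs} provides the key monotonicity: if $\mu,\nu \in \Lambda_1$ satisfy $\mu = \nu + c\theta \in \bbR^3_+ \setminus \{0\}$ for some $c > 0$, then $S_\mu \subseteq S_\nu$, so $\nu$ is redundant in the intersection. The goal of the main step is to show that this $\theta$-domination can be invoked to eliminate at least two of the six extreme rays generating $\cone(\Lambda_2)$, leaving at most four.

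Concretely, for each extreme ray $\nu$ on a facet $\{\lambda_i = 0\}$, I would consider the pencil $\{\nu + t\theta : t \geq 0\}$ inside $\bbR^3_+$. By the hyperbolicity of the spectral curve $g$ and Lemma \ref{lem:Agg_Not_in_Oval}, this pencil crosses $\cV_\bbR(g)$ at a controlled number of points, and I would argue that for at least two of the extreme rays $\nu$, the pencil meets a different extreme ray $\mu \in \cone(\Lambda_2)$ before exiting $\bbR^3_+$. This gives $\mu - \nu = c\theta \in \Theta$ for some $c > 0$, eliminating $\nu$ via the monotonicity above.

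The main obstacle is to guarantee that \emph{two} extreme rays, rather than only one, can be eliminated this way. This requires a case analysis based on the sign pattern of $\theta$ and the arrangement of the (up to six) extreme rays on the three facets of $\bbR^3_+$; degenerate configurations—in which the pencil along $\theta$ exits $\bbR^3_+$ before meeting another extreme ray—must be ruled out. I expect the regularity condition $S = \mathrm{cl}(\mathrm{int}(S))$ together with the no-points-at-infinity hypothesis and the three-dimensionality of $\Theta$ (both ensured by PDLC) to provide enough flexibility in the choice of $\theta$ to force the needed second elimination, yielding $r \leq 4$ as claimed.
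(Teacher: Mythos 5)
Your overall strategy is aimed in the right direction — you correctly identify Proposition \ref{prop:Improve_aggs} as the engine and correctly deduce from the hypothesis a $\theta$ with $Q_\theta \succeq 0$ and a strictly positive component — but the elimination step as you propose it is both more complicated than necessary and left with a genuine gap.

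The specific difficulty you flag (''guarantee that two extreme rays, rather than only one, can be eliminated,'' requiring ''a case analysis'') is not a loose end that closes easily; it is the wrong frame. Tracking whether the pencil $\nu + t\theta$ hits another \emph{extreme ray} of $\cone(\Lambda_2)$ is neither necessary nor sufficient: the pencil might exit $\bbR^3_+$ at a non-extremal boundary point, and even when it does hit another extreme ray, you would still have to run an ad hoc combinatorial argument over the (up to) six generators and the unknown sign pattern of $\theta$. You also never produce the case analysis you say is needed, so as written the argument does not reach $r \le 4$.

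The cleaner observation, which is what the paper does, is to eliminate an entire \emph{facet} of $\bbR^3_+$ rather than individual extreme rays. Once $\theta$ is fixed with (say) $\theta_3 > 0$, note that PDLC plus $\mathrm{int}(S) \neq \emptyset$ forces at least one of $\theta_1, \theta_2$ to be negative (otherwise $\theta \in \bbR^3_+$ would make $f_\theta \le 0$ impossible on an open set). Then for \emph{any} $\lambda = \lambda_1 e_1 + \lambda_2 e_2 \in \Lambda$ with $\lambda_1, \lambda_2 > 0$, setting $\epsilon = \min\{-\lambda_i/\theta_i : \theta_i < 0\} > 0$ gives $\lambda + \epsilon\theta \in \bbR^3_+ \setminus \{0\}$ lying on $\cone(e_1,e_3) \cup \cone(e_2,e_3)$, and Proposition \ref{prop:Improve_aggs} gives $S_{\lambda+\epsilon\theta} \subseteq S_\lambda$. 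So the whole relative interior of $\cone(e_1,e_2)$ is redundant, and only the two remaining facets contribute — at most two extreme rays each by Proposition \ref{prop:Aggs_support}, hence $r \le 4$ with no case analysis. Your invocation of Lemma \ref{lem:Agg_Not_in_Oval} and the hyperbolicity structure of the curve, as well as the appeal to $S = \mathrm{cl}(\mathrm{int}(S))$, are unnecessary here; that regularity hypothesis is used elsewhere in the theorem but not in this facet-elimination step.
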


\begin{proof}
If the hyperbolicity cone of $g$ which contains negative definite matrices is not contained in $\bbR^3_+$, then there is $\theta \in \bbR^3$ such that $Q_\theta \succeq 0$ and at least one component of $\theta$ is strictly positive. Suppose without loss of generality that $\theta_3 >0$.  Note also that at least one of $\theta_1,\theta_2 <0$ since otherwise $\theta \in \bbR^3_+$ and therefore $\mathrm{int}(S) = \emptyset$. 

Now, if $\lambda =  \lambda_1e_1 + \lambda_2e_2 \in \Lambda$ is an aggregation for $\lambda_1,\lambda_2 > 0$, we have that $\lambda + \epsilon \theta \in \bbR^3_+\setminus \{0\}$, where 

\[\epsilon = \min \left\{\frac{-\lambda_i}{\theta_i} \; \middle\vert \; \theta_i < 0\right\} > 0.\]
So, by Proposition \ref{prop:Improve_aggs}, $S_{\lambda + \epsilon \theta}$ is an aggregation with $S_{\lambda + \epsilon \theta} \subseteq S_\lambda$ and $\lambda + \epsilon \theta \in  \cone(e_1,e_3) \cup \cone(e_2,e_3)$. In particular, this implies that no aggregations in $\cone(e_1,e_2)$ are necessary for the description of $\conv(S)$. By Proposition \ref{prop:Aggs_support} and the fact that all aggregations in $\cone(e_i,e_j)$ can be described using at most two aggregations, this in turn implies that $\cconv(S)$ can be described as the intersection of at most four aggregations. 
\end{proof}

If $n = 1$ or $2$, then we are able to eliminate the possibility of six aggregations being necessary using the structure of the curve $\cV_{\bbR}(g)$. As in the statement of Proposition \ref{prop:Aggs_support}, set $$\Lambda_1 = \{\lambda \in \Lambda \; \vert \; \lambda \text{ is a good aggregtaion and } S_\lambda \neq \bbR^n\},$$ and $\Lambda_2 = \{\lambda \in \Lambda_1 \; \vert\; |\supp \lambda| \leq 2\}$. Set $\Lambda_3$ to be the set of unit length generators of the extreme rays of $\Lambda_2$. 

\begin{proposition}\label{Prop:low_Degree_PDLC_Bound}
If $n = 1$ or $n = 2$, $Q_1,Q_2,Q_3$ satisfy PDLC, $\mathrm{int}(S) \not = \emptyset$, $S = \mathrm{cl}(\mathrm{int}(S))$, $S$ has no points at infinity, and the hyperbolicity cone of $g$ which contains negative definite matrices is contained in $\bbR^3_+$, then there are $\lambda^{(1)},\lambda^{(2)}, \ldots, \lambda^{(r)} \in \Lambda_2$ such that $r \leq 4$ and $\cconv(S) = \bigcap_{\lambda \in \Lambda_2} S_\lambda = \bigcap_{i = 1}^rS_{\lambda^{(i)}}$. 
\end{proposition}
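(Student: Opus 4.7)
The key is that the hypothesis ``the ND hyperbolicity cone lies in $\bbR^3_+$'' forces the PD hyperbolicity cone into $-\bbR^3_+$, so no PSD aggregation has a strictly positive coefficient; this blocks the dragging trick of Proposition \ref{prop:drag_away_from_edge}, and one must exploit the spectral curve directly. My plan is to first reduce to the case where every $Q_i$ has signature $(n,1)$, then to analyze the extreme rays of $\Lambda_2$ using Proposition \ref{prop:GoodAggsConnectedPDLC}.

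First I would reduce to the case where no $Q_i$ is negative semidefinite. If $Q_i$ is NSD, then $f_i\le 0$ is a vacuous constraint, $S_i=\bbR^n$, and $e_i\notin\Lambda_2$; dropping the inequality yields a system of two quadratics, where a direct (facet-by-facet) argument gives $r\le 2$. After this reduction, no $Q_i$ is NSD, and since the PSD cone lies in $-\bbR^3_+$ no $Q_i$ is PSD either, so each $Q_i$ has signature $(n,1)$ and every $e_i$ lies in $\Lambda$.

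By Proposition \ref{prop:GoodAggsConnectedPDLC} the good aggregations form a single connected component $\Lambda_{\text{good}}$ of $\Omega^n(-\bbR^3_+)$. I would then identify the extreme rays of $\Lambda_{\text{good}}\cap\Lambda_2$: those with $|\supp|\le 2$ lie on $\partial\bbR^3_+$ and are either basis vectors $e_i$ or rays on $\cV_\bbR(g)\cap\partial\bbR^3_+$. The key observation is that a ray $\mu$ on the boundary separating $\Lambda=\{\text{sig }(n,1)\}$ from the ND cone has $Q_\mu$ negative semidefinite, so $f_\mu\leq 0$ everywhere, $S_\mu=\bbR^n$, and $\mu\notin\Lambda_2$; only crossings between signature $(n,1)$ and signature $(n-1,2)$ produce rays $\mu$ with $S_\mu\ne\bbR^n$ that actually contribute to $\Lambda_3$. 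For $n=1$ no such ``$(1,1)\to(0,2)$'' crossings occur inside $\bbR^3_+$ (the spectral conic in $\bbR^3_+$ only bounds the ND region), so $|\Lambda_3|\le 3$.

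For $n=2$ the hyperbolic cubic has an oval, whose interior is the projective image of the ND cone (contained in the image of $\bbR^3_+$ in $\bbP^2$), together with a pseudoline; the non-NSD crossings on $\partial\bbR^3_+$ come from the pseudoline. If the pseudoline avoids the image of $\bbR^3_+$, then $\Lambda$ equals the triangle minus the oval region, all three $e_i$ are good, and $|\Lambda_3|\le 3$. If the pseudoline meets the image of $\bbR^3_+$, then by Bezout applied to each edge it crosses exactly two edges of the triangle, producing two extreme rays of $\Lambda_2$ with mixed signature; but the pseudoline also separates $\Lambda$ into two connected components, so by Proposition \ref{prop:GoodAggsConnectedPDLC} only one of them consists of good aggregations, and at most two of the three $e_i$ lie in $\Lambda_{\text{good}}$. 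This gives $|\Lambda_3|\le 2+2=4$. In every case $\bigcap_{\lambda\in\Lambda_3}S_\lambda=\bigcap_{\lambda\in\Lambda_2}S_\lambda=\cconv(S)$, using that for $\lambda=\alpha\mu_1+\beta\mu_2\in\Lambda_2$ we have $S_{\mu_1}\cap S_{\mu_2}\subseteq S_\lambda$. The main obstacle is the $n=2$ case: one must carefully track how the pseudoline, oval, and boundary of $\bbR^3_+$ interact and verify that the connectedness conclusion of Proposition \ref{prop:GoodAggsConnectedPDLC} really does eliminate an $e_i$ whenever a pseudoline ray is added.
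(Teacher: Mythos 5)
Your high-level plan is the same as the paper's: reduce to the unit extreme rays $\Lambda_3$ of $\Lambda_2$ (so that $\bigcap_{\Lambda_3}S_\lambda=\bigcap_{\Lambda_2}S_\lambda$), observe that each $\lambda\in\Lambda_3$ is either a standard basis vector or a zero of $g$ lying on the pseudoline, and then count, using Proposition \ref{prop:GoodAggsConnectedPDLC} to control connectivity. However, your execution of the $n=2$ case contains a genuine signature error that invalidates the case analysis.

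The error: you assert that if the pseudoline avoids the image of $\bbR^3_+$ in $\bbP^2$, then ``$\Lambda$ equals the triangle minus the oval region.'' This is wrong. Crossing the oval outward from the ND hyperbolicity cone changes the signature from $(0,3)$ to $(1,2)$; it does \emph{not} produce $(2,1)$. The signature-$(2,1)$ region (the set $\Lambda$) is the region on the \emph{far} side of the pseudoline, adjacent to the PD cone $-\cP_-\subseteq -\bbR^3_+$. Therefore, if the pseudoline misses $\bbR^3_+$, the spherical triangle $\bbR^3_+\cap\bbS^2$ is entirely contained in the $(0,3)\cup(1,2)$ region and $\Lambda\cap\bbR^3_+$ is (up to boundary) empty --- not the triangle minus the oval. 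The same confusion infects the other subcase: you say the pseudoline ``separates $\Lambda$ into two connected components,'' but in fact the pseudoline is a \emph{boundary} of the $(2,1)$ region inside the triangle, not a curve passing through its interior, so it does not split $\Lambda$ in the way you describe. Because your picture of which region is $\Lambda$ is backwards, the count ``at most two of the three $e_i$ lie in $\Lambda_{\text{good}}$'' and the resulting bound $|\Lambda_3|\le 2+2$ are not justified by the argument you give, even if the final number happens to be right. The paper instead argues directly that the facet line $\cone(e_j,e_k)$ meets the pseudoline at most once (using that the oval is strictly interior to $\bbR^3_+$ together with B\'ezout), which bounds the number of non-basis extreme rays per facet, and then enumerates the possible configurations of $\Lambda_3$.

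Two smaller issues: (i) your opening reduction to the case where no $Q_i$ is NSD, and the claim that two quadratics yield $r\le 2$ by a ``facet-by-facet argument,'' are both asserted without proof; the paper does not perform this reduction, instead handling NSD boundary points by observing that they have more than one negative eigenvalue (hence lie outside $\Lambda$) or give $S_\lambda=\bbR^n$ (hence lie outside $\Lambda_1$). (ii) Your assertion that B\'ezout forces the pseudoline to cross ``exactly two edges'' is incomplete --- B\'ezout gives an odd intersection count with each projective edge line (1 or 3), and one must argue further to rule out multiple crossings, which your sketch does not do.
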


\begin{proof}

It follows from Proposition \ref{prop:Aggs_support} that $\cconv(S) = \bigcap_{\lambda \in \Lambda_2} S_\lambda$. Since the convex combination of good aggregations is a good aggregation if it is permissible, it follows that $\cconv(S) = \bigcap_{\lambda \in \Lambda_3} S_\lambda$ and therefore it suffices to bound $|\Lambda_3|$. If $\lambda \in \Lambda_3$, then either $\lambda$ is a standard basis vector or $g(\lambda) = 0$. Note that if $\lambda \in \Lambda_3$, then $Q_\lambda$ has exactly one negative eigenvalue since if $Q_\lambda \succeq 0$ then $\mathrm{int}(S_\lambda) = \emptyset$, a contradiction with the hypothesis that $\mathrm{int}(S)\not = \emptyset$ since $\mathrm{int}(S) \subseteq \mathrm{int}(S_\lambda)$. Note also that the set of good aggregations is connected in $\Lambda$ by Propositions \ref{prop:ConnectedGoodAggs} and \ref{prop:GoodAggsConnectedPDLC}.  

In the case $n = 1$, note that for $\lambda \in \bbR^3_+$, either $Q_\lambda \prec 0$ or $\lambda \in \Lambda$. Since the cone of negative definite matrices is contained in $\bbR^3_+$, it follows that every nonzero $\lambda \in \bbR^3_+$ with $|\supp(\lambda)| \leq 2$ has exactly one negative eigenvalue. In particular, we have that $\Lambda$ is connected, and therefore the set of good aggregations is also connected. It then follows that $\Lambda_3 = \{e_1,e_2,e_3\}$.

In the case $n = 2$, if $\lambda \in \Lambda_3$ is not a standard basis vector, then $\lambda$ lies on the non-oval component of $\cV_\bbR(g)$. Note that the line through $e_j$ and $e_k$ can only intersect this component of the spectral curve once.  So, if a face $\cone(e_j,e_k)$ of $\bbR^3_+$ contains two elements of $\Lambda_3$, it follows that the elements are $e_j,e_k$ or that exactly one of $e_j,e_k$ is an element of $\Lambda_3$. So, up to reordering of the $Q_i$, the three possibilities for $\Lambda_3$ are 

\[\{e_1,e_2,e_3\}, \quad \{e_1, \lambda^{(2)}, \lambda^{(3)}\}, \text{ and} \quad \{e_2,e_3,\lambda^{(2)},\lambda^{(3)}\},\]
where $\lambda^{(i)} \in \cone(e_1,e_i)$ is not a standard basis vector. 

In all cases, $|\Lambda_3| \leq 4$. 
\end{proof}

We now address the case that $n\geq 3$ and the hyperbolicity cone of $g$ which contains negative definite matrices is strictly contained in $\bbR^3_+$. To do so, we need the following lemma which controls the eigenvalues of matrices which are convex combinations of an element of $\Lambda_3$ and a negative definite matrix.

\begin{lemma}\label{lem:neg_def_cvx_comb}
Suppose that the hyperbolicity cone of $g$ which contains negative definite matrices is strictly contained in $\bbR^3_+$. Suppose that $\lambda \in \Lambda_3$ has $g(\lambda) = 0$. Let $\omega \in \bbR^3$ be such that $Q_\omega \prec 0$. Then, for any $t \in (0,1)$, the matrix $Q_{t\lambda + (1-t)\omega}$ has at most $n-1$ positive eigenvalues.    
\end{lemma}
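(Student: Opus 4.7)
The plan is to exploit the negative definiteness of $Q_\omega$ by reparametrizing the line segment so that the perturbation direction is itself negative definite, and then invoke Weyl's monotonicity inequality for symmetric eigenvalues.

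First, I would rewrite, for each $t \in (0,1)$,
\[
Q_{t\lambda + (1-t)\omega} \;=\; t\bigl(Q_\lambda + s\, Q_\omega\bigr), \qquad s \;=\; \frac{1-t}{t} \;>\; 0.
\]
Multiplication by the positive scalar $t$ does not change the signature, so it suffices to show that the matrix $M(s) := Q_\lambda + s\, Q_\omega$ has at most $n - 1$ positive eigenvalues for every $s > 0$.

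The key step is a monotonicity argument. For any $s_2 > s_1 \geq 0$ we have $M(s_2) - M(s_1) = (s_2 - s_1)\, Q_\omega \prec 0$, so Weyl's inequality gives $\rho_k(M(s_2)) < \rho_k(M(s_1))$ for every $k$; that is, each eigenvalue of $M(s)$ is strictly decreasing in $s$. In particular, the number of positive eigenvalues of $M(s)$ is non-increasing in $s$, and every eigenvalue of $M(0) = Q_\lambda$ which is exactly zero becomes strictly negative the instant $s > 0$.

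Finally, I would read off the signature of $Q_\lambda$. Since $\lambda \in \Lambda_3 \subseteq \Lambda$, the matrix $Q_\lambda$ has exactly one negative eigenvalue, and since $g(\lambda) = \det Q_\lambda = 0$, it is singular with nullity $z \geq 1$; thus its signature is $(n - z,\; z,\; 1)$. Combined with the strict monotonicity above, this means that for every $s > 0$ the matrix $M(s)$ has at most $n - z \leq n - 1$ positive eigenvalues, and translating back through the reparametrization gives the claim on $(0,1)$.

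I do not expect a real obstacle: the whole argument is essentially a one-line application of Weyl's inequality, once the correct reparametrization is identified so that the one-parameter perturbation of $Q_\lambda$ runs in the negative definite direction $Q_\omega$. The hyperbolicity and cone-containment hypotheses of the lemma do not enter the local argument directly; they serve as context for how the lemma will be used and, via PDLC, guarantee the existence of $\omega$ with $Q_\omega \prec 0$ together with the relevant $\lambda \in \Lambda_3$ satisfying $g(\lambda) = 0$.
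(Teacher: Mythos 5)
Your proof is correct and takes a genuinely different, more elementary route than the paper's. You reparametrize so the perturbation direction is $Q_\omega\prec 0$, use Weyl's inequality to get strict decrease of every eigenvalue of $M(s)=Q_\lambda+sQ_\omega$ in $s$, and then read off the signature of $Q_\lambda$ (at most $n-1$ positives because it is singular with exactly one negative eigenvalue) to bound the number of positives of $M(s)$ for all $s>0$. The paper instead argues by contradiction with a root-counting argument on the restriction of $g$ to the line through $\lambda$ and $\omega$: assuming some interior point has $n$ positive eigenvalues forces $n+1$ roots in $(0,1]$, and the hyperbolicity/cone-containment hypothesis forces at least one additional negative root, exceeding the degree $n+1$. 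Your argument avoids hyperbolicity entirely, is local, and in fact proves the lemma without the cone-containment hypothesis; as you observe, that hypothesis is really ambient context for the surrounding section rather than a logical ingredient here.

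One small point worth being explicit about: the claim that $Q_\lambda$ has exactly one negative eigenvalue for $\lambda\in\Lambda_3$ deserves a word of justification, since $\Lambda_3$ is defined via extreme rays of $\Lambda_2$ and a priori a generator could be a limit point of $\Lambda$ with $Q_\lambda\succeq 0$. By continuity $Q_\lambda$ has at most one negative eigenvalue, and $Q_\lambda\succeq 0$ is ruled out because it would force $\mathrm{int}(S_\lambda)=\emptyset$, contradicting the standing assumption $\mathrm{int}(S)\neq\emptyset$ (this is exactly the observation the paper makes in the proof of the subsequent proposition). With that gap filled, your Weyl-based argument goes through cleanly: $M(0)=Q_\lambda$ has at most $n-1$ positive eigenvalues, and strict monotonicity in $s$ implies the same for all $M(s)$ with $s>0$, hence for $Q_{t\lambda+(1-t)\omega}$ with $t\in(0,1)$.
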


\begin{proof}
Consider the univariate polynomial $g(t\lambda + (1-t)\omega)$. Suppose for the sake of a contradiction that there is $t^* \in (0,1)$ such that $Q_{t^*\lambda + (1-t^*)\omega}$ has $n$ positive eigenvalues. Then, since $g(\lambda) = 0$ and $Q_\omega$ has $n+1$ negative eigenvalues, there are $n+1$ roots of $g(t\lambda + (1-t)\omega)$ on the interval $(0,1]$, counted with multiplicity. However, since $\lambda$ is not in the hyperbolicity cone of $g$ which includes $\omega$, it cannot be the case that $g(t\lambda + (1-t)\omega)$ has all positive roots. So, there is a negative root ($g(0) = \det(Q_\omega) \not = 0$). But then, the degree $n+1$ polynomial $g(t\lambda + (1-t)\omega)$ has $n+2$ roots, a contradiction with the fact that $g$ is nonconstant.  
\end{proof}

Next, we show that the hyperbolicity cone of $g$ which contains negative definite matrices does not intersect $\cone(\Lambda_3)\setminus\{0\}$. 

\begin{lemma}\label{lem:neg_def_cone_lambda_prime}
Suppose $n \geq 3$ and that the hyperbolicity cone of $g$ which contains negative definite matrices is strictly contained in $\bbR^3_+$. Let $\omega \in \cone(\Lambda_3)$ be nonzero. Then, $Q_\omega$ has at least one nonnegative eigenvalue.
\end{lemma}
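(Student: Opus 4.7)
The plan is to rule out $Q_\omega \prec 0$ by exhibiting a nonzero vector $x$ with $x^\top Q_\omega x \geq 0$, obtained from the intersection of the nonnegative eigenspaces of the summands in a Carath\'eodory decomposition of $\omega$.

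First I would apply Carath\'eodory's theorem for cones in $\bbR^3$ to write $\omega = \sum_{i=1}^{k} c_i \lambda^{(i)}$ with $k \leq 3$, $c_i > 0$, and distinct $\lambda^{(i)} \in \Lambda_3$. Since $\Lambda_3 \subseteq \Lambda$, each $Q_{\lambda^{(i)}}$ has exactly one negative eigenvalue, so its nonnegative eigenspace $W_i \subseteq \bbR^{n+1}$ has dimension $n$. The standard dimension inequality for intersections of subspaces, applied iteratively, gives
\[
\dim(W_1 \cap \cdots \cap W_k) \;\geq\; \sum_{i=1}^{k} \dim W_i - (k-1)(n+1) \;=\; n - k + 1.
\]
Using $k \leq 3$ and the hypothesis $n \geq 3$, the right-hand side is at least $n - 2 \geq 1$, so there is a nonzero $x \in W_1 \cap \cdots \cap W_k$.

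By the defining property of the nonnegative eigenspaces, $x^\top Q_{\lambda^{(i)}} x \geq 0$ for every $i$, and taking the positive linear combination gives
\[
x^\top Q_\omega x \;=\; \sum_{i=1}^{k} c_i\, x^\top Q_{\lambda^{(i)}} x \;\geq\; 0.
\]
Since $x \neq 0$, this rules out $Q_\omega \prec 0$, so $Q_\omega$ has at least one nonnegative eigenvalue.

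The argument reduces to a short linear algebra computation, and the only subtle point is identifying the role of the hypothesis $n \geq 3$: for $k = 3$ the dimension bound above is $n-2$, which is positive only when $n \geq 3$, exactly matching the lemma's hypothesis. The second hypothesis — that the hyperbolicity cone of $g$ containing negative definite matrices sits strictly inside $\bbR^3_+$ — seems to provide context from the surrounding discussion and is not invoked in this particular proof.
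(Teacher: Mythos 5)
Your proof is correct and takes essentially the same route as the paper: Carath\'eodory's theorem to reduce to at most three generators, a dimension count on the intersection of the $n$-dimensional nonnegative eigenspaces (valid precisely when $n\geq 3$), and a common test vector to rule out $Q_\omega \prec 0$. Your observation that the hyperbolicity-cone hypothesis is not actually used in this particular argument also matches the paper's proof, where it plays no role either.
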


\begin{proof}

 By Carath\'eodory's Theorem, $\omega$ is a conical combination of $\lambda^{(1)},\lambda^{(2)}, \lambda^{(3)} \in \Lambda_3$. Let $V_1,V_2,V_3$ be the $n$-dimensional subspaces of $\bbR^{n+1}$ so that $v^\top Q_{\lambda^{(i)}}v \geq 0$ for $v \in V_i$. Then, $V = \cap_{i = 1}^{3}V_i$ has codimension at most three in $\bbR^{n+1}$ and therefore $\dim (V) \geq 1$ since $n \geq 3$. Moreover, for $v \in V$ and $t_1,t_2,t_3 \geq 0$, we see that 

\[v^\top \left(\sum_{i =1}^{3}t_iQ_{\lambda^{(i)}}\right) v = \sum_{i = 1}^{3}t_i v^\top Q_{\lambda^{(i)}}v \geq 0.\]
So, if $\omega \in \cone(\lambda^{(1)},\lambda^{(2)},\lambda^{(3)}),$ then $Q_{\omega}$ has at least one nonnegative eigenvalue.  
\end{proof}

Using Lemmas \ref{lem:neg_def_cvx_comb} and \ref{lem:neg_def_cone_lambda_prime}, we show that it is not possible for two aggregations from each two dimensional face of $\bbR^3_+$ to be necessary for the description of $\conv(S)$.

\begin{proposition}\label{prop:Neg_def_contained_four_aggs}
Suppose that $n\geq 3$, and that $Q_1,Q_2,Q_3$ satisfy PDLC, $S \not = \emptyset$, $S$ has no points at infinity, and $S = \mathrm{cl}(\mathrm{int}(S))$. Assume that the hyperbolicity cone which consists of negative definite matrices is completely contained in $\bbR^3_+$. Then, $\cconv(S)$ can be described using at most two aggregations. 
\end{proposition}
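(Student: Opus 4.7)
The plan is to reduce the claim to the bound $|\Lambda_3| \leq 2$. First, if $\Omega^n(-\bbR^3_+) = \emptyset$, then by Proposition \ref{prop:PDLC_no_invertible_agg_trivial_CH} we have $\conv(S) = \bbR^n$ and the statement holds trivially. Otherwise, Proposition \ref{prop:Aggs_support} together with the observation from the proof of Proposition \ref{Prop:low_Degree_PDLC_Bound} that permissible convex combinations of good aggregations are again good yields
\[
\cconv(S) = \bigcap_{\lambda \in \Lambda_2} S_\lambda = \bigcap_{\lambda \in \Lambda_3} S_\lambda,
\]
so it suffices to show $|\Lambda_3| \leq 2$.

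The central tool is Lemma \ref{lem:neg_def_cone_lambda_prime}, which (crucially using $n \geq 3$) forces $\cone(\Lambda_3)$ to be disjoint from the open hyperbolicity cone $\cH$ of negative definite matrices. By hypothesis $\cH$ is a full-dimensional open convex cone completely contained in $\bbR^3_+$. I would argue by contradiction that $\cone(\Lambda_3)$ is at most two-dimensional: assume three linearly independent $\lambda^{(1)}, \lambda^{(2)}, \lambda^{(3)} \in \Lambda_3$ exist. By Proposition \ref{prop:Lambda_prime_elts} each is either a standard basis vector or satisfies $g(\lambda^{(i)}) = 0$, and for those on the spectral curve, Lemma \ref{lem:neg_def_cvx_comb} pins them onto the oval of depth $\lfloor (n+1)/2 \rfloor - 1$ bounding the signature $(n,1)$ region from outside (a point on the inner boundary, adjacent to the positive definite cone, would yield $n$ positive eigenvalues on the segment toward $\cH$, contradicting the lemma). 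The three independent rays would then span a three-dimensional subcone of $\bbR^3_+$; combining the strict interior placement of $\cH$ in $\bbR^3_+$ with the restricted oval placement of the $\lambda^{(i)}$ forces this subcone to meet $\cH$, contradicting Lemma \ref{lem:neg_def_cone_lambda_prime}. Hence $|\Lambda_3| \leq 2$.

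The main obstacle is this final geometric step. In isolation, three rays in $\partial \bbR^3_+$ can generate a three-dimensional subcone avoiding any prescribed interior region, so the argument must genuinely use the rigidity from Proposition \ref{prop:Lambda_prime_elts} and Lemma \ref{lem:neg_def_cvx_comb} restricting where extreme rays of $\Lambda_3$ can lie on $\partial\bbR^3_+$. Exploiting the nested structure of the hyperbolic ovals of $g$ around $\cH$, together with the completeness of the containment $\cH \subseteq \bbR^3_+$, provides the geometric constraint needed to rule out any three-dimensional configuration of $\cone(\Lambda_3)$.
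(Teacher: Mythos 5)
Your strategy is genuinely different from the paper's, and it has a real gap that you yourself flag. The paper does \emph{not} prove $|\Lambda_3| \leq 2$. Instead it takes an arbitrary finite describing set $\lambda^{(1)},\dots,\lambda^{(r)}$ with $\cconv(S)=\bigcap_i S_{\lambda^{(i)}}$, sets $K=\cone(\lambda^{(1)},\dots,\lambda^{(r)})$, and uses Propositions~\ref{prop:GoodAggsConnectedPDLC} and \ref{prop:PDLC_no_invertible_agg_trivial_CH} to establish that $\Omega^n(K^\circ)$ is nonempty and connected. It then shows, via Lemma~\ref{lem:neg_def_cvx_comb} and a topological cutting argument with arcs toward a negative definite $\omega$, that at most two of the $\lambda^{(i)}$ can lie in the component of $\Lambda$ meeting $\Omega^n(K^\circ)$. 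Crucially, the remaining $\lambda^{(i)}$ may still be extreme rays of $\Lambda_2$ — the paper does not discard them by bounding $\Lambda_3$, but instead proves directly that the selected two suffice, by applying Proposition~\ref{prop:General_Emptiness}, the Cauchy Interlacing Theorem, and the interlacing of $g_H$ with $g$ to show that for any separating affine hyperplane $H$, emptiness of $H\cap\bigcap_i S_{\lambda^{(i)}}$ forces $\Omega^n_H(K^\circ)$ to meet $\cone(\lambda^{(1)},\lambda^{(2)})$. This spectral-sequence detour is what lets the paper prove a two-aggregation description without any cardinality bound on $\Lambda_3$.

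The final geometric step you identify as the main obstacle is indeed the point where the argument breaks. Three rays on $\partial\bbR^3_+$ that avoid a full-dimensional open subcone $\cH\subseteq\mathrm{int}(\bbR^3_+)$ can perfectly well generate a three-dimensional cone: take all three rays clustered on two adjacent facets near a common vertex, with $\cH$ sitting near the opposite edge. Lemma~\ref{lem:neg_def_cone_lambda_prime} only gives $\cone(\Lambda_3)\cap\cH=\{0\}$; it does not give containment of $\cH$ in $\cone(\Lambda_3)$ or any mutual-surrounding relation that would force $\dim\cone(\Lambda_3)\leq 2$. Proposition~\ref{prop:Lambda_prime_elts} and Lemma~\ref{lem:neg_def_cvx_comb} constrain where points of $\Lambda_3$ lie relative to the ovals of $\cV_{\bbR}(g)$ (and note that Lemma~\ref{lem:neg_def_cvx_comb} only applies to $\lambda$ with $g(\lambda)=0$, not to standard basis vectors that happen to lie in $\Lambda_3$), but you do not supply, and I do not see, an argument that these constraints rule out every three-dimensional configuration. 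Because the paper's actual proof carefully routes around exactly this difficulty — reducing not $|\Lambda_3|$ but the number of \emph{necessary} aggregations — I would not expect $|\Lambda_3|\leq 2$ to be provable in general, and you should not attempt to patch the gap along the lines sketched. The spectral-sequence hyperplane argument (or something of comparable strength) appears to be essential.
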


\begin{proof}

Let $\lambda^{(1)},\lambda^{(2)},\ldots, \lambda^{(r)}$ be good aggregations such that $\conv(S) = \bigcap_{i = 1}^{r}S_{\lambda^{(i)}}$. Set $K = \cone(\lambda^{(1)},\lambda^{(2)}, \ldots, \lambda^{(r)})$. Note that at most two of the $\lambda^{(i)}$ can lie on a given facet $\cone(e_j,e_k)$ of $\bbR^3_+$. By Propositions \ref{prop:GoodAggsConnectedPDLC} and \ref{prop:PDLC_no_invertible_agg_trivial_CH}, we have that $\Omega^n:=\Omega^n(K^\circ)$ is nonempty and connected. 

We first show that the connected component of $\Lambda$ which contains $\Omega^n$ contains exactly two of the $\lambda^{(i)}$. Without loss of generality relabel so that these are $\lambda^{(1)},\lambda^{(2)}$.  Suppose for the sake of a contradiction that there is $\lambda^{(3)}$ in the same connected component of $\Lambda$. Given $\omega$ such that $Q_\omega \prec 0$, set $L_i = \{t\omega + (1-t)\lambda^{(i)}\;\vert\; t \in [0,1]\}$ and $\hat{L_i} = \left\{\frac{\ell}{\|\ell\|}\; \middle \vert \; \ell \in L_i\right\}$. By Lemma \ref{lem:neg_def_cvx_comb}, it follows that $Q_\mu$ has at most $n-1$ positive eigenvalues for any $\mu \in \hat{L_i}$ and any $i \in [3]$. So, $\hat{L_i} \cap \Omega^n = \emptyset$ for each $i \in [3]$ and therefore $\Omega^n$ has at least two components, the desired contradiction. 

Note that it suffices to show that for any affine hyperplane $H\subseteq \bbR^{n+1}$, the set $\left(\bigcap_{i=1}^r S_{\lambda^{(i)}}\right) \cap H = \emptyset$ if and only if $H \cap S_{\lambda^{(1)}} \cap S_{\lambda^{(2)}} = \emptyset$. Indeed, since $\cconv(S) = \bigcap_{i = 1}^r S_{\lambda^{(i)}}$, if $y \not \in \bigcap_{i = 1}^{r}S_{\lambda^{(i)}}$ then there is an affine hyperplane $H$ such that $y \in H$ and $H \cap \left(\bigcap_{i = 1}^{r} S_{\lambda^{(i)}}\right) = \emptyset$ and therefore $y \not \in S_{\lambda^{(1)}} \cap S_{\lambda^{(2)}}$. On the other hand, since $S_{\lambda^{(i)}}$ is a good aggregation for each $i \in [r]$, it follows that $\cconv(S) \subseteq S_{\lambda^{(1)}} \cap S_{\lambda^{(2)}}$.  

Clearly $\left(\bigcap_{i = 1}^r S_{\lambda^{(i)}}\right) \cap H  = \emptyset$ if $S_{\lambda^{(1)}} \cap S_{\lambda^{(2)}} \cap H = \emptyset$ for any affine hyperplane $H$. For the converse statement, suppose that $H$ is an affine hyperplane such that $\left(\bigcap_{i =1}^r S_{\lambda^{(i)}}\right) \cap H = \emptyset$. Then, by Proposition \ref{prop:General_Emptiness},  $\Omega^n_H(K^\circ) \not = \emptyset$, as $H^1(\Omega_H^{n-1}(K^\circ)) = 0$ since PDLC is satisfied. By the Cauchy Interlacing Theorem, $\Omega^n_H(K^\circ) \subseteq \Omega^n$. Moreover, $\Omega^n_H(K^\circ) \cap \cone(\lambda^{(1)},\lambda^{(2)}) \not = \emptyset$ since $g_H$ interlaces $g$ and therefore the hyperbolicity cone of $g_H$ containing positive definite matrices cannot be completely contained in $\bbR^3_+$. Since $\Omega^n_H(K^\circ) \cap \cone(\lambda^{(1)},\lambda^{(2)}) \not = \emptyset$, it follows that $S_{\lambda^{(1)}} \cap S_{\lambda^{(2)}} \cap H = \emptyset$. 
\end{proof}

Combining the results of Propositions \ref{prop:drag_away_from_edge}, \ref{Prop:low_Degree_PDLC_Bound}, and \ref{prop:Neg_def_contained_four_aggs} then implies Theorem \ref{thm:PDLCBound}.

\PDLCBound* 

\begin{proof}

If the $Q_i$ satisfy PDLC, then $g$ is hyperbolic with a hyperbolicity cone which contains negative definite matrices. Proposition \ref{prop:drag_away_from_edge} proves the theorem in the case that this hyperbolicity cone is not entirely contained in $\bbR^3_+$. In the case where this hyperbolicity cone is contained in $\bbR^3_+$ and $n =1,2$, the statement is Proposition \ref{Prop:low_Degree_PDLC_Bound}. The remaining case where this hyperbolicity cone is contained in $\bbR^3_+$ and $n \geq 3$ is proved in Proposition \ref{prop:Neg_def_contained_four_aggs}. 
\end{proof}

\printbibliography

\end{document}